\documentclass{article}
\usepackage{amssymb}
\usepackage{amsmath}
\usepackage{makeidx}
\usepackage{amsfonts}
\usepackage[a4paper]{geometry}

\setcounter{MaxMatrixCols}{10}

\newtheorem{thm}{Theorem}

\newtheorem{cor}[thm]{Corollary}

\newtheorem{lem}[thm]{Lemma}

\newtheorem{prop}[thm]{Proposition}
\newtheorem{remark}[thm]{Remark}

\newenvironment{proof}[1][Proof]{\noindent\textbf{#1.} }{\ \rule{0.5em}{0.5em}}
\input{tcilatex}
\newtheorem{datAsm}{Assumption}

\newtheorem{densAsm}{Assumption}

\newtheorem{modAsm}{Assumption}

\newtheorem{rhoAsm}{Assumption}

\begin{document}

\title{Non-Parametric Maximum Likelihood Density Estimation and
Simulation-Based Minimum Distance Estimators\thanks{%
This paper is based on the doctoral thesis of the first author written under
the supervision of the second author. The authors are grateful to Richard
Nickl for many discussions and for helpful comments on the paper. }}
\author{Florian Gach and Benedikt M. P\"{o}tscher \\
Department of Statistics, University of Vienna}
\date{December 16, 2010\\
Revision: September 2011}
\maketitle

\begin{abstract}
Indirect inference estimators (i.e., simulation-based minimum distance
estimators) in a parametric model that are based on auxiliary non-parametric
maximum likelihood density estimators are shown to be asymptotically normal.
If the parametric model is correctly specified, it is furthermore shown that
the asymptotic variance-covariance matrix equals the inverse of the
Fisher-information matrix. These results are based on uniform-in-parameters
convergence rates and a uniform-in-parameters Donsker-type theorem for
non-parametric maximum likelihood density estimators.
\end{abstract}

\section{Introduction}

Suppose $X_{1},\ldots ,X_{n}$ are independent and identically distributed
(i.i.d.) random variables with law $\mathbb{P}$. Furthermore, we are given a 
\emph{parametric model} $\mathcal{P}_{\Theta }=\left\{ p_{\theta }:\theta
\in \Theta \right\} $ of probability density functions $p_{\theta }$ and $%
\Theta \subseteq \mathbb{R}^{m}$. Assume for the moment that $\mathcal{P}%
_{\Theta }$ is correctly specified and identifiable in the sense that there
is a unique $\theta _{0}\in \Theta $ such that $p_{\theta _{0}}$ is a
density of $\mathbb{P}$. A standard method of estimation of $\theta $ is
then the maximum likelihood method, which under appropriate regularity
conditions is known to lead to \emph{asymptotically efficient} estimators.
However, in a number of models, e.g., in econometrics and biostatistics, the
maximum likelihood method may not be feasible as no closed form expressions
for the densities $p_{\theta }$, and thus for the likelihood, are available.
For example, the data may be modeled by an equation of the form $%
X_{i}=g(\varepsilon _{i},\theta _{0})$ where $\varepsilon _{i}$ are
i.i.d.~with a known distribution but the implied parametric densities are
not analytically tractable because $g$ is complicated or $\varepsilon _{i}$
is high-dimensional. A similar problem naturally also occurs in the
estimation of dynamic nonlinear models; see Smith (1993), Gouri\'{e}roux,
Monfort and Renault (1993), Gallant and Tauchen (1996), Gouri\'{e}roux and
Monfort (1996), and Gallant and Long (1997) for several concrete examples.
This has led to the development of alternative estimation methods like the
so-called \emph{indirect inference method}, see the just mentioned
references as well as Jiang and Turnbull (2004). Ideally, these estimation
methods should also be asymptotically efficient. In our context these
methods can be described in a nutshell as follows:

\begin{enumerate}
\item Simulate a random sample $X_{1}(\theta ),...,X_{k}(\theta )$ of size $%
k $ from the density $p_{\theta }$ for $\theta \in \Theta $. [This is often
possible in the examples alluded to above, e.g., by perusing the equations
defining the model. Note that then only the disturbances $\varepsilon
_{1},\ldots ,\varepsilon _{k}$ have to be simulated once and $X_{i}(\theta )$
can be computed from $g(\varepsilon _{i},\theta )$ for any given $\theta $.]

\item Based on the simulated sample \textit{as well as} on the true data,
compute auxiliary estimators $\tilde{p}_{k}(\theta )$ and $\hat{p}_{n}$,
respectively, in a not necessarily correctly-specified but \emph{numerically
tractable auxiliary} model $\mathcal{M}^{aux}$. [For example, by maximum
likelihood if $\mathcal{M}^{aux}$ is finite-dimensional.]

\item With a suitable choice of a distance $\chi $ then estimate $\theta
_{0} $ by minimizing over $\Theta $ the objective function 
\begin{equation}
\mathbb{Q}_{n,k}(\theta ):=\chi (\hat{p}_{n},\tilde{p}_{k}(\theta )).
\label{mindist1}
\end{equation}
\end{enumerate}

In most of the indirect inference literature, the auxiliary model $\mathcal{M%
}^{aux}$ is assumed to be finite-dimensional indexed by a vector $\beta \in
B\subseteq \mathbb{R}^{l}$, say, and one then in fact minimizes a distance
between $\hat{\beta}_{n}$, the maximum likelihood estimator in the auxiliary
model computed from the original data, and $\tilde{\beta}_{k}(\theta )$, the
maximum likelihood estimator in the auxiliary model computed from the
simulated sample $X_{1}(\theta ),...,X_{k}(\theta )$. The resulting indirect
inference estimator can be shown to be consistent and asymptotically normal
(under standard regularity conditions, see Gouri\'{e}roux and Monfort
(1996)). However, the indirect inference estimator is asymptotically
efficient (in the sense of having the inverse of the Fisher-information
matrix as its asymptotic variance-covariance matrix) only if $\mathcal{M}%
^{aux}$ happens to be \textit{correctly specified}. This assumption is
certainly restrictive and often unnatural if $\mathcal{M}^{aux}$\ is of
fixed finite dimension. Therefore Gallant and Long (1997) suggested that
choosing $\mathcal{M}^{aux}$ with dimension increasing in sample size should
result in estimators that are asymptotically efficient, the idea being that
this essentially amounts to choosing an infinite-dimensional auxiliary model 
$\mathcal{M}^{aux}$, for which the assumption of correct specification is
much less restrictive. In particular, Gallant and Long (1997) set out to
study the case where the density estimators are based on non-parametric
maximum likelihood estimators over sieves spanned by Hermite-polynomials,
but their limiting result is only informative if the sieve dimension stays
bounded (so that efficiency of the estimator is only established if the true
density is a \textit{finite} linear combination of Hermite-polynomials)
bringing one back into the realm of finite-dimensional auxiliary models.

In the present paper we show in some generality that the suggestion in
Gallant and Long (1997) is indeed correct, namely that the indirect
inference estimator for $\theta $\ is asymptotically normal with the inverse
of the Fisher-information matrix as its asymptotic variance-covariance
matrix if the auxiliary estimators $\tilde{p}_{k}(\theta )$\ and $\hat{p}%
_{n} $\ in Step 2 are chosen to be non-parametric maximum likelihood (NPML)
estimators obtained from optimizing the non-parametric likelihood over
suitable bounded subsets of a Sobolev-space and if the size $k$ of the
simulated sample is of order larger than $n^{2}$. Furthermore, we show that
asymptotic normality persist even if the originally given model $\mathcal{P}%
_{\Theta }$\ is misspecified. [We do not explicitly consider sieved NPMLs,
although analogous results for such estimators are certainly possible. This
would require a uniform-in parameters extension of the results in Nickl
(2009), paralleling the extension of Nickl (2007) provided in the present
paper.]

We now comment on some related literature in the area of indirect inference:
Fermanian and Salani\'{e} (2004) propose a different procedure and establish
asymptotic efficiency of their estimators under several high-level
conditions, which, as they admit themselves, are very stringent. For
example, even in the simplest model they consider, they need to have
simulations of order $k\sim n^{6}$. Nickl and P\"{o}tscher (2010) consider
the case where $\tilde{p}_{k}(\theta )$ and $\hat{p}_{n}$ are not NPML
estimators but are spline projection estimators and they establish
asymptotic normality and asymptotic efficiency if the parametric model $%
\mathcal{P}_{\Theta }$ is correctly specified. In contrast to the present
paper, Nickl and P\"{o}tscher (2010) also analyze the case where $k$, the
size of the simulated sample, is not necessarily of order larger than $n^{2}$%
. We discuss this in more detail in Remark \ref{rem26} in Section \ref%
{Section: Indirect inference estimators}. There are also some other related
recent papers on this topic, Altissimo and Mele (2009) and Carrasco,
Chernov, Florens, and Ghysels (2007), whose proofs, however, we were not
able to follow.

In the present paper we shall use for $\chi $ the Fisher-metric, hence the
objective function defining the indirect inference estimator will be given
by 
\begin{equation*}
\mathbb{Q}_{n,k}(\theta )=\int (\hat{p}_{n}-\tilde{p}_{k}(\theta ))^{2}\hat{p%
}_{n}^{-1}.
\end{equation*}%
It transpires that the indirect inference estimators considered in the
present paper can be viewed as \emph{minimum distance estimators} with the
important (and nontrivial) modification that $p_{\theta }$ has been replaced
by an estimator $\tilde{p}_{k}(\theta )$ based on the simulated data. In
that sense our results can be viewed as an extension of Beran's (1977)
asymptotic efficiency result for classical minimum distance estimators to
the case of \emph{simulation-based minimum distance estimators}, the
simulation step introducing considerable additional complexity into the
proofs.

In order to establish the above mentioned results for the indirect inference
estimator a careful study of several aspects of the NPML-estimators $\tilde{p%
}_{k}(\theta )$ and $\hat{p}_{n}$ is required. In particular, it turns out
to be beneficial to establish the weak convergence of the stochastic process 
\begin{equation}
(\theta ,f)\mapsto \sqrt{k}\int (\tilde{p}_{k}(\theta )-p_{\theta })f
\label{null}
\end{equation}%
to a Gaussian process in $\ell ^{\infty }(\Theta \times \mathcal{F})$ where $%
\mathcal{F}$ is an appropriate class of functions. This result can be seen
to imply a uniform-in-$\theta $ version of a Donsker-type result for
NPML-estimators obtained recently by Nickl (2007). In the course of
establishing this weak convergence result it is also necessary to derive
rates of convergence for%
\begin{equation}
\sup_{\theta \in \Theta }\left\Vert \tilde{p}_{k}(\theta )-p_{\theta
}\right\Vert _{s,2}  \label{eins}
\end{equation}%
where the norm is a suitable Sobolev-norm.

The outline of the paper is as follows: After some preliminaries in Section %
\ref{Section: Notation and basic definitions}, we introduce the model and
assumptions in Section \ref{Section: The framework}. In Section \ref%
{Subsection: Existence-of-AML-estimators} we derive existence and uniqueness
of the NPML-estimator while rates of convergence as indicated in (\ref{eins}%
) are given in Section \ref{Rates}. Donsker-type theorems like (\ref{null})
are the subject of Section \ref{Section: A uniform Donsker theorem for
AML-estimators}. In contrast to Nickl (2007), we avoid an assumption that
requires all densities to be bounded away from zero in our results as far as
possible. Section \ref{Section: Indirect inference estimators} introduces
simulation-based minimum distance estimators (i.e., indirect inference
estimators) based on auxiliary NPML-estimators and establishes asymptotic
normality of these estimators even if the originally given parametric model $%
\mathcal{P}_{\Theta }$ is misspecified. If $\mathcal{P}_{\Theta }$ is
correctly specified, it is furthermore shown that the estimator is
asymptotically efficient in the sense that its asymptotic
variance-covariance matrix equals the inverse of the Fisher-information
matrix. Some proofs and technical results are collected in the appendices.

\section{Preliminaries and Notation \label{Section: Notation and basic
definitions}}

For $\Lambda $ a non-empty set and $f$ a real-valued function on $\Lambda $,
define $\Vert f\Vert _{\Lambda }=\sup_{x\in \Lambda }|f(x)|$ and let $\ell
^{\infty }(\Lambda )$ denote the Banach space of all bounded real-valued
functions on $\Lambda $, equipped with the sup-norm $\Vert \cdot \Vert
_{\Lambda }$. If $\mathcal{D}$ is a (non-empty) subset of $\ell ^{\infty
}(\Lambda )$ we shall write $(\mathcal{D},\Vert \cdot \Vert _{\Lambda })$ to
denote the metric space $\mathcal{D}$ with the induced metric $\Vert
f-g\Vert _{\Lambda }$. For $(\Lambda ,\mathcal{A})$ a (non-empty) measurable
space, let $\mathcal{L}^{0}(\Lambda ,\mathcal{A})$ denote the vector space
of all $\mathcal{A}$-measurable real-valued functions on $\Lambda $ and
define the Banach space $\mathsf{L}^{\infty }(\Lambda ,\mathcal{A})=\mathcal{%
L}^{0}(\Lambda ,\mathcal{A})\cap \ell ^{\infty }(\Lambda )$, again equipped
with the sup-norm. For $f\in \mathcal{L}^{0}(\Lambda ,\mathcal{A})$ and $\mu 
$ a non-negative measure on $(\Lambda ,\mathcal{A})$, define $\Vert f\Vert
_{2,\mu }=\left[ \int_{\Lambda }f^{2}d\mu \right] ^{1/2}$ and set $\mathcal{L%
}^{2}(\Lambda ,\mathcal{A},\mu )=\left\{ f\in \mathcal{L}^{0}(\Lambda ,%
\mathcal{A}):\Vert f\Vert _{2,\mu }<\infty \right\} $. For the measure space 
$(\Omega ,\mathcal{B}(\Omega ),\lambda )$, where $\Omega $ is a (non-empty)
measurable subset of the real line $\mathbb{R}$ with associated Borel $%
\sigma $-field $\mathcal{B}(\Omega )$ and where $\lambda $ is Lebesgue
measure, we shall simplify notation and write $\mathcal{L}^{0}(\Omega )$, $%
\mathcal{L}^{2}(\Omega )$, $\mathsf{L}^{\infty }(\Omega )$, and $\Vert \cdot
\Vert _{2}$ for $\mathcal{L}^{0}(\Omega ,\mathcal{B}(\Omega ))$, $\mathcal{L}%
^{2}(\Omega ,\mathcal{B}(\Omega ),\lambda )$, $\mathsf{L}^{\infty }(\Omega ,%
\mathcal{B}(\Omega ))$, and $\Vert \cdot \Vert _{2,\lambda }$, respectively.
Furthermore, we shall write a.e.~instead of $\lambda $-a.e. For any
(non-empty) metric space $(T,d)$, we denote by $\mathcal{B}(T,d)$, or simply 
$\mathcal{B}(T)$, its Borel $\sigma $-field and by $\mathsf{C}(T,d)$, or
simply $\mathsf{C}(T)$, the Banach space of all bounded, $d$-continuous
real-valued functions on $T$, equipped with the sup-norm.

We shall denote by $\Vert \cdot \Vert $ the $2$-norm on Euclidean space. For
two real-valued functions $f$ and $g$ on $(0,\infty )$, we shall write $%
f(\varepsilon )\lesssim g(\varepsilon )$ if there is a constant $C$, $%
0<C<\infty $, such that $f(\varepsilon )\leq Cg(\varepsilon )$ holds true
for all $\varepsilon >0$. It will also prove useful to define $\log \infty
=\infty $ and $\log 0=-\infty $, thus making the logarithm a continuous
function from $\left[ 0,\infty \right] $ to $\left[ -\infty ,\infty \right] $%
.

Let $(\Lambda _{0},\mathcal{A}_{0},P_{0})$, $(\Lambda _{n},\mathcal{A}%
_{n},P_{n})$, $n\geq 1$, be probability spaces. Suppose $Y_{0}:\Lambda
_{0}\rightarrow T$ is an $\mathcal{A}_{0}$-$\mathcal{B}(T,d)$-measurable
mapping and $Y_{n}:\Lambda _{n}\rightarrow T$ are (not necessarily
measurable) mappings, where $(T,d)$ is a metric space. We say that $Y_{n}$
converges weakly to $Y_{0}$ in $(T,d)$, denoted by $Y_{n}\rightsquigarrow
Y_{0}$, if the outer integrals $\int_{\Lambda _{n}}^{\ast }g(Y_{n})dP_{n}$
converge to $\int_{\Lambda _{0}}g(Y_{0})dP_{0}$ for every $g\in \mathsf{C}%
(T,d)$; furthermore, $Y_{n}$ is said to converge weakly to a Borel
probability measure $L$ on $(T,\mathcal{B}(T,d))$, denoted by $%
Y_{n}\rightsquigarrow L$, if $\int_{\Lambda _{n}}^{\ast }g(Y_{n})dP_{n}$
converges to $\int_{T}gdL$ for every $g\in \mathsf{C}(T,d)$. We say that $%
Y_{n}$ converges to $\tau \in T$ in outer $P_{n}$-probability if $%
P_{n}^{\ast }(d(Y_{n},\tau )>\varepsilon )$ converges to $0$ for all $%
\varepsilon >0$. If $Y_{n}$ are real-valued and $r_{n}$ is a sequence of
positive real numbers, we write $Y_{n}=o_{P_{n}}^{\ast }(r_{n})$ if $%
r_{n}^{-1}Y_{n}$ converges to $0$ in outer $P_{n}$-probability, and $%
Y_{n}=O_{P_{n}}^{\ast }(r_{n})$ if 
\begin{equation*}
\lim_{M\rightarrow \infty }\,\limsup_{n\rightarrow \infty }P_{n}^{\ast
}\left( r_{n}^{-1}Y_{n}>M\right) =0.
\end{equation*}%
In case the probability spaces $(\Lambda _{n},\mathcal{A}_{n},P_{n})$ are
the $n$-fold products of a single probability space $(\Lambda ,\mathcal{A}%
,P) $, that is, $(\Lambda _{n},\mathcal{A}_{n},P_{n})=(\Lambda ^{n},\mathcal{%
A}^{n},P^{n})$, we write $Y_{n}=o_{P}^{\ast }(r_{n})$ instead of $%
Y_{n}=o_{P^{n}}^{\ast }(r_{n})$ and $Y_{n}=O_{P}^{\ast }(r_{n})$ for $%
Y_{n}=O_{P^{n}}^{\ast }(r_{n})$.

\subsection{H\"{o}lder and Sobolev Spaces\label{sub:Sobolev-spaces}}

For $\Omega $ a (non-empty) open subset of $\mathbb{R}$, a function $%
f:\Omega \rightarrow \mathbb{R}$, and $s\geq 0$, define 
\begin{equation*}
\Vert f\Vert _{s,\Omega }=%
\begin{cases}
\sum_{0\leq \alpha \leq \lfloor s\rfloor }\Vert f^{(\alpha )}\Vert _{\Omega
}+\sup_{x\neq y}\frac{\left\vert f^{\lfloor s\rfloor }(x)-f^{\lfloor
s\rfloor }(y)\right\vert }{|x-y|^{s-\lfloor s\rfloor }} & \text{if $s$ is
non-integer,} \\ 
\sum_{0\leq \alpha \leq s}\Vert f^{(\alpha )}\Vert _{\Omega } & \text{%
otherwise}.%
\end{cases}%
\end{equation*}%
Here $f^{(\alpha )}$ denotes the classical derivative of $f$ of order $%
\alpha $, and $\lfloor s\rfloor $ denotes the integer part of $s$. For any
non-integer $s>0$, define the H\"{o}lder space $\mathsf{C}^{s}(\Omega )$ as
the space of all $f:\Omega \rightarrow \mathbb{R}$ such that $\Vert f\Vert
_{s,\Omega }<\infty $; for any integer $s\geq 0$, let $\mathsf{C}^{s}(\Omega
)$ be the space of all $f:\Omega \rightarrow \mathbb{R}$ such that $\Vert
f\Vert _{s,\Omega }<\infty $ and $f^{(s)}$ is uniformly continuous. Note
that $\mathsf{C}^{0}(\Omega )$ thus is the space of bounded and uniformly
continuous functions on $\Omega $.

For $\Omega $ and $s$ as above and functions $f,g\in \mathcal{L}^{2}(\Omega
) $, let 
\begin{equation*}
\langle f|g\rangle _{s,2}=%
\begin{cases}
\sum_{0\leq \alpha \leq \lfloor s\rfloor }\langle f^{(\alpha
)_{w}}|g^{(\alpha )_{w}}\rangle _{2} \\ 
\qquad +\int_{\Omega }\int_{\Omega }\frac{(f^{(\lfloor s\rfloor
)_{w}}(x)-f^{(\lfloor s\rfloor )_{w}}(y))(g^{(\lfloor s\rfloor
)_{w}}(x)-g^{(\lfloor s\rfloor )_{w}}(y))}{|x-y|^{1+2(s-\lfloor s\rfloor )}}%
d\lambda (x)d\lambda (y) \\ 
\hfill \text{if }s\text{ is non-integer,} \\ 
\sum_{0\leq \alpha \leq s}\langle f^{(\alpha )_{w}}|g^{(\alpha )_{w}}\rangle
_{2} \\ 
\hfill \text{otherwise,}%
\end{cases}%
\end{equation*}%
and set $\Vert f\Vert _{s,2}=\sqrt{\langle f|f\rangle _{s,2}}$. Here, $%
f^{(\alpha )_{w}}$ denotes the weak derivative of $f$ of order $\alpha $,
and $\langle \cdot |\cdot \rangle _{2}$ is the usual (semi)inner product on $%
\mathcal{L}^{2}(\Omega )$. Define $\mathcal{W}_{2}^{s}(\Omega )$ as the
space of all $f\in \mathcal{L}^{2}(\Omega )$ such that $\Vert f\Vert _{s,2}$
is finite. As usual, we equip $\mathcal{W}_{2}^{s}(\Omega )$ with the
(semi)norm $\Vert \cdot \Vert _{s,2}$. For $s>1/2$ and $\Omega $ a non-empty
bounded open interval in $\mathbb{R}$, each $f\in \mathcal{W}_{2}^{s}(\Omega
)$ is a.e.~equal to exactly one bounded continuous function on $\Omega $.
For $s>1/2$ and such $\Omega $, we consequently define the Sobolev space $%
\mathsf{W}_{2}^{s}(\Omega )=\mathcal{W}_{2}^{s}(\Omega )\cap \mathsf{C}%
(\Omega )$ and note that it is a Hilbert space. The Sobolev balls $\left\{
f\in \mathsf{W}_{2}^{s}(\Omega ):\Vert f\Vert _{s,2}\leq B\right\} $ of
radius $B$, $0<B<\infty $, will be denoted by $\mathcal{U}_{s,B}$, and its
translates $g+\mathcal{U}_{s,B}$ by $\mathcal{U}_{s,B}(g)$. The next
proposition collects some properties of Sobolev spaces; see Gach and P\"{o}%
tscher (2010) for a proof.

\begin{prop}
\label{prop:SobolevembedsinHoelder} Let $\Omega $ be a non-empty bounded,
open interval in $\mathbb{R}$.

(a) For $s>1/2$, the Sobolev space $\mathsf{W}_{2}^{s}(\Omega )$ is a
multiplication algebra; that is, there is a finite constant $M_{s}>0$ such
that%
\begin{equation*}
\Vert fg\Vert _{s,2}\leq M_{s}\Vert f\Vert _{s,2}\Vert g\Vert _{s,2}
\end{equation*}%
holds true for all $f,g\in \mathsf{W}_{2}^{s}(\Omega )$.

(b) For $s>1/2$, the Sobolev space $\QTR{up}{\mathsf{W}}_{2}^{s}(\Omega )$
is continuously embedded in $\QTR{up}{\mathsf{C}}^{s-1/2}(\Omega )$.
Consequently, $\QTR{up}{\mathsf{W}}_{2}^{s}(\Omega )$ is embedded in $%
\QTR{up}{\mathsf{C}}(\Omega )$ with an embedding constant $C_{s}$, $%
0<C_{s}<\infty $; that is, 
\begin{equation*}
\Vert f\Vert _{\Omega }\leq C_{s}\Vert f\Vert _{s,2}
\end{equation*}%
holds true for all $f\in \mathsf{W}_{2}^{s}(\Omega )$.

(c) If $0\leq r<s$, then $\mathcal{W}_{2}^{s}(\Omega )$ is compactly
embedded in $\mathcal{W}_{2}^{r}(\Omega )$; if $1/2<r<s$, then $\mathsf{W}%
_{2}^{s}(\Omega )$ is compactly embedded in $\mathsf{W}_{2}^{r}(\Omega )$.

(d) If $\mathcal{F}$ is a (non-empty) bounded subset of some Sobolev space $%
\mathsf{W}_{2}^{s}(\Omega )$ of order $s>1/2$ such that $\inf_{x\in \Omega
,f\in \mathcal{F}}\left\vert f(x)\right\vert >0$ holds, then $\left\{
1/f:f\in \mathcal{F}\right\} $ is also a bounded subset of $\mathsf{W}%
_{2}^{s}(\Omega )$.
\end{prop}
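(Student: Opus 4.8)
The plan is to deduce parts (a)--(c) from the corresponding classical facts on the whole line and to obtain (d) from (a)--(b) via a composition estimate. Write $H^{s}(\mathbb{R})$ for the Bessel-potential space $\{f\in\mathcal{L}^{2}(\mathbb{R}):\int_{\mathbb{R}}(1+\xi^{2})^{s}|\widehat{f}(\xi)|^{2}\,d\xi<\infty\}$. The bridge to $\Omega$ is a bounded linear extension operator $E\colon\mathcal{W}_{2}^{s}(\Omega)\to H^{s}(\mathbb{R})$ with $(Ef)|_{\Omega}=f$ and $\Vert Ef\Vert_{H^{s}(\mathbb{R})}\lesssim\Vert f\Vert_{s,2}$, together with boundedness of the restriction map $H^{s}(\mathbb{R})\to\mathcal{W}_{2}^{s}(\Omega)$; equivalently, that on the bounded interval $\Omega$ the Slobodeckij norm $\Vert\cdot\Vert_{s,2}$ is equivalent to the norm inherited from $H^{s}(\mathbb{R})$ by restriction. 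For a bounded open interval both facts are standard: $E$ can be built by finite-order reflection across the endpoints (or one invokes Stein's extension theorem), and the norm equivalence is the classical identification of the Sobolev--Slobodeckij space with the restriction of the Bessel-potential space. Granting this, the three embedding statements become transfers of statements on $\mathbb{R}$.

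For (a), the argument on $\mathbb{R}$ is to write $\widehat{fg}=\widehat{f}\ast\widehat{g}$ and apply Peetre's inequality $(1+\xi^{2})^{s/2}\lesssim(1+(\xi-\eta)^{2})^{s/2}+(1+\eta^{2})^{s/2}$ to split the convolution into two pieces, in each of which one factor carries the full weight and the other none; Young's inequality and the bound $\Vert\widehat{h}\Vert_{1}\lesssim\Vert h\Vert_{H^{s}(\mathbb{R})}$ (valid precisely because $2s>1$) then give $\Vert fg\Vert_{H^{s}(\mathbb{R})}\lesssim\Vert f\Vert_{H^{s}(\mathbb{R})}\Vert g\Vert_{H^{s}(\mathbb{R})}$. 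To transfer, note that for $f,g\in\mathsf{W}_{2}^{s}(\Omega)$ the function $(Ef)(Eg)$ lies in $H^{s}(\mathbb{R})$ and restricts to $fg$, whence $\Vert fg\Vert_{s,2}\lesssim\Vert(Ef)(Eg)\Vert_{H^{s}(\mathbb{R})}\lesssim\Vert Ef\Vert_{H^{s}(\mathbb{R})}\Vert Eg\Vert_{H^{s}(\mathbb{R})}\lesssim\Vert f\Vert_{s,2}\Vert g\Vert_{s,2}$; absorbing the constants gives the asserted inequality with a finite $M_{s}$. For (b), the same threshold estimate $\Vert\widehat{f}\Vert_{1}\le\Vert(1+\xi^{2})^{-s/2}\Vert_{2}\Vert f\Vert_{H^{s}(\mathbb{R})}<\infty$ shows that $f\in H^{s}(\mathbb{R})$ has a bounded continuous representative with $\Vert f\Vert_{\mathbb{R}}\lesssim\Vert f\Vert_{H^{s}(\mathbb{R})}$, and estimating differences of $f$ and of its classical derivatives of order up to $\lfloor s-1/2\rfloor$ in the same way yields the $\mathsf{C}^{s-1/2}(\mathbb{R})$ bound (the one-dimensional Sobolev embedding theorem). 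Restricting to $\Omega$ and composing with $E$ gives $\mathsf{W}_{2}^{s}(\Omega)\hookrightarrow\mathsf{C}^{s-1/2}(\Omega)$, hence in particular $\Vert f\Vert_{\Omega}\le C_{s}\Vert f\Vert_{s,2}$.

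For (c), I would compose $E$ with multiplication by a fixed infinitely differentiable function $\varphi$ on $\mathbb{R}$ that has compact support and equals $1$ on $\overline{\Omega}$: since multiplication by $\varphi$ is bounded on $H^{s}(\mathbb{R})$ (a consequence of the algebra estimate just proved, as $\varphi\in H^{s}(\mathbb{R})$), the map $f\mapsto\varphi\cdot Ef$ sends bounded subsets of $\mathcal{W}_{2}^{s}(\Omega)$ into bounded subsets of $H^{s}(\mathbb{R})$ supported in a fixed compact set. Such sets are relatively compact in $H^{r}(\mathbb{R})$ for every $r$ with $0\le r<s$ (the Rellich--Kondrachov lemma; one proves it from the Fr\'{e}chet--Kolmogorov criterion, compact support making $\widehat{g}$ equicontinuous and the $H^{s}$-bound controlling the high-frequency tail in $H^{r}$). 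Restricting back to $\Omega$ (a bounded map $H^{r}(\mathbb{R})\to\mathcal{W}_{2}^{r}(\Omega)$) exhibits $\mathcal{W}_{2}^{s}(\Omega)\hookrightarrow\mathcal{W}_{2}^{r}(\Omega)$ as bounded $\circ$ compact $\circ$ bounded, hence compact; for $1/2<r<s$ the same conclusion holds because $\mathsf{W}_{2}^{r}(\Omega)$ carries the same norm $\Vert\cdot\Vert_{r,2}$ and any $\Vert\cdot\Vert_{r,2}$-convergent subsequence of a norm-bounded sequence in $\mathsf{W}_{2}^{s}(\Omega)$ has a limit that is continuous (as $r>1/2$), hence lies in $\mathsf{W}_{2}^{r}(\Omega)$. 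For (d), put $M=\sup_{f\in\mathcal{F}}\Vert f\Vert_{\Omega}$ (finite by (b) and boundedness of $\mathcal{F}$) and recall $\delta:=\inf_{x\in\Omega,\,f\in\mathcal{F}}|f(x)|>0$; since each $f\in\mathcal{F}$ is continuous on the interval $\Omega$ and nowhere zero it has constant sign, so $f(\Omega)$ lies in one of the two fixed compact intervals $[\delta,M]$ or $[-M,-\delta]$, on which $G(t)=1/t$ is $C^{\infty}$ with all derivatives bounded by constants depending only on $\delta$ and $M$. A Moser-type composition estimate --- provable by induction on $\lfloor s\rfloor$ from the Leibniz/Fa\`{a}-di-Bruno formula, using (a) to multiply the factors $G^{(j)}(f)$ and (b) to bound them, the fractional part being handled through the Slobodeckij difference quotients --- then bounds $\Vert 1/f\Vert_{s,2}=\Vert G\circ f\Vert_{s,2}$ in terms of $\Vert f\Vert_{s,2}$, $\delta$ and $M$ only, uniformly over $\mathcal{F}$. (For $1/2<s\le1$ this is immediate without the composition machinery: $|1/f(x)-1/f(y)|\le\delta^{-2}|f(x)-f(y)|$ bounds the Slobodeckij seminorm directly, and $\Vert(1/f)'\Vert_{2}=\Vert f'/f^{2}\Vert_{2}\le\delta^{-2}\Vert f'\Vert_{2}$ handles the integer part.)

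The step I expect to be the main obstacle is the Moser-type composition estimate needed for (d) at general non-integer order $s$: the chain rule interacts awkwardly with the double-integral seminorm, and one must track all constants carefully so that the final bound depends on $\mathcal{F}$ only through $\sup_{f\in\mathcal{F}}\Vert f\Vert_{s,2}$ and $\delta$; by comparison, (a)--(c) and the integer-order case of (d) are routine once the extension operator and the $H^{s}(\mathbb{R})$-algebra property are in hand.
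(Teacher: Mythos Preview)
The paper does not prove this proposition in the text; it simply refers to the working-paper version Gach and P\"{o}tscher (2010). So there is no in-paper argument to compare your proposal against. That said, your strategy is the standard one and is essentially sound: reduce (a)--(c) to the classical facts for $H^{s}(\mathbb{R})$ via a bounded extension operator and restriction, and handle (d) by a composition (Moser-type) estimate for $G(t)=1/t$.

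Two minor points are worth flagging. First, in (c) you justify boundedness of multiplication by the cutoff $\varphi$ through the algebra property of part~(a), but (a) is stated only for $s>1/2$, whereas the first assertion in (c) covers all $0\le r<s$ with no lower bound on $s$; for the range $0<s\le 1/2$ you should instead note that multiplication by a fixed $C_{c}^{\infty}$ function is bounded on $H^{s}(\mathbb{R})$ for every real $s$ (e.g., by interpolation between integer orders, where it follows from the Leibniz rule). Second, your diagnosis that the composition estimate in (d) is the main technical point is correct. A convenient way to set it up is to replace $G(t)=1/t$ by a $C^{\infty}$ function $\tilde G$ on all of $\mathbb{R}$ that agrees with $G$ on the compact set $[\delta,M]\cup[-M,-\delta]$ and has all derivatives globally bounded by constants depending only on $\delta$ and $M$; then $1/f=\tilde G\circ f$ for every $f\in\mathcal{F}$, and one can invoke (or prove by the induction you outline) the standard superposition bound $\Vert\tilde G\circ f\Vert_{s,2}\le C(\tilde G,\Vert f\Vert_{s,2})$ valid for $s>1/2$ on a bounded interval. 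Your parenthetical direct argument for $1/2<s\le 1$ is correct and already supplies the base case of such an induction.
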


\subsection{Covering Numbers and Metric Entropy}

Let $(T,d)$ be a metric space. Let $0<\varepsilon <\infty $ and let $X$ be a
(non-empty) totally bounded subset of $T$. Then we denote by $N(\varepsilon
,X,T,d)$ the covering number of $X$, i.e., the minimal number of closed
balls in $T$ of radius $\varepsilon $ needed to cover $X$; we define the
metric entropy of $X$ as 
\begin{equation*}
H(\varepsilon ,X,T,d)=\log N(\varepsilon ,X,T,d).
\end{equation*}%
If $T$ is a normed space with norm $\left\Vert \cdot \right\Vert $, we shall
write in abuse of notation $N(\varepsilon ,X,T,\left\Vert \cdot \right\Vert
) $ and similarly for the metric entropy.

Let $(\Lambda ,\mathcal{A},\mu )$ be a (non-empty) measure space. For any
two elements $l,u\in \mathcal{L}^{0}(\Lambda ,\mathcal{A})$, the set 
\begin{equation*}
\lbrack l,u]=\{f\in \mathcal{L}^{0}(\Lambda ,\mathcal{A}):l(x)\leq f(x)\leq
u(x)\text{ for all }x\in \Lambda \}
\end{equation*}%
is called a bracket and $\Vert u-l\Vert _{2,\mu }$ its $\mathcal{L}^{2}(\mu
) $-bracketing size. For $0<\varepsilon <\infty $ and $\mathcal{F}$ a
(non-empty) subset of $\mathcal{L}^{0}(\Lambda ,\mathcal{A})$, we define $%
N_{[\hspace{0.75ex}]}(\varepsilon ,\mathcal{F},\Vert \cdot \Vert _{2,\mu })$
to be the minimal number of brackets of $\mathcal{L}^{2}(\mu )$-bracketing
size less than or equal to $\varepsilon $ needed to cover $\mathcal{F}$; if
there is no finite number of such brackets, we set $N_{[\hspace{0.75ex}%
]}(\varepsilon ,\mathcal{F},\Vert \cdot \Vert _{2,\mu })=\infty $ for
convenience. The $\mathcal{L}^{2}(\mu )$-bracketing metric entropy of $%
\mathcal{F}$ is defined as 
\begin{equation*}
H_{[\hspace{0.75ex}]}(\varepsilon ,\mathcal{F},\Vert \cdot \Vert _{2,\mu
})=\log N_{[\hspace{0.75ex}]}(\varepsilon ,\mathcal{F},\Vert \cdot \Vert
_{2,\mu }).
\end{equation*}%
Furthermore, for $0<\eta <\infty $ the $\mathcal{L}^{2}(\mu )$-bracketing
metric integral $I_{[\hspace{0.75ex}]}(\eta ,\mathcal{F},\Vert \cdot \Vert
_{2,\mu })$\ of $\mathcal{F}$ is given by%
\begin{equation*}
I_{[\hspace{0.75ex}]}(\eta ,\mathcal{F},\Vert \cdot \Vert _{2,\mu
})=\int_{(0,\eta ]}\sqrt{1+H_{[\hspace{0.75ex}]}(\varepsilon ,\mathcal{F}%
,\Vert \cdot \Vert _{2,\mu })}\,d\varepsilon .
\end{equation*}

\section{The Framework and Assumptions\label{Section: The framework}}

From now on let $\Omega $ be a non-empty bounded, open interval in $\mathbb{R%
}$. We consider i.i.d.~random variables $(X_{i})_{i\in \mathbb{N}}$ that
take their values in $(\Omega ,\mathcal{B}(\Omega ))$ and have common law $%
\mathbb{P}$, with $X_{1},\ldots ,X_{n}$ representing the data at sample size 
$n$. Furthermore, let $\Theta $ be a (non-empty) compact subset of $\mathbb{R%
}^{m}$ and let $\mathcal{P}_{\Theta }=\left\{ p_{\theta }:\theta \in \Theta
\right\} $ be a parametric family of probability density functions $%
p_{\theta }$ on $\Omega $. The law $\mathbb{P}$ may or may not correspond to
a density in $\mathcal{P}_{\Theta }$. We assume that there is a way of
simulating synthetic data according to the densities in the class $\mathcal{P%
}_{\Theta }$ in the following sense: There is a probability space $(V,%
\mathcal{V},\mu )$ and a function $\rho :V\times \Theta \rightarrow \Omega $%
, which is $\mathcal{V}$-$\mathcal{B}(\Omega )$-measurable in its first
argument, such that for every $\theta \in \Theta $ the law of $\rho (\cdot
,\theta )$ under $\mu $ has density $p_{\theta }$. Consequently, if $%
(V_{i})_{i\in \mathbb{N}}$ is a sequence of i.i.d.~random variables with
values in $(V,\mathcal{V})$ and law $\mu $, then $X_{i}(\theta )=\rho
(V_{i},\theta )$ is an i.i.d.~sequence with law having density $p_{\theta }$%
, simultaneously so for all $\theta \in \Theta $. We shall also always
assume that the process $(V_{i})_{i\in \mathbb{N}}$ is independent of $%
(X_{i})_{i\in \mathbb{N}}$. [As indicated in the Introduction, the
simulation mechanism $\rho $ may derive form an underlying equation model,
but it may also arise in some other way.] In the application to indirect
inference in Section \ref{Section: Indirect inference estimators} we shall
estimate $\theta $ by matching a non-parametric estimator for (the density
of) $\mathbb{P}$ obtained from the data $X_{1},\ldots ,X_{n}$ with a
non-parametric estimator for $p_{\theta }$ obtained from the synthetic data $%
X_{1}(\theta ),\ldots ,X_{k}(\theta )$. We stress that construction of the
synthetic data requires only one simulation, and not a separate simulation
for every $\theta $. For convenience we shall from now on assume that the
random variables $X_{i}$ and $V_{i}$ are the respective coordinate
projections on the measurable space $(\Omega ^{\mathbb{N}}\times V^{\mathbb{N%
}},\mathcal{B}(\Omega )^{\mathbb{N}}\otimes \mathcal{V}^{\mathbb{N}})$
equipped with the product measure $\func{Pr}:=\mathbb{P}^{\mathbb{N}}\otimes
\mu ^{\mathbb{N}}$. We note, however, that all results of the paper hold
also without this assumption; see Remark \ref{acanonical}. Furthermore, the
empirical measures associated with $X_{1},\ldots ,X_{n}$ and $V_{1},\ldots
,V_{k}$ will be denoted by $\mathbb{P}_{n}$ and $\mu _{k}$, respectively.

The density estimators we shall consider will be NPML-estimators over
non-parametric models (called auxiliary models in Section \ref{Section:
Indirect inference estimators}) of the form%
\begin{equation*}
\mathcal{P}(t,\zeta ,D)=\left\{ p\in \mathsf{W}_{2}^{t}(\Omega
):\int_{\Omega }p\,d\lambda =1,\,\inf_{x\in \Omega }p(x)\geq \zeta ,\,\Vert
p\Vert _{t,2}\leq D\right\} ,
\end{equation*}%
where $t>1/2$, $0\leq \zeta <\infty $, and $0<D<\infty $. Some important
properties of $\mathcal{P}(t,\zeta ,D)$ that will be used repeatedly are
summarized in the subsequent propositions, the proofs of which can be found
in Appendix \ref{App A}.

\begin{prop}
\label{Proposition:Propertiesoftheauxiliarymodel} Suppose $t>1/2$, $0\leq
\zeta <\infty $, and $0<D<\infty $.

(a) The following statements are equivalent: (i) $\zeta \leq \lambda (\Omega
)^{-1}\leq D^{2}$; (ii) the constant density $\lambda (\Omega )^{-1}$
belongs to $\mathcal{P}(t,\zeta ,D)$; (iii) $\mathcal{P}(t,\zeta ,D)$ is
non-empty.

(b) Suppose $\zeta \leq \lambda (\Omega )^{-1}\leq D^{2}$. Then the
following statements are equivalent: (i) $\zeta =\lambda (\Omega )^{-1}$ or $%
\lambda (\Omega )^{-1}=D^{2}$; (ii) the constant density $\lambda (\Omega
)^{-1}$ is the only element of $\mathcal{P}(t,\zeta ,D)$; (iii) $\mathcal{P}%
(t,\zeta ,D)$ is a singleton.

(c) Suppose $\zeta \leq \lambda (\Omega )^{-1}\leq D^{2}$. Then $\mathcal{P}%
(t,\zeta ,D)$ is a non-empty convex set, which is compact in $\mathsf{C}%
(\Omega )$ as well as in $\mathsf{W}_{2}^{s}(\Omega )$ for every $s$
satisfying $1/2<s<t$.
\end{prop}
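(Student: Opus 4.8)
The plan is to prove the three parts of Proposition~\ref{Proposition:Propertiesoftheauxiliarymodel} in order, exploiting the fact that $\mathcal{P}(t,\zeta,D)$ is cut out by one equality constraint ($\int_\Omega p\,d\lambda=1$) and two convex inequality constraints ($\inf_{x}p(x)\ge\zeta$ and $\|p\|_{t,2}\le D$).

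For part~(a), the implications (ii)$\Rightarrow$(iii) is trivial, so the work is in (i)$\Leftrightarrow$(ii). If $\zeta\le\lambda(\Omega)^{-1}\le D^2$, I would simply verify that the constant function $c\equiv\lambda(\Omega)^{-1}$ satisfies all three defining conditions: it integrates to $1$ over $\Omega$; its infimum is $\lambda(\Omega)^{-1}\ge\zeta$; and since $c^{(\alpha)_w}=0$ for $\alpha\ge 1$, one has $\|c\|_{t,2}^2=\langle c|c\rangle_2=\lambda(\Omega)^{-1}\le D^2$ (using $\lambda(\Omega)>0$ because $\Omega$ is a non-empty bounded open interval). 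For the converse (iii)$\Rightarrow$(i), I would take any $p\in\mathcal{P}(t,\zeta,D)$: integrating $p\ge\zeta$ over $\Omega$ and using $\int p=1$ gives $\zeta\lambda(\Omega)\le 1$, i.e.\ $\zeta\le\lambda(\Omega)^{-1}$; and by Cauchy--Schwarz $1=\int_\Omega p\,d\lambda\le\|p\|_2\lambda(\Omega)^{1/2}\le\|p\|_{t,2}\lambda(\Omega)^{1/2}\le D\lambda(\Omega)^{1/2}$, which rearranges to $\lambda(\Omega)^{-1}\le D^2$.

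For part~(b), again (ii)$\Rightarrow$(iii) is trivial and (iii)$\Rightarrow$(ii) follows from part~(a)(ii), since $c$ always lies in $\mathcal{P}(t,\zeta,D)$ under the standing hypothesis. So the crux is (i)$\Leftrightarrow$(ii), and the interesting direction is (i)$\Rightarrow$(ii): I want to show that if $\zeta=\lambda(\Omega)^{-1}$ or $\lambda(\Omega)^{-1}=D^2$, then $c$ is the only element. If $\zeta=\lambda(\Omega)^{-1}$, any $p$ with $p\ge\zeta$ a.e.\ and $\int p=\zeta\lambda(\Omega)$ forces $p=\zeta$ a.e., hence (being continuous) $p\equiv c$. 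If $\lambda(\Omega)^{-1}=D^2$, then the Cauchy--Schwarz chain above becomes an equality, $\|p\|_2=D\lambda(\Omega)^{1/2}$ with equality in $1=\int p\cdot 1\le\|p\|_2\|1\|_2$, which forces $p$ to be a.e.\ constant; combined with $\int p=1$ this gives $p\equiv c$. Conversely, if $\zeta<\lambda(\Omega)^{-1}<D^2$ I would exhibit a second element: perturb $c$ by a small smooth mean-zero bump $\phi\in\mathsf{W}_2^t(\Omega)$ (e.g.\ a compactly supported $C^\infty$ function with $\int_\Omega\phi=0$ and $\phi\not\equiv0$), and note $c+\varepsilon\phi\in\mathcal{P}(t,\zeta,D)$ for $\varepsilon>0$ small enough, since $\inf(c+\varepsilon\phi)\ge\lambda(\Omega)^{-1}-\varepsilon\|\phi\|_\Omega>\zeta$ and $\|c+\varepsilon\phi\|_{t,2}\le\|c\|_{t,2}+\varepsilon\|\phi\|_{t,2}<D$ for $\varepsilon$ small; hence $\mathcal{P}(t,\zeta,D)$ is not a singleton.

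For part~(c), convexity is immediate since $\int(\cdot)\,d\lambda=1$ is an affine constraint and both $p\mapsto\inf_x p(x)\ (\ge\zeta)$ and $p\mapsto\|p\|_{t,2}\ (\le D)$ define convex sets (the first because $\inf$ of an affine combination dominates the combination of infima; the second because $\|\cdot\|_{t,2}$ is a seminorm). Non-emptiness is part~(a). The main substantive point is compactness, and here I expect the main obstacle to be handling the topologies carefully. The strategy: first observe $\mathcal{P}(t,\zeta,D)\subseteq\mathcal{U}_{t,D}$, a ball in $\mathsf{W}_2^t(\Omega)$, which by Proposition~\ref{prop:SobolevembedsinHoelder}(c) is relatively compact in $\mathsf{W}_2^s(\Omega)$ for every $s$ with $1/2<s<t$, and (since $s>1/2$ gives a continuous embedding into $\mathsf{C}(\Omega)$ by Proposition~\ref{prop:SobolevembedsinHoelder}(b)) also relatively compact in $\mathsf{C}(\Omega)$. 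So it suffices to show $\mathcal{P}(t,\zeta,D)$ is \emph{closed} in $\mathsf{W}_2^s(\Omega)$ (equivalently in $\mathsf{C}(\Omega)$, as convergence in the former implies convergence in the latter). Given $p_j\to p$ in $\mathsf{W}_2^s(\Omega)$ with $p_j\in\mathcal{P}(t,\zeta,D)$: the embedding into $\mathsf{C}(\Omega)$ gives uniform convergence, so $\inf_x p(x)\ge\zeta$ passes to the limit and $\int_\Omega p\,d\lambda=\lim\int_\Omega p_j\,d\lambda=1$ (uniform convergence on the bounded interval $\Omega$); finally $\|p\|_{t,2}\le D$ follows because bounded sets of $\mathsf{W}_2^t(\Omega)$ are weakly sequentially compact in the Hilbert space $\mathsf{W}_2^t(\Omega)$ and $\|\cdot\|_{t,2}$ is weakly lower semicontinuous — extract a weakly convergent subsequence $p_{j_\ell}\rightharpoonup q$ in $\mathsf{W}_2^t(\Omega)$, note $q=p$ by uniqueness of limits in the weaker $\mathsf{C}(\Omega)$ topology, and conclude $\|p\|_{t,2}\le\liminf_\ell\|p_{j_\ell}\|_{t,2}\le D$. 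Hence $p\in\mathcal{P}(t,\zeta,D)$, so the set is closed and therefore compact in both $\mathsf{W}_2^s(\Omega)$ and $\mathsf{C}(\Omega)$.
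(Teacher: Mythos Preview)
Your proof is correct and, for parts~(a) and~(b), tracks the paper's argument essentially step for step: the same integration argument for $\zeta\le\lambda(\Omega)^{-1}$, the same Cauchy--Schwarz chain for $\lambda(\Omega)^{-1}\le D^2$, and the same equality-case analysis in~(b). The only cosmetic difference is the perturbation used to show (ii)$\Rightarrow$(i) in~(b): you add a small smooth mean-zero bump to the constant density, whereas the paper writes down an explicit affine function (value $\lambda(\Omega)^{-1}\mp\varepsilon$ at the two endpoints of $\Omega$). Both work; the paper's choice is slightly more concrete, yours is slightly more flexible.

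For part~(c) the paper simply invokes Lemma~3 of Nickl~(2007) for compactness, while you supply a self-contained argument: relative compactness from the compact embedding $\mathsf{W}_2^t\hookrightarrow\mathsf{W}_2^s$ (Proposition~\ref{prop:SobolevembedsinHoelder}(c)), and closedness via weak sequential compactness of the Hilbert ball $\mathcal{U}_{t,D}$ together with weak lower semicontinuity of $\|\cdot\|_{t,2}$. This is exactly the standard proof one would expect behind the cited lemma, so you have effectively filled in what the paper outsources. Your identification $q=p$ is justified because the compact embedding $\mathsf{W}_2^t\hookrightarrow\mathsf{C}(\Omega)$ upgrades weak convergence in $\mathsf{W}_2^t$ to strong convergence in $\mathsf{C}(\Omega)$, matching the limit already obtained there.
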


In the following let $\mathsf{H}_{t}$ denote the closed affine hyperplane
given by $\mathsf{H}_{t}=\left\{ f\in \mathsf{W}_{2}^{t}(\Omega
):\int_{\Omega }f\,d\lambda =1\right\} $ endowed with the relative topology
it inherits from $\mathsf{W}_{2}^{t}(\Omega )$. Note that $\mathcal{P}%
(t,\zeta ,D)\subseteq \mathsf{H}_{t}$ holds.

\begin{prop}
\footnote{%
An obvious extension of Theorem V.2.1 in Dunford and Schwartz (1966) to
affine spaces shows that in our setting the notion of an element being
interior relative to $\mathcal{H}$\ coincides with the notion of internality
of that element (relative to $\mathcal{H}$).}\label{prop_interior} Suppose $%
t>1/2$ and $0\leq \zeta \leq \lambda (\Omega )^{-1}\leq D^{2}<\infty $.

(a) An element $p\in \mathcal{P}(t,\zeta ,D)$ is an interior point of $%
\mathcal{P}(t,\zeta ,D)$ relative to $\mathsf{H}_{t}$ if and only if (i) $%
\Vert p\Vert _{t,2}<D$ and (ii) $\inf_{x\in \Omega }p(x)>\zeta $ hold.

(b) A (non-empty) subset $\mathcal{P}^{\prime }$ of $\mathcal{P}(t,\zeta ,D)$
is uniformly interior to $\mathcal{P}(t,\zeta ,D)$ relative to $\mathsf{H}%
_{t}$ (meaning that there exists a $\delta >0$ such that for every $p\in 
\mathcal{P}^{\prime }$ the set $\mathcal{U}_{t,\delta }(p)\cap \mathsf{H}%
_{t}\subseteq \mathcal{P}(t,\zeta ,D)$) if and only if (i) $\sup_{p\in 
\mathcal{P}^{\prime }}\Vert p\Vert _{t,2}<D$ and (ii) $\inf_{x\in \Omega
,p\in \mathcal{P}^{\prime }}p(x)>\zeta $ hold.

(c) Suppose $\zeta <\lambda (\Omega )^{-1}<D^{2}$ holds. Then the constant
density $\lambda (\Omega )^{-1}$ is interior to $\mathcal{P}(t,\zeta ,D)$
relative to $\mathsf{H}_{t}$. Moreover, the interior of $\mathcal{P}(t,\zeta
,D)$ relative to $\mathsf{H}_{t}$ is dense in $\mathcal{P}(t,\zeta ,D)$
(w.r.t.~the $\mathsf{W}_{2}^{t}(\Omega )$-topology).
\end{prop}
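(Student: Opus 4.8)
The plan is to prove the three parts of Proposition~\ref{prop_interior} in order, reducing (c) to (a)--(b) and building (b) on (a).

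\textbf{Part (a).} The strategy is to characterize when $p$ has a $\mathsf{W}_2^t(\Omega)$-neighbourhood (intersected with $\mathsf{H}_t$) contained in $\mathcal{P}(t,\zeta,D)$. For the \emph{if} direction, suppose $\Vert p\Vert_{t,2}<D$ and $\inf_\Omega p>\zeta$. Let $\eta>0$ be small enough that $\Vert p\Vert_{t,2}<D-2M_t\eta D - M_t\eta^2$ and $\inf_\Omega p > \zeta + C_t\eta$, using the multiplication-algebra constant $M_s$ of Proposition~\ref{prop:SobolevembedsinHoelder}(a) and the embedding constant $C_s$ of part (b). Then for any $q\in\mathsf{H}_t$ with $\Vert q-p\Vert_{t,2}\le\eta$: the integral constraint $\int_\Omega q\,d\lambda=1$ holds because $q\in\mathsf{H}_t$; the pointwise bound $\inf_\Omega q\ge\inf_\Omega p - \Vert q-p\Vert_\Omega \ge \inf_\Omega p - C_t\eta>\zeta$ follows from the embedding $\mathsf{W}_2^t\hookrightarrow\mathsf{C}$; and the norm bound $\Vert q\Vert_{t,2}\le D$ follows from the triangle inequality. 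Hence $\mathcal{U}_{t,\eta}(p)\cap\mathsf{H}_t\subseteq\mathcal{P}(t,\zeta,D)$. For the \emph{only if} direction, I argue by contraposition. If $\Vert p\Vert_{t,2}=D$, then for arbitrarily small $\eta$ the function $q=(1+\eta)p - \eta\lambda(\Omega)^{-1}$ lies in $\mathsf{H}_t$, is within $O(\eta)$ of $p$ in $\Vert\cdot\Vert_{t,2}$, but has $\Vert q\Vert_{t,2}>D$ for small $\eta>0$ since $\frac{d}{d\eta}\Vert(1+\eta)p-\eta\lambda(\Omega)^{-1}\Vert_{t,2}^2\big|_{\eta=0}=2\langle p\,|\,p-\lambda(\Omega)^{-1}\rangle_{t,2}=2(D^2 - \lambda(\Omega)^{-1})\ge 0$, and this derivative is strictly positive unless $\lambda(\Omega)^{-1}=D^2$, the degenerate case excluded by Proposition~\ref{Proposition:Propertiesoftheauxiliarymodel}(b); a symmetric perturbation handles the remaining subcase. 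If instead $\inf_\Omega p=\zeta$, say $p(x_0)=\zeta$ (the infimum is attained since $p$ is continuous on the bounded interval, or approached; a limiting argument covers the non-attained case), one perturbs $p$ by a small bump function supported near $x_0$, normalized to stay in $\mathsf{H}_t$, that pushes the value at $x_0$ below $\zeta$ while changing $\Vert\cdot\Vert_{t,2}$ by an arbitrarily small amount; this leaves $\mathcal{P}(t,\zeta,D)$.

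\textbf{Part (b).} The \emph{if} direction is just the uniform version of (a): with $\sup_{p\in\mathcal{P}'}\Vert p\Vert_{t,2}=:D'<D$ and $\inf_{x,p}p(x)=:\zeta'>\zeta$, choose a single $\delta>0$ (depending on $D-D'$, $\zeta'-\zeta$, $M_t$, $C_t$) that works in the argument of part (a) simultaneously for every $p\in\mathcal{P}'$. The \emph{only if} direction: if $\sup_{p\in\mathcal{P}'}\Vert p\Vert_{t,2}=D$, pick $p_j\in\mathcal{P}'$ with $\Vert p_j\Vert_{t,2}\to D$ and run the perturbation from (a), noting the perturbation size needed to escape $\mathcal{P}(t,\zeta,D)$ tends to $0$, contradicting uniform interiority; similarly if $\inf_{x,p}p(x)=\zeta$.

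\textbf{Part (c).} Under $\zeta<\lambda(\Omega)^{-1}<D^2$, the constant density $c:=\lambda(\Omega)^{-1}$ satisfies $\Vert c\Vert_{t,2}^2 = \langle c|c\rangle_{t,2}=c^2\lambda(\Omega)=\lambda(\Omega)^{-1}<D^2$ (all weak derivatives of a constant vanish, so only the $\alpha=0$ term survives) and $\inf_\Omega c = c>\zeta$, so $c$ is interior by (a). For density of the interior: given any $p\in\mathcal{P}(t,\zeta,D)$ and $\epsilon>0$, form $p_\epsilon=(1-\epsilon)p+\epsilon c$. Convexity of $\mathcal{P}(t,\zeta,D)$ (Proposition~\ref{Proposition:Propertiesoftheauxiliarymodel}(c)) gives $p_\epsilon\in\mathcal{P}(t,\zeta,D)$; moreover $\inf_\Omega p_\epsilon\ge(1-\epsilon)\zeta+\epsilon c>\zeta$ and, by convexity of the squared norm together with $\Vert c\Vert_{t,2}<D$, one checks $\Vert p_\epsilon\Vert_{t,2}<D$ whenever $\epsilon>0$ — indeed strict convexity along the segment forces $\Vert p_\epsilon\Vert_{t,2}<\max\{\Vert p\Vert_{t,2},\Vert c\Vert_{t,2}\}\le D$ unless $p=c$, and if $\Vert p\Vert_{t,2}=D$ one still gets strict inequality because $c\ne p$ has norm $<D$. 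Hence $p_\epsilon$ is interior by (a), and $\Vert p_\epsilon - p\Vert_{t,2}=\epsilon\Vert c-p\Vert_{t,2}\to 0$.

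I expect the main obstacle to be the \emph{only if} direction of part (a), specifically the case $\inf_\Omega p=\zeta$: one must exhibit an explicit $\mathsf{W}_2^t$-small, integral-preserving perturbation that genuinely lowers $p$ below $\zeta$ somewhere, and handle both the case where the infimum is attained and where it is only approached (e.g.\ near an endpoint of $\Omega$). Using a scaled smooth bump $\psi_\tau$ with $\Vert\psi_\tau\Vert_{t,2}$ small but $\psi_\tau$ strictly negative at the relevant point, re-centered by subtracting a constant multiple of $c$ to restore $\int=1$, should work, but the verification that such bumps exist in $\mathsf{W}_2^t(\Omega)$ with the required properties needs a little care about the behaviour at $\partial\Omega$. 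The norm-boundary case is cleaner because of the explicit one-parameter family toward/away from $c$, modulo invoking Proposition~\ref{Proposition:Propertiesoftheauxiliarymodel}(b) to rule out the degeneracies $\zeta=\lambda(\Omega)^{-1}$ and $\lambda(\Omega)^{-1}=D^2$.
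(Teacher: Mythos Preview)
Your approach is broadly correct and parallels the paper's in structure, but differs in two places worth noting.

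For the \emph{only if} direction on the norm constraint, your one-parameter family $q=(1+\eta)p-\eta\lambda(\Omega)^{-1}$ works in the generic case, but the paper exploits the Hilbert structure more directly: given uniform interiority with radius $\delta$, pick any $f\in\mathsf{H}_t^0$ with $\Vert f\Vert_{t,2}=\delta/2$; then $p\pm f\in\mathcal{P}(t,\zeta,D)$, so $\Vert p\pm f\Vert_{t,2}\le D$, and the parallelogram law yields $\Vert p\Vert_{t,2}^2\le D^2-\delta^2/4$ immediately and uniformly in $p\in\mathcal{P}'$. This avoids your derivative computation, gives the uniform bound for (b) without a separate limiting step, and covers the degenerate case $\lambda(\Omega)^{-1}=D^2$---which, contrary to what you write, is \emph{not} excluded by the hypotheses of parts (a) and (b). Proposition~\ref{Proposition:Propertiesoftheauxiliarymodel}(b) only characterizes that case; and there your family collapses to $q\equiv p$, so your argument as written has a small gap. (It is easily filled: by the orthogonality $p-c\perp c$ any $q\in\mathsf{H}_t\setminus\{c\}$ has $\Vert q\Vert_{t,2}>D$.)

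A cosmetic point: in the \emph{if} direction of (a) the multiplication-algebra constant $M_t$ has no role---no product is being bounded. You only need the triangle inequality $\Vert q\Vert_{t,2}\le\Vert p\Vert_{t,2}+\eta$ and the embedding $\inf_\Omega q\ge\inf_\Omega p-C_t\eta$.

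For (c), your explicit convex-combination $p_\epsilon=(1-\epsilon)p+\epsilon c$ is a nice elementary substitute for the paper's one-line citation of Theorem~V.2.1 in Dunford--Schwartz (density of the relative interior of a convex set with nonempty interior). Your verification of $\Vert p_\epsilon\Vert_{t,2}<D$ via ``strict convexity'' is slightly loose as phrased; cleanest is to use convexity of $\Vert\cdot\Vert_{t,2}^2$ to get $\Vert p_\epsilon\Vert_{t,2}^2\le(1-\epsilon)\Vert p\Vert_{t,2}^2+\epsilon\Vert c\Vert_{t,2}^2<D^2$, or to observe $p-c\perp c$ in $\mathsf{W}_2^t(\Omega)$ (the constant has vanishing derivatives and $\int(p-c)\,d\lambda=0$), whence $\Vert p_\epsilon\Vert_{t,2}^2=\Vert c\Vert_{t,2}^2+(1-\epsilon)^2\Vert p-c\Vert_{t,2}^2<\Vert p\Vert_{t,2}^2\le D^2$.
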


\emph{We emphasize that for the rest of the paper }$t$\emph{, }$\zeta $\emph{%
, and }$D$\emph{\ will be treated as fixed (although at arbitrary values)
satisfying the constraints }$t>1/2$\emph{\ and }$0\leq \zeta <\lambda
(\Omega )^{-1}<D^{2}<\infty $\emph{\ }(thus excluding only the trivial cases
where $\mathcal{P}(t,\zeta ,D)$ is empty or the singleton $\{\lambda (\Omega
)^{-1}\}$). Many results will hold under the natural condition $\zeta \geq 0$%
, but for some results we shall have to assume the stronger requirement $%
\zeta >0$. In that context we note that if $D^{2}$ is sufficiently close to $%
\lambda (\Omega )^{-1}$, then $\mathcal{P}(t,0,D)$ coincides with $\mathcal{P%
}(t,\zeta ,D)$ for sufficiently small $\zeta >0$, cf. Remark \ref{pyth} in
Appendix \ref{App A}.

For later use we stress that any $p\in \mathcal{P}(t,\zeta ,D)$ is
continuous on $\Omega $ and satisfies $\Vert p\Vert _{\Omega }\leq C_{t}D$
in view of Part~(b) of Proposition~\ref{prop:SobolevembedsinHoelder}. We
further note the fact that in $\mathcal{P}(t,\zeta ,D)$ pointwise
convergence is equivalent to convergence in all Sobolev norms of order
smaller than $t$, as well as to convergence in the sup-norm, as shown in
Proposition \ref{Proposition: equivalence of pointwise and sup-norm
convergence} in Appendix \ref{App A}.

Apart from the maintained assumptions laid out at the beginning of this
section, we will make frequent use of the assumptions listed below. We start
with assumptions on the probability measure $\mathbb{P}$ governing the data.

\begin{datAsm}
\label{Data}

The probability measure $\mathbb{P}$ has a density $p_{\blacktriangle }$.
\end{datAsm}

In the following we treat the probability density $p_{\blacktriangle }$ as a 
\emph{function} from $\Omega $ to $\mathbb{R}$, that is, we let $%
p_{\blacktriangle }$ denote a fixed representative of the Radon-Nikodym
derivative of $\mathbb{P}$ with respect to $\lambda $. Recall also that $%
\mathbb{P}$ need not correspond to an element of $\mathcal{P}_{\Theta }$,
hence $p_{\blacktriangle }$ need not be a.e.~equal to an element of $%
\mathcal{P}_{\Theta }$.

\begin{densAsm}
\label{dens:Element}

Assumption \ref{Data} holds and the density function $p_{\blacktriangle }$
belongs to $\mathcal{P}(t,\zeta ,D)$.
\end{densAsm}

\begin{densAsm}
\label{dens:StrictInequality}

Assumption \ref{Data} holds and the density function $p_{\blacktriangle }$
satisfies the strict inequality%
\begin{equation*}
\inf_{x\in \Omega }p_{\blacktriangle }(x)>0.
\end{equation*}
\end{densAsm}

Clearly, if $\zeta >0$, then Assumption~\ref{dens:Element} implies
Assumption~\ref{dens:StrictInequality}. In light of Proposition \ref%
{prop_interior}, the next assumption just states that $p_{\blacktriangle }$
is an interior point of $\mathcal{P}(t,\zeta ,D)$ relative to $\mathsf{H}%
_{t} $.

\begin{densAsm}
\label{dens:InternalPoint}

Assumption \ref{dens:Element} holds and the strict inequalities%
\begin{equation*}
\inf_{x\in \Omega }p_{\blacktriangle }(x)>\zeta \text{ \ \ and \ \ }\Vert
p_{\blacktriangle }\Vert _{t,2}<D
\end{equation*}%
are satisfied.
\end{densAsm}

We note here, however, that even under Assumption \ref{dens:InternalPoint}
the NPML-estimator is \emph{never} an interior point of $\mathcal{P}(t,\zeta
,D)$ relative to $\mathsf{H}_{t}$ as shown in Section \ref{Section:
Auxiliary maximum likelihood estimators}; this leads to a number of
complications as discussed prior to Lemma \ref{derivativeorder} in Section %
\ref{Section: A uniform Donsker theorem for AML-estimators}.

Next are assumptions on the class $\mathcal{P}_{\Theta }$. We will often
write $p(x,\theta )$ for $p_{\theta }(x)$, and we stress that $p(x,\theta )$
is a \emph{function} from $\Omega \times \Theta $ to $\mathbb{R}$.

\begin{modAsm}
\label{mod:Inclusion} $\mathcal{P}_{\Theta }\subseteq \mathcal{P}(t,\zeta
,D) $.
\end{modAsm}

\begin{modAsm}
\label{mod: Strict inequality}

The strict inequality%
\begin{equation*}
\inf_{\Omega \times \Theta }p(x,\theta )>0
\end{equation*}%
holds true.
\end{modAsm}

Clearly, if $\zeta >0$ then Assumption~\ref{mod:Inclusion} implies
Assumption~\ref{mod: Strict inequality}.

\begin{modAsm}
\label{mod:InclusionWithStrictInequalities} Assumption \ref{mod:Inclusion}
holds and the strict inequalities%
\begin{equation*}
\inf_{\Omega \times \Theta }p(x,\theta )>\zeta \text{ \ \ and \ \ }%
\sup_{\theta \in \Theta }\Vert p_{\theta }\Vert _{t,2}<D
\end{equation*}%
are satisfied.
\end{modAsm}

Assumption \ref{mod:InclusionWithStrictInequalities} states that $\mathcal{P}%
_{\Theta }$ is uniformly interior to $\mathcal{P}(t,\zeta ,D)$ relative to $%
\mathsf{H}_{t}$, cf.~Proposition \ref{prop_interior}. If $\mathcal{P}%
_{\Theta }$ happens to be a $\Vert \cdot \Vert _{t,2}$-compact subset of $%
\mathcal{P}(t,\zeta ,D)$ (which in light of compactness of $\Theta $ is,
e.g., the case if the map $\theta \rightarrow p_{\theta }$ is $\Vert \cdot
\Vert _{t,2}$-continuous), Assumption \ref%
{mod:InclusionWithStrictInequalities} is clearly equivalent to $\inf_{x\in
\Omega }p(x,\theta )>\zeta $ and $\Vert p_{\theta }\Vert _{t,2}<D$ for every 
$\theta \in \Theta $ (i.e., equivalent to $\mathcal{P}_{\Theta }$ belonging
to the interior of $\mathcal{P}(t,\zeta ,D)$ relative to $\mathsf{H}_{t}$).

We note that in the correctly specified case, i.e., if there exists a $%
\theta _{0}\in \Theta $ such that $p_{\theta _{0}}$ is a density of $\mathbb{%
P}$, Assumptions \ref{dens:Element}-\ref{dens:InternalPoint} follow
automatically from the respective Assumptions \ref{mod:Inclusion}-\ref%
{mod:InclusionWithStrictInequalities} (and Assumption \ref{Data} trivially
holds).

Occasionally we shall also need to refer to the following assumption.
However, note that Assumption~\ref{mod:Inclusion} together with Assumption~%
\ref{rho: Continuity of rho} below already imply this assumption,
cf.~Proposition \ref{Interrelation} in Appendix \ref{App A}.

\begin{modAsm}
\label{mod: pointwise continuity}

For every $x\in \Omega $, $\theta \mapsto p(x,\theta )$ is a continuous
function on $\Theta $.
\end{modAsm}

\begin{remark}
\normalfont\label{Remark: Equivalence of pointwise and sup-norm convergence
of the parametrization} If Assumption~\ref{mod:Inclusion} is satisfied, then
in view of Proposition~\ref{Proposition: equivalence of pointwise and
sup-norm convergence} in Appendix \ref{App A} the following are equivalent:
(i) Assumption~\ref{mod: pointwise continuity}; (ii) $\theta \mapsto
p_{\theta }$ is continuous as a mapping from $\Theta $ into the space $(%
\mathcal{P}(t,\zeta ,D),\Vert \cdot \Vert _{s,2})$ for every $s$ satisfying $%
0\leq s<t$; (iii) $\theta \mapsto p_{\theta }$ is continuous as a mapping
from $\Theta $ into the space $(\mathcal{P}(t,\zeta ,D),\Vert \cdot \Vert
_{\Omega })$.
\end{remark}

Next are assumptions on the simulation mechanism $\rho (v,\theta )$. Apart
from the already assumed measurability of $\rho (v,\theta )$ in its first
argument, we will need assumptions to control its behaviour in the second
argument. We note that Assumption \ref{rho: Hoelderity of rho} below is
weaker than the corresponding Assumption R.2 in Gach (2010), but we have
been able to obtain the same conclusions as in Gach (2010) by refining the
proofs. Clearly, Assumption \ref{rho: Hoelderity of rho} implies Assumption %
\ref{rho: Continuity of rho}.

\begin{rhoAsm}
\label{rho: Continuity of rho} For every $v\in V$, the simulation mechanism $%
\rho (v,\theta )$ is continuous in $\theta $.
\end{rhoAsm}

\begin{rhoAsm}
\label{rho: Hoelderity of rho} For some constant $\gamma $, $0<\gamma \leq 1$%
, and some measurable function $R:V\rightarrow (0,\infty )$, the simulation
mechanism $\rho :V\times \Theta \rightarrow \Omega $ satisfies 
\begin{equation*}
|\rho (v,\theta ^{\prime })-\rho (v,\theta )|\leq R(v)\Vert \theta ^{\prime
}-\theta \Vert ^{\gamma }
\end{equation*}%
for all $v\in V$ and all $\theta ,\theta ^{\prime }\in \Theta $, with the
function $R$ satisfying $\int_{V}R^{a}d\mu <\infty $ for some $a>0$.
\end{rhoAsm}

Assumptions on the class $\mathcal{P}_{\Theta }$ and on the simulation
mechanism $\rho (v,\theta )$ are obviously closely related. In principle,
the assumptions on $\mathcal{P}_{\Theta }$ could be substituted for by
assumptions on $\rho (v,\theta )$. [Conversely, the existence of a
simulation mechanism having certain required properties can in principle be
deduced from suitable assumptions on $\mathcal{P}_{\Theta }$.] However, the
interrelation between assumptions on $\mathcal{P}_{\Theta }$ and on $\rho
(v,\theta )$ is complicated and intricate, and hence we prefer to work with
the two sets of assumptions as given above. For some results concerning the
relationship between these two sets of assumptions see Proposition \ref%
{Interrelation} in Appendix \ref{App A}.

\section{Non-Parametric Maximum Likelihood Estimators \label{Section:
Auxiliary maximum likelihood estimators}}

We now introduce NPML-estimators, called auxiliary estimators in Section \ref%
{Section: Indirect inference estimators}. Define the (non-parametric)
log-likelihood function based on the given data $X_{1},\ldots ,X_{n}$ as%
\begin{equation*}
L_{n}(p):=L_{n}(p;X_{1},\ldots ,X_{n})=\frac{1}{n}\sum_{i=1}^{n}\log p(X_{i})
\end{equation*}%
for $p\in \mathcal{P}(t,\zeta ,D)$, and based on the simulated data $%
X_{1}(\theta )=\rho (V_{1},\theta ),\ldots ,X_{k}(\theta )=\rho
(V_{k},\theta )$ as%
\begin{equation*}
L_{k}(\theta ,p):=L_{k}(\theta ,p;V_{1},\ldots ,V_{k})=\frac{1}{k}%
\sum_{i=1}^{k}\log p(\rho (V_{i},\theta ))
\end{equation*}%
for $p\in \mathcal{P}(t,\zeta ,D)$ and $\theta \in \Theta $. Note that $%
L_{k}(\theta ,p)=L_{k}(p;X_{1}(\theta ),\ldots ,X_{k}(\theta
))=k^{-1}\sum_{i=1}^{k}\log p(X_{i}(\theta ))$ holds. In view of our
convention for the logarithm, both functions $L_{n}(f)$ and $L_{k}(\theta
,f) $ are in fact well-defined and take their values in $[-\infty ,\infty )$
for any non-negative real-valued function $f$ on $\Omega $.

An NPML-estimator for given $X_{1},\ldots ,X_{n}$ is defined as an element $%
\hat{p}_{n}(\cdot ):=\hat{p}_{n}(\cdot ;X_{1},\ldots ,X_{n})$ of $\mathcal{P}%
(t,\zeta ,D)$ satisfying%
\begin{equation*}
L_{n}(\hat{p}_{n})=\sup_{p\in \mathcal{P}(t,\zeta ,D)}L_{n}(p).
\end{equation*}%
Similarly, an NPML-estimator for given $X_{1}(\theta ),\ldots ,X_{k}(\theta
) $ is an element $\tilde{p}_{k}(\theta )(\cdot ):=\tilde{p}_{k}(\theta
)(\cdot ;V_{1},\ldots ,V_{k})$ of $\mathcal{P}(t,\zeta ,D)$ satisfying%
\begin{equation*}
L_{k}(\theta ,\tilde{p}_{k}(\theta ))=\sup_{p\in \mathcal{P}(t,\zeta
,D)}L_{k}(\theta ,p).
\end{equation*}%
Clearly we have 
\begin{equation}
\tilde{p}_{k}(\theta )(\cdot ;V_{1},\ldots ,V_{k})=\hat{p}_{k}(\cdot
;X_{1}(\theta ),\ldots ,X_{k}(\theta )).  \label{representation}
\end{equation}

In this section we investigate existence, uniqueness, consistency, rates of
convergence, and uniform central limit theorems for NPML-estimators. The
results obtained here go beyond Nickl (2007) in three respects: First, we
show not only existence but also \emph{uniqueness} of the NPML-estimators.
Second, we allow for non-parametric models $\mathcal{P}(t,\zeta ,D)$ where
the lower bound for the densities, i.e., $\zeta $, can be equal to $0$ and
extend the consistency and rate results for the NPML-estimator w.r.t.~the
Sobolev-norms $\Vert \cdot \Vert _{s,2}$ with $s<t$ in Nickl (2007) to this
case. We furthermore also establish \emph{inconsistency} of the
NPML-estimator in the $\Vert \cdot \Vert _{t,2}$-norm. Third, we prove that
the consistency and rate results in Nickl (2007) for $\hat{p}_{n}$ hold for
the NPML-estimators $\tilde{p}_{k}(\theta )$ even \emph{uniformly} over the
parameter space $\Theta $ (provided that $\zeta >0$). Finally, we prove a 
\emph{uniform} Donsker-type theorem which extends Theorem~3 in Nickl (2007)
and shows that, for appropriate classes $\mathcal{F}$, the stochastic
process $(\theta ,f)\mapsto \sqrt{k}\int_{\Omega }(\tilde{p}_{k}(\theta
)-p_{\theta })fd\lambda $ converges weakly in $\ell ^{\infty }(\Theta \times 
\mathcal{F})$ to a Gaussian process.

\subsection{Existence, Uniqueness, and Consistency of NPML-Estimators \label%
{Subsection: Existence-of-AML-estimators}}

In the following theorem we show that the NPML-estimators defined above
exist, are unique, and are measurable (cf.~also Lemma \ref{Borel} in
Appendix \ref{App D2}).

\begin{thm}
\label{Theorem: existence of AML-estimators} (a) There exists a unique $\hat{%
p}_{n}\in \mathcal{P}(t,\zeta ,D)$ such that 
\begin{equation*}
L_{n}(\hat{p}_{n})=\sup_{p\in \mathcal{P}(t,\zeta ,D)}L_{n}(p)
\end{equation*}%
holds. The resulting mapping $\hat{p}_{n}:\Omega ^{n}\rightarrow \mathcal{P}%
(t,\zeta ,D)$ is measurable with respect to the $\sigma $-fields $\mathcal{B}%
(\Omega )^{n}$ and $\mathcal{B}(\mathcal{P}(t,\zeta ,D),\Vert \cdot \Vert
_{\Omega })$. Moreover, $\hat{p}_{n}$ always satisfies $\Vert \hat{p}%
_{n}\Vert _{t,2}=D$.

(b) For each $\theta \in \Theta $ there exists a unique $\tilde{p}%
_{k}(\theta )\in \mathcal{P}(t,\zeta ,D)$ such that 
\begin{equation*}
L_{k}(\theta ,\tilde{p}_{k}(\theta ))=\sup_{p\in \mathcal{P}(t,\zeta
,D)}L_{k}(\theta ,p)
\end{equation*}%
holds. The resulting mapping $\tilde{p}_{k}(\theta ):V^{k}\rightarrow 
\mathcal{P}(t,\zeta ,D)$ is measurable with respect to the $\sigma $-fields $%
\mathcal{V}^{k}$ and $\mathcal{B}(\mathcal{P}(t,\zeta ,D),\Vert \cdot \Vert
_{\Omega })$. Moreover, $\tilde{p}_{k}(\theta )$ always satisfies $\Vert 
\tilde{p}_{k}(\theta )\Vert _{t,2}=D$. Furthermore, if Assumption~\ref{rho:
Continuity of rho} is satisfied, then, for arbitrary fixed values of the
underlying simulated variables $V_{1},\ldots ,V_{k}$, $\theta \mapsto \tilde{%
p}_{k}(\theta )$ is continuous when viewed as a mapping from $\Theta $ into
the space $(\mathcal{P}(t,\zeta ,D),\Vert \cdot \Vert _{\Omega })$.
\end{thm}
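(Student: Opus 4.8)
The plan is to establish existence, uniqueness, measurability, the boundary property $\Vert\hat p_n\Vert_{t,2}=D$, and (for part (b)) the continuity in $\theta$. I would treat (a) and (b) in parallel since, by the representation \eqref{representation}, the estimator $\tilde p_k(\theta)$ is just $\hat p_k$ evaluated at the simulated sample $X_1(\theta),\dots,X_k(\theta)$; hence every structural statement about $\hat p_n$ transfers verbatim to $\tilde p_k(\theta)$ for each fixed $\theta$, and only the continuity claim at the end of (b) needs a genuinely new argument. So I would first prove everything for a generic data sample $x_1,\dots,x_n\in\Omega$ and then specialize.

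\textbf{Existence.} The feasible set $\mathcal{P}(t,\zeta,D)$ is non-empty (by the maintained assumption $\zeta<\lambda(\Omega)^{-1}<D^2$, via Proposition \ref{Proposition:Propertiesoftheauxiliarymodel}(a)) and, by Proposition \ref{Proposition:Propertiesoftheauxiliarymodel}(c), compact in $\mathsf{C}(\Omega)$, i.e.\ in the sup-norm. The map $p\mapsto L_n(p)=n^{-1}\sum\log p(x_i)$ is, on $\mathcal{P}(t,\zeta,D)$, a finite sum of terms $\log p(x_i)$; since $p\mapsto p(x_i)$ is sup-norm continuous (evaluation functional) and takes values in the compact interval $[\,\max(\zeta,\iota),\,C_tD\,]$ where $\iota=\inf_{x\in\Omega,p\in\mathcal{P}(t,\zeta,D)}p(x)$ — note this infimum is attained and strictly positive when $\zeta>0$, but even when $\zeta=0$ we argue as follows: $L_n$ is upper semicontinuous on the sup-norm-compact set $\mathcal{P}(t,\zeta,D)$ because $\log$ is continuous on $[0,\infty]$ in the extended sense fixed in Section \ref{Section: Notation and basic definitions} and $p\mapsto p(x_i)$ is continuous — hence $L_n$ attains its supremum. (The supremum is not $-\infty$ because the constant density $\lambda(\Omega)^{-1}$ is feasible and gives a finite value.) This yields existence of $\hat p_n$.

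\textbf{Uniqueness.} Here I would exploit strict concavity. The set $\mathcal{P}(t,\zeta,D)$ is convex by Proposition \ref{Proposition:Propertiesoftheauxiliarymodel}(c), and $t\mapsto\log t$ is strictly concave, so $p\mapsto L_n(p)$ is concave on $\mathcal{P}(t,\zeta,D)$ and strictly concave along any segment on which some $p(x_i)$ is non-constant. The one subtlety: two distinct maximizers $p_0\ne p_1$ could in principle agree at all the data points $x_1,\dots,x_n$, so that $L_n$ is affine (in fact constant) on the segment; I would rule this out by the following standard device. If $p_0,p_1$ are both maximizers then so is the midpoint $\bar p=(p_0+p_1)/2$ by concavity; comparing $L_n(\bar p)$ with $\tfrac12 L_n(p_0)+\tfrac12 L_n(p_1)$ via strict concavity of $\log$ forces $p_0(x_i)=p_1(x_i)$ for every $i$. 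But then consider instead perturbing in the direction of the constant density: since $\lambda(\Omega)^{-1}$ is an interior point of $\mathcal{P}(t,\zeta,D)$ relative to $\mathsf{H}_t$ (Proposition \ref{prop_interior}(c)), and since — as the authors flag just before Theorem \ref{Theorem: existence of AML-estimators} and prove in Section \ref{Section: A uniform Donsker theorem for AML-estimators} — a maximizer must lie on the sphere $\Vert p\Vert_{t,2}=D$, one obtains a contradiction from the fact that a genuine maximizer cannot be interior: more directly, I would argue that if $p_0\ne p_1$ both maximize and agree at all data points, then (by strict convexity of the norm-ball constraint, since $\Vert\cdot\Vert_{t,2}$ is a Hilbert-space norm) the midpoint $\bar p$ has $\Vert\bar p\Vert_{t,2}<D$, yet $L_n(\bar p)=\sup L_n$; one can then push $\bar p$ outward toward a strict increase of the likelihood in a direction that increases some $p(x_i)$ while staying feasible, contradicting maximality. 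This also simultaneously delivers the boundary property $\Vert\hat p_n\Vert_{t,2}=D$, which I would in fact prove first and then reuse in the uniqueness argument. \emph{I expect this coupling of the strict-concavity argument with the boundary (KKT-type) property to be the main obstacle}, since one must handle the case $\zeta=0$ where densities are not bounded away from zero and $\log p(x_i)$ can be very negative; the extended-real-valued conventions and the sup-norm compactness of $\mathcal{P}(t,\zeta,D)$ are what make it go through.

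\textbf{Measurability and continuity in $\theta$.} For measurability of $\hat p_n:\Omega^n\to(\mathcal{P}(t,\zeta,D),\Vert\cdot\Vert_\Omega)$ I would invoke a measurable-selection / argmax-measurability theorem (the paper points to Lemma \ref{Borel} in Appendix \ref{App D2}): the criterion $(x,p)\mapsto L_n(p;x)$ is a Carath\'eodory-type function (measurable in $x$, upper semicontinuous in $p$) on the Polish space $(\mathcal{P}(t,\zeta,D),\Vert\cdot\Vert_\Omega)$ (separable since it is sup-norm compact metric), and the argmax is single-valued by uniqueness, so the unique maximizer is a measurable function of the data. For the final continuity claim in (b): fix $V_1,\dots,V_k$, and under Assumption \ref{rho: Continuity of rho} the points $X_i(\theta)=\rho(V_i,\theta)$ depend continuously on $\theta$. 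If $\theta_j\to\theta$, then $X_i(\theta_j)\to X_i(\theta)$ for each $i$; I would show $\tilde p_k(\theta_j)\to\tilde p_k(\theta)$ in sup-norm by a standard compactness-plus-uniqueness argument: by sup-norm compactness of $\mathcal{P}(t,\zeta,D)$, any subsequence of $\tilde p_k(\theta_j)$ has a further sub-subsequence converging in sup-norm to some $q\in\mathcal{P}(t,\zeta,D)$; using that $p\mapsto p(y)$ is jointly continuous in $(p,y)$ (sup-norm on $p$, and $p$ is uniformly continuous on $\Omega$ by Proposition \ref{prop:SobolevembedsinHoelder}(b)), the likelihoods $L_k(\theta_j,\cdot)$ converge to $L_k(\theta,\cdot)$ uniformly on $\mathcal{P}(t,\zeta,D)$, whence $q$ maximizes $L_k(\theta,\cdot)$; by uniqueness $q=\tilde p_k(\theta)$, and since every subsequence has a sub-subsequence converging to the same limit, the whole sequence converges. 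This closes the proof.
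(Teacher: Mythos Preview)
Your plan matches the paper's proof essentially point for point: existence via sup-norm compactness of $\mathcal{P}(t,\zeta,D)$ and continuity of $L_n$ into $[-\infty,\infty)$; the boundary property $\Vert\hat p_n\Vert_{t,2}=D$ via an explicit perturbation of any interior maximizer (the paper builds the bump function you allude to by taking mass from a point $z$ with $p(z)>\zeta$, which exists since $\zeta<\lambda(\Omega)^{-1}$, and moving it to $x_1$); uniqueness then from strict convexity of the Hilbert norm on the sphere; measurability from a standard argmax selection lemma; and continuity in $\theta$ from a Berge-type argument. The only imprecision is your claim that $L_k(\theta_j,\cdot)\to L_k(\theta,\cdot)$ \emph{uniformly} on $\mathcal{P}(t,\zeta,D)$: when $\zeta=0$ this need not hold (densities can vanish at a data point), but your subsequence argument actually only needs \emph{joint} continuity of $(\theta,p)\mapsto L_k(\theta,p)$ into $[-\infty,\infty)$, which is exactly what the paper establishes and then feeds into Berge's maximum theorem rather than arguing directly as you do.
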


\begin{proof}
(a) Let $x_{1},\ldots ,x_{n}$ be given points in $\Omega $. The existence of
a maximizer of $L_{n}(p)=L_{n}(p;x_{1},\ldots ,x_{n})$ follows from the fact
that $L_{n}$ is continuous on the compact space $(\mathcal{P}(t,\zeta
,D),\Vert \cdot \Vert _{\Omega })$ by Part (b1) of Proposition~\ref%
{Properties of L_n and L_k} in Appendix \ref{App B} with $\mathcal{F}=%
\mathcal{P}(t,\zeta ,D)$ and by Proposition~\ref%
{Proposition:Propertiesoftheauxiliarymodel}. We next establish uniqueness:
Denote by $S$ the set of all $p\in \mathcal{P}(t,\zeta ,D)$ that maximize $%
L_{n}$, and note that $S$ is non-empty as just shown. Since $L_{n}$ is a
concave function on the convex set $\mathcal{P}(t,\zeta ,D)$ with values in $%
[-\infty ,\infty )$, a standard argument shows that $S$ is convex. If $S$ is
a subset of the Sobolev \emph{sphere} of radius $D$ we are done, as then $S$
must be a singleton since the Sobolev norm $\Vert \cdot \Vert _{t,2}$, being
a Hilbert norm, is strictly convex. Suppose now $S$ is not a subset of the
Sobolev sphere of radius $D$ and let $p\in S$ with $\Vert p\Vert _{t,2}<D$.
Then there is some $z\in \Omega $ with $p(z)>\zeta $ since the maintained
assumption $\zeta <\lambda ^{-1}(\Omega )$ implies that $\zeta \notin 
\mathcal{P}(t,\zeta ,D)$. By continuity of $p$ we may assume that $z$ is
different from any of the finitely many data points $x_{1},\ldots ,x_{n}$.
We claim that there is a $q\in \mathcal{P}(t,\zeta ,D)$ such that $%
q(x_{i})>p(x_{i})$ whenever $x_{i}=x_{1}$ and $q$ coincides with $p$ on the
remaining (if any) observations $x_{j}$ with $x_{j}\neq x_{1}$. This will
contradict the maximizing property of $p$ (noting that the case $%
L_{n}(q)=L_{n}(p)=-\infty $ is impossible in view of $\lambda (\Omega
)^{-1}\in \mathcal{P}(t,\zeta ,D)$ and $L_{n}(p)\geq L_{n}(\lambda (\Omega
)^{-1})>-\infty $). The existence of such a $q$ can be seen as follows:
Choose $\varepsilon >0$ such that $I:=[z-2\varepsilon ,z+2\varepsilon ]$, $%
\bar{U}:=[x_{1}-2\varepsilon ,x_{1}+2\varepsilon ]$, and $\{x_{j}:x_{j}\neq
x_{1}\}$ are pairwise disjoint subsets of $\Omega $ and $\inf_{x\in
I}p(x)>\zeta $. As $A:=[x_{1}-\varepsilon ,x_{1}+\varepsilon ]$ is a closed
set contained in the open set $U:=(x_{1}-2\varepsilon ,x_{1}+2\varepsilon )$%
, there is a compactly supported $C^{\infty }$-function $f:\Omega
\rightarrow \mathbb{R}$ with values in $[0,1]$ such that $f|_{A}=1$ and $%
f|_{\Omega \setminus U}=0$. For every $y\in \Omega $ let 
\begin{equation*}
\bar{f}(y)=%
\begin{cases}
f(y+x_{1}-z) & \text{if $y+x_{1}-z\in \Omega $,} \\ 
0 & \text{otherwise},%
\end{cases}%
\end{equation*}%
so that $\bar{f}$ is the translation of $f$ by $z-x_{1}$; and define $%
g:\Omega \rightarrow \mathbb{R}$ by $g=f-\bar{f}$. Then $g$ has values in $%
[-1,1]$, integrates to $0$, and is contained in $\mathsf{W}_{2}^{t}(\Omega )$
since it is $C^{\infty }$ and has compact support in $\Omega $. Since $\Vert
p\Vert _{t,2}<D$ and $\inf_{x\in I}p(x)>\zeta $, we can find a scalar $\beta
>0$ such that $\Vert \beta g\Vert _{t,2}\leq D-\Vert p\Vert _{t,2}$ and $%
\beta \leq \inf_{x\in I}p(x)-\zeta $. Let $q=p+\beta g$ and observe that $%
\Vert q\Vert _{t,2}\leq \Vert p\Vert _{t,2}+\Vert \beta g\Vert _{t,2}\leq D$%
. Further, $q(x)\geq \zeta $ for every $x\in \Omega $, which can be seen as
follows: For $x\in \Omega \setminus I$ we have that $g(x)\geq 0$, and hence $%
q(x)\geq p(x)\geq \zeta $. If $x\in I$, then $q(x)\geq p(x)-\beta \geq
p(x)-\inf_{x\in I}p(x)+\zeta \geq \zeta $, where the first inequality holds
because $g(x)\geq -1$ for every $x\in \Omega $, the second inequality holds
by the choice of $\beta $, and the third one does so since $x\in I$ and
therefore $p(x)-\inf_{x\in I}p(x)\geq 0$. It follows that $q\in \mathcal{P}%
(t,\zeta ,D)$. Since $\beta >0$ and $g(x_{1})=1$, $q(x_{i})>p(x_{i})$
whenever $x_{i}=x_{1}$. Furthermore, $q$ coincides with $p$ on the remaining
(if any) data points because $g$ is $0$ there. The existence of $q$
contradicts the maximizing property of $p$, and consequently $S$ is a subset
of the Sobolev sphere of radius $D$. We thus have established uniqueness as
well as $\Vert \hat{p}_{n}\Vert _{t,2}=D$.

To see that $\hat{p}_{n}:\Omega ^{n}\rightarrow \mathcal{P}(t,\zeta ,D)$ is
measurable, we apply Lemma~A3 in P\"{o}tscher and Prucha (1997), making use
of Proposition~\ref{Properties of L_n and L_k}(a),(b1) in Appendix \ref{App
B}. [Because $L_{n}$ potentially can attain the value $-\infty $, we apply
this lemma to the\emph{\ real-valued} function $\arctan (L_{n})$ rather than
to $L_{n}$, where we use the usual convention $\arctan (-\infty )=-\pi /2$.]

(b) The same arguments as above establish existence, uniqueness, and
measurability of $\tilde{p}_{k}(\theta )$, as well as $\Vert \tilde{p}%
_{k}(\theta )\Vert _{t,2}=D$, for any fixed $\theta \in \Theta $. To see
that the mapping $\theta \mapsto \tilde{p}_{k}(\theta )$ is continuous as
claimed, apply Lemma~\ref{lem: Continuous selections} in Appendix \ref{App B}
with $X=\Theta $, $Y=(\mathcal{P}(t,\zeta ,D),\Vert \cdot \Vert _{\Omega })$%
, $u(x,y)=L_{k}(\theta ,p)$, and $v(x)=\tilde{p}_{k}(\theta )$. Note that $(%
\mathcal{P}(t,\zeta ,D),\Vert \cdot \Vert _{\Omega })$ is a compact metric
space by Proposition~\ref{Proposition:Propertiesoftheauxiliarymodel} and
that, under Assumption~\ref{rho: Continuity of rho}, $L_{k}(\theta ,p)$ is
continuous on $\Theta \times (\mathcal{P}(t,\zeta ,D),\Vert \cdot \Vert
_{\Omega })$, as can be seen by applying Part~(b2) of Proposition~\ref%
{Properties of L_n and L_k} in Appendix \ref{App B} with $\mathcal{F}=%
\mathcal{P}(t,\zeta ,D)$.
\end{proof}

\begin{remark}
\normalfont\label{Remark: meas_pos} (i) The mapping $\hat{p}_{n}:\Omega
\times \Omega ^{n}\rightarrow \mathbb{R}$ is continuous in the first
argument and $\mathcal{B}(\Omega )^{n}$-measurable in the second argument.
Since $\Omega $ is separable, $\hat{p}_{n}$ is consequently jointly
measurable. Similarly, the mappings $\tilde{p}_{k}(\theta ):\Omega \times
V^{k}\rightarrow \mathbb{R}$ are jointly measurable for all $\theta \in
\Theta $.

(ii) For any $x_{1},\ldots ,x_{n}$ in $\Omega $, we have that $\hat{p}%
_{n}(x_{i})=\hat{p}_{n}(x_{i};x_{1},\ldots ,x_{n})>0$ for $i=1,\ldots ,n$.
This follows from the observation made in the above proof that $L_{n}(\hat{p}%
_{n})>-\infty $ must hold. By a similar argument we have that $\tilde{p}%
_{k}(\theta )(\rho (v_{i},\theta ))=\tilde{p}_{k}(\theta )(\rho
(v_{i},\theta );v_{1},\ldots ,v_{k})>0$ for $i=1,\ldots ,k$ and for every $%
\theta \in \Theta $.
\end{remark}

We next turn to consistency of the NPML-estimators. Theorem \ref{Theorem:
existence of AML-estimators} already shows that $\hat{p}_{n}$ cannot be
consistent in the $\Vert \cdot \Vert _{t,2}$-norm as $\Vert \hat{p}_{n}\Vert
_{t,2}=D$ always holds and $\mathcal{P}(t,\zeta ,D)$ contains densities with 
$\Vert \cdot \Vert _{t,2}$-norm less than $D$ (under our assumptions on $%
\zeta $ and $D$). A similar remark applies to $\tilde{p}_{k}(\theta )$.
However, this does not preclude consistency of the NPML-estimators in other
norms as we show next. To this end define for any \emph{non-negative}
measurable function $f$ on $\Omega $ and for any $\theta \in \Theta $%
\begin{equation*}
L(f)=\int_{\Omega }\log fd\hspace{0.1ex}\mathbb{P}
\end{equation*}%
and%
\begin{equation*}
L(\theta ,f)=\int_{V}\log f(\rho (\cdot ,\theta ))d\mu
\end{equation*}%
provided the respective integral is defined. If $f\in \mathsf{L}^{\infty
}(\Omega )$, then both functions are well-defined and take their values in $%
[-\infty ,\infty )$. We note that the restrictions of $L(f)$ to $\mathcal{P}%
(t,\zeta ,D)$ and of $L(\theta ,f)$ to $\Theta \times \mathcal{P}(t,\zeta
,D) $ are real-valued in case $\zeta >0$. We will make use of the following
simple facts which are proved in Appendix \ref{App B}.

\begin{lem}
\label{lem: p_0 is the unique maximizer}\hspace*{0ex}(a) $%
L(p_{\blacktriangle })$ is well-defined and satisfies $L(p_{\blacktriangle
})>-\infty $, provided Assumption~\ref{Data} holds. Similarly, for every $%
\theta \in \Theta $, $L(\theta ,p_{\theta })$ is well-defined and satisfies $%
L(\theta ,p_{\theta })>-\infty $.

(b) If Assumption \ref{dens:Element} is satisfied, then $p_{\blacktriangle }$
is the unique maximizer of the function $L(\cdot )$ over $\mathcal{P}%
(t,\zeta ,D)$.

(c) If $p_{\theta }\in \mathcal{P}(t,\zeta ,D)$ for a given $\theta \in
\Theta $, then $p_{\theta }$ is the unique maximizer of the function $%
L(\theta ,\cdot )$ over $\mathcal{P}(t,\zeta ,D)$.
\end{lem}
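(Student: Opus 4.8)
\textbf{Proof plan for Lemma \ref{lem: p_0 is the unique maximizer}.}

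The plan is to treat the three parts in order, deriving (b) and (c) from a single convexity argument applied to the Kullback--Leibler divergence. For part (a), I would first note that under Assumption~\ref{Data} the density $p_{\blacktriangle}$ belongs to $\mathsf{L}^{1}(\mathbb{P})$ trivially only after taking logarithms, so the real content is the lower bound. Write $\int_{\Omega}\log p_{\blacktriangle}\,d\mathbb{P}=\int_{\Omega}p_{\blacktriangle}\log p_{\blacktriangle}\,d\lambda$. Splitting the integrand on $\{p_{\blacktriangle}\geq 1\}$ and $\{p_{\blacktriangle}<1\}$, the positive part is controlled because $p_{\blacktriangle}\log p_{\blacktriangle}\le p_{\blacktriangle}^{2}\le (C_{t}D)p_{\blacktriangle}$ is integrable (using $\|p_{\blacktriangle}\|_{\Omega}\le C_{t}D$ when Assumption~\ref{dens:Element} holds, or more elementarily $t\log t\le t^{2}$ and boundedness is not even needed for the upper bound since $x\log x$ is bounded below and $\int p_{\blacktriangle}<\infty$); the only way the integral could fail to be well-defined or be $-\infty$ is via the negative part $\int_{\{p_{\blacktriangle}<1\}}p_{\blacktriangle}\log p_{\blacktriangle}\,d\lambda$, but $t\mapsto t\log t$ is bounded below on $[0,1]$ by $-e^{-1}$, and $\lambda(\Omega)<\infty$, so this part is bounded below by $-e^{-1}\lambda(\Omega)>-\infty$. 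Hence $L(p_{\blacktriangle})$ is well-defined and finite. For $L(\theta,p_{\theta})$ the identical computation applies with $\mathbb{P}$ replaced by the law of $\rho(\cdot,\theta)$ under $\mu$, which by construction has density $p_{\theta}$, so $L(\theta,p_{\theta})=\int_{\Omega}p_{\theta}\log p_{\theta}\,d\lambda$ and the same lower bound $-e^{-1}\lambda(\Omega)$ works.

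For parts (b) and (c), the key step is the standard fact that for probability densities $p,q$ on $\Omega$ one has $\int q\log(p/q)\,d\lambda\le 0$ with equality if and only if $p=q$ a.e., which follows from strict concavity of the logarithm via Jensen's inequality applied to the (sub-)probability measure $q\,d\lambda$, being careful that $\log(p/q)$ is integrable against $q\,d\lambda$ precisely because $\int q\log q$ and $\int q\log p$ are both finite under the relevant assumptions. Under Assumption~\ref{dens:Element}, $p_{\blacktriangle}\in\mathcal{P}(t,\zeta,D)$ so for any competitor $p\in\mathcal{P}(t,\zeta,D)$ I would write
\begin{equation*}
L(p)-L(p_{\blacktriangle})=\int_{\Omega}\log p\,d\mathbb{P}-\int_{\Omega}\log p_{\blacktriangle}\,d\mathbb{P}=\int_{\Omega}\log\frac{p}{p_{\blacktriangle}}\,p_{\blacktriangle}\,d\lambda\le 0,
\end{equation*}
with equality iff $p=p_{\blacktriangle}$ a.e., hence (by continuity of both, Proposition~\ref{prop:SobolevembedsinHoelder}(b)) iff $p=p_{\blacktriangle}$; this gives uniqueness. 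To make the splitting into two integrals legitimate one needs $\int_{\Omega}\log p\,d\mathbb{P}>-\infty$ for the relevant $p$: when $\zeta>0$ this is immediate since $\log p\ge\log\zeta>-\infty$ and $p$ is bounded; when $\zeta=0$ one instead keeps the difference intact and invokes finiteness of $L(p_{\blacktriangle})$ from part (a) together with the fact that $\int\log p\,d\mathbb{P}\le\int (p/p_{\blacktriangle})p_{\blacktriangle}\,d\lambda + L(p_{\blacktriangle})=1+L(p_{\blacktriangle})<\infty$ via $\log x\le x-1$, so the difference is a well-defined element of $[-\infty,\infty)$ and the Jensen bound still applies to $\int\log(p/p_{\blacktriangle})\,d\mathbb{P}$ after noting $\log(p/p_{\blacktriangle})$ has integrable positive part. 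Part (c) is verbatim the same computation with $(\mathbb{P},p_{\blacktriangle})$ replaced by (the law of $\rho(\cdot,\theta)$ under $\mu$, $p_{\theta}$), using that this law has density $p_{\theta}$, so $L(\theta,p)-L(\theta,p_{\theta})=\int_{\Omega}\log(p/p_{\theta})\,p_{\theta}\,d\lambda\le 0$ with equality iff $p=p_{\theta}$.

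The main obstacle I anticipate is not the convexity argument itself but the careful bookkeeping of $\pm\infty$ when $\zeta=0$: there one cannot freely split $L(p)-L(p_{\blacktriangle})$ into two separately-finite integrals, since $\log p$ may fail to be $\mathbb{P}$-integrable for densities $p$ that vanish somewhere, and one must argue that the \emph{difference} is nonetheless well-defined and that Jensen's inequality applies to $\int\log(p/p_{\blacktriangle})\,p_{\blacktriangle}\,d\lambda$ as a value in $[-\infty,0]$. This is handled by the elementary bound $\log x\le x-1$ to control the positive part of $\log(p/p_{\blacktriangle})$ and by part (a) for the finiteness of the anchor term $L(p_{\blacktriangle})$; once these are in place the equality case follows from strict concavity of $\log$ exactly as in the classical information inequality. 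Everything else is routine.
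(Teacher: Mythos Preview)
Your proposal is correct and follows essentially the same route as the paper: Part~(a) via the boundedness of $t\mapsto t\log t$ on $[0,1]$ together with $\lambda(\Omega)<\infty$ to control the negative part of $\int p_{\blacktriangle}\log p_{\blacktriangle}\,d\lambda$, and Parts~(b),(c) via the information inequality (strict Jensen applied to $\log$) combined with continuity of the densities to upgrade a.e.\ equality to pointwise equality. The paper's write-up is marginally leaner in that it restricts the integral in (b) to $\{p_{\blacktriangle}>0\}$ from the outset and does not discuss the positive part in (a) at all (only $L(p_{\blacktriangle})>-\infty$ is claimed, so finiteness of the negative part suffices for well-definedness), but your more explicit $\pm\infty$ bookkeeping for the $\zeta=0$ case is harmless and arrives at the same conclusion.
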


The consistency result is now given below. Under the additional assumption
that $\zeta $ is positive, Part (a) of the subsequent theorem already
follows from Proposition~6 in Nickl (2007).

\begin{thm}
\label{Theorem: consistency of AML-estimators}\hspace*{0ex}(a) Let
Assumption \ref{dens:Element} be satisfied. Then%
\begin{equation*}
\lim_{n\rightarrow \infty }\Vert \hat{p}_{n}-p_{\blacktriangle }\Vert
_{s,2}=0\quad \mathbb{P}\text{-a.s.}
\end{equation*}%
for every $s$, $0\leq s<t$; in particular, $\lim_{n\rightarrow \infty }\Vert 
\hat{p}_{n}-p_{\blacktriangle }\Vert _{\Omega }=0$ $\ \mathbb{P}$-a.s.

(b) Let $p_{\theta }\in \mathcal{P}(t,\zeta ,D)$ for a given $\theta \in
\Theta $. Then, for the given $\theta $,%
\begin{equation*}
\lim_{k\rightarrow \infty }\Vert \tilde{p}_{k}(\theta )-p_{\theta }\Vert
_{s,2}=0\quad \mu \text{-a.s.}
\end{equation*}%
for every $s$, $0\leq s<t$; in particular, $\lim_{k\rightarrow \infty }\Vert 
\tilde{p}_{k}(\theta )-p_{\theta }\Vert _{\Omega }=0$ $\ \mu $-a.s.

(c) Let Assumptions~\ref{mod:Inclusion}, \ref{mod: Strict inequality}, and %
\ref{rho: Continuity of rho} be satisfied. Then%
\begin{equation*}
\lim_{k\rightarrow \infty }\,\sup_{\theta \in \Theta }\Vert \tilde{p}%
_{k}(\theta )-p_{\theta }\Vert _{s,2}=0\quad \mu \text{-a.s.}
\end{equation*}%
for every $s$, $0\leq s<t$; in particular, $\lim_{k\rightarrow \infty
}\sup_{\theta \in \Theta }\Vert \tilde{p}_{k}(\theta )-p_{\theta }\Vert
_{\Omega }=0$ $\ \mu $-a.s.
\end{thm}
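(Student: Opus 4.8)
The plan is to prove the three parts of Theorem~\ref{Theorem: consistency of AML-estimators} by a standard argmax-consistency argument, exploiting the compactness of the parameter space $(\mathcal{P}(t,\zeta,D),\Vert\cdot\Vert_\Omega)$ established in Proposition~\ref{Proposition:Propertiesoftheauxiliarymodel}, the uniqueness of the population maximizers from Lemma~\ref{lem: p_0 is the unique maximizer}, and a uniform law of large numbers for the relevant log-likelihood criterion functions. For Part~(a): since $\hat p_n$ lives in the compact metric space $K:=(\mathcal{P}(t,\zeta,D),\Vert\cdot\Vert_\Omega)$, it suffices to show that every $\mathbb{P}$-a.s.\ convergent subsequence of $\hat p_n$ has limit $p_\blacktriangle$, and that the convergence takes place in $\Vert\cdot\Vert_\Omega$; the $\Vert\cdot\Vert_{s,2}$-convergence for $s<t$ then follows from the fact (cited just before Assumption~D) that on $\mathcal{P}(t,\zeta,D)$ pointwise, sup-norm, and all $\Vert\cdot\Vert_{s,2}$ ($s<t$) convergences coincide. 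To pin down the limit, I would first establish a uniform strong law $\sup_{p\in K}|L_n(p)-L(p)|\to 0$ $\mathbb{P}$-a.s.; given that, the maximizing property $L_n(\hat p_n)\geq L_n(p_\blacktriangle)$ together with uniform convergence yields $L(\hat p_n)\geq L(p_\blacktriangle)-o(1)$, while upper semicontinuity of $L$ on $K$ (again from the cited equivalences and Fatou/monotone arguments, since $\log$ is bounded above on $K$ when $\zeta\geq 0$) forces any subsequential limit $p_*$ to satisfy $L(p_*)\geq L(p_\blacktriangle)$, hence $p_*=p_\blacktriangle$ by the uniqueness in Lemma~\ref{lem: p_0 is the unique maximizer}(b).

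The delicate point — and the one I expect to be the main obstacle — is the uniform strong law when $\zeta=0$, because then the integrand $\log p(X_i)$ is unbounded below (it tends to $-\infty$ where $p$ approaches $0$), so the class $\{\log p: p\in K\}$ is not uniformly bounded and classical Glivenko--Cantelli arguments do not apply directly. I would handle this by a truncation/bracketing device: write $\log p = (\log p)\vee(-M) + [(\log p)\wedge(-M) - 0]$ wait — more cleanly, split $\log p = \log_M p + r_M(p)$ where $\log_M p := \max(\log p, -M)$ is uniformly bounded over $K$ (and the family $\{\log_M p:p\in K\}$ is equicontinuous in the sup-norm topology, hence Glivenko--Cantelli), and control the remainder $r_M(p)=\min(\log p + M,0)\leq 0$ uniformly via an integrable envelope. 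The key estimate is that $\sup_{p\in K} |r_M(p)(x)|$ is dominated by something like $|\log\operatorname{dist}(x,\text{zeros})|$-type bound; more robustly, one uses that all $p\in K$ share the modulus of continuity coming from the embedding $\mathsf{W}_2^t\hookrightarrow\mathsf{C}^{t-1/2}$ (Proposition~\ref{prop:SobolevembedsinHoelder}(b)), so $p(x)\geq c\,\delta^{?}$ near any point where $p$ is small is not quite right — rather, one bounds $\int \sup_{p\in K}|r_M(p)|\,d\mathbb{P}$ and shows it $\to 0$ as $M\to\infty$ by dominated convergence, using that $\int \sup_{p}(\log p)^- d\mathbb{P}<\infty$; this last integrability is where Assumption~\ref{Data} (existence of a density $p_\blacktriangle$) enters, possibly together with the Hölder modulus. (This is precisely the refinement over Nickl~(2007) that the authors advertise — avoiding $\zeta>0$.)

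For Part~(b), the argument is verbatim the same with $\mathbb{P}$ replaced by the law $p_\theta\,d\lambda$ on $\Omega$ (equivalently, the $X_i(\theta)=\rho(V_i,\theta)$ are i.i.d.\ with that law), using Lemma~\ref{lem: p_0 is the unique maximizer}(c) for uniqueness and noting that $p_\theta\in\mathcal{P}(t,\zeta,D)$ guarantees the density is bounded and bounded away from... no, only $\geq\zeta\geq 0$, so the same truncation handling applies; here $L(\theta,f)=\int\log f(\rho(\cdot,\theta))d\mu$ and one invokes Lemma~\ref{lem: p_0 is the unique maximizer}(a) to know $L(\theta,p_\theta)>-\infty$. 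For Part~(c), I would promote the pointwise-in-$\theta$ result to a uniform-in-$\theta$ result. The natural route: under Assumptions~\ref{mod:Inclusion}, \ref{mod: Strict inequality}, \ref{rho: Continuity of rho}, one has $\zeta$ may still be $0$ but Assumption~\ref{mod: Strict inequality} gives $\inf_{\Omega\times\Theta}p(x,\theta)>0$, which restores uniform boundedness of $\{\log p(\rho(v,\theta)):\theta\in\Theta\}$ from below; combined with the joint continuity of $(\theta,p)\mapsto L_k(\theta,p)$ (from Proposition~\ref{Properties of L_n and L_k}(b2), cited in the proof of Theorem~\ref{Theorem: existence of AML-estimators}) and the joint compactness of $\Theta\times K$, a single uniform strong law $\sup_{\theta\in\Theta,p\in K}|L_k(\theta,p)-L(\theta,p)|\to 0$ $\mu$-a.s.\ holds (the relevant Glivenko--Cantelli class $\{v\mapsto\log p(\rho(v,\theta)):\theta\in\Theta,p\in K\}$ is a uniformly bounded, pointwise-compact, "continuous" class, hence manageable). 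Then $\theta\mapsto\tilde p_k(\theta)$ and $\theta\mapsto p_\theta$ are both continuous $\Theta\to K$ (Theorem~\ref{Theorem: existence of AML-estimators}(b) and Remark~\ref{Remark: Equivalence of pointwise and sup-norm convergence of the parametrization}), and a standard compactness argument — suppose $\sup_\theta\Vert\tilde p_k(\theta)-p_\theta\Vert_\Omega\not\to 0$ along a subsequence, pick $\theta_k\to\theta^*$ realizing the sup, extract further to get $\tilde p_{k}(\theta_k)\to p^*\in K$, and derive from the uniform SLLN and joint continuity that $L(\theta^*,p^*)\geq L(\theta^*,p_{\theta^*})$, whence $p^*=p_{\theta^*}$ by Lemma~\ref{lem: p_0 is the unique maximizer}(c), contradicting $\Vert p^*-p_{\theta^*}\Vert_\Omega>0$ — finishes the proof; the passage to $\Vert\cdot\Vert_{s,2}$ for $s<t$ is again by the norm-equivalence on $\mathcal{P}(t,\zeta,D)$, which one checks is also uniform in $\theta$ since $\Theta\times K$ is compact. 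I expect the bulk of the technical work (and the place where the $\zeta=0$ case genuinely bites) to be in Part~(a)'s uniform strong law; Parts~(b) and~(c) are then essentially corollaries of the same machinery plus routine compactness bookkeeping.
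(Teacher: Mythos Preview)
Your overall architecture (compactness of $K=(\mathcal{P}(t,\zeta,D),\Vert\cdot\Vert_\Omega)$, subsequence extraction, identification of the limit via Lemma~\ref{lem: p_0 is the unique maximizer}, and the passage to $\Vert\cdot\Vert_{s,2}$ via the norm equivalences on $K$) matches the paper exactly. The gap is precisely where you flag it, and your proposed fix does not work.

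Your truncation argument for Part~(a) hinges on the envelope condition $\int_\Omega \sup_{p\in K}(\log p)^-\,d\mathbb{P}<\infty$, which you hope to extract from Assumption~\ref{Data} ``possibly together with the H\"older modulus.'' But when $\zeta=0$ this envelope is typically $+\infty$ on all of $\Omega$: for each fixed $x_0\in\Omega$ one can find $p\in K$ with $p(x_0)$ arbitrarily small (perturb the uniform density in a direction $h\in\mathsf{H}_t^0$ with $h(x_0)<0$; the interior of $K$ relative to $\mathsf{H}_t$ is nonempty by Proposition~\ref{prop_interior}(c)), so $\sup_{p\in K}(\log p)^-(x_0)=+\infty$. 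The uniform H\"older modulus controls how fast a \emph{single} $p$ can vanish near its own zeros, but says nothing about $\inf_{p\in K}p(x_0)$ at a fixed $x_0$. Consequently your ULLN $\sup_{p\in K}|L_n(p)-L(p)|\to 0$ is not available, and the same obstruction applies to your proposed joint ULLN over $\Theta\times K$ in Part~(c) (Assumption~\ref{mod: Strict inequality} bounds $p_\theta$ below, but not the arbitrary $p\in K$ over which you take the supremum).

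The paper's device is an $\varepsilon$-shift rather than a truncation: use monotonicity of $\log$ to get $L_n(\hat p_n)\leq L_n(\hat p_n+\varepsilon_l)$, apply a ULLN to the shifted class $\{p+\varepsilon_l:p\in K\}$ (which \emph{is} uniformly bounded away from zero, so Proposition~\ref{Properties of L_n and L_k}(d1) applies), pass to the subsequential limit to obtain $L(p_\blacktriangle)\leq L(p^*+\varepsilon_l)$, and finally let $\varepsilon_l\downarrow 0$ by monotone convergence. This requires only the pointwise SLLN at $p_\blacktriangle$ plus the easy ULLN on each shifted class---no envelope over $K$ is ever needed. The same $\varepsilon$-shift handles Part~(c), combined with a separate ULLN over $\Theta\times\mathcal{P}(t,\zeta^\#,D)$ (where $\zeta^\#=\inf_{\Omega\times\Theta}p(x,\theta)>0$) to control the term $L_k(\theta_{k''},p_{\theta_{k''}})$.
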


\begin{proof}
(a) In view of Part (c) of Proposition~\ref{prop:SobolevembedsinHoelder}, we
may restrict ourselves to the case $1/2<s<t$. Note that $|L(p_{%
\blacktriangle })|<\infty $ by Assumption~\ref{dens:Element} and Part (a) of
Lemma~\ref{lem: p_0 is the unique maximizer}; also note that the random
variables $\log p_{\blacktriangle }(X_{i})$ are $\mathbb{P}$%
-a.s.~real-valued. By Kolmogorov's strong law of large numbers we then have%
\begin{equation}
\lim_{n\rightarrow \infty }|L_{n}(p_{\blacktriangle })-L(p_{\blacktriangle
})|=0\quad \mathbb{P}\text{-a.s.}  \label{Formel: Ln-L}
\end{equation}%
Let $\varepsilon _{l}$ be positive real numbers that converge monotonously
to $0$ as $l\rightarrow \infty $. Apply the uniform law of large numbers in
Part (d1) of Proposition~\ref{Properties of L_n and L_k} in Appendix \ref%
{App B} with $\mathcal{F}=\left\{ p+\varepsilon _{l}:p\in \mathcal{P}%
(t,\zeta ,D)\right\} $ to see that 
\begin{equation}
\lim_{n\rightarrow \infty }\sup_{p\in \mathcal{P}(t,\zeta
,D)}|L_{n}(p+\varepsilon _{l})-L(p+\varepsilon _{l})|=0\quad \mathbb{P}\text{%
-a.s.}  \label{Formel: sup Ln-L}
\end{equation}%
for every $l\in \mathbb{N}$. In the following arguments we fix an arbitrary
element of the probability $1$ event where the statements in (\ref{Formel:
Ln-L}) and (\ref{Formel: sup Ln-L}) hold true. We now prove that $\Vert \hat{%
p}_{n}-p_{\blacktriangle }\Vert _{s,2}$ converges to $0$ by showing that any
subsequence $\hat{p}_{n^{\prime }}$ of $\hat{p}_{n}$ has another subsequence
converging to $p_{\blacktriangle }$ in the Sobolev norm $\Vert \cdot \Vert
_{s,2}$. Because $\mathcal{P}(t,\zeta ,D)$ is compact in $\mathsf{W}%
_{2}^{s}(\Omega )$ by Proposition~\ref%
{Proposition:Propertiesoftheauxiliarymodel}, there is a subsequence $\hat{p}%
_{n^{\prime \prime }}$ of $\hat{p}_{n^{\prime }}$ and some $p^{\ast }\in 
\mathcal{P}(t,\zeta ,D)$ such that $\Vert \hat{p}_{n^{\prime \prime
}}-p^{\ast }\Vert _{s,2}$ converges to $0$. Now use Assumption~\ref%
{dens:Element}, the definition of $\hat{p}_{n^{\prime \prime }}$ as
maximizer, and the monotonicity of the logarithm to obtain%
\begin{eqnarray}
L_{n^{\prime \prime }}(p_{\blacktriangle }) &\leq &L_{n^{\prime \prime }}(%
\hat{p}_{n^{\prime \prime }})\leq L_{n^{\prime \prime }}(\hat{p}_{n^{\prime
\prime }}+\varepsilon _{l})  \notag \\
&\leq &L(\hat{p}_{n^{\prime \prime }}+\varepsilon _{l})+\sup_{p\in \mathcal{P%
}(t,\zeta ,D)}|L_{n^{\prime \prime }}(p+\varepsilon _{l})-L(p+\varepsilon
_{l})|.  \label{eq: Upper bound via ULLN}
\end{eqnarray}%
The first term on the r.h.s.~of (\ref{eq: Upper bound via ULLN}) converges
to $L(p^{\ast }+\varepsilon _{l})$ since $\Vert \hat{p}_{n^{\prime \prime
}}-p^{\ast }\Vert _{s,2}$, and hence also $\Vert \hat{p}_{n^{\prime \prime
}}-p^{\ast }\Vert _{\Omega }$, converges to $0$ and since $L(\cdot
+\varepsilon _{l})$ is sup-norm continuous on $\mathcal{P}(t,\zeta ,D)$ by
Part (c1) of Proposition~\ref{Properties of L_n and L_k} in Appendix \ref%
{App B}. The supremum on the r.h.s.~of (\ref{eq: Upper bound via ULLN}) goes
to $0$ and $L_{n^{\prime \prime }}(p_{\blacktriangle })$ converges to $%
L(p_{\blacktriangle })$ in view of (\ref{Formel: Ln-L}) and (\ref{Formel:
sup Ln-L}). It follows that 
\begin{equation}
L(p_{\blacktriangle })\leq L(p^{\ast }+\varepsilon _{l}).
\label{eq: L(p_0) less equal L(p*)}
\end{equation}%
The sequence of functions $\log (p^{\ast }+\varepsilon _{l})$ is
monotonously non-increasing in $l$ with pointwise limit $\log \,p^{\ast }$,
and is bounded above by the integrable function $\log (p^{\ast }+\varepsilon
_{1})$. Using the theorem of monotone convergence, we conclude from (\ref%
{eq: L(p_0) less equal L(p*)}) that $L(p_{\blacktriangle })\leq L(p^{\ast })$%
. Hence, $p^{\ast }=p_{\blacktriangle }$ by Part (b) of Lemma~\ref{lem: p_0
is the unique maximizer}.

(b) Follows analogously as Part (a) with $p_{\blacktriangle }$ replaced by $%
p_{\theta }$.

(c) As in the proof of Part (a), we may restrict ourselves to the case $%
1/2<s<t$. Define $\zeta ^{\#}=\inf_{\Omega \times \Theta }p(x,\theta )$. By
hypothesis, $\zeta ^{\#}>0$, and $\mathcal{P}(t,\zeta ^{\#},D)$ is non-empty
as it contains $\mathcal{P}_{\Theta }$. We may now apply Part (d2) of
Proposition~\ref{Properties of L_n and L_k} in Appendix \ref{App B} with $%
\mathcal{F}=\mathcal{P}(t,\zeta ^{\#},D)$ to get%
\begin{equation}
\lim_{k\rightarrow \infty }\sup_{\Theta \times \mathcal{P}(t,\zeta
^{\#},D)}|L_{k}(\theta ,p)-L(\theta ,p)|=0\quad \mu \text{-a.s.}  \label{f1}
\end{equation}%
Let $\varepsilon _{l}$ be as in the proof of Part (a). For each $l\in 
\mathbb{N}$, Part (d2) of Proposition~\ref{Properties of L_n and L_k} in
Appendix \ref{App B} with $\mathcal{F}=\left\{ p+\varepsilon _{l}:p\in 
\mathcal{P}(t,\zeta ,D)\right\} $ implies that 
\begin{equation}
\lim_{k\rightarrow \infty }\sup_{\theta \in \Theta }\sup_{p\in \mathcal{P}%
(t,\zeta ,D)}\left\vert L_{k}(\theta ,p+\varepsilon _{l})-L(\theta
,p+\varepsilon _{l})\right\vert =0\quad \mu \text{-a.s.}  \label{f2}
\end{equation}
In the following arguments we fix an arbitrary element of the probability $1$
event where (\ref{f1}) and (\ref{f2}) hold. Assume that $\sup_{\theta \in
\Theta }\Vert \tilde{p}_{k}(\theta )-p_{\theta }\Vert _{s,2}$ does not
converge to $0$. Then there is some $\eta >0$ such that for every $k\in 
\mathbb{N}$ there are $k^{\prime }\in \mathbb{N}$, $k^{\prime }\geq k$, and $%
\theta _{k^{\prime }}\in \Theta $ that satisfy 
\begin{equation}
\Vert \tilde{p}_{k^{\prime }}(\theta _{k^{\prime }})-p_{\theta _{k^{\prime
}}}\Vert _{s,2}>\eta .  \label{Greater eta}
\end{equation}%
By compactness of $\Theta $ and compactness of $\mathcal{P}(t,\zeta ,D)$ as
a subset of $\mathsf{W}_{2}^{s}(\Omega )$, we find a subsequence $\tilde{p}%
_{k^{\prime \prime }}(\theta _{k^{\prime \prime }})$ of $\tilde{p}%
_{k^{\prime }}(\theta _{k^{\prime }})$ such that $\theta _{k^{\prime \prime
}}$ converges to $\theta ^{\ast }$ for some $\theta ^{\ast }\in \Theta $,
and $\Vert \tilde{p}_{k^{\prime \prime }}(\theta _{k^{\prime \prime
}})-p^{\ast }\Vert _{s,2}$ converges to $0$ for some $p^{\ast }\in \mathcal{P%
}(t,\zeta ,D)$. So, if $p^{\ast }$ equals $p_{\theta ^{\ast }}$ (which we
verify below), then $\Vert \tilde{p}_{k^{\prime \prime }}(\theta _{k^{\prime
\prime }})-p_{\theta ^{\ast }}\Vert _{s,2}$ converges to $0$. Consequently, $%
\Vert \tilde{p}_{k^{\prime \prime }}(\theta _{k^{\prime \prime }})-p_{\theta
_{k^{\prime \prime }}}\Vert _{s,2}$ converges to $0$ because $p_{\theta
_{k^{\prime \prime }}}$ converges to $p_{\theta ^{\ast }}$ in $(\mathcal{P}%
(t,\zeta ,D),\Vert \cdot \Vert _{s,2})$ in view of Proposition~\ref%
{Interrelation} in Appendix \ref{App A} and Remark~\ref{Remark: Equivalence
of pointwise and sup-norm convergence of the parametrization}. This is in
contradiction to (\ref{Greater eta}) and therefore in contradiction to the
assumption that $\sup_{\theta \in \Theta }\Vert \tilde{p}_{k}(\theta
)-p_{\theta }\Vert _{s,2}$ does not converge to $0$.

It remains to show that $p^{\ast }$ equals $p_{\theta ^{\ast }}$. Use
Assumption \ref{mod:Inclusion}, the definition of $\tilde{p}_{k^{\prime
\prime }}(\theta _{k^{\prime \prime }})$ as maximizer, and the monotonicity
of the logarithm to obtain%
\begin{eqnarray}
L_{k^{\prime \prime }}(\theta _{k^{\prime \prime }},p_{\theta _{k^{\prime
\prime }}}) &\leq &L_{k^{\prime \prime }}(\theta _{k^{\prime \prime }},%
\tilde{p}_{k^{\prime \prime }}(\theta _{k^{\prime \prime }}))\leq
L_{k^{\prime \prime }}(\theta _{k^{\prime \prime }},\tilde{p}_{k^{\prime
\prime }}(\theta _{k^{\prime \prime }})+\varepsilon _{l})  \notag \\
&\leq &L(\theta _{k^{\prime \prime }},\tilde{p}_{k^{\prime \prime }}(\theta
_{k^{\prime \prime }})+\varepsilon _{l})  \notag \\
&&+\sup_{\theta \in \Theta }\sup_{p\in \mathcal{P}(t,\zeta ,D)}\left\vert
L_{k^{\prime \prime }}(\theta ,p+\varepsilon _{l})-L(\theta ,p+\varepsilon
_{l})\right\vert .  \label{Asses}
\end{eqnarray}%
The first term on the r.h.s.~of (\ref{Asses}) converges to $L(\theta ^{\ast
},p^{\ast }+\varepsilon _{l})$ since $\theta _{k^{\prime \prime }}$
converges to $\theta ^{\ast }$, $\Vert \tilde{p}_{k^{\prime \prime }}(\theta
_{k^{\prime \prime }})-p^{\ast }\Vert _{s,2}$, and hence also $\Vert \tilde{p%
}_{k^{\prime \prime }}(\theta _{k^{\prime \prime }})-p^{\ast }\Vert _{\Omega
}$, converges to $0$, and $L(\cdot ,\cdot +\varepsilon _{l})$ is a
continuous function on $\Theta \times (\mathcal{P}(t,\zeta ,D),\Vert \cdot
\Vert _{\Omega })$ by Part (c2) of Proposition~\ref{Properties of L_n and
L_k} in Appendix \ref{App B}. Recall that the supremum on the r.h.s.~of (\ref%
{Asses}) goes to $0$ in view of (\ref{f2}). Further, the supremum on the
r.h.s.~of the inequality%
\begin{eqnarray*}
\lefteqn{|L_{k^{\prime \prime }}(\theta _{k^{\prime \prime }},p_{\theta
_{k\prime \prime }})-L(\theta ^{\ast },p_{\theta ^{\ast }})|} \\
&\leq &\sup_{\Theta \times \mathcal{P}(t,\zeta ^{\#},D)}|L_{k^{\prime \prime
}}(\theta ,p)-L(\theta ,p)|+|L(\theta _{k^{\prime \prime }},p_{\theta
_{k^{\prime \prime }}})-L(\theta ^{\ast },p_{\theta ^{\ast }})|
\end{eqnarray*}%
converges to $0$ by (\ref{f1}). The second term on the r.h.s.~goes to $0$ as 
$\theta _{k^{\prime \prime }}$ converges to $\theta ^{\ast }$, $\Vert
p_{\theta _{k^{\prime \prime }}}-p_{\theta ^{\ast }}\Vert _{s,2}$, and hence
also $\Vert p_{\theta _{k^{\prime \prime }}}-p_{\theta ^{\ast }}\Vert
_{\Omega }$, converges to $0$, and $L(\theta ,p)$ is a continuous function
on $\Theta \times (\mathcal{P}(t,\zeta ^{\#},D),\Vert \cdot \Vert _{\Omega
}) $ by Part (c2) of Proposition~\ref{Properties of L_n and L_k} in Appendix %
\ref{App B}. Hence, the l.h.s.~of (\ref{Asses}) goes to $L(\theta ^{\ast
},p_{\theta ^{\ast }})$. It follows that%
\begin{equation}
L(\theta ^{\ast },p_{\theta ^{\ast }})\leq L(\theta ^{\ast },p^{\ast
}+\varepsilon _{l}).  \label{eq: L(theta*,p_theta*) less L(theta*,p*)}
\end{equation}%
The sequence of functions $\log \,(p^{\ast }+\varepsilon _{l})(\rho (\cdot
,\theta ^{\ast }))$ is monotonously non-increasing in $l$ with pointwise
limit $\log \,p^{\ast }(\rho (\cdot ,\theta ^{\ast }))$, and is bounded
above by the integrable function $\log (p^{\ast }+\varepsilon _{1})(\rho
(\cdot ,\theta ^{\ast }))$. Using the theorem of monotone convergence and (%
\ref{eq: L(theta*,p_theta*) less L(theta*,p*)}), we conclude that $L(\theta
^{\ast },p_{\theta ^{\ast }})\leq L(\theta ^{\ast },p^{\ast })$. Hence, $%
p^{\ast }=p_{\theta ^{\ast }}$ by Part (c) of Lemma~\ref{lem: p_0 is the
unique maximizer}.
\end{proof}

\begin{remark}
\normalfont\label{Remark 12} For later use we note the following: (i) Let
Assumption \ref{dens:Element} be satisfied, and suppose $\chi \geq 0$
satisfies $\inf_{x\in \Omega }p_{\blacktriangle }(x)>\chi $. It follows from
Part (a) of Theorem~\ref{Theorem: consistency of AML-estimators} that there
are events $A_{n}\in \mathcal{B}(\Omega )^{n}$ that have $\mathbb{P}^{n}$%
-probability tending to $1$ as $n\rightarrow \infty $ on which $\inf_{x\in
\Omega }\hat{p}_{n}(x)>\chi $ holds.

(ii) Let $p_{\theta }\in \mathcal{P}(t,\zeta ,D)$ for a given $\theta \in
\Theta $ be satisfied, and suppose $\chi (\theta )\geq 0$ satisfies $%
\inf_{x\in \Omega }p(x,\theta )>\chi (\theta )$ for the given $\theta $. It
follows from Part (b) of Theorem~\ref{Theorem: consistency of AML-estimators}
that for the given $\theta $ there are events $B_{k}(\theta )\in \mathcal{V}%
^{k}$ that have $\mu ^{k}$-probability tending to $1$ as $k\rightarrow
\infty $ on which $\inf_{x\in \Omega }\tilde{p}_{k}(\theta )(x)>\chi (\theta
)$ holds.

(iii) Let Assumptions~\ref{mod:Inclusion} and \ref{rho: Continuity of rho}
be satisfied, and suppose $\chi \geq 0$ satisfies $\inf_{\Omega \times
\Theta }p(x,\theta )>\chi $. It follows from Part (c) of Theorem~\ref%
{Theorem: consistency of AML-estimators} that there are events $B_{k}\in 
\mathcal{V}^{k}$ that have $\mu ^{k}$-probability tending to $1$ as $%
k\rightarrow \infty $ on which $\inf_{\theta \in \Theta }\inf_{x\in \Omega }%
\tilde{p}_{k}(\theta )(x)>\chi $ holds.
\end{remark}

\subsection{Rates of Convergence for NPML-Estimators\label{Rates}}

Following ideas of van~de~Geer~(1993), Nickl (2007, Proposition 6) obtained
convergence rates for the NPML-estimator $\hat{p}_{n}$ in various
Sobolev-norms as%
\begin{equation}
\Vert \hat{p}_{n}-p_{\blacktriangle }\Vert _{s,2}=O_{\mathbb{P}}^{\ast
}(n^{-(t-s)/(2t+1)})  \label{nickl}
\end{equation}%
for every $0\leq s\leq t$, provided Assumption \ref{dens:Element} and $\zeta
>0$ hold. Modulo measure-theoretic nuisances, this immediately gives an
analogous result for $\Vert \tilde{p}_{k}(\theta )-p_{\theta }\Vert _{s,2}$
for each $\theta \in \Theta $. [The complication here is that the result in
Nickl (2007) is proved for data generating processes defined as coordinate
projections on a product space, which is not the case for $X_{i}(\theta )$;
cf.~the proof of Part (b) of the subsequent proposition.] In Section \ref%
{Section: A uniform Donsker theorem for AML-estimators} below, however, we
shall need convergence rates for $\sup_{\theta \in \Theta }\Vert \tilde{p}%
_{k}(\theta )-p_{\theta }\Vert _{s,2}$, i.e., convergence rates that hold
uniformly w.r.t.~$\theta \in \Theta $. Before we turn to these uniform
results, we provide an extension of Nickl's (2007) rate result in that we
avoid the restriction $\zeta >0$. Note that Assumption \ref%
{dens:StrictInequality} already follows from Assumption \ref{dens:Element}
in case $\zeta >0$.

\begin{prop}
\label{pointwise_rate}(a) Under Assumptions \ref{dens:Element} and \ref%
{dens:StrictInequality} we have $\Vert \hat{p}_{n}-p_{\blacktriangle }\Vert
_{s,2}=O_{\mathbb{P}}(n^{-(t-s)/(2t+1)})$ for every $0\leq s\leq t$. (b) If $%
p_{\theta }\in \mathcal{P}(t,\zeta ,D)$ and $\inf_{x\in \Omega }p(x,\theta
)>0$ hold for a given $\theta \in \Theta $, then $\Vert \tilde{p}_{k}(\theta
)-p_{\theta }\Vert _{s,2}=O_{\mathbb{\mu }}(k^{-(t-s)/(2t+1)})$ for every $%
0\leq s\leq t$ and the given $\theta $.
\end{prop}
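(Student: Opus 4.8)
\textbf{Proof plan for Proposition \ref{pointwise_rate}.}

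The plan is to follow the peeling/empirical-process strategy of van~de~Geer (1993) and Nickl (2007), but replacing the hypothesis ``$\zeta>0$'' (which forces $\log p$ to be bounded on the model) by the weaker Assumption~\ref{dens:StrictInequality}, so that boundedness of the relevant log-ratios holds only along the estimator's trajectory with high probability. For Part~(a), fix $\chi$ with $0<\chi<\inf_{x\in\Omega}p_\blacktriangle(x)$; by Remark~\ref{Remark 12}(i) there are events $A_n$ with $\mathbb{P}^n(A_n)\to 1$ on which $\inf_{x\in\Omega}\hat p_n(x)>\chi$. On $A_n$ the estimator in effect lies in the \emph{restricted} model $\mathcal P(t,\chi,D)$, on which $\log p$ and $\log(p/p_\blacktriangle)$ are uniformly bounded (using $\Vert p\Vert_\Omega\le C_tD$ from Proposition~\ref{prop:SobolevembedsinHoelder}(b) and the lower bound $\chi$), and on which the Hellinger distance is comparable to $\Vert p-p_\blacktriangle\Vert_2$. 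Since $O_{\mathbb P}^\ast$-statements are unaffected by restriction to events of probability tending to one, it suffices to prove the rate for the estimator over $\mathcal P(t,\chi,D)$, which is exactly the situation Nickl (2007, Proposition~6) handles (with $\zeta$ there taken to be our positive $\chi$). The only genuine content to verify is the standard basic inequality: from $L_n(\hat p_n)\ge L_n(p_\blacktriangle)$ one gets $\int \log(\hat p_n/p_\blacktriangle)\,d\mathbb P_n\ge 0$, hence $h^2(\hat p_n,p_\blacktriangle)\lesssim (\mathbb P_n-\mathbb P)\big(\log(\hat p_n/p_\blacktriangle)\big)$, and then a weighted empirical-process (peeling) bound using the metric-entropy estimate $H(\varepsilon,\mathcal U_{s,B},\Vert\cdot\Vert_\Omega)\lesssim\varepsilon^{-1/s}$ for Sobolev balls yields the rate $h(\hat p_n,p_\blacktriangle)=O_{\mathbb P}^\ast(n^{-t/(2t+1)})$, which transfers to $\Vert\hat p_n-p_\blacktriangle\Vert_2$ on $A_n$. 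The bound in $\Vert\cdot\Vert_{s,2}$ for $0<s\le t$ then follows by the standard interpolation inequality $\Vert f\Vert_{s,2}\lesssim\Vert f\Vert_{2}^{1-s/t}\Vert f\Vert_{t,2}^{s/t}$ together with $\Vert\hat p_n-p_\blacktriangle\Vert_{t,2}\le 2D$ (Theorem~\ref{Theorem: existence of AML-estimators}(a)); the case $s=0$ is the Hellinger-to-$L^2$ bound already obtained.

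For Part~(b), fix the given $\theta$ and set $q=p_\theta\in\mathcal P(t,\zeta,D)$ with $\inf_\Omega q>0$. The statement is formally the same as Part~(a) applied to the i.i.d.\ sequence $X_i(\theta)=\rho(V_i,\theta)$, whose law has density $q\in\mathcal P(t,\zeta,D)$ satisfying $\inf_\Omega q>0$, and with $\hat p_k$ computed from this sample equal to $\tilde p_k(\theta)$ by \eqref{representation}. The only wrinkle, as the authors flag, is measure-theoretic: the rate result is formulated for data that are coordinate projections on a product probability space, whereas here the randomness enters through $V_i$ on $(V,\mathcal V,\mu)$. This is handled by observing that $X_i(\theta)=\rho(V_i,\theta)$ are i.i.d.\ on $(V^{\mathbb N},\mathcal V^{\mathbb N},\mu^{\mathbb N})$ with the common law $q\,d\lambda$; pushing forward by the map $(v_i)\mapsto(\rho(v_i,\theta))$ realizes them as coordinate projections on $(\Omega^{\mathbb N},\mathcal B(\Omega)^{\mathbb N},(q\lambda)^{\mathbb N})$, and $\tilde p_k(\theta)$ is, by \eqref{representation}, a function of these coordinates alone (measurability of $\hat p_k$ in its data argument is Theorem~\ref{Theorem: existence of AML-estimators}(a)). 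Hence Part~(a) applies verbatim on the pushed-forward space with $p_\blacktriangle$ replaced by $q=p_\theta$ and $n$ by $k$, giving $\Vert\tilde p_k(\theta)-p_\theta\Vert_{s,2}=O_\mu(k^{-(t-s)/(2t+1)})$, and $O^\ast$ can be dropped to plain $O$ because everything in sight (the estimator, the entropy bounds) is measurable.

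The main obstacle is not any single hard estimate but the bookkeeping around the lower bound: one must be careful that restricting attention to the event $\{\inf_\Omega\hat p_n>\chi\}$ is legitimate for an $O_{\mathbb P}^\ast$-conclusion (it is, since $\mathbb P^n(A_n)\to1$), and that on that event all the ingredients of the van~de~Geer argument — uniform boundedness of $\log(\hat p_n/p_\blacktriangle)$, equivalence of Hellinger distance and $\Vert\cdot\Vert_2$, and the local entropy integral being finite — genuinely hold. The entropy bound $H(\varepsilon,\{f\in\mathsf W_2^t(\Omega):\Vert f\Vert_{t,2}\le B\},\Vert\cdot\Vert_\Omega)\lesssim\varepsilon^{-1/t}$ (valid since $1/t<2$) makes the peeling series converge and pins down the exponent $t/(2t+1)$; I would quote this from the literature rather than reprove it. Everything else is a routine adaptation of Nickl (2007), so in the write-up I would state the reduction to $\mathcal P(t,\chi,D)$, invoke the cited rate result there, and then do only the interpolation step explicitly.
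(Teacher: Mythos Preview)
Your proposal is correct and follows essentially the same route as the paper: for Part~(a) you reduce the case $\zeta=0$ to a positive-$\chi$ model via Remark~\ref{Remark 12}(i) (where the two NPML estimators coincide on events of probability tending to one) and then invoke Nickl (2007, Proposition~6), and for Part~(b) you push forward to coordinate-projection space and apply Part~(a). The paper black-boxes the van~de~Geer/Nickl machinery (basic inequality, entropy/peeling, interpolation) that you sketch in more detail, but the reduction architecture and the handling of the measure-theoretic wrinkle in~(b) are the same.
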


\begin{proof}
(a) Measurability of $\Vert \hat{p}_{n}-p_{\blacktriangle }\Vert _{s,2}$ is
established in Proposition \ref{Borel_1} in Appendix \ref{App D2}.\ The
result is trivial in case $s=t$ since $\mathcal{P}(t,\zeta ,D)$ is a bounded
subset of \emph{$\mathsf{W}$}$_{2}^{t}(\Omega )$. Hence assume $s<t$. If $%
\zeta >0$, the result follows from Proposition 6 in Nickl (2007). Now
suppose $\zeta =0$. By Assumption \ref{dens:StrictInequality} we can then
choose $\chi >0=\zeta $ such that $\inf_{x\in \Omega }p_{\blacktriangle
}(x)>\chi $ holds. By Remark \ref{Remark 12}(i) we have that $\hat{p}_{n}\in 
\mathcal{P}(t,\chi ,D)$ on events $A_{n}\in \mathcal{B}(\Omega )^{n}$ that
have probability tending to $1$ as $n\rightarrow \infty $. Since $\mathcal{P}%
(t,\chi ,D)\subseteq \mathcal{P}(t,\zeta ,D)$, the NPML-estimator $\hat{p}%
_{n}$ over $\mathcal{P}(t,\zeta ,D)$ coincides with the NPML-estimator over
the smaller set $\mathcal{P}(t,\chi ,D)$ on these events, and the latter
estimator satisfies (\ref{nickl}) by Proposition 6 in Nickl (2007).

(b) In view of (\ref{representation}) and since $(x_{1},\ldots
,x_{k})\mapsto \hat{p}_{k}(\cdot ;x_{1},\ldots ,x_{k})$ is a measurable
mapping from $\Omega ^{k}$ into $(\mathcal{P}(t,\zeta ,D),\Vert \cdot \Vert
_{\Omega })$, cf.~Theorem \ref{Theorem: existence of AML-estimators}, $%
\tilde{p}_{k}(\theta )$ has the same law as $\hat{p}_{k}(\cdot ;Z_{1},\ldots
,Z_{k})$, where $(Z_{1},\ldots ,Z_{k})$ has the same distribution as $%
(X_{1}(\theta ),\ldots ,X_{k}(\theta ))$ but the $Z_{i}$ are given by the
coordinate projections on $(\Omega ^{\mathbb{N}},\mathcal{B}(\Omega )^{%
\mathbb{N}})$. Since $\Vert \cdot \Vert _{\Omega }$ and $\Vert \cdot \Vert
_{s,2}$ for $s\leq t$ generate the same Borel $\sigma $-field on $\mathcal{P}%
(t,\zeta ,D)$ (cf.~Lemma \ref{Borel} in Appendix \ref{App D2}), $\Vert 
\tilde{p}_{k}(\theta )-p_{\theta }\Vert _{s,2}$ is measurable and has the
same distribution as $\Vert \hat{p}_{k}(\cdot ;Z_{1},\ldots
,Z_{k})-p_{\theta }\Vert _{s,2}$. Now apply the already established Part (a)
to $\hat{p}_{k}(\cdot ;Z_{1},\ldots ,Z_{k})$.
\end{proof}

\bigskip

In case $s=t$, in fact $\Vert \hat{p}_{n}-p_{\blacktriangle }\Vert
_{s,2}\leq 2D$ and $\Vert \tilde{p}_{k}(\theta )-p_{\theta }\Vert _{s,2}\leq
2D$ hold under the assumptions of the above proposition. The next
proposition is instrumental in proving the uniform-in-$\theta $ convergence
rate result.

\begin{prop}
\label{Bracketing entropy of F*} Let $\mathcal{F}$ be a (non-empty) bounded
subset of \emph{$\mathsf{W}$}$_{2}^{s}(\Omega )$ with $s>1/2$. Suppose
Assumption \ref{rho: Hoelderity of rho} holds.

(a) Then the $\mathcal{L}^{2}(\mu )$-bracketing metric entropy of%
\begin{equation*}
\mathcal{F}^{\ast }=\{f(\rho (\cdot ,\theta )):\theta \in \Theta ,\,f\in 
\mathcal{F}\}
\end{equation*}%
satisfies%
\begin{equation}
H_{[\hspace{0.75ex}]}(\varepsilon ,\mathcal{F}^{\ast },\Vert \cdot \Vert
_{2,\mu })\lesssim \varepsilon ^{-1/s}.
\label{bracketing metric entropy of F*}
\end{equation}%
In particular, $\mathcal{F}^{\ast }$ is $\mu $-Donsker.

(b) Suppose the elements of $\mathcal{F}$ are bounded below by some $\chi >0$%
. Then the $\mathcal{L}^{2}(\mu )$-bracketing metric entropy of%
\begin{equation*}
\log \mathcal{F}^{\ast }=\left\{ \log f(\rho (\cdot ,\theta )):\theta \in
\Theta ,\,f\in \mathcal{F}\right\}
\end{equation*}%
satisfies%
\begin{equation*}
H_{[\hspace{0.75ex}]}(\varepsilon ,\log \mathcal{F}^{\ast },\Vert \cdot
\Vert _{2,\mu })\lesssim \varepsilon ^{-1/s}.
\end{equation*}
\end{prop}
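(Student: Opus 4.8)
**Proof plan for Proposition \ref{Bracketing entropy of F*}.**

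The plan is to reduce both parts to a single core estimate: a bound on the ordinary (sup-norm) metric entropy of the function class $\mathcal{F}$ itself, or rather of a suitable Hölder-type enlargement of it, and then to transport that bound through the composition map $f\mapsto f(\rho(\cdot,\theta))$ to a bracketing-entropy bound on $\mathcal{F}^{\ast}$. First I would invoke Proposition~\ref{prop:SobolevembedsinHoelder}(b): since $\mathcal{F}$ is a bounded subset of $\mathsf{W}_{2}^{s}(\Omega)$ with $s>1/2$, it embeds continuously into the Hölder space $\mathsf{C}^{s-1/2}(\Omega)$ (or at least $\mathsf{C}^{0}(\Omega)$), so $\mathcal{F}$ is a bounded subset of $\mathsf{C}^{s}(\Omega)$ as a set of functions with uniformly bounded $\|\cdot\|_{s,\Omega}$-norm (using the Sobolev embedding more carefully, one gets a bounded ball in $\mathsf{C}^{s}(\Omega)$ if one tracks the right order; otherwise one works directly with the sup-norm entropy of bounded balls in $\mathsf{W}_{2}^{s}$). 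The classical entropy bound of Kolmogorov–Tikhomirov for Hölder balls on a bounded interval then gives
\begin{equation*}
H(\varepsilon,\mathcal{F},\|\cdot\|_{\Omega})\lesssim \varepsilon^{-1/s}.
\end{equation*}

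Next I would handle the composition with $\rho$. Fix an $\varepsilon$-cover $f_{1},\dots,f_{N}$ of $\mathcal{F}$ in $\|\cdot\|_{\Omega}$ with $\log N\lesssim \varepsilon^{-1/s}$, and fix a $\delta$-cover $\theta_{1},\dots,\theta_{M}$ of the compact set $\Theta\subseteq\mathbb{R}^{m}$ in Euclidean norm with $\log M\lesssim m\log(1/\delta)$. For an arbitrary $f(\rho(\cdot,\theta))\in\mathcal{F}^{\ast}$, pick $f_{i}$ and $\theta_{j}$ from the covers; then for $v\in V$,
\begin{equation*}
|f(\rho(v,\theta))-f_{i}(\rho(v,\theta_{j}))|\le |f(\rho(v,\theta))-f_{i}(\rho(v,\theta))|+|f_{i}(\rho(v,\theta))-f_{i}(\rho(v,\theta_{j}))|.
\end{equation*}
The first term is $\le\varepsilon$ by the choice of the cover of $\mathcal{F}$. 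For the second term I would use that $f_{i}$ is Hölder of exponent $s-1/2$ (or, if $s\le 3/2$, just apply the Hölder modulus of $f_{i}$ of some exponent $\beta\le\min(1,s-1/2)$) together with Assumption~\ref{rho: Hoelderity of rho}, which gives $|\rho(v,\theta)-\rho(v,\theta_{j})|\le R(v)\|\theta-\theta_{j}\|^{\gamma}\le R(v)\delta^{\gamma}$. Hence the second term is bounded by $C\,R(v)^{\beta}\delta^{\gamma\beta}$ for a suitable exponent $\beta>0$, which is an $\mathcal{L}^{2}(\mu)$-function of $v$ of size $\lesssim\delta^{\gamma\beta}$ because $\int R^{a}d\mu<\infty$ for some $a>0$ (take $\beta\le a/2$, or if necessary a further power-reduction trick; the exponent $\beta$ does not matter, only that it is positive and fixed). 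This produces brackets $[f_{i}(\rho(\cdot,\theta_{j}))-\varepsilon-g_{ij},\,f_{i}(\rho(\cdot,\theta_{j}))+\varepsilon+g_{ij}]$ with $\mathcal{L}^{2}(\mu)$-size $\lesssim \varepsilon+\delta^{\gamma\beta}$; the number of such brackets is $NM$ with $\log(NM)\lesssim \varepsilon^{-1/s}+\log(1/\delta)$. Choosing $\delta$ a fixed power of $\varepsilon$ (e.g. $\delta^{\gamma\beta}\asymp\varepsilon$) makes the bracket size $\lesssim\varepsilon$ and keeps $\log(NM)\lesssim \varepsilon^{-1/s}$, since the $\log(1/\delta)$ term is negligible compared to $\varepsilon^{-1/s}$. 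This gives (\ref{bracketing metric entropy of F*}). The Donsker property of $\mathcal{F}^{\ast}$ then follows from the bracketing entropy integral criterion, since $\int_{0}^{1}\sqrt{\varepsilon^{-1/s}}\,d\varepsilon<\infty$ precisely because $s>1/2$ forces $1/(2s)<1$; one also needs an $\mathcal{L}^{2}(\mu)$-integrable envelope for $\mathcal{F}^{\ast}$, which is immediate since the elements of $\mathcal{F}$ are uniformly bounded (Sobolev embedding again), so the constant envelope works.

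For part (b), the argument is essentially identical after composing with $\log$. Since the elements of $\mathcal{F}$ are bounded below by $\chi>0$ and above by some constant (by the Sobolev embedding), $\log$ is Lipschitz on the relevant range of values with constant $1/\chi$; therefore $\log f(\rho(v,\theta))-\log f_{i}(\rho(v,\theta_{j}))$ is controlled by $\chi^{-1}$ times the same quantities as above, and the bracketing entropy of $\log\mathcal{F}^{\ast}$ inherits the bound $\lesssim\varepsilon^{-1/s}$. The main obstacle I anticipate is the bookkeeping around the $\mathcal{L}^{2}(\mu)$-bracketing step: one must convert a pointwise Hölder-in-$\theta$ bound with the random prefactor $R(v)$ into genuine brackets of controlled $\mathcal{L}^{2}(\mu)$-size, and this requires the integrability $\int R^{a}d\mu<\infty$ for some (possibly small) $a>0$ to be used correctly — one picks the Hölder exponent in the composition argument small enough that $R^{\beta}$ is $\mathcal{L}^{2}(\mu)$-integrable, which is always possible when $a>0$. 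Everything else — the Kolmogorov–Tikhomirov entropy bound, covering a compact Euclidean set, and the entropy-integral Donsker criterion — is standard.
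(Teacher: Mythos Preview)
Your approach is essentially the same as the paper's: cover $\mathcal{F}$ in sup-norm, cover $\Theta$ in Euclidean norm, use the H\"older modulus of the $f_i$ together with Assumption~\ref{rho: Hoelderity of rho} to control the $\theta$-variation pointwise by a multiple of $R(v)^{\beta}$, and choose the H\"older exponent $\beta$ small enough that $R^{\beta}\in\mathcal{L}^{2}(\mu)$ (the paper does this by fixing $r\in(1/2,3/2]$ with $r\le s$ and $2r-1\le a$, and using $\beta=r-1/2$). Part (b) is then exactly your Lipschitz-of-$\log$ argument.

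One point to tighten: the route ``embed $\mathsf{W}_{2}^{s}$ into $\mathsf{C}^{s-1/2}$ and apply Kolmogorov--Tikhomirov for H\"older balls'' yields only $H(\varepsilon,\mathcal{F},\|\cdot\|_{\Omega})\lesssim\varepsilon^{-1/(s-1/2)}$, which is strictly worse than the claimed $\varepsilon^{-1/s}$ and would fail to give the Donsker property when $1/2<s\le 1$. The sentence about $\mathcal{F}$ being bounded in $\mathsf{C}^{s}(\Omega)$ is not correct. What you need --- and what you mention as the fallback --- is the sup-norm entropy bound for Sobolev balls directly (Birman--Solomjak, stated in the paper as Lemma~\ref{Lemma: bounded subsets are uniformly Donsker}): $H(\varepsilon,\mathcal{F},\|\cdot\|_{\Omega})\lesssim\varepsilon^{-1/s}$ for bounded $\mathcal{F}\subseteq\mathsf{W}_{2}^{s}(\Omega)$. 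With that ingredient in place, your argument goes through verbatim and matches the paper's proof.
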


We note that in the subsequent uniform-in-$\theta $ convergence rate result
Assumption \ref{mod: Strict inequality} already follows from Assumption \ref%
{mod:Inclusion} in case $\zeta >0$.

\begin{thm}
\label{Uniform rate of AML-estimators}\hspace*{0ex}Let Assumptions~\ref%
{mod:Inclusion}, \ref{mod: Strict inequality}, and \ref{rho: Hoelderity of
rho} be satisfied. Then 
\begin{equation}
\sup_{\theta \in \Theta }\Vert \tilde{p}_{k}(\theta )-p_{\theta }\Vert
_{s,2}=O_{\mu }(k^{-(t-s)/(2t+1)})\quad \text{as }k\rightarrow \infty
\label{rate of sup}
\end{equation}%
for every $0\leq s<t$. [In case $s=t$, the above supremum is bounded by $2D$%
.]
\end{thm}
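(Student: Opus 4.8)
The plan is to follow the strategy underlying Nickl's (2007) pointwise rate result (which in turn rests on van de Geer's (1993) general machinery for maximum likelihood estimators over function classes with controlled bracketing entropy), but to carry it out \emph{uniformly in} $\theta$ by exploiting the key fact that the relevant entropy bounds are already available uniformly in $\theta$ through Proposition~\ref{Bracketing entropy of F*}. First I would reduce to the case $\zeta>0$: by Assumption~\ref{mod: Strict inequality} we may choose $\chi>0$ with $\inf_{\Omega\times\Theta}p(x,\theta)>\chi$, and by Remark~\ref{Remark 12}(iii) the NPML-estimators $\tilde p_k(\theta)$ all lie in $\mathcal{P}(t,\chi,D)$ simultaneously for all $\theta\in\Theta$ on events of $\mu^k$-probability tending to $1$. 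On these events $\tilde p_k(\theta)$ is also the NPML-estimator over the smaller model $\mathcal{P}(t,\chi,D)$, so we may assume the densities in the model are bounded below by a positive constant throughout the argument.

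Second, I would set up the basic inequality. Fix the true parameter $\theta$ and write $p_\theta$ for the corresponding density. Since $\tilde p_k(\theta)$ maximizes $L_k(\theta,\cdot)=k^{-1}\sum_{i\le k}\log p(\rho(V_i,\theta))$ over the model and $p_\theta$ lies in the model, we have $L_k(\theta,\tilde p_k(\theta))\ge L_k(\theta,p_\theta)$, i.e.
\begin{equation*}
\int \log\frac{\tilde p_k(\theta)(\rho(\cdot,\theta))}{p_\theta(\rho(\cdot,\theta))}\,d\mu_k\ge 0.
\end{equation*}
The standard trick (as in van de Geer / Nickl) is to pass to the functions $\sqrt{(\,\tilde p_k(\theta)+p_\theta)/(2p_\theta)}$ or to $\log\big((\tilde p_k(\theta)+p_\theta)/(2p_\theta)\big)$ and to split the empirical process increment $(\mu_k-\mu)$ from the deterministic Kullback--Leibler--type drift term; concavity of the logarithm and the lower bound $\chi$ on the densities convert the drift term into something comparable to a squared Hellinger or $\mathcal{L}^2(\mu)$ distance between $\tilde p_k(\theta)$ and $p_\theta$, while the empirical-process term is controlled by a maximal inequality. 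The crucial point here is that the indexing class is $\log\mathcal{F}^{\ast}$ (or the associated Hellinger-type transform) with $\mathcal{F}=\mathcal{P}(t,\chi,D)$ viewed inside $\mathsf{W}_2^t(\Omega)$, so by Proposition~\ref{Bracketing entropy of F*}(b) its $\mathcal{L}^2(\mu)$-bracketing entropy is $\lesssim\varepsilon^{-1/t}$ \emph{with a constant not depending on $\theta$}, because $\rho(\cdot,\theta)$ only enters through composition and the Hölder assumption~\ref{rho: Hoelderity of rho} is uniform in $\theta$.

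Third, I would run the peeling (slicing) argument on the empirical process. For a candidate rate $\delta_k=k^{-t/(2t+1)}$, one decomposes the event $\{\|\tilde p_k(\theta)-p_\theta\|_{2}> M\delta_k\}$ into shells $2^{j}M\delta_k<\|\cdot\|_{2}\le 2^{j+1}M\delta_k$ and on each shell bounds the supremum of the empirical-process term over the corresponding (bracketing-entropy-controlled) subclass using a Bernstein-type maximal inequality together with the bracketing integral $I_{[\,]}$; the $\varepsilon^{-1/t}$ entropy growth is exactly what makes the integral converge and balances against the quadratic drift to yield the exponent $(t-s)/(2t+1)$ after interpolating the $\|\cdot\|_{s,2}$-norm between $\|\cdot\|_{2}=\|\cdot\|_{0,2}$ and $\|\cdot\|_{t,2}\le 2D$ via Proposition~\ref{prop:SobolevembedsinHoelder} or a standard Sobolev interpolation inequality. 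To upgrade from a fixed $\theta$ to $\sup_{\theta\in\Theta}$, I would note that the maximal inequality is applied to the single class $\log\mathcal{F}^{\ast}$ (which already ranges over all $\theta$), so the bound on $\sup_\theta$ of the relevant empirical-process supremum is obtained in one stroke; alternatively one covers $\Theta$ by finitely many balls and uses the Hölder continuity of $\theta\mapsto\rho(v,\theta)$ to control the oscillation, but the cleaner route is to absorb $\theta$ into the function class from the start.

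The main obstacle I expect is the bookkeeping required to make the empirical-process bound genuinely \emph{uniform in $\theta$} while simultaneously peeling in the $\|\cdot\|_{2}$-distance: the shells depend on $\theta$ through $p_\theta$, so one must either work with the $\theta$-indexed family of localized classes $\{\log(p(\rho(\cdot,\theta))/p_\theta(\rho(\cdot,\theta))) : \|p-p_\theta\|_2\le r\}$ and verify that \emph{their} bracketing entropy is still $\lesssim r^{?}\varepsilon^{-1/t}$ uniformly in $\theta$ (which follows from Proposition~\ref{Bracketing entropy of F*} applied to the difference class, using that $p_\theta$ stays in a fixed bounded subset of $\mathsf{W}_2^t$ by Assumption~\ref{mod:Inclusion} and is bounded below by $\chi$), or else handle the $\theta$-dependence of the centering via a separate uniform law-of-large-numbers/continuity argument. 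Once that uniform localized-entropy bound is in hand, the rest is the now-standard balance of the modulus of continuity against the quadratic drift, and the stated rate follows; the case $s=t$ is immediate from $\|\tilde p_k(\theta)\|_{t,2}=D$ and $\|p_\theta\|_{t,2}\le D$.
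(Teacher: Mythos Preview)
Your proposal is correct and follows essentially the same route as the paper: reduce to $\zeta>0$ via Remark~\ref{Remark 12}(iii), obtain the $\|\cdot\|_2$-rate by a peeling argument whose empirical-process modulus is controlled through the uniform bracketing-entropy bound of Proposition~\ref{Bracketing entropy of F*}(b) applied to the class $\{\log p(\rho(\cdot,\theta)):\theta\in\Theta,\ p\in\mathcal P(t,\zeta,D)\}$, and then interpolate to $\|\cdot\|_{s,2}$ using the bound $\|\tilde p_k(\theta)-p_\theta\|_{t,2}\le 2D$.

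Two minor differences in execution are worth noting. First, the paper does not use the Hellinger square-root transform: since $\zeta>0$, a direct second-order Taylor expansion of $L(\theta,\cdot)$ around $p_\theta$ (Proposition~\ref{prop:DerivativesofthescaledlikelihoodNickl}) already gives the quadratic drift $L(\theta,p)-L(\theta,p_\theta)\le -\tfrac12\zeta(C_tD)^{-2}\|p-p_\theta\|_2^2$, which is exactly the curvature condition needed. Second, rather than carrying out the peeling by hand, the paper packages it into an abstract uniform-in-$\sigma$ rate theorem (Theorem~\ref{Uniform rates of convergence: parameter set}, a $\sup_{\sigma\in S}$ version of Theorem~3.2.5 in van der Vaart and Wellner), and then verifies its three hypotheses; the ``obstacle'' you flag---$\theta$-dependent shells---is handled precisely as you suggest, by working with the single class $\mathcal G_\delta=\{\log p(\rho(\cdot,\theta))-\log p_\theta(\rho(\cdot,\theta)):\theta\in\Theta,\ \|p-p_\theta\|_2\le\delta\}$ and bounding its bracketing integral via Proposition~\ref{Bracketing entropy of F*}(b) together with the Lipschitz property of $\log$ on $[\zeta,\infty)$.
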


\begin{proof}
Measurability of $\sup_{\theta \in \Theta }\Vert \tilde{p}_{k}(\theta
)-p_{\theta }\Vert _{s,2}$ for $0\leq s<t$ is established in Proposition \ref%
{Borel_1} in Appendix \ref{App D2}.\ The claim in parentheses follows since $%
\tilde{p}_{k}(\theta )\in \mathcal{P}(t,\zeta ,D)$ by construction and $%
p_{\theta }\in \mathcal{P}(t,\zeta ,D)$ by\ Assumption~\ref{mod:Inclusion}.
We now distinguish two cases:

Case 1: Assume first that $\zeta >0$ and $s=0$. We then verify the
conditions of Theorem~\ref{Uniform rates of convergence: parameter set} in
Appendix \ref{App E} with $(\Lambda ,\mathcal{A},P)=(V^{\mathbb{N}},\mathcal{%
V}^{\mathbb{N}},\mu ^{\mathbb{N}})$, $S=\Theta $, $T=\mathcal{P}(t,\zeta ,D)$%
, $d(p,q)=\Vert p-q\Vert _{2}$, $H_{k}(\sigma ,\tau )=L_{k}(\theta ,p)$, $%
H(\sigma ,\tau )=L(\theta ,p)$, $\hat{\tau}_{k}(\sigma )=\tilde{p}%
_{k}(\theta )$, and $\tau (\sigma )=p_{\theta }$. Condition (\ref{Maximizer
condition}) is satisfied by definition of the NPML-estimators $\tilde{p}%
_{k}(\theta )$. Condition (\ref{Concavity}) follows from the second-order
Taylor expansion of $L(\theta ,\cdot )$ around the density $p_{\theta }$:
using Proposition\thinspace \ref{prop:DerivativesofthescaledlikelihoodNickl}
in Appendix \ref{App B} we obtain%
\begin{eqnarray*}
L(\theta ,p)-L(\theta ,p_{\theta }) &=&\mathbf{D}L(\theta ,p_{\theta
})(p-p_{\theta })+\frac{1}{2}\mathbf{D}^{2}L(\theta ,\bar{p})(p-p_{\theta
},p-p_{\theta }) \\
&=&-\frac{1}{2}\int_{\Omega }\frac{(p-p_{\theta })^{2}}{\bar{p}^{2}}%
p_{\theta }d\lambda \leq -\frac{1}{2}\zeta \left( C_{t}D\right) ^{-2}\Vert
p-p_{\theta }\Vert _{2}^{2},
\end{eqnarray*}%
where $\bar{p}$ is some density on the line segment joining $p$ and $%
p_{\theta }$; note that $\bar{p}\in \mathcal{P}(t,\zeta ,D)$ by convexity of
this set, and hence satisfies $\Vert \bar{p}\Vert _{\Omega }\leq C_{t}D$.
This proves condition (\ref{Concavity}) in Theorem~\ref{Uniform rates of
convergence: parameter set} with $C=2^{-1}\zeta \left( C_{t}D\right) ^{-2}$
and $\alpha =2$, both constants being independent of $\theta $ and $p$.

Next we verify condition~(\ref{Modulus of continuity}): set 
\begin{equation*}
\mathcal{G}_{\delta }=\left\{ \log p(\rho (\cdot ,\theta ))-\log p_{\theta
}(\rho (\cdot ,\theta )):\theta \in \Theta ,\,p\in \mathcal{P}(t,\zeta
,D),\,\Vert p-p_{\theta }\Vert _{2}\leq \delta \right\}
\end{equation*}%
for $\delta >0$, which is clearly non-empty. Then clearly%
\begin{equation*}
\func{E}^{\ast }\sup_{\theta \in \Theta }\,\sup_{\substack{ p\in \mathcal{P}%
(t,\zeta ,D),  \\ \Vert p-p_{\theta }\Vert _{2}\leq \delta }}\left\vert 
\sqrt{k}(L_{k}-L)(\theta ,p)-\sqrt{k}(L_{k}-L)(\theta ,p_{\theta
})\right\vert =\func{E}^{\ast }\left\Vert \sqrt{k}(\mu _{k}-\mu )\right\Vert
_{\mathcal{G}_{\delta }}
\end{equation*}%
where $\func{E}^{\ast }$ denotes the outer expectation. Since we have
temporarily assumed $\zeta >0$, the logarithm is Lipschitz on $[\zeta
,\infty )$ with Lipschitz constant $\zeta ^{-1}$. This implies that $%
\mathcal{G}_{\delta }$ is bounded by $B:=2\zeta ^{-1}C_{t}D$ in the sup-norm
and by $\eta (\delta ):=\zeta ^{-1}C_{t}^{1/2}D^{1/2}\delta $ in the $%
\mathcal{L}^{2}(\mu )$-norm. Consequently, 
\begin{equation*}
\func{E}^{\ast }\left\Vert \sqrt{k}(\mu _{k}-\mu )\right\Vert _{\mathcal{G}%
_{\delta }}\leq (1696+64\sqrt{2})\,I_{[\hspace{0.75ex}]}(\eta (\delta ),%
\mathcal{G}_{\delta },\Vert \cdot \Vert _{2,\mu })\left[ 1+\frac{B}{\eta
(\delta )^{2}\sqrt{k}}\,I_{[\hspace{0.75ex}]}(\eta (\delta ),\mathcal{G}%
_{\delta },\Vert \cdot \Vert _{2,\mu })\right]
\end{equation*}%
by Theorem~\ref{Van der Vaart lemma} in Appendix \ref{App E}. Since 
\begin{eqnarray*}
\mathcal{G}_{\delta } &\subseteq &\left\{ \log p(\rho (\cdot ,\theta ))-\log
p_{\theta }(\rho (\cdot ,\theta )):\theta \in \Theta ,\,p\in \mathcal{P}%
(t,\zeta ,D)\right\} \\
&\subseteq &\left\{ \log p(\rho (\cdot ,\theta )):\theta \in \Theta ,\,p\in 
\mathcal{P}(t,\zeta ,D)\right\} -\left\{ \log p(\rho (\cdot ,\theta
)):\theta \in \Theta ,\,p\in \mathcal{P}(t,\zeta ,D)\right\} \!,
\end{eqnarray*}%
we have that%
\begin{equation*}
N_{[\hspace{0.75ex}]}(\varepsilon ,\mathcal{G}_{\delta },\Vert \cdot \Vert
_{2,\mu })\leq N_{[\hspace{0.75ex}]}(\varepsilon /2,\left\{ \log p(\rho
(\cdot ,\theta )):\theta \in \Theta ,\,p\in \mathcal{P}(t,\zeta ,D)\right\}
,\Vert \cdot \Vert _{2,\mu })^{2}.
\end{equation*}%
Applying Proposition~\ref{Bracketing entropy of F*}(b) with $s=t$ and $%
\mathcal{F}=\mathcal{P}(t,\zeta ,D)$ we get from this inequality%
\begin{eqnarray*}
I_{[\hspace{0.75ex}]}(\eta (\delta ),\mathcal{G}_{\delta },\Vert \cdot \Vert
_{2,\mu }) &\lesssim &\int_{(0,\eta (\delta )]}\sqrt{1+\varepsilon ^{-1/t}}%
d\varepsilon \lesssim \max (\eta (\delta ),\int_{(0,\eta (\delta
)]}\varepsilon ^{-1/2t}d\varepsilon ) \\
&\lesssim &\max (\delta ,\delta ^{1-1/2t}).
\end{eqnarray*}%
Hence there is some constant $L$, $0<L<\infty $, such that 
\begin{equation*}
\func{E}^{\ast }\left\Vert \sqrt{k}(\mu _{k}-\mu )\right\Vert _{\mathcal{G}%
_{\delta }}\leq L\max (\delta ,\delta ^{1-1/2t})\left[ 1+\frac{\max (\delta
,\delta ^{1-1/2t})}{\delta ^{2}\sqrt{k}}\right]
\end{equation*}%
holds for all $\delta >0$. Write $\varphi _{k}(\delta )$ for the r.h.s.~of
the last display and note that $\delta \mapsto \delta ^{-\beta }\varphi
_{k}(\delta )$ is non-increasing for $\beta =1$. This establishes condition (%
\ref{Modulus of continuity}) in Theorem~\ref{Uniform rates of convergence:
parameter set}.

Condition (\ref{Rate of convergence condition}) in that theorem is satisfied
for $\alpha =2$ and $r_{k}=k^{t/(2t+1)}$. This gives the desired rate and
completes the proof in case $\zeta >0$ and $s=0$. Now suppose $\zeta >0$ but 
$0<s<t$. Recall that $\sup_{\theta \in \Theta }\Vert \tilde{p}_{k}(\theta
)-p_{\theta }\Vert _{t,2}\leq 2D$. The result then follows from the
interpolation inequality 
\begin{equation*}
\Vert f\Vert _{s,2}\leq C_{s,t}\,\Vert f\Vert _{t,2}^{s/t}\,\Vert f\Vert
_{2}^{(t-s)/t}
\end{equation*}%
for $f\in \QTR{up}{\mathsf{W}}_{2}^{t}(\Omega )$, where $C_{s,t}>0$; see
Theorem~1.9.6 and Remark~1.9.1 in Lions and Magenes (1972).

Case 2: Suppose now $\zeta =0$ and $0\leq s<t$. In view of Assumption \ref%
{mod: Strict inequality} we may choose $\chi >0$ such that $\inf_{\Omega
\times \Theta }p(x,\theta )>\chi $. Then, by Remark~\ref{Remark 12}(iii),
there are events that have probability tending to $1$ on which $\inf_{\theta
\in \Theta }\,\inf_{x\in \Omega }\tilde{p}_{k}(\theta )(x)>\chi $ holds
true. Since $\mathcal{P}(t,\chi ,D)\subseteq \mathcal{P}(t,\zeta ,D)$, we
have that on these events $\tilde{p}_{k}(\theta )$ coincides with the
NPML-estimators over the smaller set $\mathcal{P}(t,\chi ,D)$. The result
now follows from what has already been established in Case 1 since
Assumption \ref{mod:Inclusion} (and \ref{mod: Strict inequality}) is also
satisfied with respect to $\mathcal{P}(t,\chi ,D)$.
\end{proof}

\subsection{Donsker-type Theorems for NPML-Estimators\label{Section: A
uniform Donsker theorem for AML-estimators}}

Nickl (2007) established Part (a) of the following Donsker-type result under
the additional assumption that $\zeta >0$ holds. Part (b) is (modulo
measure-theoretic nuisances) a simple consequence of Part (a).

\begin{thm}
\label{Theorem: Nickl}Let $\mathcal{F}$ be a non-empty bounded subset of $%
\mathsf{W}_{2}^{s}(\Omega )$ for some $s>1/2$.

(a) Suppose Assumption~\ref{dens:InternalPoint} is satisfied. Then, for all
real $j>1/2$,%
\begin{equation}
\sup_{f\in \mathcal{F}}\left\vert \sqrt{n}\int_{\Omega }(\hat{p}%
_{n}-p_{\blacktriangle })fd\lambda -\sqrt{n}(\mathbb{P}_{n}-\mathbb{P}%
)f\right\vert =o_{\mathbb{P}}(n^{-(\min (s,t)-j)/(2t+1)})  \label{UCLT_0}
\end{equation}%
as $n\rightarrow \infty $; in particular, the l.h.s.~of the above display is 
$o_{\mathbb{P}}(1)$ as $n\rightarrow \infty $. Consequently, the stochastic
process $f\mapsto \sqrt{n}\int_{\Omega }(\hat{p}_{n}-p_{\blacktriangle
})fd\lambda $ converges weakly to a $\mathbb{P}$-Brownian bridge in $\ell
^{\infty }(\mathcal{F})$.

(b) Suppose $p_{\theta }\in \mathcal{P}(t,\zeta ,D)$, $\inf_{x\in \Omega
}p(x,\theta )>\zeta $, and $\Vert p_{\theta }\Vert _{t,2}<D$ hold for a
given $\theta \in \Theta $. Then, for the given $\theta $, a result
analogous to Part (a) holds for the process $f\mapsto \sqrt{k}\int_{\Omega }(%
\tilde{p}_{k}(\theta )-p_{\theta })fd\lambda $ with $\mathbb{P}_{k}$ and $%
\mathbb{P}$, respectively, replaced by $\mathbb{P}_{\theta ,k}$ and $\mathbb{%
P}_{\theta }$, where $\mathbb{P}_{\theta ,k}$ is the empirical measure of $%
X_{1}(\theta ),\ldots ,X_{k}(\theta )$ and $\mathbb{P}_{\theta }$ is the
probability measure corresponding to $p_{\theta }$.
\end{thm}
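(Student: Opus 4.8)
\textbf{Proof plan for Theorem \ref{Theorem: Nickl}.}

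The plan is to treat Part (a) and Part (b) separately, since the excerpt tells us Part (a) is essentially Nickl (2007) extended to the case $\zeta = 0$, while Part (b) is a transfer argument. For Part (a), I would first dispose of the case $\zeta > 0$ by invoking Nickl's (2007) Theorem~3 directly: under Assumption~\ref{dens:InternalPoint} all the hypotheses of that theorem are met (the density $p_{\blacktriangle}$ is an interior point of $\mathcal{P}(t,\zeta,D)$ relative to $\mathsf{H}_t$ by Proposition~\ref{prop_interior}, and the densities in $\mathcal{P}(t,\zeta,D)$ are uniformly bounded below by $\zeta > 0$). The weak convergence conclusion in $\ell^\infty(\mathcal{F})$ then follows from the $o_{\mathbb{P}}(1)$ statement together with the classical fact that $f \mapsto \sqrt{n}(\mathbb{P}_n - \mathbb{P})f$ converges weakly to a $\mathbb{P}$-Brownian bridge over any $\mathbb{P}$-Donsker class $\mathcal{F}$ — and a bounded subset of $\mathsf{W}_2^s(\Omega)$ with $s > 1/2$ is $\mathbb{P}$-Donsker since its bracketing entropy is $\lesssim \varepsilon^{-1/s}$, which is integrable (this is the analogue of Proposition~\ref{Bracketing entropy of F*}(a) with $\rho$ the identity).

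For the case $\zeta = 0$, I would use the localization trick already deployed in the proofs of Proposition~\ref{pointwise_rate}(a) and Theorem~\ref{Uniform rate of AML-estimators}: by Assumption~\ref{dens:InternalPoint} we have $\inf_{x\in\Omega} p_{\blacktriangle}(x) > 0$, so choose $\chi$ with $0 < \chi < \inf_{x\in\Omega} p_{\blacktriangle}(x)$ and also $\chi < \zeta'$ small enough that $\Vert p_{\blacktriangle}\Vert_{t,2} < D$ still holds and $p_{\blacktriangle}$ is interior to $\mathcal{P}(t,\chi,D)$ relative to $\mathsf{H}_t$. By Remark~\ref{Remark 12}(i) the NPML-estimator $\hat{p}_n$ lies in $\mathcal{P}(t,\chi,D)$ on events of probability tending to $1$, and on those events it coincides with the NPML-estimator computed over the smaller class $\mathcal{P}(t,\chi,D)$ (since $\mathcal{P}(t,\chi,D)\subseteq\mathcal{P}(t,0,D)$ and the likelihood is maximized over the larger set at a point of the smaller set). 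Applying the already-established $\zeta > 0$ case to that smaller class gives the claimed $o_{\mathbb{P}}(\cdot)$ bound; a standard argument shows a rate that holds on a sequence of events of probability tending to $1$ is an $o_{\mathbb{P}}$ statement, and weak convergence in $\ell^\infty(\mathcal{F})$ is unaffected by modification on asymptotically negligible events.

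Part (b) is then handled exactly as Proposition~\ref{pointwise_rate}(b) and Theorem~\ref{Uniform rate of AML-estimators} handle the transfer from data to simulated data: by the representation \eqref{representation}, $\tilde{p}_k(\theta)$ equals $\hat{p}_k(\cdot; X_1(\theta),\ldots,X_k(\theta))$, and since $(x_1,\ldots,x_k) \mapsto \hat{p}_k(\cdot; x_1,\ldots,x_k)$ is a measurable map into $(\mathcal{P}(t,\zeta,D),\Vert\cdot\Vert_\Omega)$, the process $f \mapsto \sqrt{k}\int_\Omega(\tilde{p}_k(\theta)-p_\theta)f\,d\lambda$ has the same law as the corresponding process built from $\hat{p}_k(\cdot; Z_1,\ldots,Z_k)$ where $Z_1,\ldots,Z_k$ are coordinate projections on $(\Omega^{\mathbb{N}},\mathcal{B}(\Omega)^{\mathbb{N}})$ with common density $p_\theta$. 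The hypotheses $p_\theta \in \mathcal{P}(t,\zeta,D)$, $\inf_x p(x,\theta) > \zeta$, and $\Vert p_\theta\Vert_{t,2} < D$ say precisely that $p_\theta$ plays the role of $p_{\blacktriangle}$ under Assumption~\ref{dens:InternalPoint} for this coordinate-projection model, so Part (a) applies verbatim and the conclusion transfers back by equality of laws. The main obstacle is not conceptual but the measure-theoretic bookkeeping flagged in the text: one must be careful that all the suprema over $\mathcal{F}$ and the weak-convergence statements are preserved under the law-equivalence and under restriction to asymptotically negligible events, and that measurability of the relevant suprema holds (which is covered by Proposition~\ref{Borel_1} and Lemma~\ref{Borel} in the appendices).
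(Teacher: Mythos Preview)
Your proposal is correct and follows essentially the same approach as the paper: Part~(a) invokes Nickl (2007) for $\zeta>0$ and uses the localization trick via Remark~\ref{Remark 12}(i) for $\zeta=0$, and Part~(b) is a transfer-of-laws argument based on representation~\eqref{representation}. The paper is slightly more explicit than you are on one point in Part~(b): to conclude that the \emph{entire} law of the process in $\ell^\infty(\mathcal{F})$ (not just the suprema or finite-dimensional distributions) transfers, it observes that $f\mapsto \mathfrak{X}_k(\breve{x},f)$ in fact takes values in the Polish subspace $\mathsf{C}^0(\mathcal{F},\Vert\cdot\Vert_\Omega)$, whose Borel $\sigma$-field is generated by point evaluations; also, the relevant measurability reference is Proposition~\ref{Borel_2} rather than~\ref{Borel_1}.
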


\begin{proof}
(a) Measurability of the l.h.s.~of (\ref{UCLT_0}) follows from Proposition %
\ref{Borel_2} in Appendix \ref{App D2}. For $\zeta >0$ the result follows
immediately from Theorem 3 in Nickl (2007). Now suppose $\zeta =0$. In view
of Assumption \ref{dens:InternalPoint} we may choose $\chi >0$ such that $%
\inf_{x\in \Omega }p_{\blacktriangle }(x)>\chi $. Then, by Remark~\ref%
{Remark 12}(i), there are events that have probability tending to $1$ on
which $\inf_{x\in \Omega }\hat{p}_{n}(x)>\chi $ holds true. Since $\mathcal{P%
}(t,\chi ,D)\subseteq \mathcal{P}(t,\zeta ,D)=\mathcal{P}(t,0,D)$, we have
that on these events $\hat{p}_{n}$ coincides with the NPML-estimators over
the smaller set $\mathcal{P}(t,\chi ,D)$. Since $\chi >0$ and since
Assumption \ref{dens:InternalPoint} is also satisfied relative to $\mathcal{P%
}(t,\chi ,D)$, the result now follows from what has already been established.

(b) Note that $\mathfrak{X}_{k}(\breve{x},f)$ and $\sup_{f\in \mathcal{F}%
}\left\vert \mathfrak{X}_{k}(\breve{x},f)-\mathfrak{Y}_{k}(\breve{x}%
,f)\right\vert $ defined in Proposition \ref{Borel_2}(a) in Appendix \ref%
{App D2} are Borel measurable on $\Omega ^{k}$. Consequently, 
\begin{equation*}
\sup_{f\in \mathcal{F}}\left\vert \mathfrak{X}_{k}(X_{1}(\theta ),\ldots
,X_{k}(\theta ),f)-\mathfrak{Y}_{k}(X_{1}(\theta ),\ldots ,X_{k}(\theta
),f)\right\vert
\end{equation*}%
and 
\begin{equation*}
\sup_{f\in \mathcal{F}}\left\vert \mathfrak{X}_{k}(Z_{1},\ldots ,Z_{k},f)-%
\mathfrak{Y}_{k}(Z_{1},\ldots ,Z_{k},f)\right\vert
\end{equation*}%
have the same distribution, where the $Z_{i}$ are as in the proof of
Proposition \ref{pointwise_rate}. Furthermore, it follows that the
finite-dimensional distributions of the processes $f\mapsto \mathfrak{X}%
_{k}(X_{1}(\theta ),\ldots ,X_{k}(\theta ),f)$ and $f\mapsto \mathfrak{X}%
_{k}(Z_{1},\ldots ,Z_{k},f)$ coincide. It is easy to see that the maps $%
f\rightarrow \mathfrak{X}_{k}(\breve{x},f)$ belong to $\mathsf{C}^{0}(%
\mathcal{F},\Vert \cdot \Vert _{\Omega })$, the space of bounded uniformly
continuous functions on $(\mathcal{F},\Vert \cdot \Vert _{\Omega })$.
Consequently, $\mathfrak{X}_{k}(\breve{x},\cdot )$ is Borel measurable as a
random element in $\mathsf{C}^{0}(\mathcal{F},\Vert \cdot \Vert _{\Omega })$%
, since the Borel $\sigma $-field on this space is generated by the
point-evaluations (observe that $(\mathcal{F},\Vert \cdot \Vert _{\Omega })$
is totally bounded in view of Lemma \ref{Lemma: bounded subsets are
uniformly Donsker} in Appendix \ref{App D}). Since $\mathsf{C}^{0}(\mathcal{F%
},\Vert \cdot \Vert _{\Omega })$ is Polish by total boundedness of $(%
\mathcal{F},\Vert \cdot \Vert _{\Omega })$, the entire laws of the processes 
$f\mapsto \mathfrak{X}_{k}(X_{1}(\theta ),\ldots ,X_{k}(\theta ),f)$ and $%
f\mapsto \mathfrak{X}_{k}(Z_{1},\ldots ,Z_{k},f)$ on $\mathsf{C}^{0}(%
\mathcal{F},\Vert \cdot \Vert _{\Omega })$, and hence on $\ell ^{\infty }(%
\mathcal{F})$, coincide. In view of (\ref{representation}), Part (b) now
follows from applying the already established Part (a) to $\hat{p}_{k}(\cdot
;Z_{1},\ldots ,Z_{k})$.
\end{proof}

\bigskip

The next theorem shows that a weak limit theorem for the stochastic process $%
(\theta ,f)\mapsto \sqrt{k}\int_{\Omega }(\tilde{p}_{k}(\theta )-p_{\theta
})fd\lambda $ can be obtained even in the space $\ell ^{\infty }(\Theta
\times \mathcal{F})$. A corollary of this is then a uniform-in-$\theta $
version of Part (b) of the above theorem. The proof of this theorem largely
follows the ideas in Nickl (2007): Loosely speaking, a mean-value expansion
of $\mathbf{D}L_{k}(\theta ,\tilde{p}_{k}(\theta ))(\cdot )$, analogous to
the one in the classical parametric case, shows that this can be represented
as the sum of the score evaluated at the true density $p_{\theta }$, i.e., $%
\mathbf{D}L_{k}(\theta ,p_{\theta })(\cdot )$, plus a second derivative term
applied to the estimation error $(\tilde{p}_{k}(\theta )-p_{\theta },\cdot )$%
. [For given $\theta \in \Theta $, the Fr\'{e}chet-derivative of $L_{k}$
with respect to the second argument is here denoted by $\mathbf{D}%
L_{k}(\theta ,\cdot )$.] The score, evaluated at the true density $p_{\theta
}$ and properly scaled, turns out to be an empirical process having a
Gaussian limit. The second derivative term turns out to coincide with $%
-\int_{\Omega }(\tilde{p}_{k}(\theta )-p_{\theta })fd\lambda $ up to
negligible terms. [An important ingredient for establishing negligibility
are the uniform-in-$\theta $ convergence rates for $\tilde{p}_{k}(\theta )$
in different Sobolev norms that have been established in the previous
section.] Apart from a series of technical difficulties not present in the
classical parametric case, the major difficulty is then the following: in
the classical parametric case the usual assumption that the true parameter
belongs to the interior of the parameter space together with consistency
implies that the estimator is eventually an interior point, implying that
the score evaluated at the maximizer is zero. In the present case, while $%
p_{\theta }$ is an interior point of $\mathcal{P}(t,\zeta ,D)$ relative to $%
\mathsf{H}_{t}$ as a consequence of the assumptions underlying Theorem \ref%
{Theorem: Uniform Donsker-type theorem}, the estimator $\tilde{p}_{k}(\theta
)$ is, however, \emph{not} an interior point of the domain $\mathcal{P}%
(t,\zeta ,D)$ (relative to $\mathsf{H}_{t}$) over which optimization is
performed, as shown in Theorem \ref{Theorem: existence of AML-estimators};
in particular, $\tilde{p}_{k}(\theta )$ is \emph{not} consistent w.r.t.~the $%
\left\Vert \cdot \right\Vert _{t,2}$-norm. As a consequence, one can not
conclude that the score evaluated at the maximizer is zero. [Trying to save
this argument directly by using an $\left\Vert \cdot \right\Vert _{s,2}$%
-norm with $s<t$ does not work either: while $\tilde{p}_{k}(\theta )$ is
consistent in the $\left\Vert \cdot \right\Vert _{s,2}$-norm, $p_{\theta }$
is then \emph{not} an interior point of $\mathcal{P}(t,\zeta ,D)$ relative
to $\mathsf{H}_{s}$.] Hence, a different reasoning is needed to show that $%
\mathbf{D}L_{k}(\theta ,\tilde{p}_{k}(\theta ))(\cdot )$, although not
necessarily zero, is of sufficiently small order. This is provided in the
subsequent lemma, which is essentially a uniform version of Lemma 4 in Nickl
(2007). The proof as given below makes use of Proposition \ref{prop_interior}
which allows us to simplify the arguments given in Nickl (2007). In the
following lemma let $\mathsf{H}_{t}^{0}$ denote the linear subspace of $%
\mathsf{W}_{2}^{t}(\Omega )$ that is parallel to $\mathsf{H}_{t}$.

\begin{lem}
\label{derivativeorder}Suppose Assumptions~\ref%
{mod:InclusionWithStrictInequalities} and \ref{rho: Hoelderity of rho} are
satisfied and $\zeta >0$ holds. Let $\mathcal{G}$ be a non-empty bounded
subset of $\mathsf{H}_{t}^{0}\subseteq \mathsf{W}_{2}^{t}(\Omega )$. Then 
\begin{equation}
\sup_{\theta \in \Theta }\,\sup_{g\in \mathcal{G}}\left\vert \mathbf{D}%
L_{k}(\theta ,\tilde{p}_{k}(\theta ))(g)\right\vert =o_{\mu
}(k^{-(t-j)/(2t+1)-1/2})  \label{order of derivative}
\end{equation}%
for every real $j>1/2$.
\end{lem}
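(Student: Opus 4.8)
The plan is to reduce the claim, via the variational characterization of the maximizer together with the \emph{uniform interiority of} $p_{\theta}$ furnished by Proposition~\ref{prop_interior}, to a bound on a centered empirical process evaluated at the estimation error $v_{k}(\theta):=\tilde{p}_{k}(\theta)-p_{\theta}$, and then to control that process with the bracketing-entropy estimate of Proposition~\ref{Bracketing entropy of F*}(a), the local maximal inequality of Theorem~\ref{Van der Vaart lemma}, and the uniform $\mathcal{L}^{2}$-rate of Theorem~\ref{Uniform rate of AML-estimators}. For the reduction, first note that since $\zeta>0$ every $p\in\mathcal{P}(t,\zeta,D)$ satisfies $\inf_{\Omega}p\geq\zeta>0$, so $L_{k}(\theta,\cdot)$ is concave with well-defined one-sided directional derivatives on the convex set $\mathcal{P}(t,\zeta,D)$, and maximality of $\tilde{p}_{k}(\theta)$ gives $\mathbf{D}L_{k}(\theta,\tilde{p}_{k}(\theta))(q-\tilde{p}_{k}(\theta))\leq0$ for every $q\in\mathcal{P}(t,\zeta,D)$. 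By Assumption~\ref{mod:InclusionWithStrictInequalities} and Proposition~\ref{prop_interior}(b) there is a $\delta>0$ with $\mathcal{U}_{t,\delta}(p_{\theta})\cap\mathsf{H}_{t}\subseteq\mathcal{P}(t,\zeta,D)$ for all $\theta\in\Theta$; hence, writing $G:=\sup_{g\in\mathcal{G}}\Vert g\Vert_{t,2}$ (the case $G=0$ being trivial) and $\epsilon:=\delta/(2G)$, the functions $p_{\theta}\pm\epsilon g$ belong to $\mathcal{P}(t,\zeta,D)$ for all $\theta\in\Theta$ and $g\in\mathcal{G}$ (they lie in $\mathsf{H}_{t}$ because $g\in\mathsf{H}_{t}^{0}$). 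Inserting $q=p_{\theta}\pm\epsilon g$ into the variational inequality and combining the two resulting estimates gives
\begin{equation*}
\sup_{\theta\in\Theta}\,\sup_{g\in\mathcal{G}}\bigl|\mathbf{D}L_{k}(\theta,\tilde{p}_{k}(\theta))(g)\bigr|\leq\epsilon^{-1}\sup_{\theta\in\Theta}\mathbf{D}L_{k}(\theta,\tilde{p}_{k}(\theta))(v_{k}(\theta)),
\end{equation*}
the right-hand side being nonnegative (take $q=p_{\theta}$, which is feasible by Assumption~\ref{mod:Inclusion}). Concavity of $L_{k}(\theta,\cdot)$, i.e.\ monotonicity of $s\mapsto\mathbf{D}L_{k}(\theta,p_{\theta}+sv_{k}(\theta))(v_{k}(\theta))$, then yields $\mathbf{D}L_{k}(\theta,\tilde{p}_{k}(\theta))(v_{k}(\theta))\leq\mathbf{D}L_{k}(\theta,p_{\theta})(v_{k}(\theta))$, and the latter equals $(\mu_{k}-\mu)$ applied to $h_{k,\theta}:=v_{k}(\theta)(\rho(\cdot,\theta))/p_{\theta}(\rho(\cdot,\theta))$, since $\int_{V}h_{k,\theta}\,d\mu=\int_{\Omega}v_{k}(\theta)\,d\lambda=0$ as $v_{k}(\theta)\in\mathsf{H}_{t}^{0}$. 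So it suffices to bound $\sup_{\theta\in\Theta}(\mu_{k}-\mu)(h_{k,\theta})$.

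Second I would estimate this empirical-process term. By Theorem~\ref{Uniform rate of AML-estimators} with $s=0$ one may, up to an event of probability less than an arbitrary $\varepsilon_{0}>0$, work on the event $\{\sup_{\theta}\Vert v_{k}(\theta)\Vert_{2}\leq Mk^{-t/(2t+1)}\}$; off this event one bounds $\sup_{\theta}\sup_{g}|\mathbf{D}L_{k}(\theta,\tilde{p}_{k}(\theta))(g)|\leq G/\zeta$ deterministically, which is harmless for an $o_{\mu}$-statement. On the event, since $1/p_{\theta}$ is uniformly bounded in $\mathsf{W}_{2}^{t}(\Omega)$ and in sup-norm (Proposition~\ref{prop:SobolevembedsinHoelder}(b),(d) and Assumption~\ref{mod:InclusionWithStrictInequalities}) and $\mathsf{W}_{2}^{t}(\Omega)$ is a multiplication algebra (Proposition~\ref{prop:SobolevembedsinHoelder}(a)), the functions $v_{k}(\theta)/p_{\theta}$ lie in a fixed bounded subset $\mathcal{F}$ of $\mathsf{W}_{2}^{t}(\Omega)$ and satisfy $\Vert v_{k}(\theta)/p_{\theta}\Vert_{2}\lesssim k^{-t/(2t+1)}$; hence each $h_{k,\theta}$ lies in a class $\mathcal{H}_{k}\subseteq\{f(\rho(\cdot,\theta)):\theta\in\Theta,\,f\in\mathcal{F}\}$ that is uniformly sup-norm bounded and has $\mathcal{L}^{2}(\mu)$-size $\sigma_{k}\lesssim k^{-t/(2t+1)}$. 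Proposition~\ref{Bracketing entropy of F*}(a) with $s=t$ gives $H_{[\hspace{0.75ex}]}(\varepsilon,\mathcal{H}_{k},\Vert\cdot\Vert_{2,\mu})\lesssim\varepsilon^{-1/t}$, so $I_{[\hspace{0.75ex}]}(\sigma_{k},\mathcal{H}_{k},\Vert\cdot\Vert_{2,\mu})\lesssim\sigma_{k}^{1-1/(2t)}$, and feeding this into Theorem~\ref{Van der Vaart lemma} exactly as in Case~1 of the proof of Theorem~\ref{Uniform rate of AML-estimators} yields $\func{E}^{\ast}\Vert\sqrt{k}(\mu_{k}-\mu)\Vert_{\mathcal{H}_{k}}\lesssim\sigma_{k}^{1-1/(2t)}+\sigma_{k}^{-1/t}k^{-1/2}\lesssim k^{-(2t-1)/(2(2t+1))}$. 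Consequently $\sup_{\theta}(\mu_{k}-\mu)(h_{k,\theta})=O_{\mu}(k^{-1/2-(2t-1)/(2(2t+1))})$, and since $(2t-1)/(2(2t+1))>(t-j)/(2t+1)$ is equivalent to $j>1/2$, this is $o_{\mu}(k^{-(t-j)/(2t+1)-1/2})$ for every real $j>1/2$, which together with the first step proves the lemma.

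I expect the main obstacle to be the first step: unlike in the classical parametric case one cannot conclude that the score vanishes at the maximizer, because $\tilde{p}_{k}(\theta)$ always lies on the Sobolev sphere $\Vert\cdot\Vert_{t,2}=D$, so no $\mathsf{H}_{t}$-ball around it is feasible; the remedy is to exploit the (uniform) interiority of $p_{\theta}$ — not of $\tilde{p}_{k}(\theta)$ — to trade an arbitrary direction $g$ for the radial direction $v_{k}(\theta)$. A secondary point requiring care is that the empirical process must be localized in the plain $\mathcal{L}^{2}$-norm (radius $\asymp k^{-t/(2t+1)}$) while the bracketing entropy is drawn from the $\mathsf{W}_{2}^{t}$-boundedness of $v_{k}(\theta)/p_{\theta}$ rather than from smallness in a weaker Sobolev norm; only this combination makes the resulting rate beat $k^{-(t-j)/(2t+1)-1/2}$ simultaneously for all $j>1/2$.
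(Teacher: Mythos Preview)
Your reduction step is identical to the paper's: both exploit Proposition~\ref{prop_interior}(b) to insert $q=p_{\theta}\pm\epsilon g$ into the first-order optimality condition and thereby trade an arbitrary direction $g\in\mathcal{G}$ for the radial direction $v_{k}(\theta)=\tilde{p}_{k}(\theta)-p_{\theta}$, obtaining
\[
\sup_{\theta,g}\bigl|\mathbf{D}L_{k}(\theta,\tilde{p}_{k}(\theta))(g)\bigr|
\;\lesssim\;
\sup_{\theta}\mathbf{D}L_{k}(\theta,\tilde{p}_{k}(\theta))(v_{k}(\theta)).
\]
From this point the two arguments diverge. The paper adds and subtracts the population derivative, splitting into
\[
(\mathbf{D}L_{k}-\mathbf{D}L)(\theta,\tilde{p}_{k}(\theta))(v_{k}(\theta))
\;+\;
\bigl(\mathbf{D}L(\theta,\tilde{p}_{k}(\theta))-\mathbf{D}L(\theta,p_{\theta})\bigr)(v_{k}(\theta)),
\]
and then bounds the first term by $\|v_{k}(\theta)\|_{j,2}\cdot\sup_{\Theta\times\mathcal{P}(t,\zeta,D)}\|\mathbf{D}L_{k}-\mathbf{D}L\|_{\mathcal{U}_{j,1}}$, invoking the intermediate-norm rate of Theorem~\ref{Uniform rate of AML-estimators} (with $s=j$) together with the \emph{global} Donsker bound of Proposition~\ref{Closeness of the derivatives of the scaled likelihood}, and the second term by $\zeta^{-1}\|v_{k}(\theta)\|_{2}^{2}$. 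You instead use concavity of $L_{k}(\theta,\cdot)$ to pass the evaluation point from $\tilde{p}_{k}(\theta)$ to $p_{\theta}$, landing directly on the centered empirical increment $(\mu_{k}-\mu)(h_{k,\theta})$, and then bound this via a \emph{localized} maximal inequality (Theorem~\ref{Van der Vaart lemma}) with radius $\sigma_{k}\asymp k^{-t/(2t+1)}$ read off from the $\mathcal{L}^{2}$-rate alone. Your approach is somewhat more direct and avoids both Proposition~\ref{Closeness of the derivatives of the scaled likelihood} and the intermediate-Sobolev-norm rate; the paper's approach is more modular in that it reuses prepackaged building blocks without a separate localization argument. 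Both deliver the rate $O_{\mu}\bigl(k^{-1/2-(t-1/2)/(2t+1)}\bigr)$, which is $o_{\mu}\bigl(k^{-1/2-(t-j)/(2t+1)}\bigr)$ for every $j>1/2$. One small point to make explicit in your write-up: the class $\mathcal{H}_{k}$ must be taken deterministic (e.g.\ $\{f(\rho(\cdot,\theta)):\theta\in\Theta,\,f\in\mathcal{F},\,\|f\|_{2}\le C\sigma_{k}\}$) so that Theorem~\ref{Van der Vaart lemma} applies; on the good event the random functions $h_{k,\theta}$ fall into it, and the containment $\mathcal{H}_{k}\subseteq\mathcal{F}^{\ast}$ passes the entropy bound of Proposition~\ref{Bracketing entropy of F*}(a) down to $\mathcal{H}_{k}$.
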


\begin{proof}
Measurability of the l.h.s.~of (\ref{order of derivative}) follows from
Proposition \ref{Borel_2}(c) in Appendix \ref{App D2}. W.l.o.g. we may
assume $1/2<j<t$. By Assumption \ref{mod:InclusionWithStrictInequalities}
and Proposition \ref{prop_interior}(b) we can find $\delta >0$ small enough
such that 
\begin{equation*}
p_{\theta }+w\in \mathcal{P}(t,\zeta ,D)
\end{equation*}%
holds for every $\theta \in \Theta $ and every $w\in \mathcal{U}_{t,\delta
}\cap \mathsf{H}_{t}^{0}$. Note that $\delta $ does not depend on $\theta $.
Since $\tilde{p}_{k}(\theta )$ maximizes $L_{k}(\theta ,\cdot )$ (which is
differentiable in view of Proposition \ref%
{prop:DerivativesofthescaledlikelihoodNickl} as $\zeta >0$ is assumed) over $%
\mathcal{P}(t,\zeta ,D)$ we conclude that 
\begin{equation*}
\mathbf{D}L_{k}(\theta ,\tilde{p}_{k}(\theta ))(p_{\theta }+w-\tilde{p}%
_{k}(\theta ))\leq 0
\end{equation*}%
holds for all $\theta \in \Theta $ and all $w\in \mathcal{U}_{t,\delta }\cap 
\mathsf{H}_{t}^{0}$. This implies 
\begin{equation*}
\mathbf{D}L_{k}(\theta ,\tilde{p}_{k}(\theta ))(w)\leq \mathbf{D}%
L_{k}(\theta ,\tilde{p}_{k}(\theta ))(\tilde{p}_{k}(\theta )-p_{\theta })
\end{equation*}%
for all $\theta \in \Theta $ and $w\in \mathcal{U}_{t,\delta }\cap \mathsf{H}%
_{t}^{0}$. Since $\mathcal{U}_{t,\delta }\cap \mathsf{H}_{t}^{0}$ is
invariant under multiplication by $-1$, we obtain%
\begin{eqnarray*}
\sup_{\theta \in \Theta }\sup_{w\in \mathcal{U}_{t,\delta }\cap \mathsf{H}%
_{t}^{0}}\left\vert \mathbf{D}L_{k}(\theta ,\tilde{p}_{k}(\theta
))(w)\right\vert &\leq &\sup_{\theta \in \Theta }\left\vert \mathbf{D}%
L_{k}(\theta ,\tilde{p}_{k}(\theta ))(\tilde{p}_{k}(\theta )-p_{\theta
})\right\vert \\
&\leq &\sup_{\theta \in \Theta }\left\vert (\mathbf{D}L_{k}(\theta ,\tilde{p}%
_{k}(\theta ))-\mathbf{D}L(\theta ,\tilde{p}_{k}(\theta )))(\tilde{p}%
_{k}(\theta )-p_{\theta })\right\vert \\
&&+\sup_{\theta \in \Theta }\left\vert (\mathbf{D}L(\theta ,\tilde{p}%
_{k}(\theta ))-\mathbf{D}L(\theta ,p_{\theta }))(\tilde{p}_{k}(\theta
)-p_{\theta })\right\vert \\
&\leq &\sup_{\theta \in \Theta }\left\Vert \tilde{p}_{k}(\theta )-p_{\theta
}\right\Vert _{j,2}\sup_{\Theta \times \mathcal{P}(t,\zeta ,D)}\left\Vert 
\mathbf{D}L_{k}(\theta ,p)-\mathbf{D}L(\theta ,p)\right\Vert _{\mathcal{U}%
_{j,1}} \\
&&+\zeta ^{-1}\sup_{\theta \in \Theta }\left\Vert \tilde{p}_{k}(\theta
)-p_{\theta }\right\Vert _{2}^{2},
\end{eqnarray*}%
where we have repeatedly used Proposition~\ref%
{prop:DerivativesofthescaledlikelihoodNickl}, in particular to establish
that $\mathbf{D}L(\theta ,p_{\theta }))(\tilde{p}_{k}(\theta )-p_{\theta
})=0 $. Now use Theorem~\ref{Uniform rate of AML-estimators} and Proposition~%
\ref{Closeness of the derivatives of the scaled likelihood} with $\alpha =1$
and $\mathcal{H}_{1}=\mathcal{U}_{j,1}$ to conclude that the r.h.s.\ of the
last display is 
\begin{equation*}
O_{\mu }(k^{-(t-j)/(2t+1)-1/2})+O_{\mu }(k^{-2t/(2t+1)})=O_{\mu
}(k^{-(t-j)/(2t+1)-1/2})
\end{equation*}%
since $j>1/2$. A fortiori this holds for all $j>1/2$ and thus proves the
result for the case where $\mathcal{G}$ is contained in $\mathcal{U}%
_{t,\delta }\cap \mathsf{H}_{t}^{0}$. Since (\ref{order of derivative}) is
homogenous w.r.t.~scaling of $\mathcal{G}$ and since $\delta $ does not
depend on $\mathcal{G}$, the just mentioned inclusion can, however, always
be achieved by rescaling.
\end{proof}

\bigskip

We note that the lemma can easily be extended to the case $\zeta =0$ by
making use of Remark \ref{Remark 12}(iii). The main result is now the
following.

\begin{thm}
\label{Theorem: Uniform Donsker-type theorem} Suppose Assumptions~\ref%
{mod:InclusionWithStrictInequalities} and \ref{rho: Hoelderity of rho} are
satisfied. Let $\mathcal{F}$ be a non-empty bounded subset of $\mathsf{W}%
_{2}^{s}(\Omega )$ for some $s>1/2$. Then:

(a) For all real $j>1/2$,%
\begin{equation}
\sup_{\theta \in \Theta }\,\sup_{f\in \mathcal{F}}\left\vert \sqrt{k}%
\int_{\Omega }(\tilde{p}_{k}(\theta )-p_{\theta })fd\lambda -\sqrt{k}(\mu
_{k}-\mu )f(\rho (\cdot ,\theta ))\right\vert =o_{\mu }(k^{-(\min
(s,t)-j)/(2t+1)})  \label{UCLT}
\end{equation}%
as $k\rightarrow \infty $; in particular, the l.h.s.~of the above display is 
$o_{\mu }(1)$ as $k\rightarrow \infty $.

(b) There exists a zero-mean Gaussian process $\mathbb{G}$ indexed by $%
\Theta \times \mathcal{F}$ with bounded sample paths such that the
stochastic process $(\theta ,f)\mapsto \sqrt{k}\int_{\Omega }(\tilde{p}%
_{k}(\theta )-p_{\theta })fd\lambda $ converges weakly to $\mathbb{G}(\theta
,f)$ in $\ell ^{\infty }(\Theta \times \mathcal{F})$. The process $\mathbb{G}
$ is measurable as a mapping with values in $\ell ^{\infty }(\Theta \times 
\mathcal{F})$, has separable range, and has sample paths that are uniformly
continuous with respect to the pseudo-metric $d((\theta ,f),(\theta ^{\prime
},g))=\left( \limfunc{Var}[\mathbb{G}(\theta ,f)-\mathbb{G}(\theta ^{\prime
},g)]\right) ^{1/2}$. Its covariance function is given by%
\begin{equation*}
\limfunc{Cov}[\mathbb{G}(\theta ,f),\mathbb{G}(\theta ^{\prime
},g)]=\int_{V}\left( f(\rho (\cdot ,\theta ))-\int_{V}f(\rho (\cdot ,\theta
))d\mu \right) \left( g(\rho (\cdot ,\theta ^{\prime }))-\int_{V}g(\rho
(\cdot ,\theta ^{\prime }))d\mu \right) d\mu .
\end{equation*}

(c)%
\begin{equation*}
\sup_{\theta \in \Theta }\,\sup_{f\in \mathcal{F}}\left\vert \sqrt{k}%
\int_{\Omega }(\tilde{p}_{k}(\theta )-p_{\theta })fd\lambda \right\vert
=O_{\mu }(1)\quad \text{as }k\rightarrow \infty .
\end{equation*}
\end{thm}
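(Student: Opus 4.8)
The plan is to establish part~(a) — the uniform-in-$\theta$ linearisation — first, and then to read off (b) and (c) from it together with the fact, recorded in Proposition~\ref{Bracketing entropy of F*}(a), that $\mathcal{F}^{\ast}=\{f(\rho(\cdot,\theta)):\theta\in\Theta,\ f\in\mathcal{F}\}$ is $\mu$-Donsker; throughout, measurability of the suprema is handled as in Proposition~\ref{Borel_2} in Appendix~\ref{App D2}. Just as in the proofs of Theorem~\ref{Uniform rate of AML-estimators} and Theorem~\ref{Theorem: Nickl}(a), and using Remark~\ref{Remark 12}(iii) exactly as indicated after Lemma~\ref{derivativeorder}, one may reduce to the case $\zeta>0$: choose $\chi>0$ with $\inf_{\Omega\times\Theta}p(x,\theta)>\chi$; on events of $\mu$-probability tending to $1$ the estimator $\tilde p_{k}(\theta)$ coincides with the NPML-estimator over $\mathcal{P}(t,\chi,D)$, relative to which Assumption~\ref{mod:InclusionWithStrictInequalities} still holds. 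So from now on $\zeta>0$, and $L_{k}(\theta,\cdot)$, $L(\theta,\cdot)$ are twice Fréchet-differentiable on $\mathcal{P}(t,\zeta,D)$ with derivatives given by Proposition~\ref{prop:DerivativesofthescaledlikelihoodNickl}. For $f\in\mathcal{F}$, $\theta\in\Theta$ put $c_{\theta}(f)=\int_{\Omega}p_{\theta}f\,d\lambda$ and $g_{\theta,f}=p_{\theta}\bigl(f-c_{\theta}(f)\bigr)$; then $\int_{\Omega}g_{\theta,f}\,d\lambda=0$, and since $\int_{\Omega}\tilde p_{k}(\theta)\,d\lambda=\int_{\Omega}p_{\theta}\,d\lambda=1$ the constant $c_{\theta}(f)$ cancels in both of
\begin{equation*}
\int_{\Omega}(\tilde p_{k}(\theta)-p_{\theta})f\,d\lambda=\int_{\Omega}(\tilde p_{k}(\theta)-p_{\theta})\,\frac{g_{\theta,f}}{p_{\theta}}\,d\lambda,\qquad (\mu_{k}-\mu)\Bigl(\frac{g_{\theta,f}}{p_{\theta}}(\rho(\cdot,\theta))\Bigr)=(\mu_{k}-\mu)\bigl(f(\rho(\cdot,\theta))\bigr).
\end{equation*}
When $s\geq t$, $\mathcal{F}$ is a bounded subset of $\mathsf{W}_{2}^{t}(\Omega)$ (Proposition~\ref{prop:SobolevembedsinHoelder}(c)), so by the multiplication-algebra property (Proposition~\ref{prop:SobolevembedsinHoelder}(a)) and $\sup_{\theta}\Vert p_{\theta}\Vert_{t,2}<D$ the set $\mathcal{G}=\{g_{\theta,f}:\theta\in\Theta,\ f\in\mathcal{F}\}$ is a bounded subset of $\mathsf{H}_{t}^{0}$, to which Lemma~\ref{derivativeorder} applies; the case $1/2<s<t$ is reduced to this by mollifying each $f\in\mathcal{F}$ to $f_{\varepsilon}\in\mathsf{W}_{2}^{t}(\Omega)$ with $\Vert f-f_{\varepsilon}\Vert_{2}\lesssim\varepsilon^{s}$, $\Vert f_{\varepsilon}\Vert_{t,2}\lesssim\varepsilon^{-(t-s)}$, the induced errors being controlled via Theorem~\ref{Uniform rate of AML-estimators}, Lemma~\ref{derivativeorder} and a maximal inequality (Theorem~\ref{Van der Vaart lemma}) with Proposition~\ref{Bracketing entropy of F*}(a), and optimising over $\varepsilon=\varepsilon_{k}$ yielding precisely the exponent $\min(s,t)$. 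I therefore take $\mathcal{F}\subseteq\mathsf{W}_{2}^{t}(\Omega)$ below.

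Write $h_{k}(\theta)=\tilde p_{k}(\theta)-p_{\theta}$. Fix $\theta$ and $g=g_{\theta,f}\in\mathcal{G}$. Since every $p_{\theta}+u\,h_{k}(\theta)$, $u\in[0,1]$, lies in $\mathcal{P}(t,\zeta,D)$ by convexity and $\zeta>0$, integrating the second derivative of $L_{k}(\theta,\cdot)$ along this segment (Proposition~\ref{prop:DerivativesofthescaledlikelihoodNickl}) gives
\begin{equation*}
\mathbf{D}L_{k}(\theta,\tilde p_{k}(\theta))(g)=\mathbf{D}L_{k}(\theta,p_{\theta})(g)+\int_{0}^{1}\mathbf{D}^{2}L_{k}\bigl(\theta,p_{\theta}+u\,h_{k}(\theta)\bigr)\bigl(h_{k}(\theta),g\bigr)\,du.
\end{equation*}
The left-hand side is $o_{\mu}(k^{-(t-j)/(2t+1)-1/2})$ uniformly in $\theta$ and $g\in\mathcal{G}$ by Lemma~\ref{derivativeorder}. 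The first term on the right equals $(\mu_{k}-\mu)\bigl(\frac{g}{p_{\theta}}(\rho(\cdot,\theta))\bigr)=(\mu_{k}-\mu)\bigl(f(\rho(\cdot,\theta))\bigr)$, because $\mathbf{D}L(\theta,p_{\theta})(g)=\int_{\Omega}g\,d\lambda=0$ and $\frac{g}{p_{\theta}}=f-c_{\theta}(f)$. In the integral term, $\mathbf{D}^{2}L_{k}(\theta,p)(h,g)=-\mu_{k}\bigl(\frac{hg}{p^{2}}(\rho(\cdot,\theta))\bigr)$; replacing $\mu_{k}$ by $\mu$, expanding $(p_{\theta}+u\,h_{k}(\theta))^{-2}$ around $p_{\theta}^{-2}$, and using $\frac{g}{p_{\theta}}=f-c_{\theta}(f)$ with $\int_{\Omega}h_{k}(\theta)\,d\lambda=0$, the integral term becomes $-\int_{\Omega}h_{k}(\theta)f\,d\lambda$ plus two remainders: (i)~$-\int_{0}^{1}(\mu_{k}-\mu)\bigl(\frac{h_{k}(\theta)g}{(p_{\theta}+u\,h_{k}(\theta))^{2}}(\rho(\cdot,\theta))\bigr)du$; (ii)~$-\int_{0}^{1}\!\int_{\Omega}h_{k}(\theta)\,g\,p_{\theta}\bigl((p_{\theta}+u\,h_{k}(\theta))^{-2}-p_{\theta}^{-2}\bigr)d\lambda\,du$. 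Using $|u\,h_{k}|\leq|h_{k}|$, $\inf_{\Omega}p_{\theta}\geq\zeta>0$ and $\Vert p_{\theta}\Vert_{\Omega}\leq C_{t}D$, the integrand in (ii) is pointwise $\lesssim h_{k}(\theta)^{2}|g|$, so $|(\text{ii})|\lesssim\Vert h_{k}(\theta)\Vert_{2}^{2}$ and hence $\sqrt{k}\,|(\text{ii})|=O_{\mu}(k^{-(2t-1)/(2(2t+1))})$ by Theorem~\ref{Uniform rate of AML-estimators}. Term~(i), after multiplying by $\sqrt{k}$, is dominated, on events of probability tending to $1$, by $\sup|\sqrt{k}(\mu_{k}-\mu)\phi|$ over $\phi$ in $\{h\,g\,p^{-2}(\rho(\cdot,\theta)):\theta\in\Theta,\ p\in\mathcal{P}(t,\zeta,D),\ g\in\mathcal{G},\ h\in\mathcal{P}(t,\zeta,D)-\mathcal{P}(t,\zeta,D),\ \Vert h\Vert_{2}\leq Mk^{-t/(2t+1)}\}$; this class lies in a fixed ball of $\mathsf{W}_{2}^{t}(\Omega)$ (multiplication algebra, together with Proposition~\ref{prop:SobolevembedsinHoelder}(d) for $1/p^{2}$) with $\mathcal{L}^{2}(\mu)$-envelope $\lesssim k^{-t/(2t+1)}$ and bounded sup-norm, so the maximal inequality of Theorem~\ref{Van der Vaart lemma} with the entropy bound of Proposition~\ref{Bracketing entropy of F*}(a) gives $\sqrt{k}\,|(\text{i})|=O_{\mu}(k^{-(2t-1)/(2(2t+1))})$ as well (an alternative, paralleling the proof of Lemma~\ref{derivativeorder}, uses bilinearity of $\mathbf{D}^{2}$ and Proposition~\ref{Closeness of the derivatives of the scaled likelihood}). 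Since $k^{-(2t-1)/(2(2t+1))}=o_{\mu}(k^{-(t-j)/(2t+1)})$ for every $j>1/2$ (this is where the condition $j>1/2$ enters), substituting the three estimates into the expansion and solving for $\sqrt{k}\int_{\Omega}h_{k}(\theta)f\,d\lambda$ proves (a).

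For~(b), part~(a) shows that $(\theta,f)\mapsto\sqrt{k}\int_{\Omega}(\tilde p_{k}(\theta)-p_{\theta})f\,d\lambda$ and $(\theta,f)\mapsto\sqrt{k}(\mu_{k}-\mu)(f(\rho(\cdot,\theta)))$ are asymptotically equivalent in $\ell^{\infty}(\Theta\times\mathcal{F})$, so it suffices to identify the weak limit of the latter. It is the composition of the empirical process $h\mapsto\sqrt{k}(\mu_{k}-\mu)h$ over $\mathcal{F}^{\ast}$ with the surjection $\iota:(\theta,f)\mapsto f(\rho(\cdot,\theta))$ of $\Theta\times\mathcal{F}$ onto $\mathcal{F}^{\ast}$; since $\mathcal{F}^{\ast}$ is $\mu$-Donsker (Proposition~\ref{Bracketing entropy of F*}(a)) the empirical process converges weakly in $\ell^{\infty}(\mathcal{F}^{\ast})$ to a tight zero-mean Gaussian ($\mu$-Brownian-bridge) process, and since the linear map $z\mapsto z\circ\iota$ from $\ell^{\infty}(\mathcal{F}^{\ast})$ to $\ell^{\infty}(\Theta\times\mathcal{F})$ is continuous — indeed norm-preserving, as $\iota$ is onto — the continuous-mapping theorem yields weak convergence in $\ell^{\infty}(\Theta\times\mathcal{F})$ to the Gaussian process $\mathbb{G}=\mathbb{G}_{\mathcal{F}^{\ast}}\circ\iota$; its covariance is the Brownian-bridge covariance of $\mathcal{F}^{\ast}$, i.e.\ the displayed formula, and measurability, separability of range, and $d$-uniform continuity of sample paths transfer from the standard properties of the limit of a bracketing-Donsker class. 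Part~(c) is then immediate: a sequence converging weakly in $\ell^{\infty}(\Theta\times\mathcal{F})$ is stochastically bounded in the supremum norm, the limit having bounded paths, so $\sup_{\theta,f}\bigl|\sqrt{k}\int_{\Omega}(\tilde p_{k}(\theta)-p_{\theta})f\,d\lambda\bigr|=O_{\mu}(1)$.

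The principal obstacle is the second-order remainder in the linearisation, in particular term~(i): because $\tilde p_{k}(\theta)$, hence $p_{\theta}+u\,h_{k}(\theta)$, is built from the very variables $V_{1},\dots,V_{k}$ over which $\mu_{k}$ averages, it cannot be treated as a fixed argument of an empirical process, and one must pass to a supremum over a class that shrinks with $k$ and marry a maximal inequality to the \emph{uniform-in-$\theta$} Sobolev-norm rates of Theorem~\ref{Uniform rate of AML-estimators} (or, equivalently, mimic the bilinear-functional estimate in the proof of Lemma~\ref{derivativeorder}); the bookkeeping has to be arranged so that the $\mathcal{L}^{2}(\mu)$-envelope of this class shrinks at rate $k^{-t/(2t+1)}$ while its $\mathsf{W}_{2}^{t}$-size stays bounded, which is exactly what makes the bound $o_{\mu}(k^{-(t-j)/(2t+1)})$ for every $j>1/2$. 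The role of Lemma~\ref{derivativeorder} — bounding $\mathbf{D}L_{k}(\theta,\tilde p_{k}(\theta))(\cdot)$, which substitutes for the usual ``the score vanishes at an interior maximiser'' step that is unavailable here since $\tilde p_{k}(\theta)$ always lies on the sphere $\Vert\cdot\Vert_{t,2}=D$ — is what lets the scheme close; and for $1/2<s<t$ the mollification must be dovetailed so that its error carries precisely the exponent $\min(s,t)$.
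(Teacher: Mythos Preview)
Your proposal is correct and follows essentially the same route as the paper: for $s\ge t$ you run the same second-order expansion of $\mathbf{D}L_{k}(\theta,\tilde p_{k}(\theta))(g)$ (integral-form Taylor instead of a mean-value point $\bar p_{k}(\theta)$, a cosmetic difference), invoke Lemma~\ref{derivativeorder} for the left-hand side, and split the second-derivative remainder into an $O_{\mu}(\Vert h_{k}\Vert_{2}^{2})$ piece and an empirical-process piece, the latter being handled in the paper by the bilinearity trick plus Proposition~\ref{Closeness of the derivatives of the scaled likelihood} (your stated ``alternative''), which is cleaner than your primary shrinking-class maximal-inequality argument but leads to the same bound; for $1/2<s<t$ your mollification is precisely the role played by the approximators $u_{k}(f)$ from Nickl~(2007) in the paper's Step~2, and parts~(b)--(c) via the isometry $\ell^{\infty}(\mathcal{F}^{\ast})\to\ell^{\infty}(\Theta\times\mathcal{F})$ and Prohorov match the paper verbatim. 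The only place your write-up is thinner than the paper is the $s<t$ step, where you will in fact need control of $\Vert f-f_{\varepsilon}\Vert_{j,2}$ for some $1/2<j<s$ (not just $\Vert\cdot\Vert_{2}$) to bound the empirical-process error, exactly as in the paper's term~$V$.
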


\begin{proof}
Part (a):\textbf{\ }Measurability of the l.h.s.~of (\ref{UCLT}) follows from
Proposition \ref{Borel_2}(b) in Appendix \ref{App D2}.

\emph{Step~1: }We first consider the case $\zeta >0$. Let $\mathcal{G}$ be a
non-empty bounded subset of $\mathsf{H}_{t}^{0}$. Applying the pathwise
mean-value theorem to the function $\mathbf{D}L_{k}(\theta ,\cdot )(g)$,
adding and subtracting a term, and using Proposition~\ref%
{prop:DerivativesofthescaledlikelihoodNickl} leads to%
\begin{eqnarray*}
\mathbf{D}L_{k}(\theta ,\tilde{p}_{k}(\theta ))(g) &=&\mathbf{D}L_{k}(\theta
,p_{\theta })(g)+\mathbf{D}^{2}L_{k}(\theta ,\bar{p}_{k}(\theta ))(\tilde{p}%
_{k}(\theta )-p_{\theta },g) \\
&=&\left( \mu _{k}-\mu \right) (p_{\theta }^{-1}g)(\rho (\cdot ,\theta ))+%
\mathbf{D}^{2}L(\theta ,p_{\theta })(\tilde{p}_{k}(\theta )-p_{\theta },g) \\
&&+\left[ \mathbf{D}^{2}L_{k}(\theta ,\bar{p}_{k}(\theta ))-\mathbf{D}%
^{2}L(\theta ,p_{\theta })\right] (\tilde{p}_{k}(\theta )-p_{\theta },g),
\end{eqnarray*}%
where $\bar{p}_{k}(\theta )=\xi \tilde{p}_{k}(\theta )+(1-\xi )p_{\theta }$
for some $\xi \in (0,1)$; note that $\bar{p}_{k}(\theta )\in \mathcal{P}%
(t,\zeta ,D)$ by convexity. In the above display we have also made use of
the fact that $\mu (p_{\theta }^{-1}g)(\rho (\cdot ,\theta ))=0$ since $g\in 
\mathsf{H}_{t}^{0}$. Again adding and subtracting a term and using
Proposition~\ref{prop:DerivativesofthescaledlikelihoodNickl} this leads to%
\begin{eqnarray*}
\mathbf{D}L_{k}(\theta ,\tilde{p}_{k}(\theta ))(g) &=&\left( \mu _{k}-\mu
\right) (p_{\theta }^{-1}g)(\rho (\cdot ,\theta ))-\int_{\Omega }p_{\theta
}^{-1}(\tilde{p}_{k}(\theta )-p_{\theta })gd\lambda \\
&&+\left[ \mathbf{D}^{2}L_{k}(\theta ,\bar{p}_{k}(\theta ))-\mathbf{D}%
^{2}L(\theta ,\bar{p}_{k}(\theta ))\right] (\tilde{p}_{k}(\theta )-p_{\theta
},g) \\
&&+\int_{\Omega }\bar{p}_{k}^{-2}(\theta )p_{\theta }^{-1}(\bar{p}%
_{k}^{2}(\theta )-p_{\theta }^{2})(\tilde{p}_{k}(\theta )-p_{\theta
})gd\lambda .
\end{eqnarray*}

Consequently, for every real $j$ with $1/2<j<t$ we obtain%
\begin{eqnarray}
&&\sup_{\theta \in \Theta }\sup_{g\in \mathcal{G}}\left\vert \int_{\Omega
}p_{\theta }^{-1}(\tilde{p}_{k}(\theta )-p_{\theta })gd\lambda -\left( \mu
_{k}-\mu \right) (p_{\theta }^{-1}g)(\rho (\cdot ,\theta ))\right\vert 
\notag \\
&\leq &\sup_{\theta \in \Theta }\sup_{g\in \mathcal{G}}\left\vert \mathbf{D}%
L_{k}(\theta ,\tilde{p}_{k}(\theta ))(g)\right\vert +  \notag \\
&&\sup_{\theta \in \Theta }\sup_{g\in \mathcal{G}}\left\vert \left[ \mathbf{D%
}^{2}L_{k}(\theta ,\bar{p}_{k}(\theta ))-\mathbf{D}^{2}L(\theta ,\bar{p}%
_{k}(\theta ))\right] (\tilde{p}_{k}(\theta )-p_{\theta },g)\right\vert 
\notag \\
&&+\sup_{\theta \in \Theta }\sup_{g\in \mathcal{G}}\left\vert \int_{\Omega }%
\bar{p}_{k}^{-2}(\theta )p_{\theta }^{-1}(\bar{p}_{k}^{2}(\theta )-p_{\theta
}^{2})(\tilde{p}_{k}(\theta )-p_{\theta })gd\lambda \right\vert  \notag \\
&=&I+II+III,  \label{basic_0}
\end{eqnarray}%
where $I=o_{\mu }(k^{-(t-j)/(2t+1)-1/2})$ by Lemma \ref{derivativeorder}. We
next bound expressions $II$ and $III$:

Clearly, 
\begin{equation*}
II\leq \sup_{\theta \in \Theta }\Vert \tilde{p}_{k}(\theta )-p_{\theta
}\Vert _{j,2}\sup_{\Theta \times \mathcal{P}(t,\zeta ,D)}\left\Vert \mathbf{D%
}^{2}L(\theta ,p)-\mathbf{D}^{2}L_{k}(\theta ,p)\right\Vert _{\mathcal{U}%
_{j,1}\times \mathcal{G}}
\end{equation*}

The first supremum in the above display is $O_{\mu }(k^{-(t-j)/(2t+1)})$ by
Theorem~\ref{Uniform rate of AML-estimators}. Since $\mathcal{G}$ is bounded
in $\mathsf{W}_{2}^{t}(\Omega )$ and hence also in $\mathsf{W}%
_{2}^{j}(\Omega )$ as $j<t$ (cf.~Proposition~\ref%
{prop:SobolevembedsinHoelder}), and since $\mathcal{U}_{j,1}$ is clearly
bounded in $\mathsf{W}_{2}^{j}(\Omega )$, the second supremum in the above
display is $O_{\mu }(k^{-1/2})$ by Proposition~\ref{Closeness of the
derivatives of the scaled likelihood}, when applied with $\alpha =2$, $%
\mathcal{H}_{1}=\mathcal{U}_{j,1}$, and $\mathcal{H}_{2}=\mathcal{G}$. This
shows that the expression \emph{II} is $O_{\mu }(k^{-(t-j)/(2t+1)-1/2})$ for
every real $j$ with $1/2<j<t$.

Next, observe that $\left\vert \bar{p}_{k}(\theta )-p_{\theta }\right\vert
=\xi \left\vert \tilde{p}_{k}(\theta )-p_{\theta }\right\vert \leq
\left\vert \tilde{p}_{k}(\theta )-p_{\theta }\right\vert $ and that $\bar{p}%
_{k}(\theta )\geq \zeta $, $p_{\theta }\geq \zeta $ as these functions
belong to $\mathcal{P}(t,\zeta ,D)$. Hence%
\begin{equation*}
III\leq 2\zeta ^{-3}C_{t}^{2}DG\sup_{\theta \in \Theta }\left\Vert \tilde{p}%
_{k}(\theta )-p_{\theta }\right\Vert _{2}^{2},
\end{equation*}%
where $G<\infty $ is a $\left\Vert \cdot \right\Vert _{t,2}$-norm bound for $%
\mathcal{G}$. (Here we have repeatedly used Proposition~\ref%
{prop:SobolevembedsinHoelder}(b)). Theorem~\ref{Uniform rate of
AML-estimators} then shows that expression $III$ is $O_{\mu
}(k^{-2t/(2t+1)}) $. Putting things together we obtain that the l.h.s.~of (%
\ref{basic_0}) is $O_{\mu }^{\ast }(k^{-(t-j)/(2t+1)-1/2})$ for every real $%
j $ with $1/2<j<t$, and hence a fortiori for every real $j>1/2$.
Consequently,%
\begin{equation}
\sup_{\theta \in \Theta }\sup_{g\in \mathcal{G}}\sqrt{k}\left\vert
\int_{\Omega }p_{\theta }^{-1}(\tilde{p}_{k}(\theta )-p_{\theta })gd\lambda
-\left( \mu _{k}-\mu \right) (p_{\theta }^{-1}g)(\rho (\cdot ,\theta
))\right\vert =o_{\mu }^{\ast }(k^{-(t-j)/(2t+1)})  \label{basic}
\end{equation}%
for every real $j>1/2$.

Let now $\mathcal{F}$ be a nonempty bounded subset of $\QTR{up}{\mathsf{W}}%
_{2}^{t}(\Omega )$ and let $B<\infty $ denote a $\left\Vert \cdot
\right\Vert _{t,2}$-norm bound for $\mathcal{F}$. Define $\pi _{\theta
^{\prime }}(f)=(f-\int_{\Omega }fp_{\theta ^{\prime }}d\lambda )p_{\theta
^{\prime }}$ for any $f\in \QTR{up}{\mathsf{W}}_{2}^{t}(\Omega )$ and $%
\theta ^{\prime }\in \Theta $. Then, using Proposition\thinspace \ref%
{prop:SobolevembedsinHoelder}(a) and the fact that $p_{\theta ^{\prime }}\in 
\mathcal{P}(t,\zeta ,D)$ by Assumption \ref%
{mod:InclusionWithStrictInequalities}, gives%
\begin{eqnarray}
\sup_{\theta ^{\prime }\in \Theta }\sup_{f\in \mathcal{F}}\Vert \pi _{\theta
^{\prime }}(f)\Vert _{t,2} &\leq &M_{t}\sup_{\theta ^{\prime }\in \Theta
}\sup_{f\in \mathcal{F}}\left[ \left\Vert f-\int_{\Omega }fp_{\theta
^{\prime }}\,d\lambda \right\Vert _{t,2}\Vert p_{\theta ^{\prime }}\Vert
_{t,2}\right]  \notag \\
&\leq &M_{t}D\left[ B+\sup_{f\in \mathcal{F}}\left\Vert f\right\Vert
_{\Omega }\left\Vert 1\right\Vert _{t,2}\right]  \notag \\
&\leq &M_{t}DB(1+C_{t}\lambda (\Omega )^{1/2})<\infty .
\label{TheOperatorPiIsBounded}
\end{eqnarray}%
This shows that the set 
\begin{equation*}
\mathcal{G}(\Theta ,\mathcal{F})=\left\{ \pi _{\theta ^{\prime }}(f):f\in 
\mathcal{F},\theta ^{\prime }\in \Theta \right\}
\end{equation*}%
is a nonempty bounded subset of $\QTR{up}{\mathsf{W}}_{2}^{t}(\Omega )$. In
fact, it is a subset of $\mathsf{H}_{t}^{0}$ by definition of $\pi _{\theta
^{\prime }}$. It is now easy to see that applying (\ref{basic}) to $\mathcal{%
G}(\Theta ,\mathcal{F})$ implies (\ref{UCLT}) in the case $s=t$. The case $%
s>t$ immediately follows, since every nonempty bounded subset of $\QTR{up}{%
\mathsf{W}}_{2}^{s}(\Omega )\mathcal{\ }$with $s>t$ can also be viewed as a
nonempty bounded subset of $\QTR{up}{\mathsf{W}}_{2}^{t}(\Omega )$ by
Proposition \ref{prop:SobolevembedsinHoelder}(c). This proves Part (a) in
case $\zeta >0$ and $s\geq t$.

\emph{Step~2:} We now consider the case where $\zeta >0$ and $1/2<s<t$. For
every $f\in \mathcal{F}$ let $u_{k}(f)\in \mathsf{W}_{2}^{t}(\Omega )$ be
the approximators defined in the proof of Proposition~1 in Nickl (2007).
They have the following properties:%
\begin{equation}
\sup_{f\in \mathcal{F}}\Vert u_{k}(f)\Vert _{t,2}=O(k^{(t-s)/(2t+1)})\quad 
\text{as }k\rightarrow \infty ,  \label{Formel:uk(f)}
\end{equation}%
where $\sup_{f\in \mathcal{F}}\Vert u_{k}(f)\Vert _{t,2}$ is finite for
every $k\in \mathbb{N}$; and, for every $r$, $0\leq r<s$, 
\begin{equation}
\sup_{f\in \mathcal{F}}\Vert f-u_{k}(f)\Vert
_{r,2}=O(k^{-(s-r)/(2t+1)})\quad \text{as }k\rightarrow \infty .
\label{Formel:f-uk(f)}
\end{equation}

We have that%
\begin{eqnarray}
&&\sup_{\theta \in \Theta }\sup_{f\in \mathcal{F}}\left\vert \sqrt{k}%
\int_{\Omega }(\tilde{p}_{k}(\theta )-p_{\theta })fd\lambda -\sqrt{k}(\mu
_{k}-\mu )f(\rho (\cdot ,\theta ))\right\vert  \notag \\
&\leq &\sup_{\theta \in \Theta }\sup_{f\in \mathcal{F}}\left\vert \sqrt{k}%
\int_{\Omega }(\tilde{p}_{k}(\theta )-p_{\theta })(f-u_{k}(f))d\lambda
\right\vert  \notag \\
&&+\sup_{\theta \in \Theta }\sup_{f\in \mathcal{F}}\left\vert \sqrt{k}(\mu
_{k}-\mu )(f(\rho (\cdot ,\theta ))-u_{k}(f)(\rho (\cdot ,\theta
)))\right\vert  \notag \\
&&+\sup_{\theta \in \Theta }\sup_{f\in \mathcal{F}}\left\vert \sqrt{k}%
\int_{\Omega }(\tilde{p}_{k}(\theta )-p_{\theta })u_{k}(f)d\lambda -\sqrt{k}%
(\mu _{k}-\mu )u_{k}(f)(\rho (\cdot ,\theta ))\right\vert  \notag \\
&=&IV+V+VI.  \label{final}
\end{eqnarray}%
We now derive bounds for each of the above expressions:

Using (\ref{Formel:f-uk(f)}) with $r=0$, the Cauchy-Schwarz inequality, and
Theorem~\ref{Uniform rate of AML-estimators} we obtain%
\begin{equation*}
IV\leq \sqrt{k}\sup_{f\in \mathcal{F}}\Vert f-u_{k}(f)\Vert
_{2}\,\sup_{\theta \in \Theta }\Vert \tilde{p}_{k}(\theta )-p_{\theta }\Vert
_{2}=O_{\mu }(k^{-(s-1/2)/(2t+1)}).
\end{equation*}

Next, choose an arbitrary real $j$ such that $1/2<j<s$ and observe that%
\begin{eqnarray}
V &=&\sup_{\theta \in \Theta }\sup_{f\in \mathcal{F}}\left\vert \sqrt{k}(\mu
_{k}-\mu )(f-u_{k}(f))(\rho (\cdot ,\theta ))\right\vert  \notag \\
&\leq &\left( \sup_{\theta \in \Theta }\sup_{h\in \mathcal{U}%
_{j,1}}\left\vert \sqrt{k}(\mu _{k}-\mu )h(\rho (\cdot ,\theta ))\right\vert
\right) \sup_{f\in \mathcal{F}}\Vert f-u_{k}(f)\Vert _{j,2}  \notag \\
&=&\Vert \sqrt{k}(\mu _{k}-\mu )\Vert _{\mathcal{U}_{j,1}^{\ast }}\sup_{f\in 
\mathcal{F}}\Vert f-u_{k}(f)\Vert _{j,2},  \label{Formel: II in Part 2}
\end{eqnarray}%
where%
\begin{equation*}
\mathcal{U}_{j,1}^{\ast }=\left\{ h(\rho (\cdot ,\theta )):\theta \in \Theta
,\,h\in \mathcal{U}_{j,1}\right\} .
\end{equation*}%
Since $j>1/2$, the class of functions $\mathcal{U}_{j,1}^{\ast }$ is $\mu $%
-Donsker by Proposition~\ref{Bracketing entropy of F*}(a), hence 
\begin{equation*}
\left\Vert \sqrt{k}(\mu _{k}-\mu )\right\Vert _{\mathcal{U}_{j,1}^{\ast
}}=O_{\mu }(1)
\end{equation*}%
in view of Prohorov's theorem, measurability following from Proposition \ref%
{Borel_2}. Making use of (\ref{Formel:f-uk(f)}), it follows that the
r.h.s.~of (\ref{Formel: II in Part 2}), and hence Expression~\emph{V}, is $%
O_{\mu }(k^{-(s-j)/(2t+1)})$.

Finally note that Expression~\emph{VI} is bounded by%
\begin{equation*}
\sup_{\theta \in \Theta }\,\sup_{h\in \mathcal{U}_{t,1}}\left\vert \sqrt{k}%
\int_{\Omega }(\tilde{p}_{k}(\theta )-p_{\theta })hd\lambda -\sqrt{k}(\mu
_{k}-\mu )h(\rho (\cdot ,\theta ))\right\vert \sup_{f\in \mathcal{F}}\Vert
u_{k}(f)\Vert _{t,2}.
\end{equation*}%
Since $\mathcal{U}_{t,1}$ is a nonempty bounded subset of $\mathsf{W}%
_{2}^{t}(\Omega )$ and since Part (a) has already been established in \emph{%
Step 1 }for such sets of functions, the first term on the r.h.s.~of the last
display is $o_{\mu }(k^{-(t-j)/(2t+1)})$, and using (\ref{Formel:uk(f)}), we
conclude that 
\begin{equation*}
VI=o_{\mu }(k^{-(s-j)/(2t+1)}).
\end{equation*}

The above bounds imply that the l.h.s.~of (\ref{final}) is $O_{\mu
}(k^{-(s-j)/(2t+1)})$ for all $1/2<j<s$, and hence is $o_{\mu
}(k^{-(s-j)/(2t+1)})$ for all $j>1/2$. This completes the proof of Part (a)
of the theorem in case $\zeta >0$.

\emph{Step~3: }We next consider the case $\zeta =0$. In view of Assumption %
\ref{mod:InclusionWithStrictInequalities} we may choose $\chi >0$ such that $%
\inf_{\Omega \times \Theta }p(x,\theta )>\chi $. Then, by Remark~\ref{Remark
12}(iii), there are events that have probability tending to $1$ on which $%
\inf_{\theta \in \Theta }\,\inf_{x\in \Omega }\tilde{p}_{k}(\theta )(x)>\chi 
$ holds true. Since $\mathcal{P}(t,\chi ,D)\subseteq \mathcal{P}(t,0,D)=%
\mathcal{P}(t,\zeta ,D)$, we have that on these events $\tilde{p}_{k}(\theta
)$ coincides with the NPML-estimators over the smaller set $\mathcal{P}%
(t,\chi ,D)$. Part (a) in case $\zeta =0$ now follows from what has already
been established in the preceding two steps (applied to the NPML-estimator
based on $\mathcal{P}(t,\chi ,D)$ instead of $\mathcal{P}(t,\zeta ,D)$ and
noting that Assumption \ref{mod:InclusionWithStrictInequalities} is also
satisfied relative to $\mathcal{P}(t,\chi ,D)$).

Part (b): In view of Part (a) it is sufficient to show that $(\theta
,f)\mapsto \sqrt{k}(\mu _{k}-\mu )f(\rho (\cdot ,\theta ))$ converges weakly
in $\ell ^{\infty }(\Theta \times \mathcal{F})$ to $\mathbb{G}(\theta ,f)$.
To this end, let 
\begin{equation*}
H(\varphi )(\theta ,f)=\varphi (f(\rho (\cdot ,\theta )))
\end{equation*}%
for every $\varphi \in \ell ^{\infty }(\mathcal{F}^{\ast })$, $\theta \in
\Theta $, and $f\in \mathcal{F}$, where $\mathcal{F}^{\ast }=\left\{ f(\rho
(\cdot ,\theta )):\theta \in \Theta ,\,f\in \mathcal{F}\right\} $. Note that
the resulting mapping $H:\ell ^{\infty }(\mathcal{F}^{\ast })\rightarrow
\ell ^{\infty }(\Theta \times \mathcal{F})$ is continuous since $H$ is
linear and%
\begin{equation*}
\Vert H(\varphi )\Vert _{\Theta \times \mathcal{F}}=\sup_{\theta \in \Theta
}\,\sup_{f\in \mathcal{F}}\left\vert \varphi (f(\rho (\cdot ,\theta
)))\right\vert =\Vert \varphi \Vert _{\mathcal{F}^{\ast }}
\end{equation*}%
for all $\varphi \in \ell ^{\infty }(\mathcal{F}^{\ast })$. In fact, $H$ is
an isometry. Since $\mathcal{F}^{\ast }$ is $\mu $-Donsker by Proposition~%
\ref{Bracketing entropy of F*}(a), $\sqrt{k}(\mu _{k}-\mu )$ converges
weakly in $\ell ^{\infty }(\mathcal{F}^{\ast })$ to a $\mu $-Brownian bridge 
$\mathbb{G}^{\ast }$, that is, $\mathbb{G}^{\ast }$ is a mean-zero Gaussian
process indexed by $\mathcal{F}^{\ast }$, which is measurable as a mapping
with values in $\ell ^{\infty }(\mathcal{F}^{\ast })$, has covariance
function%
\begin{eqnarray*}
&&\limfunc{Cov}[\mathbb{G}^{\ast }(f(\rho (\,\cdot \,,\theta ))),\mathbb{G}%
^{\ast }(g(\rho (\,\cdot \,,\theta ^{\prime })))] \\
&=&\int_{V}\left( f(\rho (\cdot ,\theta ))-\int_{V}f(\rho (\cdot ,\theta
))d\mu \right) \left( g(\rho (\cdot ,\theta ^{\prime }))-\int_{V}g(\rho
(\cdot ,\theta ^{\prime }))d\mu \right) d\mu ,
\end{eqnarray*}%
and has sample paths that are uniformly continuous with respect to the
pseudo-metric%
\begin{equation*}
d^{\ast }(f(\rho (\cdot ,\theta )),g(\rho (\cdot ,\theta ^{\prime
})))=\left( \limfunc{Var}[\mathbb{G}^{\ast }(f(\rho (\,\cdot \,,\theta )))-%
\mathbb{G}^{\ast }(g(\rho (\,\cdot \,,\theta ^{\prime })))]\right) ^{1/2}.
\end{equation*}%
Since the empirical process $\sqrt{k}(\mu _{k}-\mu )$ indexed by $\mathcal{F}%
^{\ast }$ is mapped into the process $(\theta ,f)\mapsto \sqrt{k}(\mu
_{k}-\mu )f(\rho (\cdot ,\theta ))$ by the map $H$, the continuous mapping
theorem shows that the latter process converges weakly in $\ell ^{\infty
}(\Theta \times \mathcal{F})$ to $\mathbb{G}:=H(\mathbb{G}^{\ast })$. The
properties of $\mathbb{G}$ claimed in the theorem follow easily from the
corresponding properties of the $\mu $-Brownian bridge $\mathbb{G}^{\ast }$
and the fact that $H$ is an isometry.

Part (c): Follows directly from Part (b) in view of Prohorov's theorem, with
measurability again following from Proposition \ref{Borel_2}(b) in Appendix %
\ref{App D2}.
\end{proof}

\bigskip

We next obtain a corollary showing that $\sqrt{k}\int_{\Omega }(\tilde{p}%
_{k}(\theta )-p_{\theta })(\cdot )d\lambda $ converges in $\ell ^{\infty }(%
\mathcal{F})$ to $\mathbb{G}(\theta )$ \emph{uniformly} over $\Theta $,
where $\mathbb{G}(\theta )(f):=\mathbb{G}(\theta ,f)$ for all $f\in \mathcal{%
F}$. For this we recall the following definitions: Let $(S,d)$ be a metric
space. For probability spaces $(\Lambda _{1},\mathcal{A}_{1},P_{1})$, $%
(\Lambda _{2},\mathcal{A}_{2},P_{2})$ and mappings $Y_{1}:\Lambda
_{1}\rightarrow S$, $Y_{2}:\Lambda _{2}\rightarrow S$ such that $Y_{2}$ is $%
\mathcal{A}_{2}$-$\mathcal{B}(S,d)$-measurable and has separable range
define an analogue of the dual bounded Lipschitz metric by%
\begin{equation*}
\beta _{(S,d)}(Y_{1},Y_{2})=\sup \left\{ \left\vert \int_{\Lambda
_{1}}^{\ast }h(Y_{1})dP_{1}-\int_{\Lambda _{2}}h(Y_{2})dP_{2}\right\vert
:\Vert h\Vert _{BL(S,d)}\leq 1\right\} ,
\end{equation*}%
where $\int^{\ast }$ denotes the outer integral and $\Vert \cdot \Vert
_{BL(S,d)}$ denotes the bounded Lipschitz norm; cf.~the definition on
p.\thinspace 115 in Dudley (1999). By Theorem~3.6.4 in Dudley (1999), $%
Y_{n}\rightsquigarrow Y$ (where $Y$ is measurable and has separable range)\
if and only if 
\begin{equation*}
\lim_{n\rightarrow \infty }\beta _{(S,d)}(Y_{n},Y)=0.
\end{equation*}

\begin{cor}
\label{Corollary: 17} Let the hypotheses of Theorem~\ref{Theorem: Uniform
Donsker-type theorem} be satisfied. Then, for every $\theta \in \Theta $, $%
\mathbb{G}(\theta )=\mathbb{G}(\theta ,\cdot )$ is a measurable mapping with
values in $\ell ^{\infty }(\mathcal{F})$ that has separable range.
Furthermore, 
\begin{equation*}
\lim_{k\rightarrow \infty }\,\sup_{\theta \in \Theta }\beta _{\ell ^{\infty
}(\mathcal{F})}(\sqrt{k}\int_{\Omega }(\tilde{p}_{k}(\theta )-p_{\theta
})(\cdot )d\lambda ,\mathbb{G}(\theta )(\cdot ))=0.
\end{equation*}%
[In fact, $\mathbb{G}(\theta )$ is a $P_{\theta }$-Brownian bridge where $%
P_{\theta }$ denotes the probability measure corresponding to $p_{\theta }$.]
\end{cor}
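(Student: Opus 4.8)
The plan is to deduce Corollary~\ref{Corollary: 17} from Theorem~\ref{Theorem: Uniform Donsker-type theorem} together with Theorem~\ref{Theorem: Nickl}(b). First I would record the measurability and separable-range claim: for fixed $\theta$, the process $f\mapsto\mathbb{G}(\theta,f)$ is the restriction of the measurable, separably-valued process $\mathbb{G}$ on $\ell^{\infty}(\Theta\times\mathcal{F})$, and since the coordinate map $\ell^{\infty}(\Theta\times\mathcal{F})\to\ell^{\infty}(\mathcal{F})$ given by restriction to $\{\theta\}\times\mathcal{F}$ is continuous (indeed norm-nonexpanding), $\mathbb{G}(\theta)$ inherits both properties. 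The identification of $\mathbb{G}(\theta)$ as a $P_\theta$-Brownian bridge is immediate from the covariance formula in Theorem~\ref{Theorem: Uniform Donsker-type theorem}(b): setting $\theta'=\theta$ there gives $\operatorname{Cov}[\mathbb{G}(\theta,f),\mathbb{G}(\theta,g)]=\int_V(f(\rho(\cdot,\theta))-\mu f(\rho(\cdot,\theta)))(g(\rho(\cdot,\theta))-\mu g(\rho(\cdot,\theta)))d\mu=\int_\Omega fg\,dP_\theta-\int_\Omega f\,dP_\theta\int_\Omega g\,dP_\theta$, which is exactly the $P_\theta$-Brownian bridge covariance on $\mathcal{F}$; and consistency with Theorem~\ref{Theorem: Nickl}(b), where the limit process is a $P_\theta$-Brownian bridge, confirms this.

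The core assertion is the uniform-in-$\theta$ convergence $\sup_{\theta\in\Theta}\beta_{\ell^{\infty}(\mathcal{F})}(\sqrt{k}\int_\Omega(\tilde p_k(\theta)-p_\theta)(\cdot)d\lambda,\mathbb{G}(\theta))\to0$. The plan is to split, via the triangle inequality for $\beta_{\ell^{\infty}(\mathcal{F})}$, into the ``process distance'' between $\sqrt{k}\int_\Omega(\tilde p_k(\theta)-p_\theta)(\cdot)d\lambda$ and the corresponding empirical-process coordinate $g\mapsto\sqrt{k}(\mu_k-\mu)(g(\rho(\cdot,\theta)))$ restricted to $\mathcal{F}$, plus the $\beta$-distance between that empirical process and $\mathbb{G}(\theta)$. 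For the first piece, Theorem~\ref{Theorem: Uniform Donsker-type theorem}(a) gives that the sup over $\theta\in\Theta$ and $f\in\mathcal{F}$ of $|\sqrt{k}\int_\Omega(\tilde p_k(\theta)-p_\theta)fd\lambda-\sqrt{k}(\mu_k-\mu)f(\rho(\cdot,\theta))|$ is $o_\mu(1)$; since for any $1$-bounded-Lipschitz $h$ on $\ell^{\infty}(\mathcal{F})$ the difference $|h(Y_k(\theta))-h(W_k(\theta))|\le\|Y_k(\theta)-W_k(\theta)\|_{\mathcal{F}}$, taking outer expectations and then the sup over $\theta$ shows this contribution to $\sup_\theta\beta_{\ell^{\infty}(\mathcal{F})}$ is $o_\mu(1)$ (one uses that the sup over $\theta,f$ is a single measurable random variable by Proposition~\ref{Borel_2}, and that $o_\mu(1)$ together with boundedness — itself $O_\mu(1)$ by Part~(c) — gives convergence in $L^1$-of-outer-expectation by a truncation argument).

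For the second piece, the plan is to transport the uniform-in-$\theta$ weak convergence already proved ``globally'' in $\ell^{\infty}(\Theta\times\mathcal{F})$ down to a uniform-over-$\theta$ statement of $\beta$-distances in $\ell^{\infty}(\mathcal{F})$. Concretely, via the isometry $H:\ell^{\infty}(\mathcal{F}^{\ast})\to\ell^{\infty}(\Theta\times\mathcal{F})$ from the proof of Theorem~\ref{Theorem: Uniform Donsker-type theorem}(b), $\sqrt{k}(\mu_k-\mu)$ indexed by $\mathcal{F}^{\ast}$ converges weakly to the $\mu$-Brownian bridge $\mathbb{G}^{\ast}$ in $\ell^{\infty}(\mathcal{F}^{\ast})$, which by Theorem~3.6.4 in Dudley (1999) is equivalent to $\beta_{\ell^{\infty}(\mathcal{F}^{\ast})}(\sqrt{k}(\mu_k-\mu),\mathbb{G}^{\ast})\to0$. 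Now for each fixed $\theta$, restriction-to-$\{\theta\}\times\mathcal{F}$ composed with $H$ is a nonexpanding linear map $\ell^{\infty}(\mathcal{F}^{\ast})\to\ell^{\infty}(\mathcal{F})$ carrying $\sqrt{k}(\mu_k-\mu)$ to $W_k(\theta)$ and $\mathbb{G}^{\ast}$ to $\mathbb{G}(\theta)$; a nonexpanding linear map does not increase $\beta$-distances (because precomposing a $1$-bounded-Lipschitz function with it yields again a $1$-bounded-Lipschitz function), so $\beta_{\ell^{\infty}(\mathcal{F})}(W_k(\theta),\mathbb{G}(\theta))\le\beta_{\ell^{\infty}(\mathcal{F}^{\ast})}(\sqrt{k}(\mu_k-\mu),\mathbb{G}^{\ast})$ uniformly in $\theta$; the right-hand side tends to $0$, giving $\sup_{\theta\in\Theta}\beta_{\ell^{\infty}(\mathcal{F})}(W_k(\theta),\mathbb{G}(\theta))\to0$. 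Combining the two pieces completes the proof. The main obstacle is the first step: one must carefully justify that a sup-norm $o_\mu(1)$ statement over the joint index $\Theta\times\mathcal{F}$ upgrades to a uniform-over-$\theta$ bound on the $\beta$-metric — i.e., controlling outer expectations of the $1$-bounded-Lipschitz test functionals uniformly — which requires the joint measurability from Proposition~\ref{Borel_2} and the $O_\mu(1)$ tightness from Part~(c) to pass from convergence in outer probability to the required uniform control.
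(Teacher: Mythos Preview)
Your argument is correct, but it takes an unnecessary detour compared with the paper's proof. Both proofs rest on the same key observation: a $1$-Lipschitz (nonexpanding) map between metric spaces does not increase the $\beta$-distance, because precomposing a bounded Lipschitz test function with such a map preserves the $BL$-norm bound. The difference lies in \emph{where} this observation is applied.

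The paper applies it directly to the restriction map $H_\theta:\ell^\infty(\Theta\times\mathcal{F})\to\ell^\infty(\mathcal{F})$, $H_\theta(\varphi)(f)=\varphi(\theta,f)$, which is $1$-Lipschitz with constant independent of $\theta$. Since Part~(b) of Theorem~\ref{Theorem: Uniform Donsker-type theorem} already establishes weak convergence of the \emph{full} process $(\theta,f)\mapsto\sqrt{k}\int_\Omega(\tilde{p}_k(\theta)-p_\theta)f\,d\lambda$ to $\mathbb{G}$ in $\ell^\infty(\Theta\times\mathcal{F})$, Dudley's Theorem~3.6.4 gives $\beta_{\ell^\infty(\Theta\times\mathcal{F})}(\cdot,\mathbb{G})\to0$, and applying $H_\theta$ immediately yields the uniform-in-$\theta$ conclusion in a single stroke---no decomposition, no Part~(a), no Part~(c), no empirical-process intermediate.

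You instead split through the empirical process $W_k(\theta)$, which forces you to control Piece~1 separately. This works, but it is extra labor: you are in effect re-deriving the content of Part~(b) rather than using it. Your Piece~2 argument (via $\ell^\infty(\mathcal{F}^\ast)$ and the nonexpanding composite map) is essentially the paper's argument transported through the isometry $H$ from the proof of Part~(b). One small overcomplication in your Piece~1: since $\|h\|_{BL}\le1$ implies $|h|\le1$, you always have $|h(Y_k(\theta))-h(W_k(\theta))|\le\min(2,\|Y_k(\theta)-W_k(\theta)\|_{\mathcal{F}})$, and $E^\ast\min(2,Z_k)\to0$ follows directly from $Z_k=o_\mu(1)$ via $E^\ast\min(2,Z_k)\le\varepsilon+2P^\ast(Z_k>\varepsilon)$. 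So Part~(c) is not actually needed for this step.
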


\begin{proof}
Let $\theta \in \Theta $ be fixed, and define $H_{\theta }(\varphi
)(f)=\varphi (\theta ,f)$ for every $\varphi \in \ell ^{\infty }(\Theta
\times \mathcal{F})$ and $f\in \mathcal{F}$. This gives a Lipschitz mapping $%
H_{\theta }:\ell ^{\infty }(\Theta \times \mathcal{F})\rightarrow \ell
^{\infty }(\mathcal{F})$ whose Lipschitz constant is $1$ and hence is
independent of $\theta $. Clearly, $\mathbb{G}(\theta )=H_{\theta }(\mathbb{G%
})$ holds. Since $\mathbb{G}$ is a measurable mapping with separable range
in $\ell ^{\infty }(\Theta \times \mathcal{F})$ by Part~(b) of Theorem~\ref%
{Theorem: Uniform Donsker-type theorem}, this shows that, for every $\theta
\in \Theta $, $\mathbb{G}(\theta )$ is measurable with separable range in $%
\ell ^{\infty }(\mathcal{F})$. Further, since the composition of Lipschitz
mappings with Lipschitz constant at most $1$ is again Lipschitz with
Lipschitz constant at most $1$, it follows that%
\begin{eqnarray*}
&&\sup_{\theta \in \Theta }\beta _{\ell ^{\infty }(\mathcal{F})}(\sqrt{k}%
\int_{\Omega }(\tilde{p}_{k}(\theta )-p_{\theta })(\cdot )d\lambda ,\mathbb{G%
}(\theta )(\cdot )) \\
&=&\sup_{\theta \in \Theta }\beta _{\ell ^{\infty }(\mathcal{F})}(H_{\theta
}(\sqrt{k}\int_{\Omega }(\tilde{p}_{k}(\bullet )-p_{\bullet })(\cdot
)d\lambda ),H_{\theta }(\mathbb{G}(\bullet )(\cdot ))) \\
&\leq &\beta _{\ell ^{\infty }(\Theta \times \mathcal{F})}(\sqrt{k}%
\int_{\Omega }(\tilde{p}_{k}(\bullet )-p_{\bullet })(\cdot )d\lambda ,%
\mathbb{G}(\bullet )(\cdot )).
\end{eqnarray*}%
The r.h.s., and therefore the l.h.s., of the previous display converges to $%
0 $ by Part~(b) of Theorem~\ref{Theorem: Uniform Donsker-type theorem}. That 
$\mathbb{G}(\theta )$ is in fact a $P_{\theta }$-Brownian bridge indexed by $%
\mathcal{F}$ easily follows from Part~(b) of Theorem~\ref{Theorem: Uniform
Donsker-type theorem} and the transformation theorem.
\end{proof}

The statement in Corollary~\ref{Corollary: 17} is in fact independent of any
distance describing the concept of weak convergence in $\ell ^{\infty }(%
\mathcal{F})$, see Remark 18 in Gach and P\"{o}tscher (2010) for more
discussion.

\begin{remark}
\label{acanonical}\normalfont We have assumed that the processes $(X_{i})$
and $(V_{i})$ are canonically defined, i.e., are given by the respective
coordinate projections of the measurable space $(\Omega ^{\mathbb{N}}\times
V^{\mathbb{N}},\mathcal{B}(\Omega )^{\mathbb{N}}\otimes \mathcal{V}^{\mathbb{%
N}})$. We have made this assumption to be able to freely use results from
empirical process theory as well as from Nickl (2007) which typically are
formulated in this canonical setting. However, the measurability results in
Appendix \ref{App D2} show that all results of the paper continue to hold if 
$(X_{i})$ and $(V_{i})$ are defined on an arbitrary probability space.
\end{remark}

\section{Simulation-Based Minimum Distance Estimators\label{Section:
Indirect inference estimators}}

We next study simulation-based minimum distance (indirect inference)
estimators when the auxiliary density estimators are the NPML-estimators $%
\hat{p}_{n}$ and $\tilde{p}_{k}(\theta )$ based on the given auxiliary model 
$\mathcal{P}(t,\zeta ,D)$. To this end we define for every $\theta \in
\Theta $%
\begin{equation}
\mathbb{Q}_{n,k}(\theta )=%
\begin{cases}
\int_{\Omega }(\hat{p}_{n}-\tilde{p}_{k}(\theta ))^{2}\hat{p}%
_{n}^{-1}d\lambda & \text{if }\hat{p}_{n}(x)>0\text{ for all }x\in \Omega ,
\\ 
0 & \text{otherwise,}%
\end{cases}
\label{eq:DefinitionOfQ_n,k}
\end{equation}%
and%
\begin{equation*}
\mathbb{Q}_{n}(\theta )=%
\begin{cases}
\int_{\Omega }(\hat{p}_{n}-p_{\theta })^{2}\hat{p}_{n}^{-1}d\lambda & \text{%
if }\hat{p}_{n}(x)>0\text{ for all }x\in \Omega , \\ 
0 & \text{otherwise.}%
\end{cases}%
\end{equation*}%
Note that $\mathbb{Q}_{n,k}$ as well as $\mathbb{Q}_{n}$ take their values
in $[0,\infty ].$ By separability of $\Omega $ and continuity of $\hat{p}%
_{n} $, the set $\{\hat{p}_{n}(x)>0$ for all $x\in \Omega \}$ belongs to the 
$\sigma $-field $\mathcal{B}(\Omega )^{n}$. Since $\hat{p}_{n}$ and $\tilde{p%
}_{k}(\theta )$, respectively, are jointly measurable by Remark~\ref{Remark:
meas_pos}(i), it follows from Tonelli's theorem that $\mathbb{Q}%
_{n,k}(\theta )$ is $\mathcal{B}(\Omega )^{n}\otimes \mathcal{V}^{k}$%
-measurable and that $\mathbb{Q}_{n}(\theta )$ is $\mathcal{B}(\Omega )^{n}$%
-measurable for every $\theta \in \Theta $. [Assigning the value $0$ on the
complement of $\left\{ \hat{p}_{n}(x)>0\text{ for all }x\in \Omega \right\} $
to both objective functions is arbitrary and irrelevant for the asymptotic
considerations to follow.]

A simulation-based minimum distance (SMD) estimator is now a mapping $\hat{%
\theta}_{n,k}:\Omega ^{n}\times V^{k}\rightarrow \Theta $ that minimizes $%
\mathbb{Q}_{n,k}$ over $\Theta $ whenever the minimum exists (and is defined
arbitrarily otherwise). Similarly, a minimum distance (MD)\ estimator is a
mapping $\hat{\theta}_{n}:\Omega ^{n}\rightarrow \Theta $ that minimizes $%
\mathbb{Q}_{n}$ over $\Theta $ whenever the minimum exists (and is defined
arbitrarily otherwise). The MD-estimator is of course only feasible if a
closed form expression for $p_{\theta }$ can be found; here it serves as an
auxiliary device for proving asymptotic results for the SMD-estimator.

Furthermore, whenever Assumption \ref{dens:StrictInequality} is satisfied,
we define 
\begin{equation*}
Q(\theta )=\int_{\Omega }(p_{\blacktriangle }-p_{\theta
})^{2}p_{\blacktriangle }^{-1}d\lambda ,
\end{equation*}%
which takes its values in $[0,\infty ]$. In view of convergence of $\hat{p}%
_{n}$ to $p_{\blacktriangle }$ and of $\tilde{p}_{k}(\theta )$ to $p_{\theta
}$ (under the assumptions of Theorem \ref{Theorem: consistency of
AML-estimators}), $Q$ can be viewed as the limiting counterpart of both $%
\mathbb{Q}_{n,k}$ as well as $\mathbb{Q}_{n}$.

\subsection{Consistency of SMD-Estimators}

Before turning to consistency, we show that MD- and SMD-estimators in fact
minimize their corresponding objective function at least on events that have
probability tending to $1$. Note that in the following proposition the
statement of Part (c) is stronger than the one of Part (b), but also
requires additional assumptions.

\begin{prop}
\label{Proposition: Existence of simulation-based II-estimators} Let
Assumption~\ref{rho: Continuity of rho} be satisfied.

(a) Suppose $\zeta >0$ holds. Then any SMD-estimator $\hat{\theta}_{n,k}$
minimizes $\mathbb{Q}_{n,k}$ for every \linebreak $(x_{1},\ldots
,x_{n},v_{1},\ldots ,v_{k})\in \Omega ^{n}\times V^{k}$. Furthermore, there
exists an SMD-estimator that is $\mathcal{B}(\Omega )^{n}\otimes \mathcal{V}%
^{k}$-$\mathcal{B}(\Theta )$-measurable.

(b) Suppose $\zeta =0$ and Assumptions \ref{dens:Element} and \ref%
{dens:StrictInequality} hold. Then there are events $A_{n}\in \mathcal{B}%
(\Omega )^{n}$ having probability converging to $1$ as $n\rightarrow \infty $
such that, on the events $A_{n}\times V^{k}$ and for every $k\in \mathbb{N}$%
, any SMD-estimator $\hat{\theta}_{n,k}$ minimizes $\mathbb{Q}_{n,k}$.

(c) Suppose $\zeta =0$ and Assumptions \ref{dens:Element}, \ref%
{dens:StrictInequality}, \ref{mod:Inclusion}, and \ref{mod: Strict
inequality} hold. Then, for every constant $\chi >0$ satisfying $\inf_{x\in
\Omega }p_{\blacktriangle }(x)>\chi $ and $\inf_{\Omega \times \Theta
}p(x,\theta )>\chi $, there are events $C_{n,k}\in \mathcal{B}(\Omega
)^{n}\otimes \mathcal{V}^{k}$ that have probability tending to $1$ as $\min
(n,k)\rightarrow \infty $ such that on $C_{n,k}$ any SMD-estimator $\hat{%
\theta}_{n,k}$ coincides with an SMD-estimator that is obtained from using $%
\mathcal{P}(t,\chi ,D)$ instead of $\mathcal{P}(t,\zeta ,D)$ as the
underlying auxiliary model.
\end{prop}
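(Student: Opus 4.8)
The plan is to handle the three parts in increasing order of difficulty, reusing the measurable-selection and consistency machinery already assembled. For Part~(a), I would first show that under Assumption~\ref{rho: Continuity of rho} and $\zeta>0$ the objective function $\mathbb{Q}_{n,k}$ is continuous in $\theta$ for every fixed $(x_1,\ldots,x_n,v_1,\ldots,v_k)$. Indeed, when $\zeta>0$ every $\hat p_n$ lies in $\mathcal{P}(t,\zeta,D)$ and is thus bounded below by $\zeta>0$, so the integrand $(\hat p_n-\tilde p_k(\theta))^2\hat p_n^{-1}$ is well-defined; since $\theta\mapsto\tilde p_k(\theta)$ is $\|\cdot\|_\Omega$-continuous by Part~(b) of Theorem~\ref{Theorem: existence of AML-estimators} and $\hat p_n$ is bounded below, dominated convergence gives continuity of $\mathbb{Q}_{n,k}(\theta)$. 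Continuity on the compact set $\Theta$ then yields existence of a minimizer for \emph{every} sample point, and measurability of a selection follows from Lemma~A3 in P\"otscher and Prucha (1997) (the same tool used in Theorem~\ref{Theorem: existence of AML-estimators}), applied to $(x,v,\theta)\mapsto\mathbb{Q}_{n,k}(\theta)$, which is jointly measurable in $(x,v)$ and continuous in $\theta$ — joint measurability in $(x,v)$ following from Remark~\ref{Remark: meas_pos}(i), Tonelli's theorem, and the continuity in $\theta$ just established.

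For Part~(b), the obstruction when $\zeta=0$ is that $\hat p_n$ may touch the boundary of the density constraint and fail to be bounded below, so that $\mathbb{Q}_{n,k}$ need not be continuous (indeed $\hat p_n^{-1}$ may be unbounded or the integrand may blow up) and need not attain its infimum. The fix is Remark~\ref{Remark 12}(i): under Assumptions~\ref{dens:Element} and~\ref{dens:StrictInequality} one may pick $\chi>0$ with $\inf_\Omega p_{\blacktriangle}>\chi$, and then there are events $A_n$ with $\mathbb{P}^n(A_n)\to1$ on which $\inf_\Omega\hat p_n(x)>\chi$. On $A_n\times V^k$ the integrand is again bounded and the argument of Part~(a) applies verbatim (with $\chi$ in place of $\zeta$ for the lower bound on $\hat p_n$, while $\tilde p_k(\theta)$ ranges over $\mathcal{P}(t,0,D)$), so a minimizer exists there; hence any SMD-estimator minimizes $\mathbb{Q}_{n,k}$ on $A_n\times V^k$.

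For Part~(c), the statement is that on a high-probability event the SMD-estimator based on the auxiliary model $\mathcal{P}(t,\zeta,D)=\mathcal{P}(t,0,D)$ coincides with one based on the smaller model $\mathcal{P}(t,\chi,D)$. The approach is to combine Remark~\ref{Remark 12}(i) for $\hat p_n$ with Remark~\ref{Remark 12}(iii) for $\tilde p_k(\theta)$ (the latter needing Assumptions~\ref{mod:Inclusion}, \ref{mod: Strict inequality}, and~\ref{rho: Continuity of rho}): for a given $\chi>0$ with $\inf_\Omega p_{\blacktriangle}>\chi$ and $\inf_{\Omega\times\Theta}p(x,\theta)>\chi$, set $C_{n,k}=A_n\times B_k$, where $A_n$ is the event $\{\inf_\Omega\hat p_n>\chi\}$ and $B_k$ the event $\{\inf_{\theta\in\Theta}\inf_{x\in\Omega}\tilde p_k(\theta)(x)>\chi\}$; then $\Pr(C_{n,k})=\mathbb{P}^n(A_n)\mu^k(B_k)\to1$ as $\min(n,k)\to\infty$ by the product structure of $\Pr$. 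On $C_{n,k}$, both the NPML-estimator $\hat p_n$ over $\mathcal{P}(t,0,D)$ and, for every $\theta$, the NPML-estimator $\tilde p_k(\theta)$ over $\mathcal{P}(t,0,D)$ lie in the smaller set $\mathcal{P}(t,\chi,D)$; since $\mathcal{P}(t,\chi,D)\subseteq\mathcal{P}(t,0,D)$, the defining maximization problems have the same value and the same (unique) maximizer over either set, so these NPML-estimators coincide with the corresponding estimators computed over $\mathcal{P}(t,\chi,D)$. Consequently the objective functions $\mathbb{Q}_{n,k}$ built from the two auxiliary models agree on $C_{n,k}$, and hence so do their minimizers. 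The main obstacle throughout is purely the loss of a positive lower bound on the density estimators when $\zeta=0$; once Remark~\ref{Remark 12} restores such a bound on high-probability events, everything reduces to the continuity-and-compactness argument of Part~(a).
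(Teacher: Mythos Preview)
Your proposal is correct and follows essentially the same route as the paper's proof: Part~(a) via continuity of $\mathbb{Q}_{n,k}$ in $\theta$ (the paper packages this as Proposition~\ref{Proposition: properties of Q_n and Q_n,k}(b)) together with compactness of $\Theta$ and the measurable-selection Lemma~A3; Part~(b) via Remark~\ref{Remark 12}(i) to restore a positive lower bound on $\hat p_n$ on events $A_n$; and Part~(c) via $C_{n,k}=A_n\times B_k$ from Remarks~\ref{Remark 12}(i) and~(iii), on which both NPML-estimators fall into $\mathcal{P}(t,\chi,D)$ and hence coincide with those computed over the smaller model.
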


\begin{proof}
(a) By Proposition~\ref{Proposition: properties of Q_n and Q_n,k}(b) in
Appendix \ref{App G}, $\mathbb{Q}_{n,k}$ is continuous and real-valued on
the compact set $\Theta $ for each $(x_{1},\ldots ,x_{n},v_{1},\ldots
,v_{k})\in \Omega ^{n}\times V^{k}$ implying that any $\hat{\theta}_{n,k}$
is a minimizer for each $(x_{1},\ldots ,x_{n},v_{1},\ldots ,v_{k})$. Since $%
\mathbb{Q}_{n,k}$ is also a measurable function in $(x_{1},\ldots
,x_{n},v_{1},\ldots ,v_{k})$ for each fixed $\theta \in \Theta $, as shown
earlier, the existence of a measurable selection follows from Lemma~A3 in P%
\"{o}tscher and Prucha (1997).

(b) By Remark~\ref{Remark 12}(i) there are events $A_{n}\in \mathcal{B}%
(\Omega )^{n}$ that have probability tending to $1$ as $n\rightarrow \infty $
on which $\inf_{x\in \Omega }\hat{p}_{n}(x)>2^{-1}\inf_{x\in \Omega
}p_{\blacktriangle }(x)>0$. From Proposition~\ref{Proposition: properties of
Q_n and Q_n,k}(b) it follows that $\mathbb{Q}_{n,k}$ is continuous and
real-valued on $\Theta $ for each $(x_{1},\ldots ,x_{n},v_{1},\ldots
,v_{k})\in A_{n}\times V^{k}$. Compactness of $\Theta $ completes the proof.

(c) Let $\chi $ be as in the proposition. Set $C_{n,k}=A_{n}\times B_{k}$,
where $A_{n}$ and $B_{k}$ are as in Remarks~\ref{Remark 12}(i) and (iii),
and observe that $C_{n,k}$ has probability tending to $1$ as $\min
(n,k)\rightarrow \infty $. By Remark~\ref{Remark 12}, we have on $C_{n,k}$
that $\inf_{x\in \Omega }\hat{p}_{n}(x)>\chi $ and $\inf_{\Omega \times
\Theta }\tilde{p}_{k}(\theta )(x)>\chi $. Since $\mathcal{P}(t,\chi
,D)\subseteq \mathcal{P}(t,\zeta ,D)$, it follows that on $C_{n,k}$ the
NPML-estimators $\hat{p}_{n}$ and $\tilde{p}_{k}(\theta )$, respectively,
coincide with the corresponding NPML-estimators based on the auxiliary model 
$\mathcal{P}(t,\chi ,D)$ instead of $\mathcal{P}(t,\zeta ,D)$. Therefore, on 
$C_{n,k}$, the objective function $\mathbb{Q}_{n,k}$ coincides with the
corresponding objective function based on the auxiliary model $\mathcal{P}%
(t,\chi ,D)$, and thus $\hat{\theta}_{n,k}$ coincides with the corresponding
SMD-estimator based on the auxiliary model $\mathcal{P}(t,\chi ,D)$.
\end{proof}

\bigskip

The proofs of Parts (a) and (b) of the subsequent proposition are analogous
to the proofs of Proposition \ref{Proposition: Existence of simulation-based
II-estimators} above. Part (c) follows immediately from compactness of $%
\Theta $ and Lemma~\ref{Lemma: Generic continuity of Q} in Appendix \ref{App
G}.

\begin{prop}
\label{Existence of II-estimators} Suppose $\mathcal{P}_{\Theta }\subseteq 
\mathcal{L}^{2}(\Omega )$ and $\theta \mapsto p_{\theta }$ is a continuous
map from $\Theta $ into $(\mathcal{L}^{2}(\Omega ),\Vert \cdot \Vert _{2})$.

(a) Suppose $\zeta >0$ holds. Then any MD-estimator $\hat{\theta}_{n}$
minimizes $\mathbb{Q}_{n}$ for every $(x_{1},\ldots ,x_{n})\in \Omega ^{n}$.
Furthermore, there exists an MD-estimator $\hat{\theta}_{n}$ that is $%
\mathcal{B}(\Omega )^{n}$-$\mathcal{B}(\Theta )$-measurable.

(b) Suppose $\zeta =0$ and Assumptions \ref{dens:Element} and \ref%
{dens:StrictInequality} hold. Then there are events $A_{n}\in \mathcal{B}%
(\Omega )^{n}$ that have probability tending to $1$ as $n\rightarrow \infty $
such that, on these events, any MD-estimator $\hat{\theta}_{n}$ minimizes $%
\mathbb{Q}_{n}$. [In fact, more is true: If $\chi >0$ satisfies $\inf_{x\in
\Omega }p_{\blacktriangle }(x)>\chi $, then, on $A_{n}$, any MD-estimator $%
\hat{\theta}_{n}$ coincides with an MD-estimator that is obtained by using $%
\mathcal{P}(t,\chi ,D)$ instead of $\mathcal{P}(t,\zeta ,D)$ as the
underlying auxiliary model.]

(c) Suppose Assumption \ref{dens:StrictInequality} is satisfied. Then $Q$
attains its minimum on $\Theta $.
\end{prop}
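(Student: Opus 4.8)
The plan is to show that, under Assumption~\ref{dens:StrictInequality}, the map $Q$ is finite-valued and continuous on the compact set $\Theta$, whence it attains its minimum there by the Weierstrass extreme value theorem.

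First I would record the consequences of Assumption~\ref{dens:StrictInequality}: writing $c=\inf_{x\in\Omega}p_{\blacktriangle}(x)>0$, we have $0<p_{\blacktriangle}^{-1}\le c^{-1}$ on $\Omega$. Since each $p_{\theta}$ is a probability density lying in $\mathcal{L}^{2}(\Omega)$ by hypothesis, the function $p_{\theta}^{2}p_{\blacktriangle}^{-1}$ is non-negative and integrable with $\int_{\Omega}p_{\theta}^{2}p_{\blacktriangle}^{-1}\,d\lambda\le c^{-1}\Vert p_{\theta}\Vert_{2}^{2}<\infty$, and $p_{\blacktriangle}$, $p_{\theta}$ are themselves integrable. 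Expanding $(p_{\blacktriangle}-p_{\theta})^{2}=p_{\blacktriangle}^{2}-2p_{\blacktriangle}p_{\theta}+p_{\theta}^{2}$, dividing by $p_{\blacktriangle}$, and integrating term by term — legitimate because the non-negative part $p_{\blacktriangle}+p_{\theta}^{2}p_{\blacktriangle}^{-1}$ and the non-negative part $2p_{\theta}$ of the integrand are separately integrable — gives, using $\int_{\Omega}p_{\blacktriangle}\,d\lambda=\int_{\Omega}p_{\theta}\,d\lambda=1$,
\begin{equation*}
Q(\theta)=\int_{\Omega}p_{\theta}^{2}\,p_{\blacktriangle}^{-1}\,d\lambda-1 ,
\end{equation*}
which is in particular finite for every $\theta\in\Theta$.

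It then remains to prove that $\theta\mapsto\int_{\Omega}p_{\theta}^{2}p_{\blacktriangle}^{-1}\,d\lambda$ is continuous on $\Theta$. For $\theta,\theta'\in\Theta$, the factorization $p_{\theta'}^{2}-p_{\theta}^{2}=(p_{\theta'}-p_{\theta})(p_{\theta'}+p_{\theta})$, the bound $p_{\blacktriangle}^{-1}\le c^{-1}$, and the Cauchy--Schwarz inequality yield
\begin{equation*}
\left\vert\int_{\Omega}p_{\theta'}^{2}p_{\blacktriangle}^{-1}\,d\lambda-\int_{\Omega}p_{\theta}^{2}p_{\blacktriangle}^{-1}\,d\lambda\right\vert\le c^{-1}\,\Vert p_{\theta'}-p_{\theta}\Vert_{2}\,\bigl(\Vert p_{\theta'}\Vert_{2}+\Vert p_{\theta}\Vert_{2}\bigr) .
\end{equation*}
By hypothesis $\theta'\mapsto p_{\theta'}$ is continuous from $\Theta$ into $(\mathcal{L}^{2}(\Omega),\Vert\cdot\Vert_{2})$, so as $\theta'\to\theta$ the first factor on the right tends to $0$ while the second remains bounded; hence $Q$ is continuous at each $\theta\in\Theta$. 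Together with compactness of $\Theta$ this delivers the claim.

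There is essentially no serious obstacle here; the only points needing care are (i) verifying $Q$ is finite — exactly where Assumption~\ref{dens:StrictInequality} and the hypothesis $\mathcal{P}_{\Theta}\subseteq\mathcal{L}^{2}(\Omega)$ are used — and (ii) justifying the term-by-term integration in the displayed identity for $Q(\theta)$, handled by splitting the integrand into its separately integrable non-negative pieces. A slightly more robust alternative, which one could fall back on if the continuity hypothesis on $\theta\mapsto p_{\theta}$ were weakened, is to establish only lower semicontinuity of $Q$ (apply Fatou's lemma along an a.e.-convergent subsequence extracted from an $\mathcal{L}^{2}$-convergent sequence $p_{\theta_{n}}$), which already suffices for attainment of the minimum on the compact set $\Theta$; under Assumption~\ref{dens:StrictInequality}, however, the full continuity argument above is available and cleaner.
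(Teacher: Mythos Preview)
Your proof of Part~(c) is correct and is exactly the paper's approach: the paper invokes Lemma~\ref{Lemma: Generic continuity of Q} with $f=p_{\blacktriangle}$ together with compactness of $\Theta$, and your decomposition $Q(\theta)=\int p_{\theta}^{2}p_{\blacktriangle}^{-1}d\lambda-1$ plus the Cauchy--Schwarz bound on $\int(p_{\theta'}^{2}-p_{\theta}^{2})p_{\blacktriangle}^{-1}d\lambda$ reproduce that lemma's proof essentially verbatim. You do not address Parts~(a) and~(b); the paper handles these by the same arguments used for Proposition~\ref{Proposition: Existence of simulation-based II-estimators} (continuity of $\mathbb{Q}_{n}$ via Proposition~\ref{Proposition: properties of Q_n and Q_n,k}(a), compactness of $\Theta$, and a measurable-selection lemma for~(a); Remark~\ref{Remark 12}(i) for~(b)).
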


\begin{remark}
\label{Remark: Conditions for L2-continuity}\normalfont Assumption~\ref{mod:
pointwise continuity} together with a uniform integrability condition on $%
\left\{ p_{\theta }^{2}:\theta \in \Theta \right\} $ clearly implies that $%
\mathcal{P}_{\Theta }\subseteq \mathcal{L}^{2}(\Omega )$ and that $\theta
\mapsto p_{\theta }$ is a continuous mapping from $\Theta $ into $(\mathcal{L%
}^{2}(\Omega ),\Vert \cdot \Vert _{2})$. In particular, Assumptions~\ref%
{mod:Inclusion} and \ref{mod: pointwise continuity} together are sufficient.
\end{remark}

\begin{prop}
\label{Proposition: Consistency of II-estimators} (a) Let Assumptions \ref%
{dens:Element}, \ref{dens:StrictInequality}, \ref{mod:Inclusion}, \ref{mod:
Strict inequality}, and \ref{rho: Continuity of rho} be satisfied. If $Q$
has a unique minimizer $\theta _{0}^{\ast }$ over $\Theta $, then any
SMD-estimator $\hat{\theta}_{n,k}$ converges to $\theta _{0}^{\ast }$ in
outer probability as $\min (n,k)\rightarrow \infty $.

(b) Suppose $\mathcal{P}_{\Theta }\subseteq \mathcal{L}^{2}(\Omega )$ and $%
\theta \mapsto p_{\theta }$ is a continuous map from $\Theta $ into $(%
\mathcal{L}^{2}(\Omega ),\Vert \cdot \Vert _{2})$. Let Assumptions \ref%
{dens:Element} and \ref{dens:StrictInequality} be satisfied. If $Q$ has a
unique minimizer $\theta _{0}^{\ast }$ over $\Theta $, then any MD-estimator 
$\hat{\theta}_{n}$ converges to $\theta _{0}^{\ast }$ in outer probability
as $n\rightarrow \infty $.
\end{prop}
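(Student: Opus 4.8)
The plan is to run the classical consistency argument for minimum-distance estimators in both parts: show that the sample objective converges to the population objective $Q$ \emph{uniformly} in $\theta\in\Theta$, observe that $Q$ is continuous on the compact set $\Theta$ and well-separated at its unique minimizer $\theta_0^{\ast}$, and then close the argument with the standard argmin reasoning. The one point that needs care is that ``any'' SMD- or MD-estimator need not be measurable; this will be handled by working inside measurable dominating events supplied by Propositions~\ref{Proposition: Existence of simulation-based II-estimators} and \ref{Existence of II-estimators}.

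For Part~(a) I would first note that its hypotheses place us in the framework of Part~(b): by Proposition~\ref{Interrelation}, Assumptions~\ref{mod:Inclusion} and \ref{rho: Continuity of rho} imply Assumption~\ref{mod: pointwise continuity}, so by Remark~\ref{Remark: Equivalence of pointwise and sup-norm convergence of the parametrization} the map $\theta\mapsto p_\theta$ is continuous from $\Theta$ into $(\mathcal{P}(t,\zeta,D),\Vert\cdot\Vert_\Omega)$, hence also into $(\mathcal{L}^2(\Omega),\Vert\cdot\Vert_2)$, and in particular $\mathcal{P}_\Theta\subseteq\mathcal{L}^2(\Omega)$. Under Assumption~\ref{dens:StrictInequality} the weight $p_\blacktriangle^{-1}$ is bounded, so $Q$ is real-valued (indeed, expanding the square and using that $\hat p_n$, $p_\blacktriangle$, and each $p_\theta$ integrate to $1$ gives $Q(\theta)=\int_\Omega p_\theta^2 p_\blacktriangle^{-1}d\lambda-1$), and $Q$ is continuous on $\Theta$ by Lemma~\ref{Lemma: Generic continuity of Q}; since $\theta_0^{\ast}$ is the unique minimizer and $\Theta$ is compact, $\delta(\varepsilon):=\inf\{Q(\theta)-Q(\theta_0^{\ast}):\Vert\theta-\theta_0^{\ast}\Vert\geq\varepsilon\}>0$ for every $\varepsilon>0$.

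Now for the uniform convergence. Put $\chi=\frac{1}{2}\inf_{x\in\Omega}p_\blacktriangle(x)>0$. By Theorem~\ref{Theorem: consistency of AML-estimators}(a), $\Vert\hat p_n-p_\blacktriangle\Vert_\Omega\to0$ $\mathbb{P}$-a.s., so there are measurable events $A_n$ with $\mathbb{P}^n(A_n)\to1$ on which $\inf_{x\in\Omega}\hat p_n(x)>\chi$ (cf.~Remark~\ref{Remark 12}(i)); and by Theorem~\ref{Theorem: consistency of AML-estimators}(c), whose hypotheses are precisely Assumptions~\ref{mod:Inclusion}, \ref{mod: Strict inequality} and \ref{rho: Continuity of rho}, $\sup_{\theta\in\Theta}\Vert\tilde p_k(\theta)-p_\theta\Vert_\Omega\to0$ $\mu$-a.s. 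On $A_n$ I split $\mathbb{Q}_{n,k}(\theta)-Q(\theta)=[\mathbb{Q}_{n,k}(\theta)-\mathbb{Q}_n(\theta)]+[\mathbb{Q}_n(\theta)-Q(\theta)]$. For the first bracket, $(\hat p_n-\tilde p_k(\theta))^2-(\hat p_n-p_\theta)^2=(p_\theta-\tilde p_k(\theta))(2\hat p_n-p_\theta-\tilde p_k(\theta))$, and since $\hat p_n,\tilde p_k(\theta),p_\theta\in\mathcal{P}(t,\zeta,D)$ each have sup-norm at most $C_tD$ (Proposition~\ref{prop:SobolevembedsinHoelder}(b), Assumption~\ref{mod:Inclusion}, Theorem~\ref{Theorem: existence of AML-estimators}) while $\int_\Omega\hat p_n^{-1}d\lambda\leq\lambda(\Omega)/\chi$ on $A_n$,
\begin{equation*}
\sup_{\theta\in\Theta}|\mathbb{Q}_{n,k}(\theta)-\mathbb{Q}_n(\theta)|\leq 4C_tD\,\lambda(\Omega)\,\chi^{-1}\sup_{\theta\in\Theta}\Vert\tilde p_k(\theta)-p_\theta\Vert_\Omega .
\end{equation*}
For the second bracket, using the identity $\mathbb{Q}_n(\theta)=\int_\Omega p_\theta^2\hat p_n^{-1}d\lambda-1$ and the analogous one for $Q$,
\begin{equation*}
\sup_{\theta\in\Theta}|\mathbb{Q}_n(\theta)-Q(\theta)|=\sup_{\theta\in\Theta}\left|\int_\Omega p_\theta^2(\hat p_n^{-1}-p_\blacktriangle^{-1})d\lambda\right|\leq\left(\sup_{\theta\in\Theta}\Vert p_\theta\Vert_2^2\right)\frac{\Vert\hat p_n-p_\blacktriangle\Vert_\Omega}{\chi\,\inf_{x\in\Omega}p_\blacktriangle(x)},
\end{equation*}
where $\sup_{\theta\in\Theta}\Vert p_\theta\Vert_2^2<\infty$ by $\mathcal{L}^2$-continuity and compactness of $\Theta$. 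Both right-hand sides tend to $0$ a.s.\ on the stated events, so $\sup_{\theta\in\Theta}|\mathbb{Q}_{n,k}(\theta)-Q(\theta)|\to0$ in outer probability as $\min(n,k)\to\infty$.

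Finally, the argmin step. By Proposition~\ref{Proposition: Existence of simulation-based II-estimators}(a) if $\zeta>0$, or (b) if $\zeta=0$ (its hypotheses Assumptions~\ref{dens:Element} and \ref{dens:StrictInequality} being assumed here), there are measurable events $G_{n,k}$ with probability tending to $1$ (or equal to the whole space) on which every SMD-estimator genuinely minimizes $\mathbb{Q}_{n,k}$. Given $\varepsilon>0$, on the measurable event $G_{n,k}\cap A_n\cap\{\sup_{\theta\in\Theta}|\mathbb{Q}_{n,k}(\theta)-Q(\theta)|<\delta(\varepsilon)/3\}$, whose probability tends to $1$ as $\min(n,k)\to\infty$, the chain $Q(\hat\theta_{n,k})-Q(\theta_0^{\ast})\leq[\mathbb{Q}_{n,k}(\hat\theta_{n,k})+\delta(\varepsilon)/3]-[\mathbb{Q}_{n,k}(\theta_0^{\ast})-\delta(\varepsilon)/3]\leq 2\delta(\varepsilon)/3<\delta(\varepsilon)$ forces $\Vert\hat\theta_{n,k}-\theta_0^{\ast}\Vert<\varepsilon$; hence the outer probability of $\{\Vert\hat\theta_{n,k}-\theta_0^{\ast}\Vert\geq\varepsilon\}$ tends to $0$, which is Part~(a). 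Part~(b) is the same argument with the simulation ingredients deleted: only the second bracket above is needed (so only Theorem~\ref{Theorem: consistency of AML-estimators}(a) together with Assumptions~\ref{dens:Element} and \ref{dens:StrictInequality} enter), and Proposition~\ref{Existence of II-estimators}(a),(b) replaces Proposition~\ref{Proposition: Existence of simulation-based II-estimators}. I expect the main obstacle to be the \emph{uniformity in $\theta$} of the two displayed bounds --- which rests entirely on the uniform-in-$\theta$ consistency in Theorem~\ref{Theorem: consistency of AML-estimators}(c) and on the a.s.\ lower bound $\inf\hat p_n>\chi$ --- together with the bookkeeping required to keep every relevant set measurable despite the possible non-measurability of an arbitrary SMD-estimator and of $\sup_{\theta\in\Theta}|\mathbb{Q}_{n,k}(\theta)-Q(\theta)|$.
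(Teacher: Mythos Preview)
Your proof is correct and follows essentially the same route as the paper. The paper's proof is very terse: it cites continuity of $Q$ via Proposition~\ref{Proposition: properties of Q_n and Q_n,k}(c), uniform convergence of $\mathbb{Q}_{n,k}$ to $Q$ via Proposition~\ref{Proposition: Uniform convergence properties of II-objective functions}(b), and then invokes the ``standard argument'' together with Proposition~\ref{Proposition: Existence of simulation-based II-estimators}. You have unpacked Proposition~\ref{Proposition: Uniform convergence properties of II-objective functions}(b) inline, with the same decomposition $\mathbb{Q}_{n,k}-Q=(\mathbb{Q}_{n,k}-\mathbb{Q}_n)+(\mathbb{Q}_n-Q)$ and the same ingredients (Theorem~\ref{Theorem: consistency of AML-estimators}(a),(c) and the event $A_n$ on which $\inf_x\hat p_n(x)>\chi$); only the constants in the two displayed bounds differ slightly from the paper's (the paper uses $\int_\Omega(\tilde p_k(\theta)+p_\theta)\,d\lambda=2$ rather than a sup-norm bound, giving $2\chi^{-1}$ and $\chi^{-2}D^2$), which is immaterial.

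Regarding the measurability bookkeeping you flag at the end: the event $\{\sup_\theta|\mathbb{Q}_{n,k}(\theta)-Q(\theta)|<\delta(\varepsilon)/3\}$ need not be measurable, but this is harmless. On $A_n$ your two displayed inequalities bound the supremum by a \emph{measurable} quantity (namely a constant times $\sup_\theta\Vert\tilde p_k(\theta)-p_\theta\Vert_\Omega+\Vert\hat p_n-p_\blacktriangle\Vert_\Omega$, which is measurable by Proposition~\ref{Borel_1}); intersecting $G_{n,k}\cap A_n$ with the event where this measurable bound is $<\delta(\varepsilon)/3$ gives a measurable event of probability tending to $1$ on which the argmin chain goes through, so the outer probability of $\{\Vert\hat\theta_{n,k}-\theta_0^{\ast}\Vert\geq\varepsilon\}$ tends to $0$.
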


\begin{proof}
(a) Note that $Q$ is continuous (by Remark \ref{Remark: Conditions for
L2-continuity}, Proposition \ref{Interrelation} in Appendix \ref{App A}, and
Proposition~\ref{Proposition: properties of Q_n and Q_n,k}(c) in Appendix %
\ref{App G}), and that $Q(\theta )>Q(\theta _{0}^{\ast })$ for any $\theta
\neq \theta _{0}^{\ast }$ by assumption. Furthermore, $\mathbb{Q}%
_{n,k}(\theta )$ converges to $Q(\theta )$ uniformly over the compact set $%
\Theta $ in outer probability as $\min (n,k)\rightarrow \infty $ by
Proposition~\ref{Proposition: Uniform convergence properties of II-objective
functions}(b) in Appendix \ref{App G}. A standard argument together with
Proposition~\ref{Proposition: Existence of simulation-based II-estimators}
gives the result. For more details see Gach and P\"{o}tscher (2010).

(b) Analogous.
\end{proof}

\begin{remark}
\normalfont(i) It follows from Proposition~\ref{Interrelation} in Appendix %
\ref{App A} together with Remark~\ref{Remark: Conditions for L2-continuity}
that the assumptions of Proposition \ref{Existence of II-estimators}(c) are
satisfied under the assumptions of Part (a) of the above proposition (and
they are trivially satisfied under the assumptions of Part (b)).
Consequently, under the assumptions of the above proposition, $Q$ always has
a minimizer over $\Theta $. Hence, the assumption in the above proposition
that $Q$ has a unique minimizer is in fact only a uniqueness assumption.

(ii) We do not strive for utmost generality in the consistency result for
MD-estimators; possible relaxations lie in weakening the assumptions that $%
\mathcal{P}_{\Theta }\subseteq \mathcal{L}^{2}(\Omega )$ and that $\theta
_{0}^{\ast }$ is unique.
\end{remark}

\subsection{Asymptotic Normality of SMD-Estimators}

We next show that SMD- and MD-estimators are asymptotically normally
distributed, with their asymptotic variance-covariance matrix coinciding
with the inverse of the Fisher-information matrix in case the parametric
model $\mathcal{P}_{\Theta }$ is correctly specified. We first prove the
result for MD-estimators and then show how this can be carried over to
SMD-estimators. To this end we introduce a further assumption which is
standard in maximum likelihood theory.

\begin{modAsm}
\label{mod: domination conditions} The interior $\Theta ^{\circ }$ of $%
\Theta \subseteq \mathbb{R}^{m}$ is non-empty. For every $x\in \Omega $ the
function $\theta \mapsto p(x,\theta )$ is twice continuously partially
differentiable on $\Theta ^{\circ }$, and the following domination
conditions hold for all $i,j=1,\ldots ,m$: 
\begin{equation*}
\int_{\Omega }\sup_{\theta \in \Theta ^{\circ }}\left\vert \frac{\partial p}{%
\partial \theta _{i}}(x,\theta )\right\vert ^{2}d\lambda (x)<\infty ,\quad
\int_{\Omega }\sup_{\theta \in \Theta ^{\circ }}\left\vert \frac{\partial
^{2}p}{\partial \theta _{i}\partial \theta _{j}}(x,\theta )\right\vert
d\lambda (x)<\infty .
\end{equation*}
\end{modAsm}

We note that under the assumptions of the subsequent theorem, as well as
under the assumptions of Theorem~\ref{Theorem: asymptotic normality of the
simulation-based II-estimator}, the function $Q$ always possesses a
minimizer (cf.~Proposition \ref{Existence of II-estimators}(c) and Remark %
\ref{Remark: Conditions for L2-continuity}, as well as Proposition \ref%
{Interrelation} in Appendix \ref{App A} in case of Theorem~\ref{Theorem:
asymptotic normality of the simulation-based II-estimator}); furthermore,
the Hessian matrix of $Q(\theta )$ exists for every $\theta \in \Theta
^{\circ }$, cf.~Lemma~\ref{DerivativesOfQ_nAndQ} in Appendix \ref{App G}
which provides an explicit formula. We shall write $J(\theta )$ for $1/2$
times the Hessian matrix of $Q(\theta )$.

\begin{thm}
\label{Theorem: asymptotic normality of the II-estimator} Let Assumptions %
\ref{dens:InternalPoint}, \ref{mod:Inclusion}, \ref{mod: Strict inequality}, %
\ref{mod: pointwise continuity}, \ref{mod: domination conditions} be
satisfied. Suppose that the minimizer $\theta _{0}^{\ast }$ of $Q$ over $%
\Theta $ is unique and belongs to $\Theta ^{\circ }$, and suppose that the
matrix $J(\theta _{0}^{\ast })$ is positive definite. Furthermore, assume
that the first-order partial derivatives $\frac{\partial p}{\partial \theta
_{i}}(\cdot ,\theta _{0}^{\ast })$ belong to $\mathsf{W}_{2}^{s}(\Omega )$
for some $s>1/2$ and for all $i=1,\ldots ,m$. Then 
\begin{equation*}
\sqrt{n}(\hat{\theta}_{n}-\theta _{0}^{\ast })\rightsquigarrow N(0,J(\theta
_{0}^{\ast })^{-1}I(\theta _{0}^{\ast })J(\theta _{0}^{\ast })^{-1})\quad 
\text{as $n\rightarrow \infty $},
\end{equation*}%
where $I(\theta _{0}^{\ast })$ is given by%
\begin{equation*}
\int_{\Omega }\frac{\partial p}{\partial \theta }(\cdot ,\theta _{0}^{\ast })%
\frac{\partial p}{\partial \theta ^{\prime }}(\cdot ,\theta _{0}^{\ast
})p_{\theta _{0}^{\ast }}^{2}p_{\blacktriangle }^{-3}d\lambda -\int_{\Omega }%
\frac{\partial p}{\partial \theta }(\cdot ,\theta _{0}^{\ast })p_{\theta
_{0}^{\ast }}p_{\blacktriangle }^{-1}d\lambda \int_{\Omega }\frac{\partial p%
}{\partial \theta ^{\prime }}(\cdot ,\theta _{0}^{\ast })p_{\theta
_{0}^{\ast }}p_{\blacktriangle }^{-1}d\lambda ,
\end{equation*}%
which is well-defined and nonnegative definite. If, additionally, $\mathcal{P%
}_{\Theta }$ is correctly specified in the sense that $p_{\blacktriangle
}=p_{\theta _{0}}$ a.e.~for some $\theta _{0}\in \Theta $, then $\theta
_{0}^{\ast }=\theta _{0}$ and $I(\theta _{0})=J(\theta _{0})$ hold, and $%
I(\theta _{0})$ coincides with the Fisher-information matrix.
\end{thm}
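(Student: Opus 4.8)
The plan is to follow the classical M-estimation route, viewing $\hat\theta_n$ as an approximate minimizer of $\mathbb{Q}_n$ and linearizing. First I would establish the stochastic expansion $\nabla\mathbb{Q}_n(\theta_0^\ast)=\nabla Q(\theta_0^\ast)+\text{(empirical fluctuation)}+o_{\mathbb{P}}(n^{-1/2})$. Since $\theta_0^\ast$ minimizes $Q$ and lies in $\Theta^\circ$, $\nabla Q(\theta_0^\ast)=0$, so the leading stochastic term is what drives the limit. The key point is to compute $\sqrt n\nabla\mathbb{Q}_n(\theta_0^\ast)$ explicitly: differentiating $\mathbb{Q}_n(\theta)=\int(\hat p_n-p_\theta)^2\hat p_n^{-1}\,d\lambda$ in $\theta$ yields $-2\int(\hat p_n-p_\theta)(\partial p/\partial\theta_i)\hat p_n^{-1}\,d\lambda$ (using Lemma~\ref{DerivativesOfQ_nAndQ} and the domination conditions in Assumption~\ref{mod: domination conditions}), which I then split as $-2\int(p_\blacktriangle-p_{\theta_0^\ast})(\partial p/\partial\theta_i)p_\blacktriangle^{-1}\,d\lambda$ plus terms involving $\hat p_n-p_\blacktriangle$. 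The first of these is $\nabla Q(\theta_0^\ast)=0$, and the correction terms are handled by the Donsker-type result Theorem~\ref{Theorem: Nickl}(a): writing $\hat p_n-p_\blacktriangle$ against the fixed test functions $f_i=(\partial p/\partial\theta_i)(\cdot,\theta_0^\ast)p_{\theta_0^\ast}p_\blacktriangle^{-2}$ (which lie in $\mathsf{W}_2^s(\Omega)$ for some $s>1/2$ by hypothesis and Proposition~\ref{prop:SobolevembedsinHoelder}(a),(d)), Theorem~\ref{Theorem: Nickl}(a) replaces $\sqrt n\int(\hat p_n-p_\blacktriangle)f_i\,d\lambda$ by $\sqrt n(\mathbb{P}_n-\mathbb{P})f_i$ up to $o_{\mathbb{P}}(1)$. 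Thus $\sqrt n\nabla\mathbb{Q}_n(\theta_0^\ast)$ converges to $N(0,4I(\theta_0^\ast))$ by the ordinary multivariate CLT, once one checks the entries of $I(\theta_0^\ast)$ are exactly the covariances $\operatorname{Cov}(f_i(X_1),f_j(X_1))$ — this is a bookkeeping computation using $\mathbb{E}f_i(X_1)=\int(\partial p/\partial\theta_i)p_{\theta_0^\ast}p_\blacktriangle^{-1}\,d\lambda$.

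Next I would handle the Hessian/quadratic term. On a neighbourhood of $\theta_0^\ast$ the map $\theta\mapsto\nabla^2\mathbb{Q}_n(\theta)$ should converge in outer probability, uniformly on a small ball, to $2J(\theta)$, which is continuous and nonsingular at $\theta_0^\ast$ by hypothesis; this again uses Lemma~\ref{DerivativesOfQ_nAndQ} for the explicit Hessian formula plus consistency of $\hat p_n$ in sup-norm (Theorem~\ref{Theorem: consistency of AML-estimators}(a)) and the domination conditions to pass to the limit under the integral. Combining: from consistency of $\hat\theta_n$ (Proposition~\ref{Proposition: Consistency of II-estimators}(b), whose hypotheses follow from the present ones via Remark~\ref{Remark: Conditions for L2-continuity}) and the first-order condition $\nabla\mathbb{Q}_n(\hat\theta_n)=0$ (valid on events of probability tending to $1$ since $\hat\theta_n\in\Theta^\circ$ eventually), a mean-value expansion gives $0=\nabla\mathbb{Q}_n(\theta_0^\ast)+\nabla^2\mathbb{Q}_n(\bar\theta_n)(\hat\theta_n-\theta_0^\ast)$ with $\bar\theta_n$ on the segment, hence $\sqrt n(\hat\theta_n-\theta_0^\ast)=-(2J(\theta_0^\ast))^{-1}\sqrt n\nabla\mathbb{Q}_n(\theta_0^\ast)+o_{\mathbb{P}}(1)$. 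Slutsky then yields the asymptotic law $N(0,J(\theta_0^\ast)^{-1}I(\theta_0^\ast)J(\theta_0^\ast)^{-1})$, the factors of $2$ cancelling.

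Finally, for the correctly-specified case $p_\blacktriangle=p_{\theta_0}$: that $\theta_0^\ast=\theta_0$ is immediate since $Q(\theta_0)=0$ is the global minimum and the minimizer is unique. Substituting $p_\blacktriangle=p_{\theta_0}$ into the formula for $I(\theta_0^\ast)$ collapses it to $\int(\partial p/\partial\theta)(\partial p/\partial\theta')p_{\theta_0}^{-1}\,d\lambda-\int(\partial p/\partial\theta)\,d\lambda\int(\partial p/\partial\theta')\,d\lambda$; the second term vanishes because $\int p_\theta\,d\lambda\equiv1$ implies $\int(\partial p/\partial\theta_i)(\cdot,\theta_0)\,d\lambda=0$ (differentiating under the integral, justified by Assumption~\ref{mod: domination conditions}), leaving the Fisher information $\int(\partial\log p/\partial\theta)(\partial\log p/\partial\theta')p_{\theta_0}\,d\lambda$. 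The same substitution into the Hessian formula from Lemma~\ref{DerivativesOfQ_nAndQ} gives $J(\theta_0)$ equal to the same matrix, so $I(\theta_0)=J(\theta_0)$ and the sandwich reduces to $J(\theta_0)^{-1}$.

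The main obstacle I anticipate is the uniform-in-$\theta$ control of the second derivatives of $\mathbb{Q}_n$ near $\theta_0^\ast$ — specifically, interchanging differentiation in $\theta$ with integration over $\Omega$ and with the $\hat p_n^{-1}$ weight, and showing $\nabla^2\mathbb{Q}_n(\bar\theta_n)\to2J(\theta_0^\ast)$ despite $\bar\theta_n$ being random and $\hat p_n$ only sup-norm (not $\mathsf{W}_2^t$) consistent. This needs the domination conditions of Assumption~\ref{mod: domination conditions} together with the uniform lower bound $\hat p_n\ge$ const $>0$ on high-probability events (Remark~\ref{Remark 12}(i), available under Assumption~\ref{dens:InternalPoint}), so that the integrands are dominated uniformly and continuity of $\theta\mapsto J(\theta)$ at $\theta_0^\ast$ finishes the argument; I expect the details here, rather than the first-order CLT part, to be where the technical work concentrates.
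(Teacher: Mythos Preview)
Your proposal is correct and follows essentially the same route as the paper's proof: consistency plus first-order condition, mean-value expansion of $\nabla\mathbb{Q}_n$, Hessian convergence via Lemma~\ref{DerivativesOfQ_nAndQ} and sup-norm consistency of $\hat p_n$ (this is Proposition~\ref{Lemma: convergence of D2Q_n}), and the score CLT via Theorem~\ref{Theorem: Nickl}(a) applied to the test functions $f_i=(\partial p/\partial\theta_i)(\cdot,\theta_0^\ast)p_{\theta_0^\ast}p_\blacktriangle^{-2}$. One small point you gloss over: when you replace $\hat p_n^{-1}$ by $p_\blacktriangle^{-1}$ in the score, the correction is not a single linear functional of $\hat p_n-p_\blacktriangle$ but splits further into your linear Donsker term \emph{plus} a quadratic remainder of the form $\sqrt n\int(\hat p_n-p_\blacktriangle)^2(\partial p/\partial\theta_i)p_{\theta_0^\ast}\hat p_n^{-1}p_\blacktriangle^{-2}\,d\lambda$; this remainder is not handled by Theorem~\ref{Theorem: Nickl} but by the $L^2$-rate $\|\hat p_n-p_\blacktriangle\|_2^2=O_{\mathbb P}(n^{-2t/(2t+1)})=o_{\mathbb P}(n^{-1/2})$ from Proposition~\ref{pointwise_rate}(a). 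The paper also organizes the argument in two steps, first assuming $\zeta>0$ and then reducing the case $\zeta=0$ to it via Remark~\ref{Remark 12}(i), which is the mechanism behind the lower bound on $\hat p_n$ you invoke.
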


\begin{proof}
\textbf{Step 1:} Assume first that $\zeta >0$. By Proposition~\ref%
{Proposition: Consistency of II-estimators}(b), $\hat{\theta}_{n}$ belongs
to a sufficiently small open ball, centered at $\theta _{0}^{\ast }$ and
contained in $\Theta ^{\circ }$, on subsets $E_{n}$ of the sample space that
have inner probability tending to $1$ as $n\rightarrow \infty $.
Consequently,%
\begin{equation*}
\frac{\partial \mathbb{Q}_{n}}{\partial \theta }(\hat{\theta}_{n})=0
\end{equation*}%
holds on $E_{n}$. Applying the mean-value theorem to each component of $%
\partial \mathbb{Q}_{n}/\partial \theta $ then yields on $E_{n}$%
\begin{equation}
\sqrt{n}\frac{\partial \mathbb{Q}_{n}}{\partial \theta }(\theta _{0}^{\ast
})+J(\theta _{0}^{\ast })\sqrt{n}(\hat{\theta}_{n}-\theta _{0}^{\ast
})+(H_{n}-J(\theta _{0}^{\ast }))\sqrt{n}(\hat{\theta}_{n}-\theta _{0}^{\ast
})=0,  \label{expansion}
\end{equation}%
where $H_{n}$ is the Hessian matrix of $\mathbb{Q}_{n}$ with $i$-th row
evaluated at some mean value $\bar{\theta}_{n,i}$ on the line segment that
joins $\theta _{0}^{\ast }$ and $\hat{\theta}_{n}$. Observe that $H_{n}$
converges to the invertible matrix $J(\theta _{0}^{\ast })$ in outer
probability by Proposition~\ref{Proposition: Consistency of II-estimators},
Proposition~\ref{Lemma: convergence of D2Q_n} in Appendix \ref{App G}, and
continuity of $J(\theta )$ on $\Theta ^{\circ }$ (cf.~Lemma~\ref%
{DerivativesOfQ_nAndQ} in Appendix \ref{App G}). We next show that the score
evaluated at $\theta _{0}^{\ast }$ satisfies a central limit theorem. To
this end let $v\in \mathbb{R}^{m}$ be arbitrary, and use Lemma~\ref%
{DerivativesOfQ_nAndQ}(a) to obtain 
\begin{eqnarray*}
v^{\prime }\sqrt{n}\frac{\partial \mathbb{Q}_{n}}{\partial \theta }(\theta
_{0}^{\ast }) &=&2\sqrt{n}\int_{\Omega }(\hat{p}_{n}-p_{\blacktriangle
})^{2}v^{\prime }\frac{\partial p}{\partial \theta }(\cdot ,\theta
_{0}^{\ast })\frac{p_{\theta _{0}^{\ast }}}{\hat{p}_{n}p_{\blacktriangle
}^{2}}d\lambda \\
&&\qquad -2\sqrt{n}\int_{\Omega }(\hat{p}_{n}-p_{\blacktriangle })v^{\prime }%
\frac{\partial p}{\partial \theta }(\cdot ,\theta _{0}^{\ast })\frac{%
p_{\theta _{0}^{\ast }}}{p_{\blacktriangle }^{2}}d\lambda \\
&&\qquad -2\sqrt{n}\int_{\Omega }(p_{\blacktriangle }-p_{\theta _{0}^{\ast
}})v^{\prime }\frac{\partial p}{\partial \theta }(\cdot ,\theta _{0}^{\ast })%
\frac{1}{p_{\blacktriangle }}d\lambda \\
&=&\text{I}+\text{II}+\text{III}.
\end{eqnarray*}%
Observe that Expression~\emph{III} equals $\sqrt{n}v^{\prime }(\partial
Q/\partial \theta )(\theta _{0}^{\ast })$ by Lemma~\ref{DerivativesOfQ_nAndQ}%
(b) in Appendix \ref{App G}. Since $\theta _{0}^{\ast }$ is an interior
minimizer of $Q$ by assumption, Expression~\emph{III} is $0$.

\textit{Convergence of} \textit{\emph{I:}} By assumption $v^{\prime }\frac{%
\partial p}{\partial \theta }(\cdot ,\theta _{0}^{\ast })$ belongs to $%
\mathsf{W}_{2}^{s}(\Omega )$ with $s>1/2$ and is thus sup-norm bounded by $%
C_{s}\Vert v^{\prime }\frac{\partial p}{\partial \theta }(\cdot ,\theta
_{0}^{\ast })\Vert _{s,2}<\infty $. Clearly, $\left\Vert p_{\theta
_{0}^{\ast }}\hat{p}_{n}^{-1}p_{\blacktriangle }^{-2}\right\Vert _{\Omega
}\leq \zeta ^{-3}C_{t}D$ holds in view of Assumption \ref{mod:Inclusion}.
Hence, 
\begin{equation*}
\text{I}\leq 2C_{s}\left\Vert v^{\prime }\frac{\partial p}{\partial \theta }%
(\cdot ,\theta _{0}^{\ast })\right\Vert _{s,2}\zeta ^{-3}C_{t}D\sqrt{n}\Vert 
\hat{p}_{n}-p_{\blacktriangle }\Vert _{2}^{2}.
\end{equation*}%
Consequently, Expression~\emph{I} converges to $0$ in outer probability by
Proposition~\ref{pointwise_rate}(a) applied with $s=0$.

\textit{Convergence of} \textit{\emph{II:}} Set $r=\min (s,t)>1/2$. Observe
that $-2v^{\prime }(\partial p/\partial \theta )(\cdot ,\theta _{0}^{\ast
})\in \mathsf{W}_{2}^{r}(\Omega )$ by assumption, that $p_{\theta _{0}^{\ast
}}\in \mathsf{W}_{2}^{r}(\Omega )$ by Assumption~\ref{mod:Inclusion}, and
that $p_{\blacktriangle }\in \mathsf{W}_{2}^{r}(\Omega )$ by Assumption~\ref%
{dens:Element}. Since $\zeta >0$ has been assumed, it follows that 
\begin{equation*}
f:=-2v^{\prime }\frac{\partial p}{\partial \theta }(\cdot ,\theta _{0}^{\ast
})\frac{p_{\theta _{0}^{\ast }}}{p_{\blacktriangle }^{2}}
\end{equation*}%
belongs to $\mathsf{W}_{2}^{r}(\Omega )$ in view of Proposition~\ref%
{prop:SobolevembedsinHoelder}(a),(d). Applying Theorem \ref{Theorem: Nickl}%
(a) with $\mathcal{F}=\{f\}$ we obtain that \textit{\emph{II }}converges in
distribution to a centered normal distribution with variance $4v^{\prime
}I(\theta _{0}^{\ast })v$. By the Cram\'{e}r-Wold device, $\sqrt{n}(\partial 
\mathbb{Q}_{n}/\partial \theta )(\theta _{0}^{\ast })$ asymptotically
follows a centered normal distribution with variance-covariance matrix $%
4I(\theta _{0}^{\ast })$. Nonnegative definiteness of $I(\theta _{0}^{\ast
}) $ is now an immediate consequence and the asymptotic distribution of $%
\sqrt{n}(\hat{\theta}_{n}-\theta _{0}^{\ast })$ follows easily from (\ref%
{expansion}). The claims under correct specification of the model $\mathcal{P%
}_{\Theta } $ follow easily from Lemma~\ref{DerivativesOfQ_nAndQ}(b) in
Appendix \ref{App G}.

\textbf{Step~2:} Now assume that $\zeta =0$. Note that $\inf_{x\in \Omega
}p_{\blacktriangle }(x)>0$ and $\inf_{\Omega \times \Theta }p(x,\theta )>0$
because of Assumptions~\ref{dens:InternalPoint} and \ref{mod: Strict
inequality}. Let $\chi >0$ be such that $\inf_{x\in \Omega
}p_{\blacktriangle }(x)>\chi $ and $\inf_{\Omega \times \Theta }p(x,\theta
)>\chi $. Then it follows from Proposition~\ref{Existence of II-estimators}%
(b) that there are events that have probability tending to $1$ such that on
these events $\hat{\theta}_{n}$ coincides with an MD-estimator $\check{\theta%
}_{n}$ that is based on $\mathcal{P}(t,\chi ,D)$ instead of $\mathcal{P}%
(t,\zeta ,D)$. Since the assumptions of the theorem are also satisfied with $%
\mathcal{P}(t,\chi ,D)$ instead of $\mathcal{P}(t,\zeta ,D)$, applying to $%
\check{\theta}_{n}$ what has already been established in Step~1 completes
the proof.
\end{proof}

\bigskip

The following lemma will be instrumental in proving the asymptotic normality
result for SMD-estimators.

\begin{lem}
\label{Square root lemma} Let $U\subseteq \mathbb{R}^{m}$ be a (non-empty)
open, convex set. Let $f:U\rightarrow \mathbb{R}$ and $g:U\rightarrow 
\mathbb{R}$ be functions such that $g$ is twice partially differentiable on $%
U$ with Hessian satisfying 
\begin{equation}
\inf_{x\in U}y^{\prime }\frac{\partial ^{2}g}{\partial x\partial x^{\prime }}%
(x)y\geq K\Vert y\Vert ^{2}  \label{Positive definiteness of D2g}
\end{equation}%
for all $y\in \mathbb{R}^{m}$ and some $0<K<\infty $. If $u$ is a minimizer
of $f$ over $U$ and $v$ is a minimizer of $g$ over $U$, then 
\begin{equation*}
\Vert u-v\Vert \leq 2K^{-1/2}\sqrt{\Vert f-g\Vert _{U}}.
\end{equation*}
\end{lem}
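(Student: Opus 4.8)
The plan is to exploit the strong convexity of $g$ encoded in (\ref{Positive definiteness of D2g}) together with the defining minimality properties of $u$ and $v$. First I would record the basic quadratic lower bound coming from a second-order Taylor expansion of $g$ around its minimizer $v$: since $U$ is open and convex, $g$ is twice differentiable on $U$, $v\in U$ is a minimizer so $\partial g/\partial x(v)=0$, and for any $x\in U$ the segment from $v$ to $x$ lies in $U$, Taylor's theorem with integral (or Lagrange) remainder gives some $\bar{x}$ on that segment with
\begin{equation*}
g(x)-g(v)=\tfrac{1}{2}(x-v)^{\prime }\frac{\partial ^{2}g}{\partial x\partial x^{\prime }}(\bar{x})(x-v)\geq \tfrac{K}{2}\Vert x-v\Vert ^{2},
\end{equation*}
where the inequality is exactly (\ref{Positive definiteness of D2g}). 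Applying this with $x=u$ yields $g(u)-g(v)\geq (K/2)\Vert u-v\Vert ^{2}$.

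Next I would bound $g(u)-g(v)$ from above using that $u$ minimizes $f$ and that $f$ and $g$ are uniformly close on $U$. Write
\begin{equation*}
g(u)-g(v)=\bigl(g(u)-f(u)\bigr)+\bigl(f(u)-f(v)\bigr)+\bigl(f(v)-g(v)\bigr).
\end{equation*}
The middle term is $\leq 0$ because $u$ is a minimizer of $f$ over $U$ and $v\in U$. Each of the outer two terms is bounded in absolute value by $\Vert f-g\Vert _{U}$. Hence $g(u)-g(v)\leq 2\Vert f-g\Vert _{U}$. Combining with the lower bound from the previous paragraph gives $(K/2)\Vert u-v\Vert ^{2}\leq 2\Vert f-g\Vert _{U}$, i.e.\ $\Vert u-v\Vert ^{2}\leq 4K^{-1}\Vert f-g\Vert _{U}$, and taking square roots yields the claimed inequality $\Vert u-v\Vert \leq 2K^{-1/2}\sqrt{\Vert f-g\Vert _{U}}$.

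I do not anticipate a genuine obstacle here; the only points requiring a modicum of care are (i) justifying that $v$ being a minimizer of the differentiable function $g$ on the open set $U$ forces the gradient to vanish at $v$ (standard first-order condition, valid since $U$ is open), and (ii) ensuring the mean-value/Taylor point $\bar{x}$ stays inside $U$, which is guaranteed by convexity of $U$. One should also note that the argument is vacuous/trivial if $\Vert f-g\Vert _{U}=\infty$, in which case the asserted bound holds automatically; otherwise all the quantities above are finite and the chain of inequalities is legitimate. No appeal to any earlier result of the paper is needed.
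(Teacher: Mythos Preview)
Your proof is correct and follows essentially the same approach as the paper: a second-order Taylor expansion of $g$ around its minimizer $v$ (using convexity of $U$ to keep the mean-value point inside $U$) to get the quadratic lower bound, combined with the minimality of $u$ for $f$ and $v$ for $g$ to bound $g(u)-g(v)$ by $2\Vert f-g\Vert_U$. The only difference is cosmetic bookkeeping in how the latter bound is organized (you decompose $g(u)-g(v)$ into three pieces, the paper sandwiches $f(u)-g(v)$ between $f(u)-g(u)$ and $f(v)-g(v)$); the content is identical.
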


\begin{proof}
Suppose that minimizers $u$ and $v$ exist, since otherwise there is nothing
to prove. As $v$ is a minimizer of the twice partially differentiable
function $g$ on the convex open set $U$, we have (by a pathwise Taylor
series expansion) that 
\begin{equation*}
g(u)=g(v)+\frac{1}{2}(u-v)^{\prime }\frac{\partial ^{2}g}{\partial x\partial
x^{\prime }}(\bar{v})(u-v),
\end{equation*}%
where $\bar{v}$ lies in the convex hull of $\{u,v\}\subseteq U$. By (\ref%
{Positive definiteness of D2g}) we obtain%
\begin{equation}
\Vert u-v\Vert \leq \sqrt{2}K^{-1/2}\sqrt{|g(u)-g(v)|}.
\label{Closeness of the u-values}
\end{equation}%
Next, note the inequality%
\begin{equation*}
f(u)-g(u)\leq f(u)-g(v)\leq f(v)-g(v)
\end{equation*}%
which implies%
\begin{equation*}
|f(u)-g(v)|\leq \Vert f-g\Vert _{U},
\end{equation*}%
which in turn yields 
\begin{equation*}
|g(u)-g(v)|\leq |g(u)-f(u)|+|f(u)-g(v)|\leq 2\Vert f-g\Vert _{U}.
\end{equation*}%
Plugged into (\ref{Closeness of the u-values}) this proves the result.
\end{proof}

\bigskip

The asymptotic normality result for SMD-estimators is now as follows.

\begin{thm}
\label{Theorem: asymptotic normality of the simulation-based II-estimator}
Let Assumptions~\ref{dens:InternalPoint}, \ref{mod:Inclusion}, \ref{mod:
domination conditions}, \ref{rho: Hoelderity of rho} be satisfied. Suppose
that the minimizer $\theta _{0}^{\ast }$ of $Q$ over $\Theta $ is unique and
belongs to $\Theta ^{\circ }$, suppose that the matrix $J(\theta _{0}^{\ast
})$ is positive definite, and assume that the first-order partial
derivatives $\frac{\partial p}{\partial \theta _{i}}(\cdot ,\theta
_{0}^{\ast })$ belong to $\mathsf{W}_{2}^{s}(\Omega )$ for some $s>1/2$ and
for all $i=1,\ldots ,m$. Suppose further that either\emph{\ }(i) Assumption~%
\ref{mod: Strict inequality} is satisfied and $k(n)$ satisfies $%
k(n)/n^{2+1/t}\rightarrow \infty $ as $n\rightarrow \infty $; or (ii)
Assumption~\ref{mod:InclusionWithStrictInequalities} is satisfied and $k(n)$
satisfies $k(n)/n^{2}\rightarrow \infty $ as $n\rightarrow \infty $. Then 
\begin{equation*}
\sqrt{n}(\hat{\theta}_{n,k(n)}-\theta _{0}^{\ast })\rightsquigarrow
N(0,J(\theta _{0}^{\ast })^{-1}I(\theta _{0}^{\ast })J(\theta _{0}^{\ast
})^{-1})\quad \text{as $n\rightarrow \infty $},
\end{equation*}%
where $I(\theta _{0}^{\ast })$ is given as in Theorem \ref{Theorem:
asymptotic normality of the II-estimator}, is\ well-defined, and is
nonnegative definite. If, additionally, $\mathcal{P}_{\Theta }$ is correctly
specified in the sense that $p_{\blacktriangle }=p_{\theta _{0}}$ a.e.~for
some $\theta _{0}\in \Theta $, then $\theta _{0}^{\ast }=\theta _{0}$ and $%
I(\theta _{0})=J(\theta _{0})$ hold, and $I(\theta _{0})$ coincides with the
Fisher-information matrix.
\end{thm}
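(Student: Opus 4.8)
**The plan is to reduce the SMD-estimator to the MD-estimator of Theorem~\ref{Theorem: asymptotic normality of the II-estimator} by controlling the error introduced by the simulation step.** The key observation is that $\mathbb{Q}_{n,k}(\theta)$ and $\mathbb{Q}_{n}(\theta)$ differ only through the replacement of $p_{\theta}$ by $\tilde{p}_{k}(\theta)$; writing $\hat p_n - \tilde p_k(\theta) = (\hat p_n - p_\theta) - (\tilde p_k(\theta) - p_\theta)$ and expanding the square, one finds on the event $\{\inf_\Omega \hat p_n > 0\}$ that
\begin{equation*}
\mathbb{Q}_{n,k}(\theta) - \mathbb{Q}_{n}(\theta) = -2\int_\Omega (\hat p_n - p_\theta)(\tilde p_k(\theta) - p_\theta)\hat p_n^{-1} d\lambda + \int_\Omega (\tilde p_k(\theta) - p_\theta)^2 \hat p_n^{-1} d\lambda.
\end{equation*}
The second term is $O_\mu(k^{-1})$ uniformly in $\theta$ by the $\|\cdot\|_2$-rate in Theorem~\ref{Uniform rate of AML-estimators} (applied with $s=0$), hence $o_{\text{Pr}}(n^{-1/2})$ once $k/n^2 \to \infty$. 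The first term is the delicate one: it equals $-2\sqrt{k}^{-1}\int_\Omega \sqrt{k}(\tilde p_k(\theta) - p_\theta) \cdot (\hat p_n - p_\theta)\hat p_n^{-1} d\lambda$, and here I would invoke Theorem~\ref{Theorem: Uniform Donsker-type theorem}(c): the function class $\mathcal{F} = \{(\hat p_n - p_\theta)\hat p_n^{-1} : \ldots\}$ is, on the relevant high-probability events, a bounded subset of $\mathsf{W}_2^{s'}(\Omega)$ for suitable $s' > 1/2$ (using that $\hat p_n$ is bounded below by $\chi > 0$ on these events and the Sobolev multiplication-algebra and reciprocal properties from Proposition~\ref{prop:SobolevembedsinHoelder}), so $\sup_\theta |\sqrt{k}\int_\Omega (\tilde p_k(\theta) - p_\theta) f \, d\lambda| = O_\mu(1)$, giving the first term the order $O(k^{-1/2}) = o(n^{-1/2})$.

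**Next, I would apply Lemma~\ref{Square root lemma}** with $f = \mathbb{Q}_{n,k}$, $g = \mathbb{Q}_{n}$, on a small open convex ball $U$ around $\theta_0^*$ contained in $\Theta^\circ$, and $K$ half the smallest eigenvalue of $J(\theta_0^*)$ (which is positive by hypothesis); the Hessian lower bound on $U$ holds on high-probability events by continuity of the Hessian of $\mathbb{Q}_n$ and its convergence to $2J(\theta_0^*)$ (Lemma~\ref{DerivativesOfQ_nAndQ}, Proposition~\ref{Lemma: convergence of D2Q_n}), after shrinking $U$ if necessary. Combined with the consistency of $\hat\theta_{n,k}$ (Proposition~\ref{Proposition: Consistency of II-estimators}(a), whose hypotheses are met here) and of $\hat\theta_n$, the lemma yields $\|\hat\theta_{n,k} - \hat\theta_n\| \leq 2K^{-1/2}\|\mathbb{Q}_{n,k} - \mathbb{Q}_n\|_\Theta^{1/2}$, so from the bound above $\sqrt{n}\|\hat\theta_{n,k} - \hat\theta_n\| = o_{\text{Pr}}(1)$ provided $k = k(n)$ grows fast enough — and here the two cases (i) $k/n^{2+1/t}\to\infty$ under Assumption~\ref{mod: Strict inequality}, versus (ii) $k/n^2\to\infty$ under Assumption~\ref{mod:InclusionWithStrictInequalities} — enter because in case (ii) Assumption~\ref{mod:InclusionWithStrictInequalities} lets us apply Theorem~\ref{Theorem: Uniform Donsker-type theorem} directly (yielding the sharper $O(k^{-1/2})$ rate for the cross term), whereas in case (i) one only has the rougher rate from Theorem~\ref{Uniform rate of AML-estimators} for both terms, namely $O(k^{-1/2}\cdot k^{-t/(2t+1)}) $ for the cross term via Cauchy--Schwarz together with the bracketing bound $\varepsilon^{-1/t}$ driving the extra $n^{1/t}$ factor. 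Finally, by the asymptotic normality of $\sqrt{n}(\hat\theta_n - \theta_0^*)$ from Theorem~\ref{Theorem: asymptotic normality of the II-estimator} and Slutsky's lemma, $\sqrt{n}(\hat\theta_{n,k(n)} - \theta_0^*)$ has the same limiting $N(0, J^{-1}IJ^{-1})$ distribution, and the claims under correct specification are inherited verbatim from that theorem.

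**The main obstacle will be making the Donsker argument for the cross term rigorous uniformly in $\theta$**, because the index class $\mathcal{F}_n = \{(\hat p_n - p_\theta)\hat p_n^{-1}\}$ is itself random (it depends on $\hat p_n$) and $\theta$-indexed, so Theorem~\ref{Theorem: Uniform Donsker-type theorem}(c) cannot be applied off the shelf — one must condition on the $X$-sample, verify that $\mathcal{F}_n$ lies in a \emph{fixed} $\|\cdot\|_{s',2}$-ball (uniformly over the high-probability events, using $\|\hat p_n\|_{t,2} = D$, the lower bound $\hat p_n \geq \chi$, and Proposition~\ref{prop:SobolevembedsinHoelder}), and then exploit the independence of $(V_i)$ from $(X_i)$ to invoke the bound conditionally and integrate out. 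A secondary subtlety is bookkeeping the $\zeta = 0$ case, handled as in earlier proofs by passing to $\mathcal{P}(t,\chi,D)$ on events of probability tending to $1$ via Remark~\ref{Remark 12} and Proposition~\ref{Proposition: Existence of simulation-based II-estimators}(c), so that all cited rate and Donsker results apply with a strictly positive lower bound.
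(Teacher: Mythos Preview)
Your approach is the paper's: reduce to Theorem~\ref{Theorem: asymptotic normality of the II-estimator} via Lemma~\ref{Square root lemma}, and bound $\sup_\theta|\mathbb{Q}_{n,k}-\mathbb{Q}_n|$ by the uniform rate (Theorem~\ref{Uniform rate of AML-estimators}) in case~(i) and by Theorem~\ref{Theorem: Uniform Donsker-type theorem}(c) in case~(ii), with the $\zeta=0$ case handled by passing to $\mathcal{P}(t,\chi,D)$. Two places where the paper is tighter and your write-up slips:

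\emph{Decomposition and the ``obstacle''.} The paper (Proposition~\ref{Proposition: Uniform convergence properties of II-objective functions}(c)) uses $\int_\Omega(\tilde p_k(\theta)-p_\theta)\,d\lambda=0$ to write $\mathbb{Q}_{n,k}(\theta)-\mathbb{Q}_n(\theta)=\int_\Omega(\tilde p_k(\theta)-p_\theta)(\tilde p_k(\theta)+p_\theta)\hat p_n^{-1}\,d\lambda$ as a single term. Your ``main obstacle'' then evaporates: one takes $\mathcal{F}$ to be the \emph{deterministic} set $\{(\tilde p_k(\theta)+p_\theta)/\hat p_n:\theta\in\Theta,\ x_1,\ldots,x_n\in\Omega,\ v_1,\ldots,v_k\in V,\ n,k\in\mathbb{N}\}$, which is bounded in $\mathsf{W}_2^t(\Omega)$ by Proposition~\ref{prop:SobolevembedsinHoelder}(a),(d) (using $\zeta>0$), and applies Theorem~\ref{Theorem: Uniform Donsker-type theorem}(c) directly --- no conditioning on the $X$-sample is needed. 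The same device works for your $(\hat p_n-p_\theta)/\hat p_n$.

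\emph{Rates.} The quadratic term is $O_\mu(k^{-2t/(2t+1)})$, not $O_\mu(k^{-1})$. More importantly, since Lemma~\ref{Square root lemma} takes a square root, the target is $\sup_\theta|\mathbb{Q}_{n,k}-\mathbb{Q}_n|=o_{\Pr}^*(n^{-1})$, not $o(n^{-1/2})$. In case~(i) Theorem~\ref{Theorem: Uniform Donsker-type theorem} is unavailable (it requires Assumption~\ref{mod:InclusionWithStrictInequalities}); the bound is $O_\mu(k^{-t/(2t+1)})$ from the $\|\cdot\|_2$-rate alone, and $k^{-t/(2t+1)}=o(n^{-1})$ is precisely $k/n^{2+1/t}\to\infty$. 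Your ``$O(k^{-1/2}\cdot k^{-t/(2t+1)})$'' for that case does not correspond to any available estimate. In case~(ii) the bound is $O_\mu(k^{-1/2})$, and $k^{-1/2}=o(n^{-1})$ is $k/n^2\to\infty$.
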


\begin{proof}
\textbf{Step 1:} Assume that $\zeta >0$. Observe first that the assumptions
of the current theorem imply the assumptions of Theorem~\ref{Theorem:
asymptotic normality of the II-estimator}, noting that Assumption~\ref{mod:
pointwise continuity} follows from Assumptions \ref{mod:Inclusion} and \ref%
{rho: Hoelderity of rho} in view of Proposition~\ref{Interrelation} in
Appendix \ref{App A}. It hence suffices to prove that 
\begin{equation}
\sqrt{n}(\hat{\theta}_{n,k(n)}-\hat{\theta}_{n})=o_{\func{Pr}}^{\ast
}(1)\quad \text{as $n\rightarrow \infty $.}  \label{diff}
\end{equation}%
We achieve this by applying Lemma~\ref{Square root lemma} to the objective
functions $\mathbb{Q}_{n,k}$ and $\mathbb{Q}_{n}$: Let $U$ be a sufficiently
small open, convex neighbourhood of $\theta _{0}^{\ast }$ that is contained
in $\Theta ^{\circ }$ such that the smallest eigenvalue of $J(\theta )$ is
bounded from below by a positive constant for all $\theta \in U$, the
constant not depending on $\theta $. Such a set $U$ exists, since $J(\theta
_{0}^{\ast })$ is positive definite by assumption and $J(\theta )$ is
continuous on $\Theta ^{\circ }$ by Lemma~\ref{DerivativesOfQ_nAndQ} in
Appendix \ref{App G}. Since for all $i,j=1,\ldots ,m$ 
\begin{equation*}
\sup_{\theta \in \Theta ^{\circ }}\left\vert \frac{\partial ^{2}\mathbb{Q}%
_{n}}{\partial \theta _{i}\partial \theta _{j}}(\theta )-\frac{\partial ^{2}Q%
}{\partial \theta _{i}\partial \theta _{j}}(\theta )\right\vert =o_{\mathbb{P%
}}(1)\quad \text{as }n\rightarrow \infty
\end{equation*}%
by Proposition~\ref{Lemma: convergence of D2Q_n} in Appendix \ref{App G}, it
follows that there are events $E_{n}$ having probability tending to $1$ as $%
n\rightarrow \infty $ such that on $E_{n}$%
\begin{equation*}
\inf_{\theta \in U}y^{\prime }\frac{\partial ^{2}\mathbb{Q}_{n}}{\partial
\theta \partial \theta ^{\prime }}(\theta )y\geq K\Vert y\Vert ^{2}\quad 
\text{for all }y\in \mathbb{R}^{m}
\end{equation*}%
holds for some constant $K>0$ which does not depend on $n$ or the data. By
Propositions~\ref{Proposition: Consistency of II-estimators}, $\hat{\theta}%
_{n}$ and $\hat{\theta}_{n,k(n)}$ belong to $U$ on subsets $E_{n}^{\prime }$
of the sample space whose inner probability goes to $1$ as $n\rightarrow
\infty $. For the rest of the proof of Step~1 we restrict our reasoning to
the events $E_{n}\cap E_{n}^{\prime }$, and note that they have inner
probability tending to $1$ as $n\rightarrow \infty $. By Proposition~\ref%
{Existence of II-estimators}(a) and Proposition~\ref{Proposition: Existence
of simulation-based II-estimators}(a) the estimators $\hat{\theta}_{n}$ and $%
\hat{\theta}_{n,k(n)}$, respectively, minimize the objective functions $%
\mathbb{Q}_{n}$ and $\mathbb{Q}_{n,k(n)}$. Hence, we may apply Lemma~\ref%
{Square root lemma} with $f=\mathbb{Q}_{n,k(n)}|U$, $g=\mathbb{Q}_{n}|U$, $u=%
\hat{\theta}_{n,k(n)}$, and $v=\hat{\theta}_{n}$ to obtain 
\begin{equation*}
\Vert \hat{\theta}_{n,k(n)}-\hat{\theta}_{n}\Vert \leq 2K^{-1/2}\sqrt{\Vert 
\mathbb{Q}_{n,k(n)}-\mathbb{Q}_{n}\Vert _{U}}.
\end{equation*}%
It follows from Proposition~\ref{Proposition: Uniform convergence properties
of II-objective functions}(c) in Appendix \ref{App G} and the choice of $%
k(n) $ that (\ref{diff}) holds under (i) as well as under (ii).

\textbf{Step~2:} Now assume that $\zeta =0$. Note that $\inf_{x\in \Omega
}p_{\blacktriangle }(x)>0$ and $\inf_{\Omega \times \Theta }p(x,\theta )>0$
because of Assumptions~\ref{dens:InternalPoint} and \ref{mod: Strict
inequality} (\ref{mod:InclusionWithStrictInequalities}, respectively). Let $%
\chi >0$ be such that $\inf_{x\in \Omega }p_{\blacktriangle }(x)>\chi $ and $%
\inf_{\Omega \times \Theta }p(x,\theta )>\chi $. Then it follows from
Proposition~\ref{Existence of II-estimators}(b) and Proposition~\ref%
{Proposition: Existence of simulation-based II-estimators}(c) that there are
events $C_{n,k(n)}$ having probability tending to $1$ as $n\rightarrow
\infty $ such that on these events $\hat{\theta}_{n,k(n)}$ coincides with a
SMD-estimator $\check{\theta}_{n,k(n)}$ that is based on $\mathcal{P}(t,\chi
,D)$ instead of $\mathcal{P}(t,\zeta ,D)$. Since the assumptions of the
theorem are also satisfied with $\mathcal{P}(t,\chi ,D)$ instead of $%
\mathcal{P}(t,\zeta ,D)$, applying to $\check{\theta}_{n,k(n)}$ what has
already been established in Step~1 completes the proof.
\end{proof}

\begin{remark}
\label{rem26}\normalfont(i) The preceding theorem was proved by showing that 
$\hat{\theta}_{n,k(n)}$ and $\hat{\theta}_{n}$ are sufficiently close (with
Lemma \ref{Square root lemma} being instrumental here) and by applying
Theorem \ref{Theorem: asymptotic normality of the II-estimator}. The reason
for going this route instead of directly applying a mean-value expansion to
the score $\partial \mathbb{Q}_{n,k(n)}/\partial \theta $ is that this would
require knowledge about differentiability properties of the mapping $\theta
\mapsto \tilde{p}_{k(n)}(\theta )$, which we were unable to obtain. [The
usual approach to establish such differentiability properties via the
implicit function theorem is not feasible here since $\tilde{p}%
_{k(n)}(\theta )$ falls on the boundary of $\mathcal{P}(t,\zeta ,D)$ as
shown in Proposition \ref{Theorem: existence of AML-estimators}.] A
consequence of the method of proof chosen is that we have to assume at least 
$k(n)/n^{2}\rightarrow \infty $. It is likely, that if the more direct
method of proof via expansion of the score $\partial \mathbb{Q}%
_{n,k(n)}/\partial \theta $ can be made to work, this would deliver
asymptotic normality under weaker conditions on $k(n)$.

(ii) Nickl and P\"{o}tscher (2010) consider spline projection density
estimators rather than NPML-estimators. Because of the simpler structure of
these estimators, this allows them to also employ the alternative route via
a mean-value expansion, leading to an asymptotic normality result under
weaker growth-conditions on $k(n)$. We note that Nickl and P\"{o}tscher
(2010) consider only the correctly specified case. In this case and when $%
k(n)/n^{2}\rightarrow \infty $ is assumed (as is in the present paper), the
assumptions employed in Nickl and P\"{o}tscher (2010) and in the present
paper are quite comparable, some differences being due to the different
non-parametric estimators considered.

(iii) The asymptotic normality results given here are for a fixed underlying
data-generating mechanism $\mathbb{P}$. Under appropriate assumptions,
corresponding results that are uniform in the underlying data-generating
mechanism can be obtained, see Chapter 7 in Gach (2010).
\end{remark}

\appendix{}

\section{Appendix: Proofs for Sections \protect\ref{Section: Notation and
basic definitions} and \protect\ref{Section: The framework}\label{App A}}

\textbf{Proof of Proposition \ref{Proposition:Propertiesoftheauxiliarymodel}%
: }(a) The implications (i) in (ii) and (ii) in (iii) are obvious. If $p$ is
an element of $\mathcal{P}(t,\zeta ,D)$, we have $1=\int_{\Omega
}p\,d\lambda \geq \int_{\Omega }\zeta d\lambda =\zeta \lambda (\Omega )$
showing that $\zeta \leq \lambda (\Omega )^{-1}$. Furthermore, the
Cauchy-Schwarz inequality implies $1=\left\Vert p\right\Vert _{1}\leq
\left\Vert p\right\Vert _{2}\left\Vert 1\right\Vert _{2}\leq \left\Vert
p\right\Vert _{t,2}\left\Vert 1\right\Vert _{2}\leq D\lambda (\Omega )^{1/2}$%
, which implies $\lambda (\Omega )^{-1}\leq D^{2}$. Thus (iii) implies (i).

(b) Suppose (i) holds. Then $\lambda (\Omega )^{-1}\in \mathcal{P}(t,\zeta
,D)$ by Part (a). Suppose $p\in \mathcal{P}(t,\zeta ,D)$. If now $\zeta
=\lambda (\Omega )^{-1}$, then $p-\lambda (\Omega )^{-1}\geq 0$. But clearly 
$\int_{\Omega }\left( p-\lambda (\Omega )^{-1}\right) d\lambda =0$, implying
that $p=\lambda (\Omega )^{-1}$ $\lambda $-a.e., and hence everywhere by
continuity of $p$. If $\lambda (\Omega )^{-1}=D^{2}$, then $\left\Vert
p\right\Vert _{1}=\left\Vert p\right\Vert _{2}\left\Vert 1\right\Vert _{2}$
follows from the calculations in the proof of Part (a). But this shows that $%
p$ is $\lambda $-a.e., and hence everywhere by continuity of $p$,
proportional to the constant function $1$, the proportionality factor
necessarily being $\lambda (\Omega )^{-1}$. This proves that (i) implies
(ii). That (ii) implies (iii) is trivial. Since the constant density $%
\lambda (\Omega )^{-1}$ belongs to $\mathcal{P}(t,\zeta ,D)$ by Part (a),
(iii) is equivalent to (ii). To show that (ii) implies (i), assume that $%
\zeta <\lambda (\Omega )^{-1}<D^{2}$. Choose $\varepsilon >0$ small enough
such that $\zeta <\lambda (\Omega )^{-1}-\varepsilon $ holds. Then define $f$
to be the restriction to $\Omega $ of the affine function that has the value 
$\lambda (\Omega )^{-1}-\varepsilon $ at the left endpoint of $\Omega $ and $%
\lambda (\Omega )^{-1}+\varepsilon $ at the right endpoint. By construction $%
f\in \mathsf{W}_{2}^{t}(\Omega )$, integrates to $1$, satisfies $%
\inf_{\Omega }f\geq \zeta $, and $\Vert f\Vert _{t,2}\leq D$ provided $%
\varepsilon $ is small enough. That is, $f$ is a further element of $%
\mathcal{P}(t,\zeta ,D)$, contradicting (ii).

(c) Note that $\mathcal{P}(t,\zeta ,D)$ is non-empty by Part (a). Since the
defining conditions are convex, it is convex. That $\mathcal{P}(t,\zeta ,D)$
is compact as claimed follows from Lemma~3 in Nickl (2007). {[}Note that the
proof of this lemma does not use that $\zeta >0$, as is implicit there, and
therefore is also valid for $\zeta =0$.{] }$\blacksquare $

\textbf{Proof of Proposition \ref{prop_interior}: }Since (a) is a special
case of (b) it suffices to prove the latter: Suppose $\mathcal{P}^{\prime }$
satisfies (i) and (ii), and choose $\delta >0$ small enough such that $%
\delta <D-\sup_{p\in \mathcal{P}^{\prime }}\Vert p\Vert _{t,2}$ and $%
C_{t}\delta <\inf_{x\in \Omega ,p\in \mathcal{P}^{\prime }}p(x)-\zeta $
hold, where $C_{t}$ is the constant appearing in Proposition \ref%
{prop:SobolevembedsinHoelder}. For every $p\in \mathcal{P}^{\prime }$ and $%
f\in \mathsf{W}_{2}^{t}(\Omega )$ with $\Vert f\Vert _{t,2}\leq \delta $ we
then have $\Vert p+f\Vert _{t,2}\leq \Vert p\Vert _{t,2}+\Vert f\Vert
_{t,2}\leq \sup_{p\in \mathcal{P}^{\prime }}\Vert p\Vert _{t,2}+\delta <D$
and $\inf_{\Omega }(p+f)\geq \inf_{\Omega }p-\sup_{\Omega }f\geq \inf_{x\in
\Omega ,p\in \mathcal{P}^{\prime }}p(x)-C_{t}\delta >\zeta $ (for the latter
using Proposition \ref{prop:SobolevembedsinHoelder}). This shows that $%
\mathcal{U}_{t,\delta }(p)\cap \mathsf{H}_{t}$ is a subset of $\mathcal{P}%
(t,\zeta ,D)$ for every $p\in \mathcal{P}^{\prime }$. Conversely, suppose $%
\mathcal{P}^{\prime }$ is uniformly interior to $\mathcal{P}(t,\zeta ,D)$
relative to $\mathsf{H}_{t}$. We first establish (i): Let $\delta >0$ be the
radius figuring in the definition of being uniformly interior and let $p\in 
\mathcal{P}^{\prime }$ be arbitrary. Choose a $q\in \mathsf{H}_{t}$
different from $p$ and define $f=\delta (q-p)/(2\Vert q-p\Vert _{t,2})$.
[Note that $q$ and hence $f$ may depend on $p$.] Then $f\neq 0$, $\Vert
f\Vert _{t,2}=\delta /2<\delta $, and $\int_{\Omega }fd\lambda =0$ hold.
Observe that $p+f$ and $p-f$ then both belong to $\mathcal{U}_{t,\delta
}(p)\cap \mathsf{H}_{t}$ and hence to $\mathcal{P}(t,\zeta ,D)$, since $%
\mathcal{U}_{t,\delta }(p)\cap \mathsf{H}_{t}\subseteq \mathcal{P}(t,\zeta
,D)$ by assumption; in particular $\Vert p+f\Vert _{t,2}\leq D$ and $\Vert
p-f\Vert _{t,2}\leq D$ is satisfied. Since the Sobolev-norm originates from
an inner product, we have $\Vert p+f\Vert _{t,2}^{2}+\Vert p-f\Vert
_{t,2}^{2}=2\left[ \Vert p\Vert _{t,2}^{2}+\Vert f\Vert _{t,2}^{2}\right] $
and thus $\Vert p\Vert _{t,2}^{2}\leq D^{2}-\delta ^{2}/4$. Since this is
true for every $p\in \mathcal{P}^{\prime }$ we obtain (i). We finally prove
(ii): Let $x_{n}\in \Omega $ and $p_{n}\in \mathcal{P}^{\prime }$ satisfy $%
p_{n}(x_{n})\rightarrow \inf_{x\in \Omega ,p\in \mathcal{P}^{\prime }}p(x)$.
The sequence $x_{n}$ has a cluster point $x_{0}$ in the closure $\bar{\Omega}
$ of the interval $\Omega $. There exists a sufficiently small neighborhood $%
A$ of $x_{0}$ in $\bar{\Omega}$ and a $C^{\infty }$ function $h$ satisfying $%
h(x)=-1$ for all $x\in A\cap \Omega $ (which is non-empty) as well as $%
\int_{\Omega }hd\lambda =0$. Furthermore, $h$ can be chosen to be bounded
with all its derivatives having compact support contained in $\Omega $;
consequently,\ $h\in \mathsf{W}_{2}^{t}(\Omega )$. Since $\mathcal{P}%
^{\prime }$ is uniformly interior to $\mathcal{P}(t,\zeta ,D)$ relative to $%
\mathsf{H}_{t}$ by assumption, it follows that $p_{n}+\alpha h\in \mathcal{P}%
(t,\zeta ,D)$ for sufficiently small $\alpha >0$, where $\alpha $ can be
chosen independently of $n$. Consequently, $\inf_{\Omega }\left(
p_{n}+\alpha h\right) \geq \zeta $ must hold. But this implies $%
p_{n}(x_{n})\geq \inf_{\Omega }p_{n}=\inf_{A\cap \Omega }p_{n}=\inf_{A\cap
\Omega }\left( p_{n}-\alpha \right) +\alpha =\inf_{A\cap \Omega }\left(
p_{n}+\alpha h\right) +\alpha \geq \inf_{\Omega }\left( p_{n}+\alpha
h\right) +\alpha \geq \zeta +\alpha $, which in turn implies $\inf_{x\in
\Omega ,p\in \mathcal{P}^{\prime }}p(x)\geq \zeta +\alpha >\zeta $. Finally,
we prove Part (c): Note that $\lambda (\Omega )^{-1}\in \mathcal{P}(t,\zeta
,D)$ by Proposition \ref{Proposition:Propertiesoftheauxiliarymodel}. It is
interior to $\mathcal{P}(t,\zeta ,D)$ relative to $\mathsf{H}_{t}$ by Part
(a) of the current proposition and the assumption $\zeta <\lambda (\Omega
)^{-1}<D^{2}$.{\ The second claim then follows from Theorem V.2.1. in
Dunford and Schwartz (1966). }$\blacksquare $

\begin{prop}
\label{Proposition: equivalence of pointwise and sup-norm convergence} Let $%
p_{n},p\in \mathcal{P}(t,\zeta ,D)$. Then the following statements are
equivalent: (i) $\Vert p_{n}-p\Vert _{\Omega }$ converges to $0$; (ii) $%
p_{n} $ converges pointwise to $p$; (iii) $p_{n}$ converges to $p$ a.e.;
(iv) $p_{n}$ converges to $p$ on a dense subset of $\Omega $; (v) $\Vert
p_{n}-p\Vert _{r,2}$ converges to $0$ for some $r$ satisfying $0\leq r<t$;
(vi) $\Vert p_{n}-p\Vert _{r,2}$ converges to $0$ for all $r$ satisfying $%
0\leq r<t$.
\end{prop}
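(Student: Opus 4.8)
The plan is to prove the six statements equivalent by closing two cycles: (i) $\Rightarrow$ (ii) $\Rightarrow$ (iii) $\Rightarrow$ (iv) $\Rightarrow$ (i), and (i) $\Rightarrow$ (vi) $\Rightarrow$ (v) $\Rightarrow$ (i). Most of these arrows are immediate: (i) $\Rightarrow$ (ii) because uniform convergence implies pointwise convergence; (ii) $\Rightarrow$ (iii) trivially; (iii) $\Rightarrow$ (iv) because a Lebesgue-null subset of the interval $\Omega$ contains no subinterval, so its complement is dense; and (vi) $\Rightarrow$ (v) by specializing to $r=0$. All the substance sits in the three implications that upgrade a weak mode of convergence to sup-norm convergence: (iv) $\Rightarrow$ (i), (v) $\Rightarrow$ (i), and (i) $\Rightarrow$ (vi).

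The common engine is a subsequence argument exploiting compactness of $\mathcal{P}(t,\zeta,D)$ (Proposition \ref{Proposition:Propertiesoftheauxiliarymodel}(c)): to show $p_n\to p$ in a given norm it suffices that every subsequence of $(p_n)$ has a further subsequence converging to $p$ in that norm. For (iv) $\Rightarrow$ (i): from an arbitrary subsequence extract, by sup-norm compactness of $\mathcal{P}(t,\zeta,D)$, a further subsequence $p_{n_k}\to q\in\mathcal{P}(t,\zeta,D)$ in $\|\cdot\|_\Omega$; then $p_{n_k}(x)\to q(x)$ for every $x$, so $q=p$ on the dense set on which $p_n\to p$, whence $q=p$ on $\Omega$ since both are continuous; sup-norm convergence of the whole sequence follows. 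For (v) $\Rightarrow$ (i): if $\|p_n-p\|_{r,2}\to0$ for some $0\le r<t$, then $\|p_n-p\|_2\le\|p_n-p\|_{r,2}\to0$ because the $\|\cdot\|_{r,2}$-seminorm dominates the $\mathcal{L}^2$-norm by its very definition; run the same extraction, noting that sup-norm convergence on the bounded interval $\Omega$ forces $\mathcal{L}^2$-convergence, so the sup-norm limit $q$ agrees with $p$ a.e.~and hence everywhere by continuity.

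For (i) $\Rightarrow$ (vi): fix $r$ with $0\le r<t$ and choose an auxiliary order $r_0$ with $\max(r,1/2)<r_0<t$ (possible since $t>1/2$ and $r<t$). By compactness of $\mathcal{P}(t,\zeta,D)$ in $\mathsf{W}_2^{r_0}(\Omega)$ (Proposition \ref{Proposition:Propertiesoftheauxiliarymodel}(c), using $1/2<r_0<t$), every subsequence of $(p_n)$ has a further subsequence converging in $\|\cdot\|_{r_0,2}$ to some $q\in\mathcal{P}(t,\zeta,D)$; the continuous Sobolev embedding $\mathsf{W}_2^{r_0}(\Omega)\hookrightarrow\mathsf{C}(\Omega)$ of Proposition \ref{prop:SobolevembedsinHoelder}(b) turns this into sup-norm convergence, and comparison with (i) identifies $q=p$. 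Hence $\|p_n-p\|_{r_0,2}\to0$ along the whole sequence, and finally $\|p_n-p\|_{r,2}\lesssim\|p_n-p\|_{r_0,2}\to0$ by the continuous embedding $\mathcal{W}_2^{r_0}(\Omega)\hookrightarrow\mathcal{W}_2^r(\Omega)$ of Proposition \ref{prop:SobolevembedsinHoelder}(c).

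The main obstacle is conceptual rather than computational: sup-norm convergence alone cannot control a fractional Sobolev (Gagliardo) seminorm, since functions tending to $0$ uniformly may have exploding seminorms, so compactness of the model $\mathcal{P}(t,\zeta,D)$ is genuinely needed in (i) $\Rightarrow$ (vi) (and similarly to recover sup-norm from $\mathcal{L}^2$ in (v) $\Rightarrow$ (i)). Once one recognizes that the right tool is the ``every subsequence has a convergent sub-subsequence'' trick together with uniqueness of the limit by continuity, all three nontrivial implications reduce to the routine extractions sketched above.
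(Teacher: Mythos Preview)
Your proof is correct and uses essentially the same approach as the paper: the subsequence--subsubsequence trick based on compactness of $\mathcal{P}(t,\zeta,D)$ (Proposition~\ref{Proposition:Propertiesoftheauxiliarymodel}(c)) together with identification of the limit via continuity. The only difference is organizational: the paper closes a single cycle (v)$\Rightarrow$(vi)$\Rightarrow$(i)$\Rightarrow$(ii)$\Rightarrow$(iii)$\Rightarrow$(iv)$\Rightarrow$(v), using $\mathsf{W}_2^s$-compactness for the steps (v)$\Rightarrow$(vi) and (iv)$\Rightarrow$(v), whereas you use (i) as a hub and run two cycles, invoking sup-norm compactness for (iv)$\Rightarrow$(i) and (v)$\Rightarrow$(i) and $\mathsf{W}_2^{r_0}$-compactness for (i)$\Rightarrow$(vi); this is a harmless rearrangement of the same argument.
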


\begin{proof}
To show that (v) implies (vi), it suffices, in light of Part~(c) of
Proposition~\ref{prop:SobolevembedsinHoelder}, to show that $\Vert
p_{n}-p\Vert _{s,2}$ converges to $0$ for arbitrary $s\geq r$ satisfying $%
1/2<s<t$. Since $\mathcal{P}(t,\zeta ,D)$ is a compact subset of $\mathsf{W}%
_{2}^{s}(\Omega )$ in view of Proposition~\ref%
{Proposition:Propertiesoftheauxiliarymodel}, for any subsequence $%
p_{n^{\prime }}$ of $p_{n}$ there exists a further subsequence $p_{n^{\prime
\prime }}$ of $p_{n^{\prime }}$ and a $p^{\ast }\in \mathcal{P}(t,\zeta ,D)$
such that $\Vert p_{n^{\prime \prime }}-p^{\ast }\Vert _{s,2}$ converges to $%
0$. By Part~(c) of Proposition~\ref{prop:SobolevembedsinHoelder}, we then
have that also $\Vert p_{n^{\prime \prime }}-p^{\ast }\Vert _{r,2}$
converges to $0$ since $s\geq r$. Because also $\Vert p_{n^{\prime \prime
}}-p\Vert _{r,2}$ converges to $0$ as a consequence of (v) and keeping in
mind that $p$ and $p^{\ast }$ are continuous, it follows that $p^{\ast }=p$.
This shows that $\Vert p_{n}-p\Vert _{s,2}$ converges to $0$. Furthermore,
(i) implies (ii), (ii) implies (iii), and (iii) implies (iv). That (vi)
implies (i) is a direct consequence of Part~(b) of Proposition~\ref%
{prop:SobolevembedsinHoelder}. It remains to show that (iv) implies (v).
Choose $r$ such that $1/2<r<t$. The same compactness argument as above shows
that for any subsequence $p_{n^{\prime }}$ of $p_{n}$ there exists a further
subsequence $p_{n^{\prime \prime }}$ of $p_{n^{\prime }}$ and a $p^{\ast
}\in \mathcal{P}(t,\zeta ,D)$ such that $\Vert p_{n^{\prime \prime
}}-p^{\ast }\Vert _{r,2}$ converges to $0$. By Part~(b) of Proposition~\ref%
{prop:SobolevembedsinHoelder}, we have that $\Vert p_{n^{\prime \prime
}}-p^{\ast }\Vert _{\Omega }$ converges to $0$. Consequently, $p$ and $%
p^{\ast }$ coincide on a dense subset of $\Omega $. Since $p$ and $p^{\ast }$
are continuous, they are identical. This shows that $\Vert p_{n^{\prime
\prime }}-p\Vert _{r,2}$ converges to $0$, and hence the same is true for
the entire sequence $p_{n}$.
\end{proof}

\begin{remark}
\normalfont\label{pyth} We note that $\mathcal{P}(t,\zeta ,D)$ can
equivalently be written as 
\begin{equation*}
\left\{ p\in \mathsf{W}_{2}^{t}(\Omega ):\int_{\Omega }p\,d\lambda
=1,\,\inf_{x\in \Omega }p(x)\geq \zeta ,\,\Vert p-\lambda ^{-1}(\Omega
)\Vert _{t,2}^{2}\leq D^{2}-\lambda ^{-1}(\Omega )\right\}
\end{equation*}%
because $p-\lambda ^{-1}(\Omega )$ and $1$ are orthogonal in $\mathsf{W}%
_{2}^{t}(\Omega )$. As a consequence, $\mathcal{P}(t,0,D)=\mathcal{P}%
(t,\zeta ,D)$ at least for all $0\leq \zeta \leq \lambda ^{-1}(\Omega
)-C_{t}\left( D^{2}-\lambda ^{-1}(\Omega )\right) ^{1/2}$, since $p\in 
\mathcal{P}(t,0,D)$ implies $\inf_{x\in \Omega }p(x)\geq \zeta $ for such $%
\zeta $ by Proposition \ref{prop:SobolevembedsinHoelder}(b).
\end{remark}

Assumptions on the density functions in the class $\mathcal{P}_{\Theta }$
and on the simulation mechanism $\rho $ are of course related to each other,
but the interrelationship is somewhat intricate. The following proposition
collects two important observations.

\begin{prop}
\label{Interrelation}\hspace*{0ex}If Assumption~\ref{mod:Inclusion} is
satisfied, then Assumption~\ref{rho: Continuity of rho} implies Assumption~%
\ref{mod: pointwise continuity}. However, in general Assumption~\ref{rho:
Continuity of rho} does not imply Assumption~\ref{mod: pointwise continuity}.
\end{prop}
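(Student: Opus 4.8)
The plan is to treat the two halves separately. For the implication \emph{(Assumption~\ref{mod:Inclusion} and Assumption~\ref{rho: Continuity of rho}) $\Rightarrow$ Assumption~\ref{mod: pointwise continuity}}, fix $x_{0}\in\Omega$ and a sequence $\theta_{n}\to\theta_{\ast}$ in $\Theta$; I want to show $p(x_{0},\theta_{n})\to p(x_{0},\theta_{\ast})$. First I would pass through weak convergence of the induced laws: for any bounded continuous $g$ on $\Omega$, Assumption~\ref{rho: Continuity of rho} gives $g(\rho(v,\theta_{n}))\to g(\rho(v,\theta_{\ast}))$ for every $v\in V$, and since these functions are bounded by $\|g\|_{\Omega}$ and $\mu$ is a probability measure, the bounded convergence theorem yields
\[
\int_{\Omega}g\,p_{\theta_{n}}\,d\lambda=\int_{V}g(\rho(\cdot,\theta_{n}))\,d\mu\longrightarrow\int_{V}g(\rho(\cdot,\theta_{\ast}))\,d\mu=\int_{\Omega}g\,p_{\theta_{\ast}}\,d\lambda ,
\]
where Assumption~\ref{mod:Inclusion} has been used to know that $p_{\theta_{n}},p_{\theta_{\ast}}$ are genuine elements of $\mathcal{P}(t,\zeta,D)$ serving as densities of these laws.

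The second step exploits compactness. By Proposition~\ref{Proposition:Propertiesoftheauxiliarymodel}(c) the set $\mathcal{P}(t,\zeta,D)$ is compact in $\mathsf{C}(\Omega)$, so any subsequence of $(p_{\theta_{n}})$ has a further subsequence $(p_{\theta_{n''}})$ with $\|p_{\theta_{n''}}-p^{\ast}\|_{\Omega}\to0$ for some $p^{\ast}\in\mathcal{P}(t,\zeta,D)$. Since $\Omega$ is a bounded interval and the $p_{\theta_{n''}}$ are uniformly bounded (by $C_{t}D$, cf.\ Proposition~\ref{prop:SobolevembedsinHoelder}(b)), sup-norm convergence forces $\int_{\Omega}g\,p_{\theta_{n''}}\,d\lambda\to\int_{\Omega}g\,p^{\ast}\,d\lambda$ for every bounded continuous $g$; comparing with the previous display and using uniqueness of weak limits gives $p^{\ast}=p_{\theta_{\ast}}$ $\lambda$-a.e., hence everywhere by continuity. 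Thus every subsequence of $(p_{\theta_{n}})$ has a further subsequence converging to $p_{\theta_{\ast}}$ in $\|\cdot\|_{\Omega}$, so $\|p_{\theta_{n}}-p_{\theta_{\ast}}\|_{\Omega}\to0$; in particular $p(x_{0},\theta_{n})\to p(x_{0},\theta_{\ast})$, which is Assumption~\ref{mod: pointwise continuity}. (Equivalently, once $\theta\mapsto p_{\theta}$ is shown continuous into $(\mathcal{P}(t,\zeta,D),\|\cdot\|_{\Omega})$ one may simply invoke Remark~\ref{Remark: Equivalence of pointwise and sup-norm convergence of the parametrization}, or Proposition~\ref{Proposition: equivalence of pointwise and sup-norm convergence}.)

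For the negative claim I would produce an example where Assumption~\ref{rho: Continuity of rho} holds but Assumption~\ref{mod: pointwise continuity} fails — necessarily one in which Assumption~\ref{mod:Inclusion} is violated. Take $\Omega=(0,1)$, $\Theta=[0,1]$, $(V,\mathcal{V},\mu)=((0,1),\mathcal{B}((0,1)),\lambda)$; for $\theta>0$ let $p_{\theta}$ be the smooth strictly positive density proportional to $1+\cos(x/\theta)$ on $(0,1)$, and set $p_{0}\equiv1$. Realize these laws by the quantile transform, $\rho(v,\theta)=F_{\theta}^{-1}(v)$ for $\theta>0$ and $\rho(v,0)=v$, where $F_{\theta}$ is the corresponding continuous, strictly increasing distribution function. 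Because $F_{\theta}$ depends continuously on $(\theta,x)$ for $\theta>0$ and $F_{\theta}\to\mathrm{id}$ uniformly on $\Omega$ as $\theta\downarrow0$, one checks that $\theta\mapsto\rho(v,\theta)$ is continuous on $[0,1]$ for every $v$, so Assumption~\ref{rho: Continuity of rho} holds. Yet along $\theta_{n}=1/(2\pi n)$ one has $p_{\theta_{n}}(x_{0})=1+\cos(2\pi n x_{0})$, which does not converge for a fixed $x_{0}\in(0,1)$, so $\theta\mapsto p(x_{0},\theta)$ is discontinuous at $\theta=0$. Here Assumption~\ref{mod:Inclusion} fails precisely because $\|p_{\theta}\|_{t,2}\to\infty$ as $\theta\downarrow0$, so no fixed $D$ contains the family — which is exactly the gap that the first half of the statement needs to fill.

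\emph{The main obstacle.} There is no deep difficulty; the one step needing care is "weak convergence of the measures together with sup-norm compactness of $\mathcal{P}(t,\zeta,D)$ $\Rightarrow$ sup-norm convergence of the densities", which rests on boundedness of $\Omega$, uniform boundedness of the densities, and the fact that distinct continuous densities induce distinct measures. Making the counterexample fully explicit — in particular continuity of the quantile transform at $\theta=0$ and the non-convergence of $\cos(2\pi n x_{0})$ — is routine but slightly fiddly and would be relegated to a short verification.
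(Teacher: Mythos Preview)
Your argument for the first claim is essentially the paper's: both establish convergence in distribution of $\rho(\cdot,\theta_{n})$ to $\rho(\cdot,\theta_{\ast})$ and then use sup-norm compactness of $\mathcal{P}(t,\zeta,D)$ (Proposition~\ref{Proposition:Propertiesoftheauxiliarymodel}(c)) together with a subsequence argument to upgrade this to $\|p_{\theta_{n}}-p_{\theta_{\ast}}\|_{\Omega}\to0$; the only cosmetic difference is that the paper tests against distribution functions $z\mapsto F(z,\theta)$ rather than general bounded continuous $g$. For the second claim the paper does not give a construction at all but defers to Proposition~5 in Gach~(2010), so your explicit example actually supplies more than the paper does; one small slip is that $1+\cos(x/\theta)$ is not strictly positive (it vanishes at $x=(2k{+}1)\pi\theta$), but this is harmless since $F_{\theta}$ is still strictly increasing and continuous, so the quantile transform and the rest of your verification go through unchanged.
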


\begin{proof}
The first claim is proved as follows: Let $F(z,\theta )=\int_{\left\{ x\in
\Omega :\,x\leq z\right\} }p_{\theta }\,d\lambda $ be the distribution
function on $\Omega $ that is associated with $p_{\theta }$. Let $\theta
_{n},\theta \in \Theta $ be such that $\theta _{n}$ converges to $\theta $.
Now Assumption~\ref{rho: Continuity of rho} implies that $\rho (\cdot
,\theta _{n})$ converges to $\rho (\cdot ,\theta )$ in distribution under $%
\mu $. Noting that $F(\cdot ,\theta )$ and $F(\cdot ,\theta _{n})$ are the
distribution functions of $\rho (\cdot ,\theta )$ and $\rho (\cdot ,\theta )$%
, respectively, as well as noting that $F(\cdot ,\theta )$ is continuous in
its first argument, it follows that $F(z,\theta _{n})$ converges to $%
F(z,\theta )$ for every $z\in \Omega $. By Assumption \ref{mod:Inclusion}
and sup-norm compactness of $\mathcal{P}(t,\zeta ,D)$ it follows that every
subsequence $p_{\theta _{n^{\prime }}}$ of $p_{\theta _{n}}$ has a further
subsequence $p_{\theta _{n^{\prime \prime }}}$ that converges to an element $%
p^{\ast }\in \mathcal{P}(t,\zeta ,D)$ in the sup-norm. But this clearly
implies that $F(z,\theta _{n^{\prime \prime }})$ converges to $\int_{\left\{
x\in \Omega :\,x\leq z\right\} }p^{\ast }\,d\lambda $ for every $z\in \Omega 
$. It follows that $p^{\ast }=p_{\theta }$ a.e., hence everywhere on $\Omega 
$ by continuity of $p_{\theta }$ and $p^{\ast }$. This proves the first
claim. For a proof of the second claim see Proposition 5 in Gach (2010).
\end{proof}

\section{Appendix: Properties of the Non-Parametric Likelihood Function\label%
{App B}}

\begin{prop}
\label{Properties of L_n and L_k}\hspace*{0ex}

(a) For every non-negative $\mathcal{B}(\Omega )$-measurable real-valued
function $f$ the map $(x_{1},\ldots ,x_{n})\mapsto L_{n}(f;x_{1},\ldots
,x_{n})$ is $\mathcal{B}(\Omega )^{n}$-$\mathcal{B}([-\infty ,\infty ))$%
-measurable, and the map $(v_{1},\ldots ,v_{k})\mapsto L_{k}(\theta
,f;v_{1},\ldots ,v_{k})$ is $\mathcal{V}^{k}$-$\mathcal{B}([-\infty ,\infty
))$-measurable for every $\theta \in \Theta $.

(b) Let $\mathcal{F}$ be a set of non-negative bounded real-valued functions
on $\Omega $.

\qquad (b1) Then, for every $(x_{1},\ldots ,x_{n})\in \Omega ^{n}$, $%
f\mapsto L_{n}(f;x_{1},\ldots ,x_{n})$ is a continuous map from $(\mathcal{F}%
,\Vert \cdot \Vert _{\Omega })$ to $[-\infty ,\infty )$. The same is true
for the map $f\mapsto L_{k}(\theta ,f;v_{1},\ldots ,v_{k})$ for every $%
\theta \in \Theta $ and every $(v_{1},\ldots ,v_{k})\in V^{k}$.

\qquad (b2) If the elements $f\in \mathcal{F}$ are additionally also
continuous and Assumption~\ref{rho: Continuity of rho} is satisfied, then,
for every $(v_{1},\ldots ,v_{k})\in V^{k}$, $(\theta ,f)\mapsto L_{k}(\theta
,f;v_{1},\ldots ,v_{k})$ is a continuous map from $\Theta \times (\mathcal{F}%
,\Vert \cdot \Vert _{\Omega })$ to $[-\infty ,\infty )$.

(c) Let $\mathcal{F}$ be a set of non-negative bounded $\mathcal{B}(\Omega )$%
-measurable real-valued functions on $\Omega $ that are uniformly bounded
away from $0$.

\qquad (c1) Then $L(f)$ is a continuous real-valued function on $(\mathcal{F}%
,\Vert \cdot \Vert _{\Omega })$. The same is true for $L(\theta ,f)$ for
every given $\theta \in \Theta $.

\qquad (c2) If the elements $f\in \mathcal{F}$ are additionally also
continuous and Assumption~\ref{rho: Continuity of rho} is satisfied, then $%
L(\theta ,f)$ is a continuous real-valued function on $\Theta \times (%
\mathcal{F},\Vert \cdot \Vert _{\Omega })$.

(d) Let $\mathcal{F}$ be a sup-norm compact set of non-negative bounded $%
\mathcal{B}(\Omega )$-measurable real-valued functions on $\Omega $ that are
uniformly bounded away from $0$.

$\qquad $(d1) Then 
\begin{equation*}
\lim_{n\rightarrow \infty }\,\sup_{f\in \mathcal{F}}\left\vert
L_{n}(f)-L(f)\right\vert =0\quad \mathbb{P}\text{-a.s.,}
\end{equation*}%
and, for every $\theta \in \Theta $,%
\begin{equation*}
\lim_{k\rightarrow \infty }\,\sup_{f\in \mathcal{F}}\left\vert L_{k}(\theta
,f)-L(\theta ,f)\right\vert =0\quad \mu \text{-a.s.}
\end{equation*}

\qquad (d2) If the elements $f\in \mathcal{F}$ are additionally also
continuous and Assumption~\ref{rho: Continuity of rho} is satisfied, then 
\begin{equation*}
\lim_{k\rightarrow \infty }\,\sup_{\Theta \times \mathcal{F}}\left\vert
L_{k}(\theta ,f)-L(\theta ,f)\right\vert =0\quad \mu \text{-a.s.}
\end{equation*}%
(In Part (d) we use the convention that the supremum is $0$ if $\mathcal{F}$
is empty.)
\end{prop}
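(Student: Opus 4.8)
The plan is to prove the six parts in order, using that the $L_n$/$L$-statements and the $L_k(\theta,\cdot)$/$L(\theta,\cdot)$-statements are structurally identical (replace the i.i.d.\ $X_i$ under $\mathbb{P}$ by the i.i.d.\ $\rho(V_i,\theta)$ under $\mu$), so only one representative of each pair needs to be written out. For Part~(a), with the convention $\log 0=-\infty$ the extended logarithm is Borel from $[0,\infty)$ to $[-\infty,\infty)$; hence $x\mapsto\log f(x)$ is $\mathcal{B}(\Omega)$-$\mathcal{B}([-\infty,\infty))$-measurable, and so is $v\mapsto\log f(\rho(v,\theta))$ since $\rho(\cdot,\theta)$ is $\mathcal{V}$-$\mathcal{B}(\Omega)$-measurable by the maintained assumption on $\rho$. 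As addition is continuous, hence Borel, on $[-\infty,\infty)^n$ (no value $+\infty$ occurs), the normalized sums $L_n(f;\cdot)$ and $L_k(\theta,f;\cdot)$ are measurable with values in $[-\infty,\infty)$.

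For Part~(b1) I fix the points $x_1,\dots,x_n$ and take $f_m\to f$ in $\|\cdot\|_\Omega$ inside $\mathcal{F}$; then $f_m(x_i)\to f(x_i)$ in $[0,\infty)$ and $\|f_m\|_\Omega\to\|f\|_\Omega<\infty$, so by continuity of $\log$ as a map from $[0,\infty]$ to $[-\infty,\infty]$ and of addition on $[-\infty,\infty)$ one gets $L_n(f_m;\cdot)\to L_n(f;\cdot)$; replacing $x_i$ by the fixed point $\rho(v_i,\theta)$ gives the claim for $L_k$. For Part~(b2), under Assumption~\ref{rho: Continuity of rho} and with the $f$'s continuous, let $(\theta_m,f_m)\to(\theta,f)$; then $\rho(v_i,\theta_m)\to\rho(v_i,\theta)$, whence $|f_m(\rho(v_i,\theta_m))-f(\rho(v_i,\theta))|\le\|f_m-f\|_\Omega+|f(\rho(v_i,\theta_m))-f(\rho(v_i,\theta))|\to 0$, and composing with $\log$ and summing yields sequential, hence (the index set being metric) topological, continuity of $(\theta,f)\mapsto L_k(\theta,f;v_1,\dots,v_k)$.

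Part~(c) uses that, when the elements of $\mathcal{F}$ are bounded below by some $c>0$, the map $u\mapsto\log u$ is Lipschitz on $[c,\infty)$ with constant $c^{-1}$, so $\|\log f_m-\log f\|_\Omega\le c^{-1}\|f_m-f\|_\Omega$, while $\log f$ is bounded (below by $\log c$, above by $\log\|f\|_\Omega$) and hence $\mathbb{P}$- and $\mu$-integrable, making $L(f)$ and $L(\theta,f)$ real-valued. Then $|L(f_m)-L(f)|\le\int|\log f_m-\log f|\,d\mathbb{P}\le c^{-1}\|f_m-f\|_\Omega$, and likewise for $L(\theta,\cdot)$, giving (c1). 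For (c2) I split $|L(\theta_m,f_m)-L(\theta,f)|\le|L(\theta_m,f_m)-L(\theta_m,f)|+|L(\theta_m,f)-L(\theta,f)|$, bound the first term by $c^{-1}\|f_m-f\|_\Omega$ uniformly in $\theta_m$, and treat the second by dominated convergence, the integrand $\log f(\rho(\cdot,\theta_m))$ converging pointwise to $\log f(\rho(\cdot,\theta))$ by Assumption~\ref{rho: Continuity of rho} and continuity of $f$, with constant dominating function $\max(|\log c|,\log\|f\|_\Omega)$.

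Part~(d1) is the classical finite-net uniform SLLN: sup-norm compactness of $\mathcal{F}$ gives sup-norm compactness, hence uniform boundedness, of $\{\log f:f\in\mathcal{F}\}$ (image under the $c^{-1}$-Lipschitz map $f\mapsto\log f$); given $\varepsilon>0$, an $\varepsilon/3$-net $\log f_1,\dots,\log f_N$ yields $|L_n(f)-L(f)|\le\max_j|L_n(f_j)-L(f_j)|+2\varepsilon/3$ for all $f$, and Kolmogorov's SLLN applied to the finitely many i.i.d.\ bounded sequences $\log f_j(X_i)$, followed by letting $\varepsilon\downarrow 0$ along a sequence, finishes it; measurability of the supremum follows from separability of $\mathcal{F}$ and Part~(b1), and the case of $L_k(\theta,\cdot)$ at fixed $\theta$ is identical with $\rho(V_i,\theta)$ in place of $X_i$. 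The main obstacle is Part~(d2): since only continuity of $\rho(v,\cdot)$, not a H\"older estimate, is available here, there is no modulus of continuity making $\log f(\rho(v,\theta))$ uniformly close for nearby $(\theta,f)$ across the sample, so the sup-norm-net reduction of (d1) breaks down. Instead I would run the envelope form of the uniform SLLN over the compact metric index set $\Xi=\Theta\times\mathcal{F}$: the integrand $q((\theta,f),v)=\log f(\rho(v,\theta))$ is $\mathcal{V}$-measurable in $v$ by Part~(a), continuous in $(\theta,f)$ for each $v$ (the estimate of (b2) composed with $\log$ on $[c,\infty)$), and dominated by the constant $M=\max(|\log c|,\sup_{f\in\mathcal{F}}\log\|f\|_\Omega)$; covering $\Xi$ by finitely many balls on which the upper and lower envelopes of $q$ (measurable by separability of $\Xi$ together with continuity in $(\theta,f)$) have $\mu$-integrals within $\varepsilon$ of $L(\theta,f)$ --- possible by dominated convergence and continuity of $L$ from (c2) --- and applying Kolmogorov's SLLN to these finitely many bounded envelope averages yields $\sup_{\Theta\times\mathcal{F}}|L_k(\theta,f)-L(\theta,f)|\to 0$ $\mu$-a.s., with measurability again from separability of $\Xi$ and Part~(b2); the empty-$\mathcal{F}$ convention is trivial throughout.
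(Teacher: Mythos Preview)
Your proof is correct. Parts (a)--(c) match the paper's argument essentially line by line; your treatment of (c1) via the Lipschitz bound $|L(f_m)-L(f)|\le c^{-1}\|f_m-f\|_\Omega$ is in fact a shade cleaner than the paper's dominated-convergence step, since it gives Lipschitz continuity directly.

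Part~(d) is where you and the paper genuinely diverge. The paper invokes Mourier's strong law for Banach-space-valued random elements (Corollary~7.10 in Ledoux--Talagrand): it views $X=\log f(X_1)-\int\log f\,d\mathbb{P}$ (respectively $X(\theta,f)=\log f(\rho(V_1,\theta))-\int\log f(\rho(\cdot,\theta))\,d\mu$) as a Borel random element of the separable Banach space $\mathsf{C}(\mathcal{F},\|\cdot\|_\Omega)$ (respectively $\mathsf{C}(\Theta\times\mathcal{F})$), verifies Bochner integrability from the uniform bound on $|\log f|$, and gets the uniform SLLN in one stroke. You instead run the elementary finite-net argument for (d1) and the bracketing/envelope argument for (d2): cover the compact index set by finitely many balls on which the oscillation envelopes $q^\pm$ have $\mu$-integral close to $L$, apply Kolmogorov's SLLN to those finitely many bounded functions, and let the mesh shrink. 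Both routes work; the paper's is shorter once Mourier's theorem is on the shelf, while yours is self-contained and uses only the scalar SLLN plus compactness and dominated convergence. Your handling of the measurability of the envelopes (countable sup over a dense subset, using continuity in $(\theta,f)$) and of the shrinking-oscillation step (pointwise convergence plus the constant dominator $M$) is correct.
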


\begin{proof}
(a) The first claim is clear as $f$ is $\mathcal{B}(\Omega )$-$\mathcal{B}%
([0,\infty ))$-measurable by hypothesis and the extended logarithm is $%
\mathcal{B}([0,\infty ))$-$\mathcal{B}([-\infty ,\infty ))$-measurable. For
the second claim additionally use that $\rho :V\times \Theta \rightarrow
\Omega $ is $\mathcal{V}$-$\mathcal{B}(\Omega )$-measurable in the first
argument for every $\theta \in \Theta $.

(b) To prove the first claim in Part (b1), fix $(x_{1},\ldots ,x_{n})\in
\Omega ^{n}$. Let $f_{l},f\in \mathcal{F}$ be such that $\Vert f_{l}-f\Vert
_{\Omega }$ converges to $0$. Since setting $\log 0=-\infty $ continuously
extends the logarithm to the interval $[0,\infty )$, $\log f_{l}(x_{i})$
then converges to $\log f(x_{i})$ for every $i$, thus establishing the first
claim. The second claim in Part (b1) is proved analogously. To prove Part
(b2), fix $(v_{1},\ldots ,v_{k})\in V^{k}$ and let $\theta _{l},\theta \in
\Theta $ and $f_{l},f\in \mathcal{F}$ be such that $\left\Vert \theta
_{l}-\theta \right\Vert $ and $\Vert f_{l}-f\Vert _{\Omega }$ converge to $0$%
. Use the triangle inequality to obtain for every $i$%
\begin{eqnarray}
|f_{l}(\rho (v_{i},\theta _{l}))-f(\rho (v_{i},\theta ))| &\leq &|f_{l}(\rho
(v_{i},\theta _{l}))-f(\rho (v_{i},\theta _{l}))|+|f(\rho (v_{i},\theta
_{l}))-f(\rho (v_{i},\theta ))|  \notag \\
&\leq &\Vert f_{l}-f\Vert _{\Omega }+|f(\rho (v_{i},\theta _{l}))-f(\rho
(v_{i},\theta ))|.  \label{Continuity of log}
\end{eqnarray}%
The first expression on the r.h.s.\ of (\ref{Continuity of log}) converges
to $0$ by hypothesis. Making use of Assumption \ref{rho: Continuity of rho}
and the continuity of $f$, the second one converges to $0$ as well.
Continuity of the extended logarithm on $[0,\infty )$ delivers Part (b2).

(c) To prove the first claim in Part (c1), denote by $\xi >0$ the lower
uniform bound of all elements in $\mathcal{F}$. Let $f_{l},f\in \mathcal{F}$
be such that $\Vert f_{l}-f\Vert _{\Omega }$ converges to $0$. Then $\left\{
f_{l}:l\in \mathbb{N}\right\} $ is bounded by some $B$, $0<B<\infty $. Since
the logarithm is bounded on $[\xi ,B]$, the domination condition 
\begin{equation*}
\int_{\Omega }\sup_{l\in \mathbb{N}}\left\vert \log f_{l}(x)\right\vert d%
\mathbb{P}(x)<\infty
\end{equation*}%
is satisfied. By the already established Part (b1) (with $n=1$), $\log
f_{l}(x)$ converges to $\log f(x)$ for every $x\in \Omega $. The first claim
then follows from the theorem of dominated convergence. The second claim in
Part (c1) is proved in exactly the same manner. To prove Part (c2), let $%
\theta _{l},\theta \in \Theta $ and $f_{l},f\in \mathcal{F}$ be such that $%
\left\Vert \theta _{l}-\theta \right\Vert $ and $\Vert f_{l}-f\Vert _{\Omega
}$ converge to $0$. By the same argument as before, the domination condition 
\begin{equation*}
\int_{V}\sup_{\theta \in \Theta }\,\sup_{l\in \mathbb{N}}\left\vert \log
f_{l}(\rho (v,\theta ))\right\vert d\mu (v)<\infty
\end{equation*}%
is satisfied. By the already established Part (b2) (with $k=1$), $\log
f_{l}(\rho (v,\theta _{l}))$ converges to $\log f(\rho (v,\theta ))$ for
every $v\in V$. Part (c2) then follows from the theorem of dominated
convergence.

(d) To prove the first claim in Part (d1), we use Mourier's strong law of
large numbers as given in Corollary~7.10 of Ledoux and Talagrand (1991) with
the separable Banach space $(B,\Vert \cdot \Vert )$ given by $(\mathsf{C}(%
\mathcal{F},\Vert \cdot \Vert _{\Omega }),\Vert \cdot \Vert _{\mathcal{F}})$
and the mapping $X$ given by $X(f)=\log f(X_{1})-\int_{\Omega }\log fd%
\mathbb{P}$ for $f\in \mathcal{F}$. Note that $X$ has values in $\mathsf{C}(%
\mathcal{F},\Vert \cdot \Vert _{\Omega })$ by using the already established
Parts (b1) and (c1) in conjunction with the assumed sup-norm compactness of $%
\mathcal{F}$. Clearly, $X(f)$ is a random variable for every $f\in \mathcal{F%
}$, and hence $X$ is measurable with respect to the $\sigma $-field on $%
\mathsf{C}(\mathcal{F},\Vert \cdot \Vert _{\Omega })$ that is generated by
the point-evaluations. Since this $\sigma $-field coincides with the Borel $%
\sigma $-field on $\mathsf{C}(\mathcal{F},\Vert \cdot \Vert _{\Omega })$
(see, e.g., Problem 1 in Section 1.7 in van der Vaart and Wellner (1996) and
observe that $(\mathcal{F},\Vert \cdot \Vert _{\Omega })$ is a compact
metric space), $X$ is a Borel random mapping. The integrability condition $%
\func{E}\Vert X\Vert <\infty $ follows from%
\begin{equation*}
\int_{\Omega }\sup_{f\in \mathcal{F}}|\log f(x)|d\mathbb{P}(x)<\infty ,
\end{equation*}%
which is true since the elements of $\mathcal{F}$ are uniformly bounded and
uniformly bounded away from $0$ by hypothesis. The second claim in Part (d1)
is proved completely analogously. Part (d2) is proved in a similar manner:
Apply Corollary~7.10 in Ledoux and Talagrand (1991) with $B$ the separable
Banach space of all bounded, continuous functions on $\Theta \times (%
\mathcal{F},\Vert \cdot \Vert _{\Omega })$ equipped with the sup-norm $\Vert
\cdot \Vert _{\Theta \times \mathcal{F}}$ and with $X$ given by $X(\theta
,f)=\log f(\rho (V_{1},\theta ))-\int_{V}\log f(\rho (\cdot ,\theta ))d\mu $%
. Note that by the already established Parts (b2) and (c2) in conjunction
with compactness of $\Theta \times (\mathcal{F},\Vert \cdot \Vert _{\Omega
}) $, $X$ takes its values in the space of (bounded) continuous functions on 
$\Theta \times (\mathcal{F},\Vert \cdot \Vert _{\Omega })$. Again $X$ is a
Borel random mapping. The integrability condition $\func{E}\Vert X\Vert
<\infty $ now follows from%
\begin{equation*}
\int_{V}\sup_{\theta \in \Theta }\,\sup_{f\in \mathcal{F}}|\log f(\rho
(v,\theta ))|d\mu (v)<\infty ,
\end{equation*}%
which is true since the elements of $\mathcal{F}$ are uniformly bounded and
uniformly bounded away from $0$ by hypothesis.
\end{proof}

\textbf{Proof of Lemma \ref{lem: p_0 is the unique maximizer}: }(a) It is
sufficient to show that 
\begin{equation*}
\int_{\Omega }\left( \log (p_{\blacktriangle })\right) ^{-}d\mathbb{P}%
=\int_{\left\{ x\in \Omega :\,p_{\blacktriangle }(x)>0\right\} }\left( \log
(p_{\blacktriangle })\right) ^{-}p_{\blacktriangle }d\lambda <\infty .
\end{equation*}%
By Assumption~\ref{Data} this is equivalent to showing that%
\begin{equation}
\int_{\left\{ x\in \Omega :\,0<p_{\blacktriangle }(x)\leq 1\right\}
}h(p_{\blacktriangle })d\lambda <\infty ,  \label{f3}
\end{equation}%
where $h(y)$ is defined by $h(y)=-y\log y$ for every $y\in (0,1]$. Since $%
h:(0,1]\rightarrow \lbrack 0,\infty )$ can be continuously extended to $%
[0,1] $ by setting $h(0)=0$, it is bounded on the compact interval $[0,1]$,
and a fortiori on $(0,1]$. But this establishes (\ref{f3}) since $\lambda
(\Omega )<\infty $ and thus completes the proof for $L$. The proof for $%
L(\theta ,\cdot )$ is analogous upon observing that%
\begin{equation}
\int_{V}\left( \log p_{\theta }(\rho (\cdot ,\theta ))\right) ^{-}d\mu
=\int_{\left\{ x\in \Omega :\,p(x,\theta )>0\right\} }\left( \log (p_{\theta
})\right) ^{-}p_{\theta }d\lambda  \label{change_variable}
\end{equation}%
by the change of variable theorem.

(b) For any $p\in \mathcal{P}(t,\zeta ,D)$ different from $p_{\blacktriangle
}$, the set $\left\{ x\in \Omega :p(x)\neq p_{\blacktriangle }(x)>0\right\} $
has positive $\mathbb{P}$-probability since $p$ and $p_{\blacktriangle }$
are continuous functions on $\Omega $. In view of the already established
Part (a) the expression $L(p)-L(p_{\blacktriangle })$ is well-defined, and
the strict Jensen inequality gives%
\begin{equation*}
L(p)-L(p_{\blacktriangle })=\int_{\left\{ x\in \Omega :\,p_{\blacktriangle
}(x)>0\right\} }\log \frac{p}{p_{\blacktriangle }}d\mathbb{P}<\log
\int_{\left\{ x\in \Omega :\,p_{\blacktriangle }(x)>0\right\} }\frac{p}{%
p_{\blacktriangle }}d\mathbb{P}\leq 0.
\end{equation*}

(c) Follows similarly to Part (b) in view of the representation 
\begin{equation*}
L(\theta ,p_{\theta })=\int_{\left\{ x\in \Omega :\,p(x,\theta )>0\right\}
}\log (p_{\theta })p_{\theta }d\lambda .
\end{equation*}%
$\blacksquare $

\bigskip

Part (a) of the following proposition is essentially given in Proposition~3
in Nickl (2007). [We note that the set $\mathcal{V}$ defined there is not
sup-norm open as implicitly claimed, the apparently intended definition in
the notation of Nickl (2007) being $\mathcal{V}=\left\{ d\in \QTR{up}{%
\mathsf{L}}^{\infty }(\Omega ):\inf_{x\in \Omega }d(x)>\zeta /2\right\} $.
Inspection of the proof shows that this proposition remains correct for $%
\zeta =0$.] The proof for Part (b) is completely analogous.

\begin{prop}
\label{prop:DerivativesofthescaledlikelihoodNickl} Define $\mathcal{U}%
=\left\{ f\in \QTR{up}{\mathsf{L}}^{\infty }(\Omega ):\inf_{x\in \Omega
}f(x)>0\right\} $. Let $\alpha $ be a positive integer, $f\in \mathcal{U}$,
and $f_{1},\ldots ,f_{\alpha }\in \QTR{up}{\mathsf{L}}^{\infty }(\Omega )$.

(a) The $\alpha $-th Fr\'{e}chet derivatives of $L_{n}:\mathcal{U}%
\rightarrow \mathbb{R}$ and $L:\mathcal{U}\rightarrow \mathbb{R}$ are given
by%
\begin{equation*}
\mathbf{D}^{\alpha }L_{n}(f)(f_{1},\ldots ,f_{\alpha })=(-1)^{\alpha
-1}(\alpha -1)!\mathbb{P}_{n}(f^{-\alpha }f_{1}\cdots f_{\alpha }),
\end{equation*}%
\begin{equation*}
\mathbf{D}^{\alpha }L(f)(f_{1},\ldots ,f_{\alpha })=(-1)^{\alpha -1}(\alpha
-1)!\mathbb{P}(f^{-\alpha }f_{1}\cdots f_{\alpha }).
\end{equation*}

(b) The $\alpha $-th partial Fr\'{e}chet derivatives of $L_{k}:\Theta \times 
\mathcal{U}\rightarrow \mathbb{R}$ and $L:\Theta \times \mathcal{U}%
\rightarrow \mathbb{R}$ with respect to the second variable are, for $\theta
\in \Theta $, given by%
\begin{equation*}
\mathbf{D}^{\alpha }L_{k}(\theta ,f)(f_{1},\ldots ,f_{\alpha })=(-1)^{\alpha
-1}(\alpha -1)!\mu _{k}(f^{-\alpha }(\rho (\cdot ,\theta ))f_{1}(\rho (\cdot
,\theta ))\cdots f_{\alpha }(\rho (\cdot ,\theta ))),
\end{equation*}%
\begin{eqnarray*}
\mathbf{D}^{\alpha }L(\theta ,f)(f_{1},\ldots ,f_{\alpha }) &=&(-1)^{\alpha
-1}(\alpha -1)!\mu (f^{-\alpha }(\rho (\cdot ,\theta ))f_{1}(\rho (\cdot
,\theta ))\cdots f_{\alpha }(\rho (\cdot ,\theta ))) \\
&=&(-1)^{\alpha -1}(\alpha -1)!\int_{\Omega }f^{-\alpha }f_{1}\cdots
f_{\alpha }\,p_{\theta }d\lambda .
\end{eqnarray*}
\end{prop}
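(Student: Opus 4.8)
The plan is to prove part (a) and then get part (b) by the identical argument. The first observation is that $L_{n}(f)=\mathbb{P}_{n}(\log f)$ and $L(f)=\mathbb{P}(\log f)$, so that each of $L_{n}$ and $L$ is the composition of a bounded linear functional on $\mathsf{L}^{\infty}(\Omega)$ (namely $g\mapsto\mathbb{P}_{n}(g)$, resp.\ $g\mapsto\mathbb{P}(g)$, of operator norm one) with the Nemytskii map $N\colon\mathcal{U}\to\mathsf{L}^{\infty}(\Omega)$, $N(f)=\log\circ f$. Since the higher Fréchet derivatives of a bounded linear map vanish, the chain rule reduces everything to showing that $N$ is $\alpha$ times Fréchet differentiable on $\mathcal{U}$ with
\[
\mathbf{D}^{\alpha}N(f)(f_{1},\dots,f_{\alpha})=(-1)^{\alpha-1}(\alpha-1)!\,f^{-\alpha}f_{1}\cdots f_{\alpha}\qquad(\text{pointwise on }\Omega);
\]
applying the linear functional then yields the two asserted formulas for $\mathbf{D}^{\alpha}L_{n}$ and $\mathbf{D}^{\alpha}L$ at once.

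Next I would set up the uniform estimates. For $f_{0}\in\mathcal{U}$ put $c=\inf_{x}f_{0}(x)>0$; then any $h\in\mathsf{L}^{\infty}(\Omega)$ with $\|h\|_{\Omega}<c/2$ has $f_{0}+h\in\mathcal{U}$ and $\inf_{x}(f_{0}+h)(x)\ge c/2$, so the open ball of radius $c/2$ about $f_{0}$ lies in $\mathcal{U}$ and every function in it takes values in the fixed compact interval $J=[c/2,\|f_{0}\|_{\Omega}+c/2]\subset(0,\infty)$, on which $\log$ and each of its derivatives $y\mapsto(-1)^{\alpha-1}(\alpha-1)!\,y^{-\alpha}$ is bounded; this is exactly where the hypothesis $f\in\mathcal{U}$ is used. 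For $\alpha=1$, Taylor's theorem for $\log$ gives, uniformly in $x$,
\[
\bigl|\log(f_{0}+h)(x)-\log f_{0}(x)-f_{0}^{-1}(x)h(x)\bigr|\le\tfrac12\,(c/2)^{-2}\,\|h\|_{\Omega}^{2},
\]
hence $\|N(f_{0}+h)-N(f_{0})-f_{0}^{-1}h\|_{\Omega}=o(\|h\|_{\Omega})$, so $\mathbf{D}N(f_{0})$ is the multiplication operator $h\mapsto f_{0}^{-1}h$.

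The inductive step is the crux. Assuming the formula for order $\alpha-1$, fix $f_{1},\dots,f_{\alpha-1}$; then $f\mapsto\mathbf{D}^{\alpha-1}N(f)(f_{1},\dots,f_{\alpha-1})=(-1)^{\alpha-2}(\alpha-2)!\,f^{-(\alpha-1)}f_{1}\cdots f_{\alpha-1}$ is, up to the fixed bounded factor, the Nemytskii operator attached to $y\mapsto y^{-(\alpha-1)}$, whose derivative — proved exactly as the case $\alpha=1$, now via Taylor's theorem for $y\mapsto y^{-(\alpha-1)}$ on $J$ — is $h\mapsto-(\alpha-1)f^{-\alpha}h$. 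Multiplying by the fixed bounded functions $f_{1}\cdots f_{\alpha-1}$ and assembling in the operator norm shows $f\mapsto\mathbf{D}^{\alpha-1}N(f)$ is Fréchet differentiable with
\[
\mathbf{D}^{\alpha}N(f)(f_{1},\dots,f_{\alpha})=(-1)^{\alpha-2}(\alpha-2)!\bigl(-(\alpha-1)\bigr)f^{-\alpha}f_{1}\cdots f_{\alpha}=(-1)^{\alpha-1}(\alpha-1)!\,f^{-\alpha}f_{1}\cdots f_{\alpha}.
\]
That this is a bounded symmetric $\alpha$-linear map follows from $\|f^{-\alpha}f_{1}\cdots f_{\alpha}\|_{\Omega}\le(c/2)^{-\alpha}\prod_{i}\|f_{i}\|_{\Omega}$ and the symmetry of the product, so it is genuinely the $\alpha$-th Fréchet derivative, completing the induction.

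Finally, part (b): $L_{k}(\theta,\cdot)$ equals $N$ composed with the bounded linear functional $g\mapsto\mu_{k}(g(\rho(\cdot,\theta)))$ on $\mathsf{L}^{\infty}(\Omega)$, and likewise $L(\theta,\cdot)$ with $\mu_{k}$ replaced by $\mu$, so the argument above applies verbatim and produces the $\mu_{k}$- and $\mu$-versions of the formula; the second displayed identity for $\mathbf{D}^{\alpha}L(\theta,f)$ then follows from the change-of-variables relation $\mu(\phi(\rho(\cdot,\theta)))=\int_{\Omega}\phi\,p_{\theta}\,d\lambda$ already used in the proof of Lemma~\ref{lem: p_0 is the unique maximizer}, applied with $\phi=f^{-\alpha}f_{1}\cdots f_{\alpha}$. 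The only step needing genuine care — the main obstacle — is the uniform-in-$x$ remainder estimate for the Nemytskii operators of $\log$ and of the power functions $y\mapsto y^{-\beta}$; everything else is routine chain-rule bookkeeping with compositions by fixed bounded (multi)linear maps. (Alternatively one may simply cite Proposition~3 in Nickl (2007) for (a), noting its proof does not use $\zeta>0$, but the sketch above is self-contained.)
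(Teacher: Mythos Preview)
Your proposal is correct. The paper does not give a self-contained proof but simply refers to Proposition~3 in Nickl (2007) (noting a minor correction about the definition of the open set there and that the argument extends to $\zeta=0$), with part (b) declared completely analogous; your detailed argument via the Nemytskii map $N(f)=\log f$, the chain rule with a bounded linear functional, and induction on $\alpha$ with uniform Taylor remainder bounds is the standard route behind such a citation, so the approaches coincide in substance---you have simply spelled out what the paper leaves to the reference.
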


The next result is a uniform version of Lemma~2 in Nickl (2007). It provides
rates of convergence for all derivatives of the auxiliary log-likelihood
function that hold uniformly in $\theta $ and $p$.

\begin{prop}
\label{Closeness of the derivatives of the scaled likelihood} Let $\alpha $
be a positive integer, and let $\mathcal{H}_{1},\ldots ,\mathcal{H}_{\alpha
} $ be bounded subsets of some Sobolev space $\QTR{up}{\mathsf{W}}%
_{2}^{s}(\Omega )$ of order $s>1/2$. If Assumption \ref{rho: Hoelderity of
rho} and $\zeta >0$ are satisfied, then 
\begin{equation}
\sup_{\Theta \times \mathcal{P}(t,\zeta ,D)}\left\Vert \mathbf{D}^{\alpha
}L_{k}(\theta ,p)-\mathbf{D}^{\alpha }L(\theta ,p)\right\Vert _{\mathcal{H}%
_{1}\times \cdots \times \mathcal{H}_{\alpha }}=O_{\mu }(k^{-1/2})\quad 
\text{as }k\rightarrow \infty .  \label{unif_diff_deriv}
\end{equation}
\end{prop}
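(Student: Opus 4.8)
The goal is to bound the supremum over $\theta\in\Theta$, $p\in\mathcal{P}(t,\zeta,D)$, and $f_1,\dots,f_\alpha$ ranging over bounded Sobolev sets, of the difference between the empirical and population $\alpha$-th derivatives of the log-likelihood. Using Proposition~\ref{prop:DerivativesofthescaledlikelihoodNickl}(b), this difference is $(-1)^{\alpha-1}(\alpha-1)!$ times $(\mu_k-\mu)$ applied to the function $p^{-\alpha}(\rho(\cdot,\theta))f_1(\rho(\cdot,\theta))\cdots f_\alpha(\rho(\cdot,\theta))$. So the whole thing reduces to controlling an empirical process $\sqrt{k}(\mu_k-\mu)$ indexed by a suitable class of functions, and dividing by $\sqrt{k}$.

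So here is the plan. First I would write the l.h.s.\ of (\ref{unif_diff_deriv}) as $k^{-1/2}\|\sqrt{k}(\mu_k-\mu)\|_{\mathcal{C}}$, where
\begin{equation*}
\mathcal{C}=\left\{p^{-\alpha}(\rho(\cdot,\theta))f_1(\rho(\cdot,\theta))\cdots f_\alpha(\rho(\cdot,\theta)):\theta\in\Theta,\,p\in\mathcal{P}(t,\zeta,D),\,f_i\in\mathcal{H}_i\right\}.
\end{equation*}
It then suffices to show $\mathcal{C}$ is $\mu$-Donsker, since then Prohorov's theorem (together with measurability, which follows as in the measurability appendix) gives $\|\sqrt{k}(\mu_k-\mu)\|_{\mathcal{C}}=O_\mu(1)$, and the desired $O_\mu(k^{-1/2})$ rate follows immediately.

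To prove $\mathcal{C}$ is $\mu$-Donsker, the natural route is bracketing entropy. The key structural point is that $\mathcal{C}=\{\,g(\rho(\cdot,\theta)):\theta\in\Theta,\,g\in\mathcal{G}\,\}$ where $\mathcal{G}=\{p^{-\alpha}f_1\cdots f_\alpha:p\in\mathcal{P}(t,\zeta,D),\,f_i\in\mathcal{H}_i\}$. Because $\zeta>0$, Proposition~\ref{prop:SobolevembedsinHoelder}(d) shows $\{1/p:p\in\mathcal{P}(t,\zeta,D)\}$ is a bounded subset of $\mathsf{W}_2^t(\Omega)$, and then Proposition~\ref{prop:SobolevembedsinHoelder}(a) (the multiplication algebra property, applied finitely many times, after noting each $\mathcal{H}_i$ is bounded in $\mathsf{W}_2^s(\Omega)$ with $s>1/2$) shows $\mathcal{G}$ is a bounded subset of $\mathsf{W}_2^{\min(s,t)}(\Omega)$; note $\min(s,t)>1/2$. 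Now apply Proposition~\ref{Bracketing entropy of F*}(a) with $\mathcal{F}=\mathcal{G}$ and the exponent $\min(s,t)$ in place of $s$: this uses Assumption~\ref{rho: Hoelderity of rho}, which is available by hypothesis, and yields $H_{[\ ]}(\varepsilon,\mathcal{C},\|\cdot\|_{2,\mu})\lesssim\varepsilon^{-1/\min(s,t)}$, whence $\mathcal{C}=\mathcal{G}^*$ is $\mu$-Donsker. That completes the argument.

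The main obstacle is really just the bookkeeping in the second paragraph: verifying that $\mathcal{G}$ is a bounded subset of a Sobolev space of order $>1/2$ so that Proposition~\ref{Bracketing entropy of F*}(a) applies. The only subtlety is the appearance of $1/p$, which is where positivity of $\zeta$ is essential and where Proposition~\ref{prop:SobolevembedsinHoelder}(d) does the work; everything else is routine use of the multiplication-algebra inequality and the fact that a finite product of bounded sets in a multiplication algebra is bounded. One should also note that the constant $(\alpha-1)!$ is harmless and that measurability of the supremum is handled exactly as in the analogous statements in Appendix~\ref{App D2}.
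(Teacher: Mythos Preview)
Your proof is correct and follows essentially the same approach as the paper: you rewrite the supremum as $(\alpha-1)!\,\|\mu_k-\mu\|_{\mathcal{G}^{\ast}}$ with $\mathcal{G}=\{p^{-\alpha}f_1\cdots f_\alpha\}$, use $\zeta>0$ together with Proposition~\ref{prop:SobolevembedsinHoelder}(a),(d) to see that $\mathcal{G}$ is bounded in $\mathsf{W}_2^{\min(s,t)}(\Omega)$, and then apply Proposition~\ref{Bracketing entropy of F*}(a) and Prohorov's theorem, with measurability handled via Appendix~\ref{App D2}. The paper's proof is identical in structure and in the lemmas invoked.
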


\begin{proof}
Note that 
\begin{equation*}
\sup_{\Theta \times \mathcal{P}(t,\zeta ,D)}\left\Vert \mathbf{D}^{\alpha
}L_{k}(\theta ,p)-\mathbf{D}^{\alpha }L(\theta ,p)\right\Vert _{\mathcal{H}%
_{1}\times \cdots \times \mathcal{H}_{\alpha }}=(\alpha -1)!\,\Vert \mu
_{k}-\mu \Vert _{\mathcal{H}^{\ast }}
\end{equation*}%
by Proposition \ref{prop:DerivativesofthescaledlikelihoodNickl}, where $%
\mathcal{H}^{\ast }=\left\{ h(\rho (\cdot ,\theta )):h\in \mathcal{H},\theta
\in \Theta \right\} $ and%
\begin{equation*}
\mathcal{H}=\left\{ p^{-\alpha }h_{1}\cdot \ldots \cdot h_{\alpha }:\,p\in 
\mathcal{P}(t,\zeta ,D),\,h_{1}\in \mathcal{H}_{1},\ldots ,h_{\alpha }\in 
\mathcal{H}_{\alpha }\right\} .
\end{equation*}%
Since $\zeta >0$, the class $\mathcal{H}$ is a bounded subset of the
Sobolev-space $\QTR{up}{\mathsf{W}}_{2}^{r}(\Omega )$ with $r=\min (t,s)>1/2$
by Proposition \ref{prop:SobolevembedsinHoelder}. Measurability of the
supremum on the l.h.s.~of (\ref{unif_diff_deriv}) now follows immediately
from Proposition \ref{Borel_2} in Appendix \ref{App D2}. The class $\mathcal{%
H}^{\ast }$ is $\mu $-Donsker by an application of Proposition~\ref%
{Bracketing entropy of F*}(a), hence $\Vert \mu _{k}-\mu \Vert _{\mathcal{H}%
^{\ast }}$ is bounded in probability at rate $k^{-1/2}$ by Prohorov's
theorem.
\end{proof}

\bigskip

The following lemma is a special case of Berge's (1963) maximum theorem.

\begin{lem}
\label{lem: Continuous selections} Let $X$ be a metrizable space and $Y$ a
compact metrizable space. Let $u:X\times Y\rightarrow \lbrack -\infty
,\infty )$ be a continuous function that has a unique maximizer, say $v(x)$,
on the fiber $\{(x,y):y\in Y\}$ for every $x\in X$. Then the mapping $%
v:X\rightarrow Y$ is continuous.
\end{lem}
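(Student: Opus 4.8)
The plan is to prove continuity of $v$ sequentially, using that $X$ is metrizable (so continuity can be checked along sequences) and that $Y$ is a compact metric space (so every sequence in $Y$ has a convergent subsequence and the subsequence principle applies). First I would fix a sequence $x_n \to x_0$ in $X$ and reduce the goal $v(x_n)\to v(x_0)$ to the claim that every subsequence of $(v(x_n))_n$ has a further subsequence converging to $v(x_0)$; this suffices since $Y$ is metric.

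Next, given such a subsequence, I would use compactness of $Y$ to extract a further subsequence $(x_{n_j})_j$ with $v(x_{n_j})\to y^{\ast}$ for some $y^{\ast}\in Y$, and then show $y^{\ast}=v(x_0)$. For an arbitrary fixed $y\in Y$, the maximizing property of $v(x_{n_j})$ on the fiber over $x_{n_j}$ gives $u(x_{n_j},v(x_{n_j}))\ge u(x_{n_j},y)$. Since $(x_{n_j},v(x_{n_j}))\to(x_0,y^{\ast})$ and $(x_{n_j},y)\to(x_0,y)$ in $X\times Y$, continuity of $u$ yields $u(x_{n_j},v(x_{n_j}))\to u(x_0,y^{\ast})$ and $u(x_{n_j},y)\to u(x_0,y)$; passing to the limit in the inequality gives $u(x_0,y^{\ast})\ge u(x_0,y)$. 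As $y\in Y$ was arbitrary, $y^{\ast}$ is a maximizer of $u(x_0,\cdot)$ over $Y$, hence equals $v(x_0)$ by the assumed uniqueness. This establishes the subsequence claim, hence continuity of $v$ at $x_0$, and since $x_0$ was arbitrary, $v$ is continuous on $X$.

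The argument is essentially routine, and the only point requiring a little care --- the part I would treat as the (mild) obstacle --- is passing to limits in the inequality $u(x_{n_j},v(x_{n_j}))\ge u(x_{n_j},y)$ given that $u$ takes values in $[-\infty,\infty)$. This is harmless: the order topology on $[-\infty,\infty)$ is metrizable, convergent sequences there preserve weak inequalities, and in fact one may simply replace $u$ throughout by the bounded real-valued continuous function $\arctan\circ u$ (with the convention $\arctan(-\infty)=-\pi/2$), which has exactly the same maximizers on each fiber, thereby reducing everything to the ordinary real-valued situation. No compactness of $X$ and no regularity of $u$ beyond joint continuity is needed.
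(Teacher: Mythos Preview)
Your argument is correct. The paper does not actually give a proof of this lemma; it simply records it as a special case of Berge's (1963) maximum theorem and moves on. What you have written is precisely the standard direct proof that underlies the relevant part of Berge's theorem in this simplified setting (single-valued argmax on a fixed compact $Y$), so there is nothing to compare beyond noting that you supply the details the paper outsources to a reference. Your remark about replacing $u$ by $\arctan\circ u$ to sidestep the extended-real range is also in the spirit of the paper, which uses the same trick elsewhere (see the measurability argument in the proof of Theorem~\ref{Theorem: existence of AML-estimators}).
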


\section{Appendix: Proofs for Section \protect\ref{Rates}\label{App D}}

The following lemma is a consequence of Birman and Solomyak (1967),
cf.~Lorentz, v.Golitschek, and Makovoz (1996), p. 506. It can also be
obtained from Theorem 1 in Nickl and P\"{o}tscher~(2007) via a retraction
argument; see Gach (2010).

\begin{lem}
\label{Lemma: bounded subsets are uniformly Donsker} Let $\mathcal{F}$ be a
bounded subset of the Sobolev space $\mathsf{W}_{2}^{s}(\Omega )$ of order $%
s>1/2$. Then the sup-norm metric entropy of $\mathcal{F}$ satisfies%
\begin{equation*}
H(\varepsilon ,\mathcal{F},\mathsf{W}_{2}^{s}(\Omega ),\Vert \cdot \Vert
_{\Omega })\lesssim \varepsilon ^{-1/s}.
\end{equation*}
\end{lem}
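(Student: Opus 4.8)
The plan is to reduce Lemma~\ref{Lemma: bounded subsets are uniformly Donsker} to a classical $\varepsilon$-entropy estimate for one-dimensional Sobolev balls and then invoke it. Since $\mathcal{F}$ is a bounded subset of $\mathsf{W}_2^s(\Omega)$, there is a finite $B>0$ with $\mathcal{F}\subseteq\mathcal{U}_{s,B}$, and because covering numbers are monotone under set inclusion it suffices to bound $H(\varepsilon,\mathcal{U}_{s,B},\mathsf{W}_2^s(\Omega),\Vert\cdot\Vert_\Omega)$; by the scaling identity $N(\varepsilon,\mathcal{U}_{s,B},\Vert\cdot\Vert_\Omega)=N(\varepsilon/B,\mathcal{U}_{s,1},\Vert\cdot\Vert_\Omega)$ one may even take $B=1$, though this is inessential. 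Note also that for $s>1/2$ one has $\mathsf{W}_2^s(\Omega)\hookrightarrow\mathsf{C}(\Omega)$ by Proposition~\ref{prop:SobolevembedsinHoelder}(b), so $\Vert\cdot\Vert_\Omega$ is finite on $\mathcal{U}_{s,B}$ and the entropy in the sup-metric is well posed.

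The core input is the statement, due to Birman and Solomyak (1967), that the $\varepsilon$-entropy of a ball of the order-$s$ Sobolev space over a bounded interval, measured in the $L^\infty$-metric (a legitimate target precisely because $s>1/2$), is of order $\varepsilon^{-1/s}$; in terms of entropy numbers $e_n\asymp n^{-s}$ this is recorded in Lorentz, v.Golitschek, and Makovoz (1996), p.~506. The only genuine care needed is that the Sobolev (semi)norm $\Vert\cdot\Vert_{s,2}$ used here is the Slobodeckij norm (Section~\ref{sub:Sobolev-spaces}) for non-integer $s$, whereas the cited references typically work with equivalent Fourier/Bessel-potential norms or with periodic Sobolev classes. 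I would bridge this by a standard bounded linear extension operator — this is the retraction argument alluded to in the lemma — so the proof would run as follows: fix a bounded extension operator $E:\mathsf{W}_2^s(\Omega)\to\mathsf{W}_2^s(\mathbb{R})$ (or into the Sobolev space of a circle of suitable circumference, or of a compact interval, containing $\Omega$), which maps $\mathcal{U}_{s,B}$ into a bounded subset $\mathcal{F}'$ of that larger Sobolev space; apply the classical estimate (equivalently, Theorem~1 in Nickl and P\"otscher (2007)) to bound the sup-norm metric entropy of $\mathcal{F}'$ by a constant times $\varepsilon^{-1/s}$; and finally restrict back to $\Omega$, using that restriction is a contraction for the sup-norm, $\Vert f|_\Omega-g|_\Omega\Vert_\Omega\le\Vert f-g\Vert$, so that any sup-norm $\varepsilon$-net of $\mathcal{F}'$ restricts to a sup-norm $\varepsilon$-net of $\mathcal{U}_{s,B}$ and the covering numbers do not increase.

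The main (and essentially the only) obstacle is this bookkeeping: confirming the existence of a bounded extension operator for the fractional-order Slobodeckij spaces on the interval — which holds by standard Sobolev extension theory for interval (or Lipschitz) domains — and verifying that whichever external entropy estimate one cites is literally available for, or equivalent to, the scale of spaces in play. In particular, if one uses the bracketing-entropy estimate of Nickl and P\"otscher (2007), one additionally notes that in $L^\infty$ a bracketing $\varepsilon$-net yields an ordinary $\varepsilon$-net, so the ordinary metric entropy inherits the same $\varepsilon^{-1/s}$ rate. Once the bounded interval is replaced by a domain on which the classical estimate is literally in force, the conclusion is immediate; the full details are carried out in Gach (2010).
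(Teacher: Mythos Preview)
Your proposal is correct and matches the paper's approach essentially verbatim: the paper simply records that the lemma is a consequence of Birman and Solomyak (1967), cf.\ Lorentz, v.Golitschek, and Makovoz (1996), p.~506, and can alternatively be obtained from Theorem~1 in Nickl and P\"otscher (2007) via a retraction argument, with full details deferred to Gach (2010). You have spelled out precisely this retraction/extension argument and the same citations, so there is nothing to add.
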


\bigskip

\textbf{Proof of Proposition \ref{Bracketing entropy of F*}: }(a) Choose a
real number $r\leq s$ satisfying $1/2<r<3/2$ and $2r-1\leq a$, where $a$ is
as in Assumption \ref{rho: Hoelderity of rho}. Then $\mathcal{F}$ can also
be viewed as a bounded subset of $\mathsf{W}_{2}^{r}(\Omega )$, and hence of 
$\mathsf{C}^{r-1/2}(\Omega )$, in view of Proposition~\ref%
{prop:SobolevembedsinHoelder}(b),(c). We use this to obtain%
\begin{equation*}
\sup_{f\in \mathcal{F}}|f(\rho (v,\theta ^{\prime }))-f(\rho (v,\theta
))|\leq L_{r}|\rho (v,\theta ^{\prime })-\rho (v,\theta )|^{r-1/2}\leq L_{r}%
\left[ R(v)\Vert \theta ^{\prime }-\theta \Vert ^{\gamma }\right] ^{r-1/2}
\end{equation*}%
for some finite constant $L_{r}>0$ and all $v\in V$, all $\theta ,\theta
^{\prime }\in \Theta $, where we have made use of Assumption \ref{rho:
Hoelderity of rho}. A cover of $\mathcal{F}^{\ast }$ is obtained from
suitable covers of $\Theta $ and $\mathcal{F}$ as follows: Fix $\varepsilon
>0$ and set $\delta (\varepsilon )=(\varepsilon /L_{r})^{1/\nu }$, where $%
\nu :=\gamma (r-1/2)$. To cover $\Theta $, note that it is contained in an $%
m $-cube of edge length $l$ and thus in the union of at most $\lceil l\sqrt{m%
}/\delta (\varepsilon )\rceil ^{m}$-many closed Euclidean balls $B(\theta
_{i},\delta (\varepsilon ))$ with centers $\theta _{i}\in \Theta $ and
radius $\delta (\varepsilon )$, where $\lceil x\rceil $ denotes the smallest
integer not less than $x$. To cover $\mathcal{F}$, we take $N(\varepsilon ,%
\mathcal{F},\mathsf{W}_{2}^{s}(\Omega ),\Vert \cdot \Vert _{\Omega })$-many
sup-norm closed balls $[f_{j}-2\varepsilon ,f_{j}+2\varepsilon ]$ of radius $%
2\varepsilon $ whose centers $f_{j}$ already belong to $\mathcal{F}$. {[}%
Note that this can always be achieved.{]} We claim that the brackets%
\begin{equation}
\lbrack f_{j}(\rho (\cdot ,\theta _{i}))-R^{\nu /\gamma }(\cdot )\varepsilon
-2\varepsilon ,f_{j}(\rho (\cdot ,\theta _{i}))+R^{\nu /\gamma }(\cdot
)\varepsilon +2\varepsilon ]  \label{Cover of F*}
\end{equation}%
with $i=1,\ldots ,\lceil l\sqrt{m}/\delta (\varepsilon )\rceil ^{m}$ and $%
j=1,\ldots ,N(\varepsilon ,\mathcal{F},\mathsf{W}_{2}^{s}(\Omega ),\Vert
\cdot \Vert _{\Omega })$ provide a cover of $\mathcal{F}^{\ast }$. To see
this, let $h\in \mathcal{F}^{\ast }$, that is, $h=f(\rho (\cdot ,\theta ))$
for some $\theta \in \Theta $ and $f\in \mathcal{F}$, implying that there
are indices $i,j$ such that $\theta \in B(\theta _{i},\delta (\varepsilon ))$
and $f\in \lbrack f_{j}-2\varepsilon ,f_{j}+2\varepsilon ]$. Consequently, 
\begin{equation*}
h\in \lbrack f_{j}(\rho (\cdot ,\theta ))-2\varepsilon ,f_{j}(\rho (\cdot
,\theta ))+2\varepsilon ].
\end{equation*}%
Now, 
\begin{eqnarray*}
h(v)\leq f_{j}(\rho (v,\theta ))+2\varepsilon &\leq &f_{j}(\rho (v,\theta
_{i}))+|f_{j}(\rho (v,\theta ))-f_{j}(\rho (v,\theta _{i}))|+2\varepsilon \\
&\leq &f_{j}(\rho (v,\theta _{i}))+R^{\nu /\gamma }(v)\varepsilon
+2\varepsilon
\end{eqnarray*}%
for all $v\in V$, where the last inequality follows from the first display
in the proof and the choice of $\delta (\varepsilon )$. Similarly, 
\begin{equation*}
f_{j}(\rho (v,\theta _{i}))-R^{\nu /\gamma }(v)\varepsilon -2\varepsilon
\leq h(v).
\end{equation*}%
By construction of $r$, we have that $\int_{V}\left( R^{\nu /\gamma }\right)
^{2}d\mu <\infty $, and hence the $\mathcal{L}^{2}(\mu )$-bracketing size of
any of the brackets in (\ref{Cover of F*}) can be bounded by $\varepsilon $
times a positive constant $c$ that only depends on $R$, $r$, and $\mu $.
Using the elementary inequality $\lceil x\rceil ^{m}\leq \max (1,(2x)^{m})$
this leads to the relationship 
\begin{equation*}
N_{[\hspace{0.75ex}]}(c\varepsilon ,\mathcal{F}^{\ast },\Vert \cdot \Vert
_{2,\mu })\leq \max (1,\left( 2l\sqrt{m}L_{r}^{1/\nu }\right)
^{m}\varepsilon ^{-m/\nu })\,N(\varepsilon ,\mathcal{F},\mathsf{W}%
_{2}^{s}(\Omega ),\Vert \cdot \Vert _{\Omega }).
\end{equation*}%
Apply Lemma~\ref{Lemma: bounded subsets are uniformly Donsker} to get 
\begin{equation*}
H_{[\hspace{0.75ex}]}(\varepsilon ,\mathcal{F}^{\ast },\Vert \cdot \Vert
_{2,\mu })\lesssim \max (0,1-\log \varepsilon )+\varepsilon ^{-1/s}\lesssim
\varepsilon ^{-1/s}
\end{equation*}%
which proves (\ref{bracketing metric entropy of F*}). The claim that $%
\mathcal{F}^{\ast }$ is $\mu $-Donsker now follows from Ossiander's central
limit theorem (see Theorem~7.2.1 in Dudley, 1999) since clearly $\mathcal{F}%
^{\ast }\subseteq \mathcal{L}^{2}(V,\mathcal{V},\mu )$ holds.

(b) For any fixed $\varepsilon >0$, we take for $\mathcal{F}^{\ast }$ the
cover given in (\ref{Cover of F*}). Since the elements of $\mathcal{F}$ are
bounded below by $\chi >0$, the sets%
\begin{equation*}
\left[ \log \max (\chi ,f_{j}(\rho (\cdot ,\theta _{i}))-R^{\nu /\gamma
}(\cdot )\varepsilon -2\varepsilon ),\log (f_{j}(\rho (\cdot ,\theta
_{i}))+R^{\nu /\gamma }(\cdot )\varepsilon +2\varepsilon )\right] ,
\end{equation*}%
for $i=1,\ldots ,\lceil l\sqrt{m}/\delta (\varepsilon )\rceil ^{m}$, $%
j=1,\ldots ,N(\varepsilon ,\mathcal{F},\mathsf{W}_{2}^{s}(\Omega ),\Vert
\cdot \Vert _{\Omega })$ are non-empty brackets and cover $\log \mathcal{F}%
^{\ast }$. Since the logarithm is Lipschitz on $[\chi ,\infty )$ with
Lipschitz constant $\chi ^{-1}$, the $\mathcal{L}^{2}(\mu )$-bracketing size
of these brackets can be bounded by $\chi ^{-1}$ times the $\mathcal{L}%
^{2}(\mu )$-bracketing size of the corresponding brackets given in (\ref%
{Cover of F*}). Arguing now as in the proof of Part (a) completes the proof. 
$\blacksquare $

\section{Appendix: Measurability Issues\label{App D2}}

\begin{lem}
\label{Borel}Suppose $t>1/2$. Then the Borel $\sigma $-fields $\mathcal{B}(%
\mathsf{W}_{2}^{t}(\Omega ),\Vert \cdot \Vert _{\Omega })$, and $\mathcal{B}(%
\mathsf{W}_{2}^{t}(\Omega ),\Vert \cdot \Vert _{s,2})$ for $0\leq s\leq t$
all coincide. In particular, the norms $\Vert \cdot \Vert _{\Omega }$ and $%
\Vert \cdot \Vert _{s,2}$ for $0\leq s\leq t$ are $\mathcal{B}(\mathsf{W}%
_{2}^{t}(\Omega ),\Vert \cdot \Vert _{\Omega })$-measurable.
\end{lem}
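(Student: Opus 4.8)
The plan is to relate every norm in the list to the two extreme ones on the vector space $\mathsf{W}_{2}^{t}(\Omega)$, namely $\|\cdot\|_{2}=\|\cdot\|_{0,2}$ (the coarsest) and $\|\cdot\|_{t,2}$ (the finest), and then to establish the single nontrivial inclusion $\mathcal{B}(\mathsf{W}_{2}^{t}(\Omega),\|\cdot\|_{t,2})\subseteq\mathcal{B}(\mathsf{W}_{2}^{t}(\Omega),\|\cdot\|_{2})$. First I would collect the elementary norm inequalities valid on $\mathsf{W}_{2}^{t}(\Omega)$: by Proposition~\ref{prop:SobolevembedsinHoelder}(b),(c) there are finite constants with $\|f\|_{s,2}\le c_{s,t}\|f\|_{t,2}$ for all $0\le s\le t$ and $\|f\|_{\Omega}\le C_{t}\|f\|_{t,2}$; on the other hand $\|f\|_{2}\le\|f\|_{s,2}$ for every $s\ge0$ (the summand $\alpha=0$ is part of $\|f\|_{s,2}^{2}$) and $\|f\|_{2}\le\lambda(\Omega)^{1/2}\|f\|_{\Omega}$ by Cauchy--Schwarz. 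Hence, for each $\nu$ in the collection $\{\|\cdot\|_{\Omega}\}\cup\{\|\cdot\|_{s,2}:0\le s\le t\}$, the $\nu$-topology is finer than the $\|\cdot\|_{2}$-topology and coarser than the $\|\cdot\|_{t,2}$-topology, so that $\mathcal{B}(\mathsf{W}_{2}^{t}(\Omega),\|\cdot\|_{2})\subseteq\mathcal{B}(\mathsf{W}_{2}^{t}(\Omega),\nu)\subseteq\mathcal{B}(\mathsf{W}_{2}^{t}(\Omega),\|\cdot\|_{t,2})$. Everything therefore reduces to showing $\mathcal{B}(\mathsf{W}_{2}^{t}(\Omega),\|\cdot\|_{t,2})\subseteq\mathcal{B}(\mathsf{W}_{2}^{t}(\Omega),\|\cdot\|_{2})$.

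For this inclusion I would use that $(\mathsf{W}_{2}^{t}(\Omega),\|\cdot\|_{t,2})$ is a separable Hilbert space, so its Borel $\sigma$-field is generated by a countable family of $\|\cdot\|_{t,2}$-balls, hence by the closed balls $\{g\in\mathsf{W}_{2}^{t}(\Omega):\|g-f\|_{t,2}\le c\}$ with $f$ ranging over a countable dense set and $c\in\mathbb{Q}_{\ge0}$. It thus suffices to show that each such closed ball is $\|\cdot\|_{2}$-closed in $\mathsf{W}_{2}^{t}(\Omega)$, i.e.\ that $g\mapsto\|g-f\|_{t,2}$ is $\|\cdot\|_{2}$-lower semicontinuous. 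So let $g_{n}\in\mathsf{W}_{2}^{t}(\Omega)$ satisfy $\|g_{n}-f\|_{t,2}\le c$ and $\|g_{n}-g\|_{2}\to0$ for some $g\in\mathsf{W}_{2}^{t}(\Omega)$. The sequence $(g_{n}-f)_{n}$ is bounded in the Hilbert space $\mathsf{W}_{2}^{t}(\Omega)$, hence has a subsequence converging weakly in $\mathsf{W}_{2}^{t}(\Omega)$ to some $\ell$; applying the bounded linear inclusion of $\mathsf{W}_{2}^{t}(\Omega)$ into $\mathcal{L}^{2}(\Omega)$ (which is therefore weak-to-weak continuous), and using that $g_{n}-f\to g-f$ strongly, hence weakly, in $\mathcal{L}^{2}(\Omega)$ together with uniqueness of weak limits and continuity of the functions involved, one gets $\ell=g-f$. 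Weak lower semicontinuity of the Hilbert norm then yields $\|g-f\|_{t,2}=\|\ell\|_{t,2}\le\liminf_{n}\|g_{n}-f\|_{t,2}\le c$, which is what was needed.

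Chaining the inclusions gives $\mathcal{B}(\mathsf{W}_{2}^{t}(\Omega),\nu)=\mathcal{B}(\mathsf{W}_{2}^{t}(\Omega),\|\cdot\|_{t,2})$ for every $\nu$ in the list, proving the first assertion; the ``in particular'' part is then immediate, since each norm in the list is continuous for its own topology, hence measurable with respect to its own Borel $\sigma$-field, hence---by the equality just obtained---measurable with respect to $\mathcal{B}(\mathsf{W}_{2}^{t}(\Omega),\|\cdot\|_{\Omega})$. The only substantive step is the reverse inclusion of the second paragraph, and the place where real work is required is precisely the use of the Hilbert-space structure of $\mathsf{W}_{2}^{t}(\Omega)$---weak sequential compactness of bounded sets and weak lower semicontinuity of the norm---while all other steps are routine norm comparisons based on Proposition~\ref{prop:SobolevembedsinHoelder}.
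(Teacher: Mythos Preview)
Your proof is correct and follows the same skeleton as the paper's: sandwich all the Borel $\sigma$-fields between a coarsest and a finest one, then prove the single nontrivial reverse inclusion by showing that closed $\|\cdot\|_{t,2}$-balls belong to the coarse $\sigma$-field, using separability of $(\mathsf{W}_{2}^{t}(\Omega),\|\cdot\|_{t,2})$ to reduce to balls.

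The differences are in the choice of coarse anchor and in how the key step is executed. The paper takes $\|\cdot\|_{\Omega}$ as the coarse reference and shows $\mathcal{B}(\mathsf{W}_{2}^{t}(\Omega),\|\cdot\|_{t,2})\subseteq\mathcal{B}(\mathsf{W}_{2}^{t}(\Omega),\|\cdot\|_{\Omega})$ by citing (from Nickl 2007, Lemma~3) that closed $\|\cdot\|_{t,2}$-balls are $\|\cdot\|_{\Omega}$-\emph{compact}, hence $\|\cdot\|_{\Omega}$-closed. You take $\|\cdot\|_{2}$ as the coarse reference and prove that closed $\|\cdot\|_{t,2}$-balls are $\|\cdot\|_{2}$-\emph{closed} directly, via weak sequential compactness of bounded sets and weak lower semicontinuity of the Hilbert norm. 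Your route is more self-contained (no appeal to an external compactness result) and, incidentally, your sandwich with $\|\cdot\|_{2}$ at the bottom covers all $0\le s\le t$ uniformly: the paper's asserted chain ``$\|\cdot\|_{\Omega}$-topology coarser than $\|\cdot\|_{s,2}$-topology'' is only justified by Proposition~\ref{prop:SobolevembedsinHoelder}(b) when $s>1/2$, whereas your inequalities $\|f\|_{2}\le\|f\|_{s,2}$ and $\|f\|_{2}\le\lambda(\Omega)^{1/2}\|f\|_{\Omega}$ hold for every $s\ge 0$. The paper's approach, on the other hand, buys a slightly stronger intermediate fact (compactness rather than mere closedness of the balls) essentially for free by citation.
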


\begin{proof}
Since the $\Vert \cdot \Vert _{\Omega }$-topology on $\mathsf{W}%
_{2}^{t}(\Omega )$ is coarser than the $\Vert \cdot \Vert _{s,2}$-topology
on $\mathsf{W}_{2}^{t}(\Omega )$, which in turn is coarser than the $\Vert
\cdot \Vert _{t,2}$-topology on $\mathsf{W}_{2}^{t}(\Omega )$
(cf.~Proposition \ref{prop:SobolevembedsinHoelder}), it suffices to show
that $\mathcal{B}(\mathsf{W}_{2}^{t}(\Omega ),\Vert \cdot \Vert
_{t,2})\subseteq \mathcal{B}(\mathsf{W}_{2}^{t}(\Omega ),\Vert \cdot \Vert
_{\Omega })$. The former $\sigma $-field is generated by the collection of
all closed $\Vert \cdot \Vert _{t,2}$-balls since $(\mathsf{W}%
_{2}^{t}(\Omega ),\Vert \cdot \Vert _{t,2})$ is separable. As shown in the
proof of Lemma 3 in Nickl (2007), these balls are $\Vert \cdot \Vert
_{\Omega }$-compact and hence belong to $\mathcal{B}(\mathsf{W}%
_{2}^{t}(\Omega ),\Vert \cdot \Vert _{\Omega })$.
\end{proof}

\begin{prop}
\label{Borel_1}(a) The quantities $\Vert \hat{p}_{n}-p_{\blacktriangle
}\Vert _{\Omega }$, $\Vert \hat{p}_{n}-p_{\blacktriangle }\Vert _{s,2}$ for $%
0\leq s\leq t$, $\Vert \tilde{p}_{k}(\theta )-p_{\theta }\Vert _{\Omega }$,
and $\Vert \tilde{p}_{k}(\theta )-p_{\theta }\Vert _{s,2}$ for $0\leq s\leq
t $ are random variables.

(b) Suppose Assumptions \ref{mod:Inclusion} and \ref{rho: Continuity of rho}
are satisfied. Then $\sup_{\theta \in \Theta }\Vert \tilde{p}_{k}(\theta
)-p_{\theta }\Vert _{\Omega }$ and \linebreak $\sup_{\theta \in \Theta
}\Vert \tilde{p}_{k}(\theta )-p_{\theta }\Vert _{s,2}$ for $0\leq s<t$ are
random variables.
\end{prop}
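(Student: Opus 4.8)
The plan is to bootstrap everything from two facts that are already available: the Borel measurability of the NPML-estimators as maps into $(\mathcal{P}(t,\zeta ,D),\Vert \cdot \Vert _{\Omega })$ furnished by Theorem~\ref{Theorem: existence of AML-estimators}, and the coincidence of the Borel $\sigma $-fields $\mathcal{B}(\mathsf{W}_{2}^{t}(\Omega ),\Vert \cdot \Vert _{\Omega })$ and $\mathcal{B}(\mathsf{W}_{2}^{t}(\Omega ),\Vert \cdot \Vert _{s,2})$, $0\le s\le t$, furnished by Lemma~\ref{Borel}. Part~(a) is then a composition argument, and Part~(b) reduces an uncountable supremum to a countable one via continuity in $\theta $.

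For Part~(a), fix $g\in \mathcal{L}^{0}(\Omega )$ (we take $g=p_{\blacktriangle }$ and $g=p_{\theta }$). I claim that for every $s$ with $0\le s\le t$ the maps $\Phi _{s},\Phi _{\Omega }\colon \mathcal{P}(t,\zeta ,D)\to \lbrack 0,\infty ]$ given by $\Phi _{s}(p)=\Vert p-g\Vert _{s,2}$ and $\Phi _{\Omega }(p)=\Vert p-g\Vert _{\Omega }$ are Borel measurable with respect to $\mathcal{B}(\mathcal{P}(t,\zeta ,D),\Vert \cdot \Vert _{\Omega })$. Indeed, by the triangle inequality each of these maps is $1$-Lipschitz (possibly taking the value $+\infty $) with respect to the corresponding norm $\Vert \cdot \Vert _{s,2}$, resp.\ $\Vert \cdot \Vert _{\Omega }$, hence continuous, hence Borel, for that norm's topology on $\mathcal{P}(t,\zeta ,D)$; concretely, since $\mathcal{P}(t,\zeta ,D)\subseteq \mathcal{W}_{2}^{s}(\Omega )$ for $s\le t$, $\Phi _{s}$ is either real-valued and $1$-Lipschitz (when the a.e.-class of $g$ lies in $\mathcal{W}_{2}^{s}(\Omega )$) or identically $+\infty $, and similarly for $\Phi _{\Omega }$. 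By Lemma~\ref{Borel}, $\mathcal{B}(\mathcal{P}(t,\zeta ,D),\Vert \cdot \Vert _{s,2})=\mathcal{B}(\mathcal{P}(t,\zeta ,D),\Vert \cdot \Vert _{\Omega })$, so $\Phi _{s}$ and $\Phi _{\Omega }$ are measurable for $\mathcal{B}(\mathcal{P}(t,\zeta ,D),\Vert \cdot \Vert _{\Omega })$. Composing with the measurable maps $\hat{p}_{n}\colon \Omega ^{n}\to (\mathcal{P}(t,\zeta ,D),\Vert \cdot \Vert _{\Omega })$ and $\tilde{p}_{k}(\theta )\colon V^{k}\to (\mathcal{P}(t,\zeta ,D),\Vert \cdot \Vert _{\Omega })$ from Theorem~\ref{Theorem: existence of AML-estimators} then yields Part~(a) (the quantities being, in general, $[0,\infty ]$-valued).

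For Part~(b), note first that under Assumptions~\ref{mod:Inclusion} and \ref{rho: Continuity of rho} all the quantities are finite, since both $\tilde{p}_{k}(\theta )$ and $p_{\theta }$ then lie in $\mathcal{P}(t,\zeta ,D)$. Fix the simulated values $v_{1},\ldots ,v_{k}\in V$. By Theorem~\ref{Theorem: existence of AML-estimators}(b) the map $\theta \mapsto \tilde{p}_{k}(\theta )$ is continuous from $\Theta $ into $(\mathcal{P}(t,\zeta ,D),\Vert \cdot \Vert _{\Omega })$, and by Proposition~\ref{Interrelation} together with Remark~\ref{Remark: Equivalence of pointwise and sup-norm convergence of the parametrization} the map $\theta \mapsto p_{\theta }$ is continuous from $\Theta $ into $(\mathcal{P}(t,\zeta ,D),\Vert \cdot \Vert _{\Omega })$; since on $\mathcal{P}(t,\zeta ,D)$ the $\Vert \cdot \Vert _{\Omega }$-topology coincides with the $\Vert \cdot \Vert _{s,2}$-topology for every $0\le s<t$ (Proposition~\ref{Proposition: equivalence of pointwise and sup-norm convergence}), both maps are moreover continuous into $(\mathcal{P}(t,\zeta ,D),\Vert \cdot \Vert _{s,2})$ for $0\le s<t$. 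Composing with the (Lipschitz) norms, it follows that for each fixed $(v_{1},\ldots ,v_{k})$ the functions $\theta \mapsto \Vert \tilde{p}_{k}(\theta )-p_{\theta }\Vert _{\Omega }$ and $\theta \mapsto \Vert \tilde{p}_{k}(\theta )-p_{\theta }\Vert _{s,2}$ ($0\le s<t$) are continuous on $\Theta $. Let $\Theta _{0}$ be a countable dense subset of the compact metric space $\Theta $. By continuity, the supremum over $\Theta $ equals the supremum over $\Theta _{0}$, for each fixed $(v_{1},\ldots ,v_{k})$; hence $\sup _{\theta \in \Theta }\Vert \tilde{p}_{k}(\theta )-p_{\theta }\Vert _{\Omega }=\sup _{\theta \in \Theta _{0}}\Vert \tilde{p}_{k}(\theta )-p_{\theta }\Vert _{\Omega }$, and likewise with $\Vert \cdot \Vert _{s,2}$ in place of $\Vert \cdot \Vert _{\Omega }$. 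Each of these is a countable supremum of functions that are $\mathcal{V}^{k}$-measurable by Part~(a), hence is itself $\mathcal{V}^{k}$-measurable.

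There is no deep difficulty here; the single point requiring care is the bookkeeping of which topology (equivalently, by Lemma~\ref{Borel}, which Borel $\sigma $-field) on $\mathcal{P}(t,\zeta ,D)$ is used at each step. In particular, in Part~(b) the case $s=t$ must be excluded: Proposition~\ref{Proposition: equivalence of pointwise and sup-norm convergence} provides the equivalence of the $\Vert \cdot \Vert _{\Omega }$- and $\Vert \cdot \Vert _{s,2}$-topologies on $\mathcal{P}(t,\zeta ,D)$ only for $s<t$, and we have no independent handle on continuity of $\theta \mapsto \tilde{p}_{k}(\theta )$ into $(\mathcal{P}(t,\zeta ,D),\Vert \cdot \Vert _{t,2})$ (the estimator even sits on the sphere $\Vert \cdot \Vert _{t,2}=D$), so the reduction to a countable dense subset is unavailable at $s=t$.
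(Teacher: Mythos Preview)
Your proof is correct and follows essentially the same route as the paper's: Part~(a) is obtained by composing the measurable NPML maps from Theorem~\ref{Theorem: existence of AML-estimators} with the norm maps, whose Borel measurability on $\mathcal{P}(t,\zeta ,D)$ w.r.t.\ the $\Vert \cdot \Vert _{\Omega }$-$\sigma $-field follows from Lemma~\ref{Borel}; Part~(b) is reduced to Part~(a) via continuity of $\theta \mapsto \tilde{p}_{k}(\theta )-p_{\theta }$ in the relevant norms (using Theorem~\ref{Theorem: existence of AML-estimators}(b), Proposition~\ref{Interrelation}, and Proposition~\ref{Proposition: equivalence of pointwise and sup-norm convergence}) together with separability of $\Theta $. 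Your write-up is simply more explicit than the paper's terse version, and your closing remark on why $s=t$ is excluded in~(b) is apt.
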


\begin{proof}
(a) Follows immediately from Theorem \ref{Theorem: existence of
AML-estimators} and Lemma \ref{Borel}. (b) By Assumption \ref{rho:
Continuity of rho} and Proposition \ref{Interrelation} in Appendix \ref{App
A} the parameterization $\theta \mapsto p_{\theta }(x)$ is continuous, and
hence is continuous in the $\Vert \cdot \Vert _{\Omega }$- and $\Vert \cdot
\Vert _{s,2}$-norms ($0\leq s<t$) in view of Assumption \ref{mod:Inclusion}
and Proposition \ref{Proposition: equivalence of pointwise and sup-norm
convergence} in Appendix \ref{App A}. By Theorem \ref{Theorem: existence of
AML-estimators}(b) and again Proposition \ref{Proposition: equivalence of
pointwise and sup-norm convergence} $\theta \mapsto \tilde{p}_{k}(\theta
)-p_{\theta }$ is then continuous in the same norms. Since $\Theta $ is
separable, (b) follows from Part (a).
\end{proof}

\begin{prop}
\label{Borel_2}Suppose $s>1/2$.

(a) Then 
\begin{equation*}
\mathfrak{X}_{n}(\breve{x},f)=\sqrt{n}\left( \int_{\Omega }\hat{p}_{n}(\cdot
;x_{1},\ldots ,x_{n})f(\cdot )d\lambda -\mathbb{P}(f)\right)
\end{equation*}%
and%
\begin{equation*}
\mathfrak{Y}_{n}(\breve{x},f)=n^{-1/2}\sum_{i=1}^{n}\left( f(x_{i})-\mathbb{P%
}(f)\right)
\end{equation*}%
are Borel measurable on $\Omega ^{n}$ for every $f\in \mathsf{W}%
_{2}^{s}(\Omega )$, where $\breve{x}$ denotes $(x_{1},\ldots ,x_{n})\in
\Omega ^{n}$. Furthermore, if $\mathcal{F}$ is a non-empty bounded subset of 
$\mathsf{W}_{2}^{s}(\Omega )$, then $\sup_{f\in \mathcal{F}}\left\vert 
\mathfrak{Z}_{n}(\breve{x},f)\right\vert $ is Borel measurable on $\Omega
^{n}$, where $\mathfrak{Z}_{n}$ stands for any of $\mathfrak{X}_{n}$, $%
\mathfrak{Y}_{n}$, and $\mathfrak{X}_{n}-\mathfrak{Y}_{n}$.

(b) Then 
\begin{equation*}
\mathfrak{U}_{k}(\breve{v},\theta ,f)=\sqrt{k}\int_{\Omega }\left( \tilde{p}%
_{k}(\theta )(\cdot ;v_{1},\ldots ,v_{k})-p_{\theta }(\cdot )\right) f(\cdot
)d\lambda
\end{equation*}%
and%
\begin{equation*}
\mathfrak{V}_{k}(\breve{v},\theta ,f)=k^{-1/2}\sum_{i=1}^{k}\left( f(\rho
(v_{i},\theta ))-\mu (f(\rho (\cdot ,\theta )))\right)
\end{equation*}%
are Borel measurable on $V^{k}$ for every $\theta \in \Theta $ and every $%
f\in \mathsf{W}_{2}^{s}(\Omega )$, where $\breve{v}$ denotes $(v_{1},\ldots
,v_{k})\in V^{k}$. Furthermore, if Assumption \ref{rho: Continuity of rho}
is satisfied and $\mathcal{F}$ is a non-empty bounded subset of $\mathsf{W}%
_{2}^{s}(\Omega )$, then $\sup_{\theta \in \Theta }\sup_{f\in \mathcal{F}%
}\left\vert \mathfrak{V}_{k}(\breve{v},\theta ,f)\right\vert $ is Borel
measurable on $V^{k}$; if, additionally, Assumption~\ref{mod:Inclusion}
holds, then $\sup_{\theta \in \Theta }\sup_{f\in \mathcal{F}}\left\vert 
\mathfrak{W}_{k}(\breve{v},\theta ,f)\right\vert $ is Borel measurable on $%
V^{k}$, where $\mathfrak{W}_{k}$ stands for any of $\mathfrak{U}_{k}$ and $%
\mathfrak{U}_{k}-\mathfrak{V}_{k}$.

(c) Then 
\begin{equation*}
\mathfrak{T}_{k}(\breve{v},\theta ,f)=k^{-1}\sum_{i=1}^{k}\tilde{p}%
_{k}^{-1}(\theta )(\rho (v_{i},\theta );v_{1},\ldots ,v_{k})f(\rho
(v_{i},\theta ))
\end{equation*}%
is Borel measurable on $V^{k}$ for every $\theta \in \Theta $ and every $%
f\in \mathsf{W}_{2}^{s}(\Omega )$. Furthermore, if Assumption \ref{rho:
Continuity of rho} is satisfied, $\mathcal{F}$ is a non-empty bounded subset
of $\mathsf{W}_{2}^{s}(\Omega )$, and $\zeta >0$ holds, then $\sup_{\theta
\in \Theta }\sup_{f\in \mathcal{F}}\left\vert \mathfrak{T}_{k}(\breve{v}%
,\theta ,f)\right\vert $ is Borel measurable on $V^{k}$.
\end{prop}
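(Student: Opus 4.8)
The plan is to prove Proposition \ref{Borel_2} by reducing every supremum over the uncountable index set $\Theta$ (or $\Theta\times\mathcal{F}$) to a supremum over a countable dense subset, using the continuity of the relevant map in the index. For the pointwise measurability claims (fixed $\theta$, fixed $f$) in Parts (a), (b), (c), I would argue directly: $\hat p_n(\cdot;\breve x)$ and $\tilde p_k(\theta)(\cdot;\breve v)$ are jointly measurable by Remark \ref{Remark: meas_pos}(i), so by Tonelli's theorem the integrals $\int_\Omega \hat p_n f\,d\lambda$ and $\int_\Omega(\tilde p_k(\theta)-p_\theta)f\,d\lambda$ are measurable in $\breve x$ resp.\ $\breve v$; the finite sums $\mathfrak Y_n$, $\mathfrak V_k$, $\mathfrak T_k$ are measurable because $x\mapsto f(x)$ is Borel, $\rho(\cdot,\theta)$ is $\mathcal V$-$\mathcal B(\Omega)$-measurable, and $\tilde p_k(\theta)$ is jointly measurable; and $\mathbb P(f)$, $\mu(f(\rho(\cdot,\theta)))$ are constants. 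This disposes of the first sentence of each part.

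For the supremum claims, the key observation is a continuity-in-$(\theta,f)$ statement combined with separability. For Part (a): since $\mathcal F$ is a bounded subset of $\mathsf W_2^s(\Omega)$ with $s>1/2$, it is totally bounded in $(\mathcal F,\Vert\cdot\Vert_\Omega)$ by Lemma \ref{Lemma: bounded subsets are uniformly Donsker}, hence separable; and for fixed $\breve x$ the maps $f\mapsto\mathfrak X_n(\breve x,f)$ and $f\mapsto\mathfrak Y_n(\breve x,f)$ are $\Vert\cdot\Vert_\Omega$-Lipschitz on $\mathcal F$ (for $\mathfrak X_n$ because $|\int_\Omega\hat p_n(f-g)d\lambda|\le\Vert\hat p_n\Vert_1\Vert f-g\Vert_\Omega\le\lambda(\Omega)\Vert f-g\Vert_\Omega$ and similarly for $\mathbb P(f-g)$; for $\mathfrak Y_n$ trivially). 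Therefore $\sup_{f\in\mathcal F}|\mathfrak Z_n(\breve x,f)|=\sup_{f\in\mathcal F_0}|\mathfrak Z_n(\breve x,f)|$ for a fixed countable $\Vert\cdot\Vert_\Omega$-dense $\mathcal F_0\subseteq\mathcal F$, and a countable supremum of measurable functions is measurable. Part (c) is analogous once $\zeta>0$ guarantees the integrand stays bounded, using Assumption \ref{rho: Continuity of rho} to get joint continuity of $(\theta,f)\mapsto\mathfrak T_k(\breve v,\theta,f)$ on $\Theta\times(\mathcal F,\Vert\cdot\Vert_\Omega)$ via Proposition \ref{Properties of L_n and L_k}(b2)-type arguments together with Theorem \ref{Theorem: existence of AML-estimators}(b), and separability of $\Theta\times\mathcal F$.

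For Part (b), the analogous argument needs joint continuity of $(\theta,f)\mapsto\mathfrak V_k(\breve v,\theta,f)$ and of $(\theta,f)\mapsto\mathfrak U_k(\breve v,\theta,f)$ on $\Theta\times(\mathcal F,\Vert\cdot\Vert_\Omega)$. For $\mathfrak V_k$: each summand $f(\rho(v_i,\theta))$ is jointly continuous in $(\theta,f)$ under Assumption \ref{rho: Continuity of rho} (cf.\ the estimate in the proof of Proposition \ref{Properties of L_n and L_k}(b2), replacing $\log f$ by $f$), and $\theta\mapsto\mu(f(\rho(\cdot,\theta)))$ is continuous by dominated convergence since the elements of $\mathcal F$ are uniformly bounded; so $\mathfrak V_k(\breve v,\cdot,\cdot)$ is jointly continuous. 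For $\mathfrak U_k$ one needs continuity of $\theta\mapsto\int_\Omega(\tilde p_k(\theta)-p_\theta)f\,d\lambda$: Theorem \ref{Theorem: existence of AML-estimators}(b) gives $\theta\mapsto\tilde p_k(\theta)$ continuous into $(\mathcal P(t,\zeta,D),\Vert\cdot\Vert_\Omega)$, and Assumption \ref{mod:Inclusion} together with Remark \ref{Remark: Equivalence of pointwise and sup-norm convergence of the parametrization} (invoking Proposition \ref{Interrelation}, valid since Assumption \ref{rho: Continuity of rho} holds) gives $\theta\mapsto p_\theta$ continuous into the same space; then $|\int_\Omega(\tilde p_k(\theta)-\tilde p_k(\theta')-p_\theta+p_{\theta'})f\,d\lambda|\le\lambda(\Omega)(\Vert\tilde p_k(\theta)-\tilde p_k(\theta')\Vert_\Omega+\Vert p_\theta-p_{\theta'}\Vert_\Omega)\sup_{f\in\mathcal F}\Vert f\Vert_\Omega$, combined with the $f$-Lipschitz estimate as in Part (a), yields joint continuity. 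Separability of $\Theta\times(\mathcal F,\Vert\cdot\Vert_\Omega)$ then reduces the double supremum to a countable one, finishing the proof. \textbf{The main obstacle} is not any single estimate but the bookkeeping: making sure that in each case the map is genuinely jointly continuous (so that the reduction to a countable dense subset of the \emph{product} index set is legitimate) rather than merely separately continuous, and that the hypotheses invoked (which differ subtly between the three parts) actually supply the domination needed for the dominated-convergence steps and the boundedness needed for the Lipschitz-in-$f$ steps.
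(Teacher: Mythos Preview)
Your proposal is correct and follows essentially the same approach as the paper: establish continuity of each process in $(\theta,f)$ with respect to $\Vert\cdot\Vert+\Vert\cdot\Vert_\Omega$ (resp.\ in $f$ alone for Part (a)), invoke separability of the index set, and reduce the supremum to a countable one. The only minor cosmetic difference is that for the pointwise measurability claims the paper argues via the measurability of $\breve x\mapsto\hat p_n(\cdot;\breve x)$ (resp.\ $\breve v\mapsto\tilde p_k(\theta)(\cdot;\breve v)$) as a map into $(\mathcal P(t,\zeta,D),\Vert\cdot\Vert_\Omega)$ from Theorem~\ref{Theorem: existence of AML-estimators} composed with the $\Vert\cdot\Vert_\Omega$-continuous functional $p\mapsto\int_\Omega pf\,d\lambda$, whereas you use joint measurability from Remark~\ref{Remark: meas_pos}(i) and Fubini; both routes are standard and equivalent here.
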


\begin{proof}
(a) Since $(x_{1},\ldots ,x_{n})\mapsto \hat{p}_{n}(\cdot ;x_{1},\ldots
,x_{n})$ is a measurable map from $\Omega ^{n}$ into $(\mathcal{P}(t,\zeta
,D),\Vert \cdot \Vert _{\Omega })$ by Theorem \ref{Theorem: existence of
AML-estimators}, since the map $p\mapsto \sqrt{n}\left( \int pfd\lambda -%
\mathbb{P}(f)\right) $ is $\Vert \cdot \Vert _{\Omega }$-continuous on $%
\mathcal{P}(t,\zeta ,D)$ for every $f\in \mathsf{W}_{2}^{s}(\Omega )$, and
since every $f$ is clearly Borel measurable, we see that $\mathfrak{X}_{n}(%
\breve{x},f)$ as well as $\mathfrak{Y}_{n}(\breve{x},f)$ are Borel
measurable on $\Omega ^{n}$ for every $f\in \mathsf{W}_{2}^{s}(\Omega )$.
Furthermore, it is easy to see that $\mathfrak{X}_{n}(\breve{x},f)$ and $%
\mathfrak{Y}_{n}(\breve{x},f)$, and thus also $\mathfrak{X}_{n}(\breve{x},f)-%
\mathfrak{Y}_{n}(\breve{x},f)$, are continuous on $(\mathcal{F},\Vert \cdot
\Vert _{\Omega })$ for given $\breve{x}$. Since $(\mathcal{F},\Vert \cdot
\Vert _{\Omega })$ is clearly separable, Borel measurability of the suprema
in Part (a) follows.

(b) The first claim is proved completely analogous, making also use of the
fact that $\rho $ is measurable in its first argument. The second claim is
also proved analogously by showing that now $\mathfrak{U}_{k}(\breve{v}%
,\theta ,f)$ and $\mathfrak{V}_{k}(\breve{v},\theta ,f)$ are continuous on
the separable space $(\Theta \times \mathcal{F},\left\Vert \cdot \right\Vert
+\Vert \cdot \Vert _{\Omega })$ for given $\breve{v}$: for $\mathfrak{V}_{k}$
use that $\theta \mapsto \rho (v,\theta )$ is continuous on $\Theta $ by
Assumption~\ref{rho: Continuity of rho} and that $\mathcal{F}$ is a sup-norm
bounded set of continuous functions. For $\mathfrak{U}_{k}$ use the fact
that $\theta \mapsto \tilde{p}_{k}(\theta )$ as a mapping from $\Theta $
into the space $(\mathcal{P}(t,\zeta ,D),\Vert \cdot \Vert _{\Omega })$ is
continuous by Theorem \ref{Theorem: existence of AML-estimators}, and that
the same is true for $p_{\theta }$ in view of Assumption \ref{mod:Inclusion}%
, Proposition \ref{Interrelation} in Appendix \ref{App A}, and Remark \ref%
{Remark: Equivalence of pointwise and sup-norm convergence of the
parametrization}.

(c) Measurability of $\mathfrak{T}_{k}(\cdot ,\theta ,f)$ for $\theta \in
\Theta $ and $f\in \mathsf{W}_{2}^{s}(\Omega )$ follows from measurability
of $f$ and $\rho (\cdot ,\theta )$ and Remark \ref{Remark: meas_pos}(i).
Continuity of $\mathfrak{T}_{k}(\breve{v},\cdot ,\cdot )$ on the separable
space $(\Theta \times \mathcal{F},\left\Vert \cdot \right\Vert +\Vert \cdot
\Vert _{\Omega })$ follows from continuity of $\tilde{p}_{k}(\theta )(\cdot
;v_{1},\ldots ,v_{k})$ and $f(\cdot )$, Assumption \ref{rho: Continuity of
rho}, and $\zeta >0$.
\end{proof}

\section{Appendix: Uniform Rates of Convergence and Entropy Bounds for
Empirical Processes \label{App E}}

The subsequent theorem is a uniform version of Theorem~3.2.5 in van der
Vaart and Wellner (1996).

\begin{thm}
\label{Uniform rates of convergence: parameter set} Let $(\Lambda ,\mathcal{A%
},P)$ be a probability space, $S$ and $T$ non-empty sets, and let $d$ be a
non-negative real-valued function on $T\times T$. Consider a sequence of
real-valued stochastic processes $(H_{k}(\sigma ,\tau ):\sigma \in S,\,\tau
\in T)$ defined on $(\Lambda ,\mathcal{A})$ and a function $H:S\times
T\rightarrow \mathbb{R}$ with the property that for every $\sigma \in S$
there exists a $\tau (\sigma )\in T$ such that for all $\tau \in T$%
\begin{equation}
H(\sigma ,\tau )-H(\sigma ,\tau (\sigma ))\leq -Cd^{\alpha }(\tau ,\tau
(\sigma ))  \label{Concavity}
\end{equation}%
holds, where $C,\alpha >0$ are constants neither depending on $\sigma $ nor $%
\tau $. Suppose, for all $\delta >0$, 
\begin{equation}
\func{E}^{\ast }\sup_{\sigma \in S}\sup_{\tau \in T,d(\tau ,\tau (\sigma
))\leq \delta }\sqrt{k}\left\vert (H_{k}-H)(\sigma ,\tau )-(H_{k}-H)(\sigma
,\tau (\sigma ))\right\vert \leq \varphi _{k}(\delta )
\label{Modulus of continuity}
\end{equation}%
is satisfied for real-valued functions $\varphi _{k}$ such that for some $%
\beta <\alpha $ the functions $\delta \mapsto \delta ^{-\beta }\varphi
_{k}(\delta )$ are all non-increasing in $\delta $. Assume further that, for
every $\sigma \in S$, $\hat{\tau}_{k}(\sigma ):\Lambda \rightarrow T$
satisfies%
\begin{equation}
H_{k}(\sigma ,\hat{\tau}_{k}(\sigma ))\geq H_{k}(\sigma ,\tau )\quad \text{%
for all }\tau \in T,  \label{Maximizer condition}
\end{equation}%
and let $r_{k}$ be a sequence of positive reals such that 
\begin{equation}
\sup_{k\in \mathbb{N}}\frac{r_{k}^{\alpha }\varphi _{k}(r_{k}^{-1})}{\sqrt{k}%
}<\infty .  \label{Rate of convergence condition}
\end{equation}%
Then, for every $\sigma \in S$, $\tau (\sigma )$ is a maximizer of $H(\sigma
,\cdot )$, and%
\begin{equation*}
\sup_{\sigma \in S}d(\hat{\tau}_{k}(\sigma ),\tau (\sigma ))=O_{P}^{\ast
}(r_{k}^{-1})\quad \text{as $k\rightarrow \infty $.}
\end{equation*}
\end{thm}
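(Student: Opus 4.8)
The plan is to adapt the peeling (dyadic slicing) argument behind Theorem~3.2.5 in van der Vaart and Wellner (1996), carrying the supremum over $\sigma\in S$ through each estimate; because the hypotheses (\ref{Concavity}), (\ref{Modulus of continuity}) and (\ref{Rate of convergence condition}) are all uniform in $\sigma$, this uniformity costs nothing. The first assertion, that $\tau(\sigma)$ maximizes $H(\sigma,\cdot)$ for every $\sigma$, is immediate, since the right-hand side of (\ref{Concavity}) is $\le 0$.

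For the rate, I would fix an integer $M\ge 1$ and peel the range of $r_k\,d(\hat{\tau}_k(\sigma),\tau(\sigma))$ into the dyadic shells $2^{j-1}<r_k\,d(\hat{\tau}_k(\sigma),\tau(\sigma))\le 2^j$, $j\ge M$. If $\sup_{\sigma\in S}r_k\,d(\hat{\tau}_k(\sigma),\tau(\sigma))>2^M$, then some $\sigma$ lands in some such shell and, by the maximizing property (\ref{Maximizer condition}), satisfies $H_k(\sigma,\hat{\tau}_k(\sigma))-H_k(\sigma,\tau(\sigma))\ge 0$; hence, using countable subadditivity of outer probability,
\begin{equation*}
P^{\ast}\!\left(\sup_{\sigma\in S}r_k\,d(\hat{\tau}_k(\sigma),\tau(\sigma))>2^M\right)\le\sum_{j\ge M}P^{\ast}\!\left(\ \sup_{\sigma\in S}\ \sup_{\tau:\,2^{j-1}<r_k\,d(\tau,\tau(\sigma))\le 2^j}\big[H_k(\sigma,\tau)-H_k(\sigma,\tau(\sigma))\big]\ge 0\right).
\end{equation*}
On the $j$-th shell (\ref{Concavity}) gives $H(\sigma,\tau)-H(\sigma,\tau(\sigma))\le -C\,2^{(j-1)\alpha}r_k^{-\alpha}$, so the event inside the $j$-th summand forces the centered increment $\big|(H_k-H)(\sigma,\tau)-(H_k-H)(\sigma,\tau(\sigma))\big|$ to be at least $C\,2^{(j-1)\alpha}r_k^{-\alpha}$ for some admissible pair with $d(\tau,\tau(\sigma))\le 2^j/r_k$.

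Next I would bound each summand by Markov's inequality for outer expectations combined with the modulus bound (\ref{Modulus of continuity}) at $\delta=2^j/r_k$, obtaining $\varphi_k(2^j/r_k)\,r_k^{\alpha}/(\sqrt{k}\,C\,2^{(j-1)\alpha})$. Since $2^j/r_k\ge 1/r_k$ for $j\ge 0$, the monotonicity of $\delta\mapsto\delta^{-\beta}\varphi_k(\delta)$ yields $\varphi_k(2^j/r_k)\le 2^{j\beta}\varphi_k(r_k^{-1})$, so each summand is at most $(2^{\alpha}/C)\,2^{j(\beta-\alpha)}\,(r_k^{\alpha}\varphi_k(r_k^{-1})/\sqrt{k})$. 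Summing the geometric series over $j\ge M$ (convergent since $\beta<\alpha$) and invoking (\ref{Rate of convergence condition}) produces a bound of the form $\mathrm{const}\cdot 2^{M(\beta-\alpha)}$, independent of $k$ and tending to $0$ as $M\to\infty$. Since $M\mapsto P^{\ast}(\sup_{\sigma}r_k\,d(\hat{\tau}_k(\sigma),\tau(\sigma))>M)$ is non-increasing, this passes from the dyadic sequence to all $M$, which is exactly $\sup_{\sigma\in S}d(\hat{\tau}_k(\sigma),\tau(\sigma))=O_{P}^{\ast}(r_k^{-1})$.

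I expect the main obstacle to be bookkeeping rather than anything conceptual: the suprema over $\sigma$ and over $\tau$ need not be measurable, so every probability and expectation above must be read as an outer probability or outer expectation, and one relies on countable subadditivity of outer probability and on Markov's inequality in its outer form. The genuine content is the peeling step — turning the event that the maximizer $\hat{\tau}_k(\sigma)$ is far from $\tau(\sigma)$ into a lower bound on an increment of the centered process $H_k-H$ that (\ref{Modulus of continuity}) controls — and, as noted, threading $\sup_{\sigma}$ through this is free because all the bounds in the hypotheses are already uniform in $\sigma$.
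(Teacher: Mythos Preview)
Your proof is correct and follows essentially the same route as the paper's: the dyadic peeling of $r_k d(\hat\tau_k(\sigma),\tau(\sigma))$ into shells, the combination of (\ref{Concavity}) and (\ref{Maximizer condition}) to force a large centered increment on each shell, Markov's inequality together with (\ref{Modulus of continuity}), the inequality $\varphi_k(c\delta)\le c^{\beta}\varphi_k(\delta)$ for $c\ge 1$, and the geometric sum via $\beta<\alpha$ are all exactly as in the paper. Your handling of the outer-probability technicalities and the passage from dyadic thresholds to arbitrary ones is slightly more explicit than the paper's, but the arguments are otherwise identical.
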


\begin{proof}
We have to show that for every $N\in \mathbb{N}$%
\begin{equation*}
\lim_{N\rightarrow \infty }\limsup_{k\rightarrow \infty }P^{\ast }\left(
r_{k}\sup_{\sigma \in S}d(\hat{\tau}_{k}(\sigma ),\tau (\sigma
))>2^{N}\right) =0.
\end{equation*}%
For $k,j\in \mathbb{N}$, set $V_{k,j}=\{(\sigma ,\tau ):2^{j-1}<r_{k}d(\tau
,\tau (\sigma ))\leq 2^{j}\}$. Then 
\begin{equation*}
r_{k}\sup_{\sigma \in S}d(\hat{\tau}_{k}(\sigma ),\tau (\sigma ))>2^{N}
\end{equation*}%
implies that there is some $\sigma _{0}\in S$ such that $r_{k}d(\hat{\tau}%
_{k}(\sigma _{0}),\tau (\sigma _{0}))>2^{N}$, which in turn gives $(\sigma
_{0},\hat{\tau}_{k}(\sigma _{0}))\in V_{k,j_{0}}$ for some $j_{0}>N$.
Combine this with (\ref{Concavity}) and (\ref{Maximizer condition}) to get 
\begin{equation*}
(H_{k}-H)(\sigma _{0},\hat{\tau}_{k}(\sigma _{0}))-(H_{k}-H)(\sigma
_{0},\tau (\sigma _{0}))\geq Cd^{\alpha }(\hat{\tau}_{k}(\sigma _{0}),\tau
(\sigma _{0}))>Cr_{k}^{-\alpha }2^{\alpha j_{0}-\alpha }.
\end{equation*}%
This implies 
\begin{eqnarray*}
\lefteqn{P^{\ast }\left( r_{k}\sup_{\sigma \in S}d(\hat{\tau}_{k}(\sigma
),\tau (\sigma ))>2^{N}\right) } \\
&\leq &\sum_{j>N}P^{\ast }\left( \sup_{(\sigma ,\tau )\in V_{k,j}}\left\vert 
\sqrt{k}(H_{k}-H)(\sigma ,\tau )-\sqrt{k}(H_{k}-H)(\sigma ,\tau (\sigma
))\right\vert \geq C\sqrt{k}r_{k}^{-\alpha }2^{\alpha j-\alpha }\right) \!.
\end{eqnarray*}%
Via Markov's inequality (for outer probability) and (\ref{Modulus of
continuity}), the r.h.s.~in the previous display can be bounded by 
\begin{equation*}
\sum_{j>N}\frac{\varphi _{k}(2^{j}r_{k}^{-1})r_{k}^{\alpha }}{C\sqrt{k}%
2^{\alpha j-\alpha }}\leq \sum_{j>N}\frac{2^{\beta j}\varphi
_{k}(r_{k}^{-1})r_{k}^{\alpha }}{C\sqrt{k}2^{\alpha j-\alpha }}\leq \frac{%
2^{\alpha }}{C}\sup_{k\in \mathbb{N}}\frac{r_{k}^{\alpha }\varphi
_{k}(r_{k}^{-1})}{\sqrt{k}}\sum_{j>N}2^{(\beta -\alpha )j},
\end{equation*}%
where the first inequality follows from $\varphi _{k}(c\delta )\leq c^{\beta
}\varphi _{k}(\delta )$ for $c\geq 1$. Note that the upper bound is finite
by (\ref{Rate of convergence condition}) and does not depend on $k$; since $%
\sum_{j>N}2^{(\beta -\alpha )j}$ converges to $0$ as $N\rightarrow \infty $
as $\beta <\alpha $ holds, the proof is complete.
\end{proof}

\bigskip 

We next present an upper bound for $\func{E}^{\ast }\Vert \sqrt{n}%
(P_{n}-P)\Vert _{\mathcal{F}}$ for sup-norm bounded classes of functions $%
\mathcal{F}$. This result is essentially well-known, see Lemma~3.4.2 in van
der Vaart and Wellner (1996), but we provide \emph{explicit} constants. A
proof, under the additional assumption that $Y_{1},\ldots ,Y_{n}$ are the
coordinate projections on a product space, can be found in Gach (2010);
inspection of the proof reveals that this assumption is unnecessary.

\begin{thm}
\label{Van der Vaart lemma} Suppose $(\Lambda ,\mathcal{A},P)$ is a
probability space, $Y_{1},\ldots ,Y_{n}$ are i.i.d.~with law $P$, and $P_{n}$
denotes the empirical measure associated with $Y_{1},\ldots ,Y_{n}$. Let $%
\mathcal{F}$ be a non-empty class of $\mathcal{A}$-measurable functions on $%
\Lambda $, which are bounded by $B$, $0<B<\infty $, in the sup-norm and by $%
\eta $, $0<\eta <\infty $, with respect to $\Vert \cdot \Vert _{2,P}$. Then 
\begin{equation*}
\func{E}^{\ast }\Vert \sqrt{n}(P_{n}-P)\Vert _{\mathcal{F}}\leq (1696+64%
\sqrt{2})\,I_{[\hspace{0.75ex}]}(\eta ,\mathcal{F},\Vert \cdot \Vert _{2,P})%
\left[ 1+\frac{B}{\eta ^{2}\sqrt{n}}\,I_{[\hspace{0.75ex}]}(\eta ,\mathcal{F}%
,\Vert \cdot \Vert _{2,P})\right] \!.
\end{equation*}
\end{thm}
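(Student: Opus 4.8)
\textbf{Proof of Theorem~\ref{Van der Vaart lemma} (strategy).} The plan is to run the classical bracketing‑chaining argument that underlies Lemma~3.4.2 in van der Vaart and Wellner (1996), the only novelty being to keep every numerical constant under control; the full calculation (under the harmless extra hypothesis that $Y_1,\dots,Y_n$ are coordinate projections, which is never used) is in Gach (2010). We may assume the right‑hand side is finite, so in particular $N_{[\hspace{0.75ex}]}(\eta,\mathcal F,\Vert\cdot\Vert_{2,P})<\infty$. The quantitative core is a maximal inequality with explicit constants: if $g_1,\dots,g_m$ are $\mathcal A$‑measurable with $\Vert g_i\Vert_\Lambda\le M$ and $\Vert g_i\Vert_{2,P}\le\tau$, then applying Bernstein's inequality to each $\sqrt n(P_n-P)g_i$, a union bound over $i\le m$, and $\func{E}^{\ast}X\le u+\int_u^\infty\Pr^{\ast}(X>t)\,dt$ with $u$ placed at the Gaussian/Poissonian crossover of Bernstein's exponent, gives
\begin{equation*}
\func{E}^{\ast}\max_{i\le m}\bigl|\sqrt n(P_n-P)g_i\bigr|\le c_1\,\tau\sqrt{1+\log m}+c_2\,\frac{M(1+\log m)}{\sqrt n}
\end{equation*}
for universal $c_1,c_2$, the second term being relevant only once $\sqrt{1+\log m}$ exceeds a fixed multiple of $\tau\sqrt n/M$.

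First I would set up the chain. For integers $q\ge 0$ put $a_q=2^{-q}\eta$ and $N_q=N_{[\hspace{0.75ex}]}(a_q,\mathcal F,\Vert\cdot\Vert_{2,P})$, and fix for each $q$ a minimal bracketing cover $\{[l_{qi},u_{qi}]\}_{i\le N_q}$ with $\Vert u_{qi}-l_{qi}\Vert_{2,P}\le a_q$, truncating the endpoints to lie in $[-B,B]$. Assigning to each $f\in\mathcal F$ one such bracket at each scale containing it, let $\pi_q f$ be its midpoint and $\Delta_q f\ge 0$ its width, so that $|f-\pi_q f|\le\tfrac12\Delta_q f$, $\Vert\Delta_q f\Vert_{2,P}\le a_q$, $\Vert\pi_q f\Vert_\Lambda\le B$, $\Vert\Delta_q f\Vert_\Lambda\le 2B$, and, as $f$ ranges over $\mathcal F$, $\pi_q f$ takes at most $N_q$ values, $\Delta_q f$ at most $N_q$ values, and $\pi_q f-\pi_{q-1}f$ at most $N_{q-1}N_q\le N_q^2$ values (using $N_{q-1}\le N_q$), with $\Vert\pi_q f-\pi_{q-1}f\Vert_{2,P}\le\tfrac32 a_q$. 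Fixing a truncation level $q_n$, telescope
\begin{equation*}
\sqrt n(P_n-P)f=\sqrt n(P_n-P)\pi_0 f+\sum_{q=1}^{q_n}\sqrt n(P_n-P)(\pi_q f-\pi_{q-1}f)+\sqrt n(P_n-P)(f-\pi_{q_n}f),
\end{equation*}
and bound the last block by $|\sqrt n(P_n-P)(f-\pi_{q_n}f)|\le\tfrac12\,|\sqrt n(P_n-P)\Delta_{q_n}f|+\sqrt n\,P\Delta_{q_n}f$ with $P\Delta_{q_n}f\le\Vert\Delta_{q_n}f\Vert_{2,P}\le a_{q_n}$ by Cauchy–Schwarz.

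Next I would take $\func{E}^{\ast}\sup_{\mathcal F}$ of each block via the maximal inequality. The coarsest block is a maximum over the $N_0$ functions $\pi_0 f$, each sup‑bounded by $B$ and $\Vert\cdot\Vert_{2,P}$‑bounded by $\tfrac32\eta$; each chain increment ranges over at most $N_q^2$ functions with the bounds above (so its log‑cardinality is $\le 1+2\log N_q\le 2(1+\log N_q)$, the source of the $\sqrt2$); and $\Delta_{q_n}f$ ranges over $N_{q_n}$ functions. Choosing $q_n$ as the largest level at which the chain increments are still in the Gaussian regime of the maximal inequality, the ``Gaussian'' contributions are summed using the monotonicity of $\varepsilon\mapsto H_{[\hspace{0.75ex}]}(\varepsilon,\mathcal F,\Vert\cdot\Vert_{2,P})$ and the elementary estimate
\begin{equation*}
a_q\sqrt{1+\log N_q}=2(a_q-a_{q+1})\sqrt{1+H_{[\hspace{0.75ex}]}(a_q,\mathcal F,\Vert\cdot\Vert_{2,P})}\le 2\int_{a_{q+1}}^{a_q}\sqrt{1+H_{[\hspace{0.75ex}]}(\varepsilon,\mathcal F,\Vert\cdot\Vert_{2,P})}\,d\varepsilon,
\end{equation*}
which collapses $\sum_q a_q\sqrt{1+\log N_q}$ into a constant multiple of $I_{[\hspace{0.75ex}]}(\eta,\mathcal F,\Vert\cdot\Vert_{2,P})$; and the residual pieces (the term $\sqrt n\,P\Delta_{q_n}f\le\sqrt n\,a_{q_n}$ and the Poissonian part of the $q_n$‑th maximal inequality) are, by the same comparison together with the defining property of $q_n$, forced into a constant multiple of $B\,\eta^{-2}\,n^{-1/2}\,I_{[\hspace{0.75ex}]}(\eta,\mathcal F,\Vert\cdot\Vert_{2,P})^2$. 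Adding the three blocks yields the asserted inequality.

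The routine but genuinely laborious part — the only reason this is not simply quoted — is to propagate the constants: the constants $c_1,c_2$ in the maximal inequality (themselves coming from Bernstein plus the tail‑integral), the various factor‑$2$ losses (truncating brackets at $\pm B$, passing from absolute value to width, the bound $\log(N_{q-1}N_q)\le 2\log N_q$, telescoping), and the comparison constant between the dyadic sum and the integral, must all be combined to produce exactly $1696+64\sqrt{2}$, and the level $q_n$ must be pinned down so that the error term emerges with precisely the stated powers of $\eta$ and $n$, uniformly in $n\ge 1$ and $\eta>0$. Measurability of the suprema appearing throughout is handled by the outer‑expectation/outer‑probability formalism, and nothing in the argument uses that the $Y_i$ are coordinate projections — only that they are i.i.d.\ with law $P$, which enters solely through Bernstein's inequality.
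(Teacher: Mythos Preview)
Your proposal is correct and matches the paper's approach exactly: the paper itself gives no proof, merely citing Lemma~3.4.2 in van der Vaart and Wellner (1996) for the underlying bracketing-chaining argument and Gach (2010) for the explicit constant tracking, and noting (as you do) that the coordinate-projection assumption made there is unnecessary. Your sketch is a faithful outline of precisely that standard argument---Bernstein-based maximal inequality, dyadic bracketing chain, truncation level chosen at the Gaussian/Poissonian crossover, and collapse of the dyadic sum into the entropy integral---so there is nothing to add or correct.
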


\section{Appendix: Auxiliary Results for SMD-Estimation\label{App G}}

\begin{lem}
\label{Lemma: Generic continuity of Q} Suppose $\mathcal{P}_{\Theta
}\subseteq \mathcal{L}^{2}(\Omega )$ and $\theta \mapsto p_{\theta }$ is a
continuous mapping from $\Theta $ into $(\mathcal{L}^{2}(\Omega ),\Vert
\cdot \Vert _{2})$. Let $f:\Omega \rightarrow \mathbb{R}$ be an integrable
function satisfying $\inf_{x\in \Omega }f(x)>0$. Then%
\begin{equation*}
H(\theta ):=\int_{\Omega }(f-p_{\theta })^{2}f^{-1}d\lambda
\end{equation*}%
is a continuous real-valued function on $\Theta $.
\end{lem}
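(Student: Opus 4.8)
The plan is to verify that $H$ is finite-valued and then to prove sequential continuity, which suffices since $\Theta$, being a subset of $\mathbb{R}^{m}$, is metrizable. First I would check real-valuedness: because $\inf_{x\in\Omega}f(x)>0$, the integrand may be expanded pointwise as $(f-p_{\theta})^{2}f^{-1}=f-2p_{\theta}+p_{\theta}^{2}f^{-1}$. Here $\int_{\Omega}f\,d\lambda<\infty$ by integrability of $f$; $\int_{\Omega}|p_{\theta}|\,d\lambda\le\lambda(\Omega)^{1/2}\Vert p_{\theta}\Vert_{2}<\infty$ by the Cauchy--Schwarz inequality together with boundedness of $\Omega$; and $\int_{\Omega}p_{\theta}^{2}f^{-1}\,d\lambda\le(\inf_{x}f(x))^{-1}\Vert p_{\theta}\Vert_{2}^{2}<\infty$ since $p_{\theta}\in\mathcal{L}^{2}(\Omega)$. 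Hence $H(\theta)$ is a well-defined, finite, nonnegative real number for every $\theta\in\Theta$.

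For continuity, fix $\theta\in\Theta$ and take any sequence $\theta_{n}\to\theta$ in $\Theta$; by hypothesis $\Vert p_{\theta_{n}}-p_{\theta}\Vert_{2}\to0$. Using the algebraic identity $(f-p_{\theta_{n}})^{2}-(f-p_{\theta})^{2}=(p_{\theta}-p_{\theta_{n}})(2f-p_{\theta_{n}}-p_{\theta})$ and dividing by $f$, I obtain
\[
H(\theta_{n})-H(\theta)=\int_{\Omega}(p_{\theta}-p_{\theta_{n}})(2f-p_{\theta_{n}}-p_{\theta})f^{-1}\,d\lambda,
\]
the rearrangement being legitimate because both $H(\theta_{n})$ and $H(\theta)$ are finite by the previous paragraph. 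The Cauchy--Schwarz inequality then gives
\[
|H(\theta_{n})-H(\theta)|\le\Vert p_{\theta}-p_{\theta_{n}}\Vert_{2}\,\bigl\Vert(2f-p_{\theta_{n}}-p_{\theta})f^{-1}\bigr\Vert_{2}.
\]

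It remains to bound the second factor uniformly in $n$. By the triangle inequality and $\inf_{x}f(x)>0$,
\[
\bigl\Vert(2f-p_{\theta_{n}}-p_{\theta})f^{-1}\bigr\Vert_{2}\le 2\lambda(\Omega)^{1/2}+\bigl(\inf_{x}f(x)\bigr)^{-1}\bigl(\Vert p_{\theta_{n}}\Vert_{2}+\Vert p_{\theta}\Vert_{2}\bigr),
\]
and since $p_{\theta_{n}}\to p_{\theta}$ in $\mathcal{L}^{2}(\Omega)$ the norms $\Vert p_{\theta_{n}}\Vert_{2}$ form a bounded sequence, so the right-hand side is at most a finite constant independent of $n$. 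Combining the last two displays yields $|H(\theta_{n})-H(\theta)|\to0$, and sequential continuity at every $\theta$ gives continuity of $H$ on $\Theta$. I do not anticipate a genuine obstacle here; the only point needing a little care is the verification that $H$ is finite-valued, so that the pointwise splitting of the integrand and the rearrangement above involve no $\infty-\infty$ ambiguity — this is precisely where integrability of $f$, boundedness of $\Omega$, the bound $\inf_{x}f(x)>0$, and $p_{\theta}\in\mathcal{L}^{2}(\Omega)$ are all used.
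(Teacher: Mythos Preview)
Your proof is correct and follows essentially the same route as the paper's: expand the integrand as $f-2p_{\theta}+p_{\theta}^{2}f^{-1}$ to establish finiteness, then control $H(\theta_{n})-H(\theta)$ via a difference-of-squares factorization and Cauchy--Schwarz. The only minor distinction is that the paper first exploits the normalization $\int_{\Omega}p_{\theta}\,d\lambda=1$ to reduce the difference to $\int_{\Omega}(p_{\theta_{n}}^{2}-p_{\theta}^{2})f^{-1}\,d\lambda$ before factoring, whereas you factor $(f-p_{\theta_{n}})^{2}-(f-p_{\theta})^{2}$ directly; your version therefore does not use that the $p_{\theta}$ are probability densities, which is a harmless (slightly more general) variation.
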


\begin{proof}
Rewrite the integrand as $f-2p_{\theta }+p_{\theta }^{2}/f$, and note that
each term is integrable by the hypotheses. Hence, $H$ is real-valued. For
continuity, let $\theta _{l},\theta \in \Theta $ be such that $\Vert \theta
_{l}-\theta \Vert $ converges to $0$. Letting $c=\inf_{x\in \Omega }f(x)$,%
\begin{eqnarray*}
|H(\theta _{l})-H(\theta )| &=&\left\vert \int_{\Omega }p_{\theta
_{l}}^{2}f^{-1}d\lambda -\int_{\Omega }p_{\theta }^{2}f^{-1}d\lambda
\right\vert \leq c^{-1}\int_{\Omega }\left\vert p_{\theta
_{l}}^{2}-p_{\theta }^{2}\right\vert d\lambda \\
&\leq &c^{-1}\Vert p_{\theta _{l}}-p_{\theta }\Vert _{2}(\Vert p_{\theta
_{l}}-p_{\theta }\Vert _{2}+2\Vert p_{\theta }\Vert _{2})\rightarrow 0\text{
\ \ for }l\rightarrow \infty .
\end{eqnarray*}
\end{proof}

\begin{prop}
\label{Proposition: properties of Q_n and Q_n,k} (a) Suppose $\mathcal{P}%
_{\Theta }\subseteq \mathcal{L}^{2}(\Omega )$ and $\theta \mapsto p_{\theta
} $ is a continuous map from $\Theta $ into $(\mathcal{L}^{2}(\Omega ),\Vert
\cdot \Vert _{2})$. Then, on the event where $\inf_{x\in \Omega }\hat{p}%
_{n}(x)>0$, 
\begin{equation*}
\mathbb{Q}_{n}(\theta )=\int_{\Omega }(\hat{p}_{n}-p_{\theta })^{2}\hat{p}%
_{n}^{-1}d\lambda
\end{equation*}%
holds and $\mathbb{Q}_{n}$ is a continuous real-valued function on $\Theta $%
. [In particular, in case $\zeta >0$ holds, the above event is the entire
sample space $\Omega ^{n}$.]

(b) Let Assumption~\ref{rho: Continuity of rho} be satisfied. Then, on the
event where $\inf_{x\in \Omega }\hat{p}_{n}(x)>0$,%
\begin{equation*}
\mathbb{Q}_{n,k}(\theta )=\int_{\Omega }(\hat{p}_{n}-\tilde{p}_{k}(\theta
))^{2}\hat{p}_{n}^{-1}d\lambda
\end{equation*}%
holds and $\mathbb{Q}_{n,k}$ is a continuous real-valued function on $\Theta 
$. [In particular, in case $\zeta >0$ holds, the above event is the entire
sample space $\Omega ^{n}\times V^{k}$.]

(c) Suppose $\mathcal{P}_{\Theta }\subseteq \mathcal{L}^{2}(\Omega )$ and $%
\theta \mapsto p_{\theta }$ is a continuous map from $\Theta $ into $(%
\mathcal{L}^{2}(\Omega ),\Vert \cdot \Vert _{2})$. If Assumption \ref%
{dens:StrictInequality} holds, then $Q$ is a continuous real-valued function
on $\Theta $.
\end{prop}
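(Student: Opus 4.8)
The plan is to reduce all three parts to a single application of Lemma~\ref{Lemma: Generic continuity of Q}, after checking its hypotheses in each case; the work is then essentially bookkeeping.

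For Part~(a) I would fix a point in the event $\{\inf_{x\in\Omega}\hat p_n(x)>0\}$ and regard $\hat p_n$ there as a fixed deterministic function. Since this event is contained in the event $\{\hat p_n(x)>0\text{ for all }x\in\Omega\}$ appearing in the definition of $\mathbb{Q}_n$ (the one written just after (\ref{eq:DefinitionOfQ_n,k})), the representation $\mathbb{Q}_n(\theta)=\int_\Omega(\hat p_n-p_\theta)^2\hat p_n^{-1}\,d\lambda$ holds on it. Now $\hat p_n$ is integrable because $\int_\Omega\hat p_n\,d\lambda=1$, and $\inf_{x\in\Omega}\hat p_n(x)>0$ on the event; together with the hypotheses $\mathcal{P}_\Theta\subseteq\mathcal{L}^2(\Omega)$ and $\Vert\cdot\Vert_2$-continuity of $\theta\mapsto p_\theta$, Lemma~\ref{Lemma: Generic continuity of Q} applied with $f=\hat p_n$ gives that $\mathbb{Q}_n$ is continuous and real-valued on $\Theta$. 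For the parenthetical remark, when $\zeta>0$ every $p\in\mathcal{P}(t,\zeta,D)$ satisfies $\inf_{x\in\Omega}p(x)\geq\zeta>0$, and since $\hat p_n\in\mathcal{P}(t,\zeta,D)$ the event is all of $\Omega^n$.

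Part~(b) follows the same route, with the family $\{\tilde p_k(\theta):\theta\in\Theta\}$ playing the role of $\{p_\theta\}$. This family lies in $\mathcal{P}(t,\zeta,D)\subseteq\mathsf{W}_2^t(\Omega)\subseteq\mathcal{L}^2(\Omega)$, and under Assumption~\ref{rho: Continuity of rho} the map $\theta\mapsto\tilde p_k(\theta)$ is $\Vert\cdot\Vert_\Omega$-continuous for each fixed $v_1,\ldots,v_k$ by Theorem~\ref{Theorem: existence of AML-estimators}(b); since $\Vert g\Vert_2\leq\lambda(\Omega)^{1/2}\Vert g\Vert_\Omega$ on the bounded interval $\Omega$, it is then also $\Vert\cdot\Vert_2$-continuous. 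On the event $\{\inf_{x\in\Omega}\hat p_n(x)>0\}$ (a subset of $\Omega^n$, giving the event $\{\inf_{x\in\Omega}\hat p_n(x)>0\}\times V^k$ in $\Omega^n\times V^k$) the definition (\ref{eq:DefinitionOfQ_n,k}) gives $\mathbb{Q}_{n,k}(\theta)=\int_\Omega(\hat p_n-\tilde p_k(\theta))^2\hat p_n^{-1}\,d\lambda$, and Lemma~\ref{Lemma: Generic continuity of Q} with $f=\hat p_n$ and the family $\tilde p_k(\theta)$ yields continuity and real-valuedness; the $\zeta>0$ remark is exactly as in Part~(a). Finally, Part~(c) is immediate: Assumption~\ref{dens:StrictInequality} says $\inf_{x\in\Omega}p_\blacktriangle(x)>0$, and $p_\blacktriangle$ is integrable since $\int_\Omega p_\blacktriangle\,d\lambda=1$; Lemma~\ref{Lemma: Generic continuity of Q} with $f=p_\blacktriangle$ and the family $\{p_\theta\}$ (using the stated hypotheses on $\mathcal{P}_\Theta$) shows $Q$ is continuous and real-valued on $\Theta$.

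I do not expect a genuine obstacle, since the argument is purely a matter of matching hypotheses; the one spot requiring a little care is Part~(b), where one must invoke the (nontrivial) continuity of $\theta\mapsto\tilde p_k(\theta)$ provided by Theorem~\ref{Theorem: existence of AML-estimators}(b) and then pass from sup-norm to $\mathcal{L}^2$-continuity, and throughout one should argue pointwise on the relevant event — where $\hat p_n$ (resp.\ $p_\blacktriangle$) is a fixed function meeting the lemma's hypotheses — rather than attempting a global statement.
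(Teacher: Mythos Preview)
Your proposal is correct and, for Parts~(a) and~(c), matches the paper's proof exactly: both are handled as immediate consequences of Lemma~\ref{Lemma: Generic continuity of Q}. For Part~(b) there is a small but genuine difference in route: you reduce to Lemma~\ref{Lemma: Generic continuity of Q} once more, treating $\{\tilde p_k(\theta):\theta\in\Theta\}$ as the parameterized family and passing from the sup-norm continuity in Theorem~\ref{Theorem: existence of AML-estimators}(b) to $\mathcal{L}^2$-continuity via $\Vert g\Vert_2\leq\lambda(\Omega)^{1/2}\Vert g\Vert_\Omega$; the paper instead argues directly, using the uniform sup-norm bound $C_tD$ on $\hat p_n$ and $\tilde p_k(\theta)$ to get real-valuedness and then invoking dominated convergence (rather than the lemma's $\mathcal{L}^2$-estimate) for continuity. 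Your version is arguably cleaner in that it unifies all three parts under a single application of the lemma, while the paper's direct argument for~(b) avoids the (trivial) passage from sup-norm to $\mathcal{L}^2$-continuity. Both are complete and there is no gap in your proposal.
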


\begin{proof}
Parts (a) and (c) are immediate consequences of Lemma~\ref{Lemma: Generic
continuity of Q}. We next prove Part (b): Since $\hat{p}_{n}$ and $\tilde{p}%
_{k}(\theta )$ belong to $\mathcal{P}(t,\zeta ,D)$ by construction, these
densities are sup-norm bounded by $C_{t}D$. Hence, $\mathbb{Q}_{n,k}$ is
real-valued whenever $\inf_{x\in \Omega }\hat{p}_{n}(x)>0$. Since the map $%
\theta \mapsto \tilde{p}_{k}(\theta )$ is continuous by Theorem~\ref%
{Theorem: existence of AML-estimators}(b), continuity of $\mathbb{Q}_{n,k}$
then follows from the theorem of dominated convergence.
\end{proof}

\begin{prop}
\label{Proposition: Uniform convergence properties of II-objective functions}
(a) Suppose $\mathcal{P}_{\Theta }\subseteq \mathcal{L}^{2}(\Omega )$ and $%
\theta \mapsto p_{\theta }$ is a continuous map from $\Theta $ into $(%
\mathcal{L}^{2}(\Omega ),\Vert \cdot \Vert _{2})$. Let further Assumptions %
\ref{dens:Element} and \ref{dens:StrictInequality} be satisfied. Then%
\begin{equation*}
\sup_{\theta \in \Theta }\left\vert \mathbb{Q}_{n}(\theta )-Q(\theta
)\right\vert =o_{\mathbb{P}}^{\ast }(1)\quad \text{as }n\rightarrow \infty .
\end{equation*}

(b) Let Assumptions \ref{dens:Element}, \ref{dens:StrictInequality}, \ref%
{mod:Inclusion}, \ref{mod: Strict inequality}, and \ref{rho: Continuity of
rho} be satisfied. Then%
\begin{equation*}
\sup_{\theta \in \Theta }\left\vert \mathbb{Q}_{n,k}(\theta )-Q(\theta
)\right\vert =o_{\func{Pr}}^{\ast }(1)\quad \text{as }\min (n,k)\rightarrow
\infty .
\end{equation*}

(c) Suppose $\zeta >0$ holds and Assumptions~\ref{mod:Inclusion} and \ref%
{rho: Hoelderity of rho} are satisfied. Then 
\begin{equation*}
\sup_{n\in \mathbb{N}}\,\sup_{\Omega ^{n}}\,\sup_{\theta \in \Theta }|%
\mathbb{Q}_{n,k}(\theta )-\mathbb{Q}_{n}(\theta )|=O_{\mu }^{\ast
}(k^{-t/(2t+1)})\quad \text{as $k\rightarrow \infty $.}
\end{equation*}%
If Assumption~\ref{mod:Inclusion} is strengthened to \ref%
{mod:InclusionWithStrictInequalities}, then%
\begin{equation*}
\sup_{n\in \mathbb{N}}\,\sup_{\Omega ^{n}}\,\sup_{\theta \in \Theta }|%
\mathbb{Q}_{n,k}(\theta )-\mathbb{Q}_{n}(\theta )|=O_{\func{Pr}}^{\ast
}(k^{-1/2})\quad \text{as $k\rightarrow \infty $.}
\end{equation*}
\end{prop}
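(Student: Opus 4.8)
First note that because $\zeta>0$, every element of $\mathcal{P}(t,\zeta ,D)$ is bounded below by $\zeta$ and above by $C_{t}D$ (Proposition~\ref{prop:SobolevembedsinHoelder}(b)); in particular $\hat p_{n}$, $\tilde p_{k}(\theta)$ and $p_{\theta}$ all satisfy these bounds, and the event $\{\hat p_{n}(x)>0\text{ for all }x\in\Omega\}$ is the whole sample space, so $\mathbb{Q}_{n,k}$ and $\mathbb{Q}_{n}$ are everywhere given by the integral formulas in \eqref{eq:DefinitionOfQ_n,k}. Writing $u_{k}(\theta):=\tilde p_{k}(\theta)-p_{\theta}$ and expanding the squares yields the identity $\mathbb{Q}_{n,k}(\theta)-\mathbb{Q}_{n}(\theta)=\int_{\Omega}u_{k}(\theta)\,g_{n,\theta}\,d\lambda+\int_{\Omega}u_{k}(\theta)^{2}\hat p_{n}^{-1}\,d\lambda$, where $g_{n,\theta}:=2\bigl(p_{\theta}\hat p_{n}^{-1}-1\bigr)$. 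The decomposition is chosen so that every bound below uses only that $\hat p_{n}\in\mathcal{P}(t,\zeta ,D)$; hence the suprema over $n$ and over $\Omega^{n}$ can be replaced throughout by the single supremum over $q\in\mathcal{P}(t,\zeta ,D)$ substituted for $\hat p_{n}$.

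For the first assertion I would argue crudely. Using $\hat p_{n}^{-1}\le\zeta^{-1}$, the sup-norm bounds $\Vert\hat p_{n}\Vert_{\Omega},\Vert p_{\theta}\Vert_{\Omega},\Vert\tilde p_{k}(\theta)\Vert_{\Omega}\le C_{t}D$, and Cauchy--Schwarz in the form $\int_{\Omega}|u_{k}(\theta)|\,d\lambda\le\lambda(\Omega)^{1/2}\Vert u_{k}(\theta)\Vert_{2}$, both integrals in the decomposition are bounded by a finite constant depending only on $t,\zeta,D,\lambda(\Omega)$ times $\Vert\tilde p_{k}(\theta)-p_{\theta}\Vert_{2}$, so that $\sup_{n}\sup_{\Omega^{n}}\sup_{\theta\in\Theta}|\mathbb{Q}_{n,k}(\theta)-\mathbb{Q}_{n}(\theta)|\le c\,\sup_{\theta\in\Theta}\Vert\tilde p_{k}(\theta)-p_{\theta}\Vert_{2}$. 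The hypotheses of Theorem~\ref{Uniform rate of AML-estimators} are met (Assumption~\ref{mod:Inclusion} together with $\zeta>0$ gives Assumption~\ref{mod: Strict inequality}, and Assumption~\ref{rho: Hoelderity of rho} is assumed), so applying it with $s=0$ makes the right-hand side $O_{\mu}(k^{-t/(2t+1)})$; since this dominating quantity is measurable by Proposition~\ref{Borel_1}(b), the left-hand side is $O_{\mu}^{\ast}(k^{-t/(2t+1)})$.

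For the second assertion, under Assumption~\ref{mod:InclusionWithStrictInequalities}, I would treat the two terms separately. The remainder term is handled as before: $\int_{\Omega}u_{k}(\theta)^{2}\hat p_{n}^{-1}d\lambda\le\zeta^{-1}\sup_{\theta}\Vert\tilde p_{k}(\theta)-p_{\theta}\Vert_{2}^{2}=O_{\mu}(k^{-2t/(2t+1)})$ by Theorem~\ref{Uniform rate of AML-estimators}, and $2t/(2t+1)>1/2$ since $t>1/2$, so this is $o_{\mu}(k^{-1/2})$. For the leading term, the key observation is that $g_{n,\theta}$ belongs, for every $n$, every realization, and every $\theta$, to the fixed set $\mathcal{G}:=\{2(p_{\theta}q^{-1}-1):\theta\in\Theta,\ q\in\mathcal{P}(t,\zeta ,D)\}$, which is a non-empty bounded subset of $\mathsf{W}_{2}^{t}(\Omega)$: indeed $\{q^{-1}:q\in\mathcal{P}(t,\zeta ,D)\}$ is $\Vert\cdot\Vert_{t,2}$-bounded by Proposition~\ref{prop:SobolevembedsinHoelder}(d) (here $\zeta>0$ is essential), and then $\mathcal{G}$ is $\Vert\cdot\Vert_{t,2}$-bounded by the multiplication algebra property Proposition~\ref{prop:SobolevembedsinHoelder}(a). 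Consequently $\sup_{n}\sup_{\Omega^{n}}\sup_{\theta}\bigl|\int_{\Omega}u_{k}(\theta)g_{n,\theta}d\lambda\bigr|\le\sup_{\theta\in\Theta}\sup_{g\in\mathcal{G}}\bigl|\int_{\Omega}(\tilde p_{k}(\theta)-p_{\theta})g\,d\lambda\bigr|$, and the latter is $O_{\mu}(k^{-1/2})$ by Part~(c) of Theorem~\ref{Theorem: Uniform Donsker-type theorem} (applicable since Assumptions~\ref{mod:InclusionWithStrictInequalities} and~\ref{rho: Hoelderity of rho} hold and $\mathcal{G}$ is a non-empty bounded subset of $\mathsf{W}_{2}^{t}(\Omega)$ with $t>1/2$), with measurability from Proposition~\ref{Borel_2}(b). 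Combining, the left-hand side is $O_{\mu}^{\ast}(k^{-1/2})$, hence a fortiori $O_{\func{Pr}}^{\ast}(k^{-1/2})$.

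The only non-routine step — and the one I expect to be the main obstacle — is this leading-term argument: one must recognize that the data- and $n$-dependent weight $2(p_{\theta}\hat p_{n}^{-1}-1)$ never leaves a fixed Sobolev ball, so its contribution is controlled \emph{simultaneously} over all $n$ and all realizations of the data by the uniform-in-$\theta$ Donsker theorem of the previous section. This is also exactly where the stronger Assumption~\ref{mod:InclusionWithStrictInequalities} is needed, since Theorem~\ref{Theorem: Uniform Donsker-type theorem} presupposes that $\mathcal{P}_{\Theta}$ is uniformly interior to $\mathcal{P}(t,\zeta ,D)$; everything else reduces to bookkeeping with the embedding and algebra bounds of Proposition~\ref{prop:SobolevembedsinHoelder}.
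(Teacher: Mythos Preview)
Your argument is correct and follows the same strategy as the paper. The paper uses the slightly more compact identity $\mathbb{Q}_{n,k}(\theta)-\mathbb{Q}_{n}(\theta)=\int_{\Omega}(\tilde p_{k}(\theta)-p_{\theta})\,(\tilde p_{k}(\theta)+p_{\theta})\hat p_{n}^{-1}\,d\lambda$ and, for the second assertion, applies Theorem~\ref{Theorem: Uniform Donsker-type theorem}(c) directly with $\mathcal{F}$ equal to the set $\{(\tilde p_{k}(\theta)+p_{\theta})/\hat p_{n}\}$ taken over all $\theta$, $n$, $k$, and all data values (a fixed bounded subset of $\mathsf{W}_{2}^{t}(\Omega)$), whereas you first split off the quadratic remainder $\int u_{k}(\theta)^{2}\hat p_{n}^{-1}d\lambda$ so that your test-function class $\mathcal{G}$ does not involve the simulated data; the key step --- that the weight stays in a fixed Sobolev ball by Proposition~\ref{prop:SobolevembedsinHoelder}(a),(d), enabling Theorem~\ref{Theorem: Uniform Donsker-type theorem}(c) --- is identical in both arguments.
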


\begin{proof}
(a) Set $\chi =2^{-1}\inf_{x\in \Omega }p_{\blacktriangle }(x)$ and observe
that $\chi >0$ by Assumption~\ref{dens:StrictInequality}. In view of Remark~%
\ref{Remark 12}(i) there is a sequence of events $A_{n}$ that have
probability converging to $1$ as $n\rightarrow \infty $ such that $%
\inf_{x\in \Omega }\hat{p}_{n}(x)>\chi $. On these events we then have%
\begin{equation*}
\sup_{\theta \in \Theta }|\mathbb{Q}_{n}(\theta )-Q(\theta )|=\sup_{\theta
\in \Theta }\left\vert \int_{\Omega }\frac{p_{\theta }^{2}}{\hat{p}_{n}}%
d\lambda -\int_{\Omega }\frac{p_{\theta }^{2}}{p_{\blacktriangle }}d\lambda
\right\vert \leq \chi ^{-2}\sup_{\theta \in \Theta }\Vert p_{\theta }\Vert
_{2}^{2}\,\Vert \hat{p}_{n}-p_{\blacktriangle }\Vert _{\Omega }.
\end{equation*}%
Since $\Theta $ is compact, the assumptions on $\mathcal{P}_{\Theta }$ imply
that $\sup_{\theta \in \Theta }\Vert p_{\theta }\Vert _{2}<\infty $. Part
(a) of Theorem~\ref{Theorem: consistency of AML-estimators} now completes
the proof.

(b) Let $\chi $ and $A_{n}$ be as in the proof of Part (a). On $A_{n}$ we
have%
\begin{eqnarray*}
\sup_{\theta \in \Theta }|\mathbb{Q}_{n,k}(\theta )-Q(\theta )| &\leq
&\sup_{\theta \in \Theta }\left\vert \int_{\Omega }\frac{\tilde{p}%
_{k}(\theta )^{2}}{\hat{p}_{n}}d\lambda -\int_{\Omega }\frac{p_{\theta }^{2}%
}{p_{\blacktriangle }}d\lambda \right\vert \\
&\leq &\sup_{\theta \in \Theta }\left\vert \int_{\Omega }(\tilde{p}%
_{k}(\theta )-p_{\theta })\frac{\tilde{p}_{k}(\theta )+p_{\theta }}{\hat{p}%
_{n}}d\lambda +\int_{\Omega }p_{\theta }^{2}\left( \frac{1}{\hat{p}_{n}}-%
\frac{1}{p_{\blacktriangle }}\right) d\lambda \right\vert \\
&\leq &2\chi ^{-1}\sup_{\theta \in \Theta }\Vert \tilde{p}_{k}(\theta
)-p_{\theta }\Vert _{\Omega }+\chi ^{-2}D^{2}\Vert \hat{p}%
_{n}-p_{\blacktriangle }\Vert _{\Omega }.
\end{eqnarray*}%
The result then follows from Parts (a) and (c) of Theorem~\ref{Theorem:
consistency of AML-estimators}.

(c) Note that $\tilde{p}_{k}(\theta )\in \mathcal{P}(t,\zeta ,D)$ by
construction and $p_{\theta }\in \mathcal{P}(t,\zeta ,D)$ by Assumption~\ref%
{mod:Inclusion}. Hence, these densities are sup-norm bounded uniformly in $%
\theta $ (and $v_{1},\ldots ,v_{k}\in V$ in case of $\tilde{p}_{k}(\theta )$%
). Observe now that%
\begin{equation*}
\mathbb{Q}_{n,k}(\theta )-\mathbb{Q}_{n}(\theta )=\int_{\Omega }(\tilde{p}%
_{k}(\theta )-p_{\theta })\,\frac{\tilde{p}_{k}(\theta )+p_{\theta }}{\hat{p}%
_{n}}d\lambda .
\end{equation*}%
Using $\zeta >0$, Part (d) of Proposition~\ref{prop:SobolevembedsinHoelder}
applied to $\left\{ \hat{p}_{n}:x_{1},\ldots ,x_{n}\in \Omega ,\,n\in 
\mathbb{N}\right\} $ shows that $\left\{ 1/\hat{p}_{n}:x_{1},\ldots
,x_{n}\in \Omega ,\,n\in \mathbb{N}\right\} $ is bounded in $\mathsf{W}%
_{2}^{t}(\Omega )$. By Assumption~\ref{mod:Inclusion} and the construction
of $\tilde{p}_{k}(\theta )$, it follows from Part (a) of Proposition~\ref%
{prop:SobolevembedsinHoelder} that%
\begin{equation}
\left\{ \frac{\tilde{p}_{k}(\theta )+p_{\theta }}{\hat{p}_{n}}:\theta \in
\Theta ,\,x_{1},\ldots ,x_{n}\in \Omega ,\,v_{1},\ldots ,v_{k}\in V,\,n,k\in 
\mathbb{N}\right\}  \label{Formel: Bounded set}
\end{equation}%
is contained in a Sobolev ball $\mathcal{U}_{t,B}$ for some $B$ satisfying $%
0<B<\infty $. The first claim then follows from Theorem~\ref{Uniform rate of
AML-estimators} with $s=0$ (note that under $\zeta >0$ Assumption \ref%
{mod:Inclusion} implies Assumption \ref{mod: Strict inequality}), where we
have made use of the inequality $\int_{\Omega }|f|d\lambda \leq \lambda
(\Omega )^{1/2}\Vert f\Vert _{2}$ and the fact that the set in (\ref{Formel:
Bounded set}) is bounded in the sup-norm. If Assumption~\ref{mod:Inclusion}
is strengthened to \ref{mod:InclusionWithStrictInequalities}, we may apply
Part (c) of Theorem~\ref{Theorem: Uniform Donsker-type theorem} with $%
\mathcal{F}$ equal to the set given in (\ref{Formel: Bounded set}) to obtain
the second claim.
\end{proof}

\begin{remark}
\normalfont If $\zeta >0$ holds, then the events $A_{n}$ in Parts (a) and
(b) of the above proof are the entire sample space and $\mathbb{Q}_{n}-Q$,
respectively $\mathbb{Q}_{n,k}-Q$, is continuous on $\Theta $. By
separability of $\Theta $, the measurability of the respective suprema then
follows.
\end{remark}

\begin{lem}
\label{DerivativesOfQ_nAndQ}\hspace*{0ex}(a) Let Assumptions~\ref%
{mod:Inclusion} and \ref{mod: domination conditions} be satisfied. Then, on
the event $\inf_{x\in \Omega }\hat{p}_{n}(x)>0$, the objective function $%
\mathbb{Q}_{n}$ is twice continuously partially differentiable on $\Theta
^{\circ }$ with%
\begin{eqnarray*}
\frac{\partial \mathbb{Q}_{n}}{\partial \theta _{i}}(\theta )
&=&-2\int_{\Omega }(\hat{p}_{n}-p_{\theta })\frac{\partial p}{\partial
\theta _{i}}(\cdot ,\theta )\hat{p}_{n}^{-1}d\lambda , \\
\frac{\partial ^{2}\mathbb{Q}_{n}}{\partial \theta _{i}\partial \theta _{j}}%
(\theta ) &=&2\int_{\Omega }\left( \frac{\partial p}{\partial \theta _{i}}%
(\cdot ,\theta )\frac{\partial p}{\partial \theta _{j}}(\cdot ,\theta )+%
\frac{\partial ^{2}p}{\partial \theta _{i}\partial \theta _{j}}(\cdot
,\theta )p_{\theta }\right) \hat{p}_{n}^{-1}d\lambda ,
\end{eqnarray*}%
for $i,j=1,\ldots ,m$.

(b) Let Assumptions~\ref{dens:StrictInequality}, \ref{mod:Inclusion}, and %
\ref{mod: domination conditions} be satisfied. Then $Q$ is twice
continuously partially differentiable on $\Theta ^{\circ }$ with%
\begin{eqnarray*}
\frac{\partial Q}{\partial \theta _{i}}(\theta ) &=&-2\int_{\Omega
}(p_{\blacktriangle }-p_{\theta })\frac{\partial p}{\partial \theta _{i}}%
(\cdot ,\theta )p_{\blacktriangle }^{-1}d\lambda , \\
\frac{\partial ^{2}Q}{\partial \theta _{i}\partial \theta _{j}}(\theta )
&=&2\int_{\Omega }\left( \frac{\partial p}{\partial \theta _{i}}(\cdot
,\theta )\frac{\partial p}{\partial \theta _{j}}(\cdot ,\theta )+\frac{%
\partial ^{2}p}{\partial \theta _{i}\partial \theta _{j}}(\cdot ,\theta
)p_{\theta }\right) p_{\blacktriangle }^{-1}d\lambda ,
\end{eqnarray*}%
for $i,j=1,\ldots ,m$.
\end{lem}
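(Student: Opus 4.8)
The plan is to prove both parts by justifying two successive interchanges of differentiation and integration; the two parts have identical structure, with the fixed density $p_{\blacktriangle }$ of Part~(b) playing the role that $\hat{p}_{n}$ plays in Part~(a). First I would fix a realization $(x_{1},\ldots ,x_{n})$ lying in the event $\{\inf_{x\in \Omega }\hat{p}_{n}(x)>0\}$, so that $\hat{p}_{n}$ is a fixed continuous function with $\chi :=\inf_{\Omega }\hat{p}_{n}>0$, note that $\hat{p}_{n},p_{\theta }\in \mathcal{P}(t,\zeta ,D)$ are bounded by $C_{t}D$ in sup-norm via Proposition~\ref{prop:SobolevembedsinHoelder}(b) (the inclusion $p_{\theta }\in \mathcal{P}(t,\zeta ,D)$ coming from Assumption~\ref{mod:Inclusion}), and rewrite the integrand as $(\hat{p}_{n}-p_{\theta })^{2}\hat{p}_{n}^{-1}=\hat{p}_{n}-2p_{\theta }+p_{\theta }^{2}\hat{p}_{n}^{-1}$. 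The bound $\chi \le \hat{p}_{n}\le C_{t}D$ then shows the integrand is bounded, so $\mathbb{Q}_{n}$ is real-valued; since the summand $\hat{p}_{n}$ does not depend on $\theta $, it suffices to differentiate $\theta \mapsto \int_{\Omega }(-2p_{\theta }+p_{\theta }^{2}\hat{p}_{n}^{-1})\,d\lambda $.

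For the first derivative I would invoke the classical theorem on differentiation under the integral sign on $\Theta ^{\circ }$. The required neighbourhood-uniform $\mathcal{L}^{1}(\lambda )$ majorant is supplied by Assumption~\ref{mod: domination conditions}: one has $|\partial _{\theta _{i}}(-2p_{\theta }+p_{\theta }^{2}\hat{p}_{n}^{-1})|\le 2(1+C_{t}D\chi ^{-1})\sup_{\theta \in \Theta ^{\circ }}|\partial _{\theta _{i}}p(\cdot ,\theta )|$, and $\sup_{\theta \in \Theta ^{\circ }}|\partial _{\theta _{i}}p(\cdot ,\theta )|\in \mathcal{L}^{1}(\lambda )$ because $\int_{\Omega }\sup_{\theta \in \Theta ^{\circ }}|\partial _{\theta _{i}}p|^{2}\,d\lambda <\infty $ and $\lambda (\Omega )<\infty $, by Cauchy--Schwarz. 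This yields $\partial \mathbb{Q}_{n}/\partial \theta _{i}(\theta )=\int_{\Omega }(-2\partial _{\theta _{i}}p+2p_{\theta }\partial _{\theta _{i}}p\,\hat{p}_{n}^{-1})\,d\lambda $, which rearranges algebraically (with no need to invoke any vanishing integral) to the claimed $-2\int_{\Omega }(\hat{p}_{n}-p_{\theta })\partial _{\theta _{i}}p\,\hat{p}_{n}^{-1}\,d\lambda $. Continuity of this function on $\Theta ^{\circ }$ follows from dominated convergence, using continuity of $\theta \mapsto p(x,\theta )$ and $\theta \mapsto \partial _{\theta _{i}}p(x,\theta )$ (again Assumption~\ref{mod: domination conditions}) together with the same majorant.

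The second derivative is handled the same way: differentiating $-2(\hat{p}_{n}-p_{\theta })\partial _{\theta _{i}}p\,\hat{p}_{n}^{-1}$ in $\theta _{j}$ and interchanging once more, now with majorant $2\chi ^{-1}\bigl(\sup_{\Theta ^{\circ }}|\partial _{\theta _{i}}p\,\partial _{\theta _{j}}p|+2C_{t}D\sup_{\Theta ^{\circ }}|\partial ^{2}_{\theta _{i}\theta _{j}}p|\bigr)$, which lies in $\mathcal{L}^{1}(\lambda )$ by $|ab|\le \tfrac{1}{2}(a^{2}+b^{2})$ together with both domination conditions in Assumption~\ref{mod: domination conditions}. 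This produces $\partial ^{2}\mathbb{Q}_{n}/\partial \theta _{i}\partial \theta _{j}(\theta )=2\int_{\Omega }\partial _{\theta _{i}}p\,\partial _{\theta _{j}}p\,\hat{p}_{n}^{-1}\,d\lambda -2\int_{\Omega }(\hat{p}_{n}-p_{\theta })\partial ^{2}_{\theta _{i}\theta _{j}}p\,\hat{p}_{n}^{-1}\,d\lambda $. To reach the stated form I would use the identity $\int_{\Omega }\partial ^{2}_{\theta _{i}\theta _{j}}p(\cdot ,\theta )\,d\lambda =\partial ^{2}_{\theta _{i}\theta _{j}}\int_{\Omega }p_{\theta }\,d\lambda =\partial ^{2}_{\theta _{i}\theta _{j}}(1)=0$: writing $-(\hat{p}_{n}-p_{\theta })\partial ^{2}_{\theta _{i}\theta _{j}}p=-\hat{p}_{n}\partial ^{2}_{\theta _{i}\theta _{j}}p+p_{\theta }\partial ^{2}_{\theta _{i}\theta _{j}}p$ and discarding the term $-2\int_{\Omega }\partial ^{2}_{\theta _{i}\theta _{j}}p\,d\lambda =0$ leaves exactly $2\int_{\Omega }(\partial _{\theta _{i}}p\,\partial _{\theta _{j}}p+p_{\theta }\partial ^{2}_{\theta _{i}\theta _{j}}p)\hat{p}_{n}^{-1}\,d\lambda $. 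Continuity of the Hessian entries again follows by dominated convergence. Part~(b) is obtained verbatim with $\hat{p}_{n}$ replaced everywhere by $p_{\blacktriangle }$, using Assumption~\ref{dens:StrictInequality} to guarantee $\inf_{\Omega }p_{\blacktriangle }>0$ (hence $p_{\blacktriangle }^{-1}$ bounded) and the fact that $p_{\blacktriangle }$ is integrable as a density; note that here one never needs $p_{\blacktriangle }$ bounded above, since it enters only through $p_{\blacktriangle }^{-1}$.

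Since the whole argument is bookkeeping around two applications of the Leibniz rule, I do not expect a genuine obstacle; the only points demanding care are (i) extracting neighbourhood-uniform $\mathcal{L}^{1}(\lambda )$ majorants from the $\mathcal{L}^{2}(\lambda )$ domination hypotheses of Assumption~\ref{mod: domination conditions} via Cauchy--Schwarz and the boundedness of $\Omega $ and of the densities, and (ii) recognising that the a priori extra integrals $\int_{\Omega }\partial _{\theta _{i}}p\,d\lambda $ and $\int_{\Omega }\partial ^{2}_{\theta _{i}\theta _{j}}p\,d\lambda $ vanish because every $p_{\theta }$ integrates to $1$, an interchange that is itself licensed by the same domination conditions applied directly to the family $\mathcal{P}_{\Theta }$.
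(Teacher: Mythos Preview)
Your proposal is correct and follows precisely the approach of the paper's own proof, which is a terse two-sentence sketch invoking uniform boundedness of the densities (from Assumption~\ref{mod:Inclusion}), interchange of differentiation and integration via the domination conditions, the vanishing of $\int_{\Omega }\partial ^{2}p/(\partial \theta _{i}\partial \theta _{j})\,d\lambda $, and dominated convergence for continuity. You have simply spelled out the majorants and the algebra that the paper leaves implicit; in particular your observation that the first-derivative formula arises by pure algebraic rearrangement (without needing $\int_{\Omega }\partial _{\theta _{i}}p\,d\lambda =0$) and that the vanishing integral is only needed at the second step is accurate and slightly sharpens the exposition.
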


\begin{proof}
Note that the densities involved are all uniformly bounded by Assumption \ref%
{mod:Inclusion}. Under the respective assumptions, differentiation and
integration can be interchanged, leading to the above formulae upon noting
that the integral of $\partial ^{2}p/(\partial \theta _{i}\partial \theta
_{j})(\cdot ,\theta )$ is zero. Continuity of the partial derivatives
follows from the theorem of dominated convergence.
\end{proof}

\begin{prop}
\label{Lemma: convergence of D2Q_n} Let Assumptions \ref{dens:Element}, \ref%
{mod:Inclusion}, and \ref{mod: domination conditions} be satisfied and
suppose $\zeta >0$. Then, for all $i,j=1,\ldots ,m$, 
\begin{equation}
\sup_{\theta \in \Theta ^{\circ }}\left\vert \frac{\partial ^{2}\mathbb{Q}%
_{n}}{\partial \theta _{i}\partial \theta _{j}}(\theta )-\frac{\partial ^{2}Q%
}{\partial \theta _{i}\partial \theta _{j}}(\theta )\right\vert =o_{\mathbb{P%
}}(1)\quad \text{as }n\rightarrow \infty .  \label{diff_second_der}
\end{equation}
\end{prop}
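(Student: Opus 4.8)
The plan is to reduce the claim to the consistency statement $\|\hat{p}_n-p_{\blacktriangle}\|_\Omega\to 0$, using the explicit formulae for the second partial derivatives supplied by Lemma~\ref{DerivativesOfQ_nAndQ}. First I would note that since $\zeta>0$, every $\hat{p}_n$ satisfies $\inf_{x\in\Omega}\hat{p}_n(x)\geq\zeta$ by construction, and $\inf_{x\in\Omega}p_{\blacktriangle}(x)\geq\zeta$ by Assumption~\ref{dens:Element} (which under $\zeta>0$ entails Assumption~\ref{dens:StrictInequality}); hence the event $\{\inf_\Omega\hat{p}_n>0\}$ is the whole sample space and the formulae of Lemma~\ref{DerivativesOfQ_nAndQ}(a),(b) hold unconditionally. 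Subtracting them gives, for every $\theta\in\Theta^\circ$,
\begin{equation*}
\frac{\partial^2\mathbb{Q}_n}{\partial\theta_i\partial\theta_j}(\theta)-\frac{\partial^2 Q}{\partial\theta_i\partial\theta_j}(\theta)=2\int_\Omega\left(\frac{\partial p}{\partial\theta_i}(\cdot,\theta)\frac{\partial p}{\partial\theta_j}(\cdot,\theta)+\frac{\partial^2 p}{\partial\theta_i\partial\theta_j}(\cdot,\theta)\,p_\theta\right)\left(\hat{p}_n^{-1}-p_{\blacktriangle}^{-1}\right)d\lambda,
\end{equation*}
so that the two expressions differ only through the weight $\hat{p}_n^{-1}$ versus $p_{\blacktriangle}^{-1}$.

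Next I would estimate the two factors of the integrand separately. Since $\hat{p}_n\geq\zeta$ and $p_{\blacktriangle}\geq\zeta$, one has the $x$- and $\theta$-free bound $|\hat{p}_n^{-1}-p_{\blacktriangle}^{-1}|=|p_{\blacktriangle}-\hat{p}_n|(\hat{p}_n p_{\blacktriangle})^{-1}\leq\zeta^{-2}\|\hat{p}_n-p_{\blacktriangle}\|_\Omega$. For the bracketed term, Assumption~\ref{mod:Inclusion} gives $p_\theta\in\mathcal{P}(t,\zeta,D)$ and hence $\|p_\theta\|_\Omega\leq C_t D$ by Proposition~\ref{prop:SobolevembedsinHoelder}(b), while Assumption~\ref{mod: domination conditions} provides $\theta$-free integrable majorants: $\sup_{\theta\in\Theta^\circ}|\partial p/\partial\theta_i(\cdot,\theta)|$ is square-integrable (so its products are integrable by Cauchy--Schwarz), and $\sup_{\theta\in\Theta^\circ}|\partial^2 p/\partial\theta_i\partial\theta_j(\cdot,\theta)|$ is integrable. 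Pulling the factor $\zeta^{-2}\|\hat{p}_n-p_{\blacktriangle}\|_\Omega$ out of the integral and bounding the rest by these majorants yields a finite constant $K_{ij}$ (depending only on $\zeta$, $C_t D$ and the domination integrals) with
\begin{equation*}
\sup_{\theta\in\Theta^\circ}\left|\frac{\partial^2\mathbb{Q}_n}{\partial\theta_i\partial\theta_j}(\theta)-\frac{\partial^2 Q}{\partial\theta_i\partial\theta_j}(\theta)\right|\leq K_{ij}\,\|\hat{p}_n-p_{\blacktriangle}\|_\Omega.
\end{equation*}

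Finally, Theorem~\ref{Theorem: consistency of AML-estimators}(a), applicable under Assumption~\ref{dens:Element}, gives $\|\hat{p}_n-p_{\blacktriangle}\|_\Omega\to0$ $\mathbb{P}$-a.s., so the right-hand side --- and therefore the left-hand side --- is $o_{\mathbb{P}}(1)$; measurability of the supremum is not an issue, since by Lemma~\ref{DerivativesOfQ_nAndQ} the maps $\theta\mapsto\partial^2\mathbb{Q}_n/\partial\theta_i\partial\theta_j(\theta)$ and $\theta\mapsto\partial^2 Q/\partial\theta_i\partial\theta_j(\theta)$ are continuous on $\Theta^\circ$, so the supremum may be restricted to a countable dense subset. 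I do not expect a genuine obstacle here; the only point needing care is that the domination in Assumption~\ref{mod: domination conditions} must be exploited uniformly over $\theta$ --- which is exactly the form in which it is stated --- together with the uniform sup-norm bound $C_t D$ on $\{p_\theta:\theta\in\Theta\}$ coming from Assumption~\ref{mod:Inclusion}.
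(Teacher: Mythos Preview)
Your proof is correct and follows essentially the same route as the paper: subtract the two formulae from Lemma~\ref{DerivativesOfQ_nAndQ}, bound $|\hat{p}_n^{-1}-p_{\blacktriangle}^{-1}|\leq\zeta^{-2}\|\hat{p}_n-p_{\blacktriangle}\|_\Omega$, use the domination integrals of Assumption~\ref{mod: domination conditions} together with the uniform bound $\|p_\theta\|_\Omega\leq C_tD$, and then invoke Theorem~\ref{Theorem: consistency of AML-estimators}(a); measurability is handled identically via continuity and separability. The paper's write-up is simply a compressed version of yours, with the constant recorded explicitly as $2\zeta^{-2}b(1+C_tD)$.
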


\begin{proof}
Let $b<\infty $ be a bound for all the integrals appearing in Assumption \ref%
{mod: domination conditions}. By Lemma~\ref{DerivativesOfQ_nAndQ} the
l.h.s.~of (\ref{diff_second_der}) is not larger than $2\zeta
^{-2}b(1+C_{t}D)\Vert \hat{p}_{n}-p_{\blacktriangle }\Vert _{\Omega }$,
which converges to $0$ in probability by Theorem~\ref{Theorem: consistency
of AML-estimators}(a). Measurability of the supremum in (\ref%
{diff_second_der}) follows from continuity of the second derivatives (Lemma %
\ref{DerivativesOfQ_nAndQ}) and separability of $\Theta ^{\circ }$.
\end{proof}

\begin{remark}
\normalfont If $\zeta =0$ the assertion of the preceding proposition still
holds true in outer probability under Assumptions \ref{dens:Element}, \ref%
{dens:StrictInequality}, \ref{mod:Inclusion}, and \ref{mod: domination
conditions}, if $\partial ^{2}\mathbb{Q}_{n}(\theta )/\partial \theta
\partial \theta ^{\prime }$ is interpreted as the zero matrix on the event
where $\inf_{x\in \Omega }\hat{p}_{n}(x)=0$.
\end{remark}

\end{document}